\newtheorem{theorem}{Theorem}
\newtheorem{lemma}[theorem]{Lemma}
\newtheorem{remark}[theorem]{Remark}
\newtheorem{corollary}[theorem]{Corollary}
\newtheorem{proposition}[theorem]{Proposition}
\numberwithin{theorem}{section} \numberwithin{equation}{section}
\newcommand\scalemath[2]{\scalebox{#1}{\mbox{\ensuremath{\displaystyle #2}}}}
\newcommand*{\MyScaleBig}{0.95}
\newcommand*{\MyScaleMedium}{0.87}
\newcommand*{\MyScaleSmall}{0.8}
\newcommand*{\MyScaleTiny}{0.7}
\newcommand*{\MyScalePic}{0.6}
\newcommand*{\MyScalePicSmall}{0.4}
\begin{document}
\title{Nikulin involutions and the CHL string}
\author{Adrian Clingher and Andreas Malmendier}
\address{Dept.~\!of Mathematics and Computer Science, University of Missouri -- St.~\!Louis, St.~\!Louis, MO 63121}
\email{clinghera@umsl.edu}
\address{Dept.~\!of Mathematics and Statistics, Utah State University, Logan, UT 84322}
\email{andreas.malmendier@usu.edu}
\begin{abstract}
We study certain even-eight curve configurations on a specific class of Jacobian elliptic K3 surfaces with lattice polarizations of rank ten.  These configurations are associated with K3 double covers, some of which are elliptic but not Jacobian elliptic. Several non-generic cases corresponding to K3 surfaces of higher Picard rank are also discussed. Finally, the results and the construction in question are interpreted in the context of the string dualities linked with the eight-dimensional CHL string.
\end{abstract}
\keywords{K3 surface, even eight, Nikulin involution, heterotic string, F-theory, CHL string}
\subjclass[2010]{Primary 14D0x, 14J28; Secondary 81T30}
\maketitle
\section{Introduction}
Let $\mathcal{X}$ be an algebraic K3 surface defined over the field of complex numbers. A {\it Nikulin involution} \cite{MR728142,MR544937} is an involution $ \imath_{\mathcal{X}} \colon \mathcal{X} \rightarrow \mathcal{X}$ that satisfies $\imath_{\mathcal{X}}^*(\omega) = \omega $ for any holomorphic two-form $\omega$ on $\mathcal{X}$. The fixed locus of $\imath_{\mathcal{X}}$ always consists of eight distinct points. One takes the quotient of $\mathcal{X}$ by the involution $\imath_{\mathcal{X}}$ and then resolves the eight resulting $A_1$-singularities on the quotient. This procedure, sometime referred in the literature as the {\it Nikulin construction}, leads to a new smooth K3 surface $\mathcal{Y}$, related to $\mathcal{X}$ via a rational double-cover map $\hat{\Phi} \colon \mathcal{X} \dashrightarrow \mathcal{Y}$. Moreover, the exceptional curves $C_1, C_2, \dots, C_8$ resulting on $\mathcal{Y}$ from the singularity resolution form an {\it even-eight configuration}  \cite{MR1922094}, i.e.
$$ \frac{1}{2} \ \left ( C_1 + C_2 + \cdots +  C_8 \right )  \ \in \operatorname{NS}(\mathcal{X}) \ . $$
This configuration of eight curves whose formal sum is in $2\operatorname{NS}(\mathcal{X})$ is known \cite{MR1922094} to determine in full the double cover $\hat{\Phi} \colon \mathcal{X} \dashrightarrow \mathcal{Y}$.
\par {\it Van Geemen-Sarti involutions} \cite{MR2274533,MR2824841} provide a particularly interesting case of Nikulin constructions. In this situation, the K3 surface $\mathcal{X}$ is endowed with a Jacobian elliptic  fibration $\pi_{\mathcal{X}} \colon \mathcal{X} \rightarrow \mathbb{P}^1 $ which, in addition to the trivial section $\sigma$, carries an additional section $\tau$ that makes an element of order two in the Mordell-Weil group $\operatorname{MW}(\pi_{\mathcal{X}},\sigma)$. Fiberwise translations by the order-two section $\tau$ are then known to define an involution on $\imath_{\mathcal{X}}$ on $\mathcal{X}$ -- the Van Geemen-Sarti involution. These involutions are special Nikulin involutions and the Nikulin construction from above carries out, with an extra twist -- the new K3 surface $\mathcal{Y}$ inherits a dual Jacobian elliptic fibration  $\pi_{\mathcal{Y}} \colon \mathcal{Y} \rightarrow \mathbb{P}^1$, as well as its own dual Van Geemen-Sarti involution $\imath_{\mathcal{Y}}$. One obtains a pair of dual geometric two-isogenies between the two K3 surfaces $\mathcal{X}$ and $\mathcal{Y}$. 
\begin{equation}
\label{isog_intro}
 \xymatrix 
{ \mathcal{Y} \ar @(dl,ul) _{\imath_{\mathcal{Y}}}
\ar @/_0.5pc/ @{-->} _{\hat{\Phi}} [rr] &
& \mathcal{X} \ar @(dr,ur) ^{\imath_{\mathcal{X}}}
\ar @/_0.5pc/ @{-->} _{\Phi} [ll] \\
} 
\end{equation}
The even-eight configuration associated to the double cover  $\hat{\Phi} \colon \mathcal{X} \dashrightarrow \mathcal{Y}$ consists of eight rational curves that are each part of a singular fiber of the fibration $\pi_{\mathcal{Y}}$, and are each disjoint from the trivial section of $\pi_{\mathcal{Y}}$. A similar situation holds for the even-eight configuration associated with the double cover $\Phi \colon \mathcal{Y} \dashrightarrow \mathcal{X}$. 
\par One notes that, in the above Van Geemen-Sarti involution context, both surfaces $\mathcal{X}$ and $\mathcal{Y}$ have Picard rank at least ten and, carry what is called a {\it lattice polarization} \cite{MR544937,MR1420220} of type $H \oplus N$. Here, $H$ is the standard rank-two hyperbolic lattice and $N$ is the rank-eight Nikulin lattice~\cite{MR728142}  (generated, in our case, by the eight even-eight curves and half of their formal sum). The converse argument is also true \cite{MR2274533} - the presence of a  $H \oplus N$-polarization on a given K3 surface determines a canonical Van Geemen-Sarti involution. One can actually view the structure of $(\ref{isog_intro})$ as an involution on the ten-dimensional moduli space $\mathcal{M}_{H \oplus N}$ of $H \oplus N$-polarized K3 surfaces. 
\par The generic $H \oplus N$-polarized K3 surface $\mathcal{X}$ carries a Jacobian elliptic fibration $\pi_\mathcal{X} \colon \mathcal{X} \rightarrow \mathbb{P}^1$ with eight singular fibers of Kodaira type $I_2$, eight nodes, and a section of order-two. The {\it canonical} Van Geemen-Sarti even-eight configuration consists then of the eight singular fiber rational curves that do not meet the trivial section (we shall refer to such curves as non-neutral). This even-eight configuration determines, as in $(\ref{isog_intro})$, a dual two-isogeneous Jacobian elliptic $K3$ surface $\mathcal{Y}$ that covers $\mathcal{X}$, via a double-cover $\Phi \colon \mathcal{Y} \dashrightarrow \mathcal{X}$. However, there are other {\it non-canonical} even-eight configurations on $\mathcal{X}$, still  consisting of curves belonging to the singular fibers of $\pi_\mathcal{X}$. These configurations determine additional K3 double covers $\Psi: \mathcal{Z} \dashrightarrow \mathcal{X}$, with the covering K3 surfaces $\mathcal{Z}$ still elliptic but no longer Jacobian elliptic. 
\par It is remarkable that the aforementioned construction provides a mathematical framework for a class of string dualities linked to the so-called \emph{CHL string} in eight dimensions. The CHL string is obtained from the more familiar $E_8 \times E_8$ heterotic string on a torus $T^2$ as a $\mathbb{Z}/2\mathbb{Z}$ quotient. 
F-theory, that is, compactifications of the type-IIB string theory in which the complex coupling varies over a base, is a powerful tool for analyzing the non-perturbative aspects of heterotic string compactifications \cite{MR1409284,MR1412112}.  The simplest F-theory constructions correspond to K3 surfaces that are elliptically fibered over $\mathbb{P}^1$. However, for the CHL string the standard F-theory/heterotic string correspondence has to be modified as follows: Witten found that F-theory compactifications dual to the CHL string  in eight dimensions must carry non-zero flux of an antisymmetric two-form field  \cite{MR1615617}. The presence of this flux freezes eight of the moduli in the physical moduli space, leaving a ten dimensional moduli space. In fact, it turns out that the moduli space of F-theory compactifications dual to the CHL string is a finite covering space of $\mathcal{M}_{H \oplus N}$. Concretely, in order to recover the dual CHL compactification from the Weierstrass model one needs to choose four of the eight points on the base where the fibers of type $I_2$ are located. The double cover of the projective line branched at the four marked points defines the genus-one curve on which the CHL string is compactified. 
\par The identification of points in the physical moduli space corresponding to a specific non-abelian gauge group of the CHL string simplifies considerably, if the elliptically fibered K3 surface admits a second elliptic fibration with two fibers of Kodaira type $I_0^*$ which is no longer Jacobian elliptic. In fact, Witten argued that the moduli space of F-theory compactifications dual to the CHL string is naturally isomorphic to the moduli space of K3 surfaces obtained as genus-one fibrations -- as opposed to an elliptic fibration with section -- with two fibers of type $I_0^*$ and a bisection \cite{MR1615617}. In this paper, we restrict to the case where this second fibration arises in a simple geometric manner, by interchanging the roles of base and fiber in the first fibration. The genus-one fibration then has two singular fibers of Kodaira-type $I_0^*$, four fibers of type $I_2$ and four nodes. It is precisely of the type discussed above, that is, obtained from an elliptic K3 surface with section and order-two section and singular fibers $2I_0^*+4I_2+4I_1$ via a double cover branched along a non-canonical even-eight configurations. On the other hand, elliptic fibrations with section and order-two section and singular fibers $2I_0^*+4I_2+4I_1$ form a six-dimensional natural subspace in a second copy of the moduli space $\mathcal{M}_{H \oplus N}$, namely the subspace where four fibers of type $I_2$ and four fibers of type $I_1$ have coalesced to form two fibers of type $I_0^*$.  This provides a mathematical framework for the F-theory/heterotic string duality linked with the CHL string.
\par This paper is organized as follows. Sections 2 and 3 discuss some general features of the non-canonical even-eight configurations mentioned above, as well as some properties of their associated K3 double covers. As part of this, we develop an explicit computational framework, including a Weierstrass-like normal form for certain non-Jacobian elliptic fibrations. Sections 4 and 5 analyze several non-generic situations of interest which correspond to certain K3 surfaces $\mathcal{Y}$ of high Picard rank.  Finally, the application of these ideas to CHL string and the F-theory/heterotic string correspondence is addressed in Section 6.  In particular, here we determine explicitly the duality map between the moduli of the CHL string in eight dimensions and the dual F-theory models after restricting to a natural six-dimensional subspace of the full moduli space.
%
%
%
\section{Two isogenies and torsors}
\label{setup}
If $\mathcal{E}$ is a smooth elliptic curve over a field $k$ of characteristic zero with a two-torsion point,  then an affine Weierstrass model is given by
\begin{equation}
\label{curve}
 y^2 = x \, (x^2+ b x + a c) 
 \end{equation}
with $a,b,c\in k$. We require the last coefficient $ac$ to be factored over $k$ in order to construct a closely related genus-one curve below. A two-torsion point on $\mathcal{E}$ is given by $\tau: (x,y)=(0,0)$, and we denote the point at infinity by $\sigma$. The discriminant of $\mathcal{E}$ is $\Delta_{\mathcal{E}}=a^2 c^2 (b^2-4ac) \not =0$ and non-vanishing because  $\mathcal{E}$ is assumed smooth. The points $\{ \sigma, \tau\}$ form a subgroup of $\mathcal{E}$ isomorphic to $\mathbb{Z}/2\mathbb{Z}$. The curve $\mathcal{E}$ admits two commuting involutions: the hyperelliptic involution $(x,y) \mapsto (x,-y)$ and the translation by two-torsion. For any point $p \in \mathcal{E}$ the translation by two-torsion is $p \mapsto p + \tau$ where $+$ indicates addition with respect to the group law of the elliptic curve. A computation shows that the involution is given by
\begin{equation}
\label{involution_fiber}
 \imath_{\mathcal{E}}: (x,y) \mapsto (x,y) + (0,0) = \left( \frac{ac}{x}, - \frac{acy}{x^2} \right)
\end{equation} 
for $p \not \in  \{\sigma, \tau\}$  and interchanges $\sigma$ and $\tau$. The holomorphic one-form $dx/y$ on $\mathcal{E}$ is invariant under pullback of $\imath_{\mathcal{E}}$. We obtain a smooth two-isogeneous elliptic curve as quotient $\hat{\mathcal{E}} = \mathcal{E}/\{ \sigma, \tau\}$ which has the Weierstrass model
\begin{equation}
\label{fiber_isog}
 Y^2 = X \big(X^2- 2 b X +  b^2-4ac\big)
\end{equation}
equipped with a two-torsion point $T: (X,Y)=(0,0)$ and the point at infinity $\Sigma$. The discriminant of $\hat{\mathcal{E}}$ is given by $\Delta_{\hat{\mathcal{E}}} =16ac(b^2-4ac)^2 \not =0$.  The degree-two isogeny $\hat{\varphi}:\mathcal{E}\to \hat{\mathcal{E}}$ is given by
\begin{equation}
\label{isog}
 \hat{\varphi}: (x,y) \mapsto (X,Y) = \left( \frac{y^2}{x^2}, \frac{(x^2-ac)y}{x^2} \right) 
\end{equation}
for $(x,y)\not = (0,0)$ and $\hat{\varphi}(\{\sigma, \tau\})=\Sigma$ such that $\hat{\varphi} \circ  \imath_{\mathcal{E}} = \hat{\varphi}$. On $\hat{\mathcal{E}}$ the dual involution $\imath_{\hat{\mathcal{E}}}: P \mapsto P + T$ is given by
\begin{equation}
\label{dual_isog_involution}
 \imath_{\hat{\mathcal{E}}}: (X,Y) \mapsto (X,Y) + (0,0) = \left( \frac{b^2-4ac}{X}, - \frac{(b^2-4ac)Y}{X^2} \right),
\end{equation} 
which in turn leaves the holomorphic one-form $dX/Y$ on $\hat{\mathcal{E}}$ is invariant. The corresponding dual isogeny $\varphi:\hat{\mathcal{E}}\to \mathcal{E}$ is given by
\begin{equation}
\label{dual_isog}
 \varphi: (X,Y) \mapsto (x,y) = \left( \frac{Y^2}{4X^2}, \frac{Y(X^2-b^2+4ac)}{8X^2} \right) 
\end{equation}
for $(X,Y)\not = (0,0)$ and $\varphi(\{\Sigma, T\})=\sigma$. The composition of isogenies $ \varphi\circ\hat{\varphi}$ factors multiplication by two on $\mathcal{E}$.
\par We define two plane algebraic curves $\mathcal{C}$ and $\hat{\mathcal{C}}$ of genus one using the affine coordinates $(u,v)$ and $(U,V)$ and equations
\begin{equation}
 \mathcal{C}: v^2 = u^4 + b  u^2 + a c, \qquad \hat{\mathcal{C}}: V^2 = a  U^4 + b  U^2 + c.
 \end{equation}
 Notice that for the construction of the curve  $\hat{\mathcal{C}}$ a choice of factorization for the coefficient $ac$ in Equation~(\ref{curve}) was required. The discriminants of both curves equal the discriminant of $\hat{\mathcal{E}}$, i.e., $\Delta_{\mathcal{C}}= \Delta_{\hat{\mathcal{C}}} = \Delta_{\hat{\mathcal{E}}} \not =0$. The curves $\mathcal{C}$ and $\hat{\mathcal{C}}$ are smooth and have genus one. They admit the involutions $\imath_{\mathcal{C}}: (u,v) \mapsto (-u, -v)$ and $\imath_{\hat{\mathcal{C}}}: (U,V) \mapsto (-U, -V)$. We have the following:
\begin{lemma}
\label{lemma_reduction}
The genus-one curve $\mathcal{C}$ is isomorphic to $\hat{\mathcal{E}}$ over $k$. The genus-one curve $\hat{\mathcal{C}}$ is isomorphic to $\hat{\mathcal{E}}$ over $k$ if and only if there either exists an element $\alpha \in k$ with $a=\alpha^2$, or there exist elements $e,f,g \in k$ with $b=-e-2af^2$ and $c=af^4+ef^2+g^2$.
\end{lemma}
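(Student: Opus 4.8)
The plan is to recast both assertions as statements about $k$-rational points. Recall the standard fact that a smooth projective genus-one curve $C$ over $k$ is isomorphic over $k$ to an elliptic curve $E$ if and only if $C(k)\neq\emptyset$ and $\operatorname{Jac}(C)\cong E$ over $k$: a choice of point $P_{0}\in C(k)$ makes $P\mapsto[P-P_{0}]$ an isomorphism $C\xrightarrow{\sim}\operatorname{Jac}(C)$, while conversely the origin of $E$ supplies a rational point. Thus the first step is to verify that $\operatorname{Jac}(\mathcal{C})\cong\operatorname{Jac}(\hat{\mathcal{C}})\cong\hat{\mathcal{E}}$ over $k$; granting this, the two claims reduce to ``$\mathcal{C}(k)\neq\emptyset$'' and ``$\hat{\mathcal{C}}(k)\neq\emptyset$'' respectively.

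For the Jacobian identification I would invoke the classical invariants of a binary quartic $a_{0}u^{4}+a_{1}u^{3}+a_{2}u^{2}+a_{3}u+a_{4}$, which are polynomials in the coefficients and determine the Jacobian elliptic curve up to $k$-isomorphism. Both $\mathcal{C}$ and $\hat{\mathcal{C}}$ are even quartics, so $a_{1}=a_{3}=0$ and these invariants depend only on $a_{2}=b$ and on the product $a_{0}a_{4}$, which equals $ac$ in each case ($1\cdot ac$ for $\mathcal{C}$ and $a\cdot c$ for $\hat{\mathcal{C}}$). Hence $\mathcal{C}$ and $\hat{\mathcal{C}}$ have identical invariants $I=b^{2}+12ac$ and $J=72abc-2b^{3}$, and therefore the same Jacobian. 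To see that this common Jacobian is $\hat{\mathcal{E}}\colon Y^{2}=X(X^{2}-2bX+b^{2}-4ac)$ over $k$ rather than merely over $\bar{k}$, I would compare the Weierstrass invariants $c_{4},c_{6}$: those of the Jacobian and of $\hat{\mathcal{E}}$ agree after the admissible change $X\mapsto 9X,\ Y\mapsto 27Y$, which is defined over $k$ because $\operatorname{char}k=0$. This yields $\operatorname{Jac}(\mathcal{C})=\operatorname{Jac}(\hat{\mathcal{C}})=\hat{\mathcal{E}}$. The first assertion is now immediate: the leading coefficient of $\mathcal{C}$ is $1=1^{2}$, so the two points at infinity of its smooth model are $k$-rational and $\mathcal{C}\cong\hat{\mathcal{E}}$.

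It remains to decide when $\hat{\mathcal{C}}(k)\neq\emptyset$. Every point of the smooth model of $\hat{\mathcal{C}}$ lies either over an affine solution of $V^{2}=aU^{4}+bU^{2}+c$ or is one of the two points at infinity, so I would treat these two cases separately. At infinity one has $V\sim\pm\sqrt{a}\,U^{2}$, so these points are $k$-rational exactly when $a=\alpha^{2}$ for some $\alpha\in k$; this is the first condition. For affine points I would note that the existence of some $(f,g)\in k^{2}$ with $g^{2}=af^{4}+bf^{2}+c$ is equivalent to the second condition: substituting $U=f$ into $b=-e-2af^{2}$ and $c=af^{4}+ef^{2}+g^{2}$ collapses the right-hand side of the defining equation to $g^{2}$, so $(f,g)\in\hat{\mathcal{C}}(k)$; conversely, given any affine point $(f,g)$ one sets $e:=-b-2af^{2}$ and checks the two identities directly. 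Since these two cases are exhaustive, $\hat{\mathcal{C}}(k)\neq\emptyset$ precisely when $a$ is a square or the second condition holds, which is the desired equivalence.

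The bookkeeping in the invariant computation is routine; the two points that genuinely require care are the identification of the Jacobians over $k$ rather than only over $\bar{k}$ (which is why I would track $c_{4},c_{6}$ and the rational scaling by $3$ rather than just the $j$-invariant), and the exhaustiveness of the affine-versus-infinite dichotomy on the smooth model, which is what makes the disjunction of the two conditions necessary and not merely sufficient.
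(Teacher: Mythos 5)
Your proposal is correct, and it reaches the lemma by a genuinely different route than the paper, at least for the isomorphism statements. The paper works entirely with explicit formulas: it exhibits the birational isomorphism $u=Y/(2X)$, $v=(Y^2+2bX^2-2X^3)/(4X^2)$ between $\hat{\mathcal{E}}$ and $\mathcal{C}$ (so no Jacobian theory is needed for the first claim), handles the case $a=\alpha^2$ by the explicit rescaling $U=u/\alpha$, $V=v/\alpha$ carrying $\mathcal{C}$ to $\hat{\mathcal{C}}$, and extracts the $e,f,g$ condition by solving $aU^4+bU^2+c=g^2$ as a quadratic in $U^2$, which forces the auxiliary square condition $4ag^2-4ac+b^2=e^2$; the closing implication ``a $k$-rational point on $\hat{\mathcal{C}}$ gives $\hat{\mathcal{C}}\cong\hat{\mathcal{E}}$'' rests on the same point-plus-Jacobian principle you invoke, with the identification $\operatorname{Jac}(\hat{\mathcal{C}})\cong\hat{\mathcal{E}}$ supplied only afterwards (Proposition~\ref{Jac_prop}, via pullback of one-forms along the degree-two maps). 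You instead front-load the Jacobian identification through the classical $I,J$ invariants of the binary quartic, with the pleasant observation that $\mathcal{C}$ and $\hat{\mathcal{C}}$ share $I=b^2+12ac$, $J=72abc-2b^3$ precisely because both quartics are even with the same middle coefficient and the same product of extreme coefficients ($1\cdot ac = a\cdot c$); your arithmetic checks out, since $c_4(\hat{\mathcal{E}})=16I$ and $c_6(\hat{\mathcal{E}})=32J$ match the classical Jacobian $Y^2=X^3-27IX-27J$ after the rational scaling you indicate, so the identification holds over $k$ and both claims reduce uniformly to the existence of $k$-points. Each approach buys something: the paper's explicit maps are not mere proof devices --- the same map reappears verbatim in Lemma~\ref{lemma_equivariance} and fiberwise in Section~3 where $\Phi$ and $\Psi$ are defined on the surfaces --- so coordinates earn their keep later; your argument, relying on the standard (citable) fact that $I,J$ determine the Jacobian of a quartic, is logically tidier, and in particular your explicit dichotomy on the smooth model in $\mathbb{P}(1,2,1)$ --- points at infinity $[1:\pm\sqrt{a}:0]$ rational iff $a$ is a square, affine points equivalent to the $e,f,g$ condition via the direct substitution $e=-b-2af^2$ --- makes the ``only if'' direction of the disjunction fully explicit, something the paper's proof leaves implicit. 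Your direct verification of the affine equivalence is in fact cleaner than the paper's detour through $U^2=(-b\pm e)/(2a)$. The only standing hypothesis you use silently, as does the paper, is $\Delta_{\hat{\mathcal{C}}}\neq 0$ (so $a\neq 0$ and the weighted model is smooth), which is part of the setup of Section~\ref{setup}.
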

\begin{proof}
The map $u=Y/(2X)$ and $v=(Y^2+2bX^2-2X^3)/(4X^2)$ with inverse $X=2u^2+b-2v,$ $Y=2u(2u^2-2v+b)$ is an isomorphism between $\mathcal{C}$ and $\hat{\mathcal{E}}$.
\par If $a=\alpha^2$ then $U=u/\alpha$ and $V=v/\alpha$ is a map between $\mathcal{C}$ and $\hat{\mathcal{C}}$ over $k$. Solutions of $aU^4+bU^2+c=g^2$ satisfy $U^2=(-b\pm\sqrt{4ag^2-4ac+b^2})/(2a)$.  Therefore, for a solution to be $k$-rational there must be $e\in k$ such that $4ag^2-4ac+b^2=e^2$. The curve $\hat{\mathcal{C}}$ has the $k$-rational point $(U,V)=(f,g)$ if and only $2af^2=-b \pm e$. We then have $b=-e-2af^2$ and $g^2+f^2e+af^4=c$. That is, the curve $\hat{\mathcal{C}}$ has the $k$-rational point $(U,V)=(f,g)$ if there exist $e,f,g \in k$ with $b=-e-2af^2$ and $c=af^4+ef^2+g^2$. If there is a $k$-rational point on $\hat{\mathcal{C}}$, $\mathcal{E}$ and $\hat{\mathcal{C}}$ are isomorphic.
\end{proof}
\begin{lemma}\label{lemma_equivariance}
The map $\hat{\mathcal{E}} \dasharrow \mathcal{C}$ given by $u=Y/(2X)$ and $v=(b^2-4ac-X^2)/(4X)$ is equivariant with respect to the hyperelliptic involution $(X,Y) \mapsto (X,-Y)$ on $\hat{\mathcal{E}}$ and the involution $(u,v) \mapsto (-u,v)$ on $\mathcal{C}$. Similarly, the map is equivariant with respect to the involution $ \imath_{\hat{\mathcal{E}}}$ on $\hat{\mathcal{E}}$ and the involution $(u,v) \mapsto (-u,-v)$ on $\mathcal{C}$.
\end{lemma}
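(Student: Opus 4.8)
The plan is to treat both equivariance claims as direct substitutions, after first recognizing that the map in the statement is none other than the isomorphism $\hat{\mathcal{E}} \to \mathcal{C}$ already exhibited in Lemma~\ref{lemma_reduction}. Using the Weierstrass relation $Y^2 = X(X^2 - 2bX + b^2 - 4ac)$ on $\hat{\mathcal{E}}$, one has
\[
\frac{Y^2 + 2bX^2 - 2X^3}{4X^2} = \frac{-X^3 + (b^2-4ac)X}{4X^2} = \frac{b^2 - 4ac - X^2}{4X},
\]
so the expression for $v$ recorded here agrees with the one appearing in Lemma~\ref{lemma_reduction}. Hence the map is a well-defined birational isomorphism onto $\mathcal{C}$, and it suffices to verify each equivariance on the open locus $X \neq 0$ where all formulas are regular; the extension to the whole smooth genus-one curves then follows by density.

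For the first statement I would simply apply $(X,Y) \mapsto (X,-Y)$ and read off the effect on the coordinates. Since $u = Y/(2X)$ is odd in $Y$ while $v = (b^2 - 4ac - X^2)/(4X)$ does not involve $Y$, the image point transforms as $(u,v) \mapsto (-u,v)$, which is exactly the asserted involution on $\mathcal{C}$. No algebra is required here.

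For the second statement I would substitute the formula~(\ref{dual_isog_involution}) for $\imath_{\hat{\mathcal{E}}}$, namely $(X,Y) \mapsto \left( (b^2-4ac)/X,\ -(b^2-4ac)Y/X^2 \right)$, into the defining expressions and simplify. Abbreviating $Q = b^2 - 4ac$, the first coordinate becomes $(-QY/X^2)/(2Q/X) = -Y/(2X) = -u$ after cancelling $Q$ and one power of $X$, while the second becomes $(Q - Q^2/X^2)/(4Q/X)$, which upon clearing the nested fraction equals $(X^2 - Q)/(4X) = -v$. Thus the image is $(-u,-v)$, matching the involution $(u,v) \mapsto (-u,-v)$ on $\mathcal{C}$.

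There is no substantive obstacle: the verifications are purely mechanical. The only point demanding a little care is the $v$-coordinate in the second case, where the factor $Q = b^2 - 4ac$ must be cancelled correctly out of a doubly-nested fraction in order to expose the sign change. As a cross-check one could instead precompose with the explicit inverse $X = 2u^2 + b - 2v$, $Y = 2u(2u^2 - 2v + b)$ from Lemma~\ref{lemma_reduction} and compare, but the forward substitution above is the cleanest route.
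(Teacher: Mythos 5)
Your proposal is correct and matches the paper's approach: the paper's proof is simply ``The lemma follows by explicit computation,'' and your substitutions carry out exactly that computation, with all steps checking out (in particular, the cancellation of $Q=b^2-4ac$ in the nested fraction giving $(X^2-Q)/(4X)=-v$ is right). Your preliminary observation that the stated map agrees, via the Weierstrass relation $Y^2=X(X^2-2bX+b^2-4ac)$, with the isomorphism of Lemma~\ref{lemma_reduction} is a worthwhile sanity check that the paper leaves implicit.
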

\begin{proof}
The lemma follows by explicit computation.
\end{proof}
We define a two-to-one map $\psi: \hat{\mathcal{C}} \to \mathcal{E}$ by
\begin{equation} \label{eq1}
\begin{split}
 \psi: (U,V)  & \mapsto (x,y) = \big(a\, U^2, a \, UV \big),
 \end{split}
\end{equation}
such that $\psi\circ \imath_{\mathcal{C}} = \psi$.  We have the following:
\begin{proposition}
\label{Jac_prop}
The Jacobian variety of the curve $\hat{\mathcal{C}}$ is isomorphic to $\hat{\mathcal{E}}$, i.e., $\operatorname{\operatorname{Jac}}(\hat{\mathcal{C}}) \cong \hat{\mathcal{E}}$.
\end{proposition}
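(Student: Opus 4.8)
The plan is to use the fact that a smooth genus-one curve is a torsor under its Jacobian, together with the classical description of the Jacobian of a curve presented as a double cover of $\mathbb{P}^1$ by a binary quartic. Writing $\hat{\mathcal{C}}$ as $V^2=q(U)$ with $q(U)=aU^4+bU^2+c$, the Jacobian $\operatorname{Jac}(\hat{\mathcal{C}})$ depends only on the two classical invariants of the quartic $q=A_0U^4+A_1U^3+A_2U^2+A_3U+A_4$,
\[
 I = 12\,A_0A_4 - 3A_1A_3 + A_2^2, \qquad J = 72\,A_0A_2A_4 - 27A_0A_3^2 - 27A_1^2A_4 + 9A_1A_2A_3 - 2A_2^3,
\]
and is realized by the Weierstrass model $Y^2 = X^3 - 27 I X - 27 J$. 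First I would record the invariants of $\hat{\mathcal{C}}$, where $A_1=A_3=0$, $A_0=a$, $A_2=b$, $A_4=c$, obtaining $I=b^2+12ac$ and $J=-2b^3+72abc$.

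Rather than reduce the resulting cubic by hand, I would compare these with the invariants of the companion quartic $\mathcal{C}\colon v^2=u^4+bu^2+ac$, for which $A_0=1$, $A_2=b$, $A_4=ac$; a one-line substitution shows the invariants coincide, $I_{\mathcal{C}}=b^2+12ac$ and $J_{\mathcal{C}}=-2b^3+72abc$. Hence $\operatorname{Jac}(\hat{\mathcal{C}})\cong\operatorname{Jac}(\mathcal{C})$. Since Lemma~\ref{lemma_reduction} already gives a $k$-isomorphism $\mathcal{C}\cong\hat{\mathcal{E}}$, and the Jacobian of an elliptic curve is canonically the curve itself, this yields $\operatorname{Jac}(\mathcal{C})\cong\hat{\mathcal{E}}$ and therefore $\operatorname{Jac}(\hat{\mathcal{C}})\cong\hat{\mathcal{E}}$, as claimed.

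To keep the argument self-contained I would also confirm the Weierstrass form directly: with $I=b^2+12ac$ and $J=-2b^3+72abc$ the curve $Y^2=X^3-27IX-27J$ has $c_4$ and $c_6$ proportional to those of $\hat{\mathcal{E}}\colon Y^2=X^3-2bX^2+(b^2-4ac)X$, the ratios being $3^4$ and $3^6$, so the admissible change $(X,Y)=(9X',27Y')$ realizes the $k$-isomorphism explicitly. A more geometric alternative would use the two-to-one map $\psi\colon\hat{\mathcal{C}}\to\mathcal{E}$ of \eqref{eq1}: because $ac\neq0$ the involution $\imath_{\hat{\mathcal{C}}}$ has no fixed point (it misses $U=V=0$ and merely interchanges the two points at infinity), so $\psi$ is an \'etale double cover and $\operatorname{Jac}(\hat{\mathcal{C}})$ is two-isogenous to $\mathcal{E}$; identifying this cover with the $k$-rational isogeny $\varphi\colon\hat{\mathcal{E}}\to\mathcal{E}$ of \eqref{dual_isog} recovers $\operatorname{Jac}(\hat{\mathcal{C}})\cong\hat{\mathcal{E}}$.

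The main obstacle I anticipate is justifying that the Jacobian is governed solely by the pair $(I,J)$ with the correct normalization constant; this is where one either cites the standard theory of Jacobians of binary-quartic genus-one curves or pins the constant down via the explicit $c_4,c_6$ comparison above. Everything else is routine invariant-theoretic bookkeeping.
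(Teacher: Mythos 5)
Your proof is correct, but it takes a genuinely different route from the paper's. The paper's own proof runs through the two degree-two maps onto $\mathcal{E}$: it computes $\varphi^*\big(\tfrac{dx}{y}\big)=2\,\tfrac{dX}{Y}$ and $\psi^*\big(\tfrac{dx}{y}\big)=2\,\tfrac{dU}{V}$ and concludes from this matching behavior of the holomorphic one-forms that $\operatorname{Jac}(\hat{\mathcal{C}})\cong\operatorname{Jac}(\hat{\mathcal{E}})\cong\hat{\mathcal{E}}$ --- in effect a period-lattice argument, stated quite tersely. You instead invoke the classical fact that the Jacobian of $V^2=q(U)$ is $Y^2=X^3-27IX-27J$, observe that $\hat{\mathcal{C}}$ and $\mathcal{C}$ have \emph{identical} invariants $(I,J)=(b^2+12ac,\,-2b^3+72abc)$, and finish with Lemma~\ref{lemma_reduction}; your numbers check out (for $\hat{\mathcal{E}}$ one gets $c_4=16I$, $c_6=32J$, against $1296I=3^4\cdot 16I$ and $23328J=3^6\cdot 32J$ for the invariant-theoretic model). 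Your route buys something real: it is algebraic over the ground field $k$, it pins down the normalization explicitly, and it isolates exactly why the choice of factorization of $ac$ changes the torsor $\hat{\mathcal{C}}$ but never its Jacobian; the paper's route buys brevity and avoids invariant theory. Two small caveats. First, the pure scaling $(X,Y)=(9X',27Y')$ does not by itself carry $Y^2=X^3-27IX-27J$ to the paper's model of $\hat{\mathcal{E}}$, since the latter is not in short form; you must also compose with the translation $X\mapsto X+\tfrac{2b}{3}$ --- but proportionality of $(c_4,c_6)$ by $(u^4,u^6)$ with $u=3\in k^\times$ is exactly what yields the $k$-isomorphism, which is what your computation in effect establishes. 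Second, in your geometric alternative, identifying the \'etale double cover $\psi$ with the specific isogeny $\varphi$ is the one step requiring genuine work (an elliptic curve can admit up to three $2$-isogenies onto it, and the factor-of-two pullback of one-forms alone does not select among the corresponding index-two sublattices); notably, the paper's own proof leaves this same identification implicit, so your invariant-theoretic main line is arguably the more watertight of the two arguments.
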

\begin{proof}
The smooth varieties $\hat{\mathcal{C}}$, $\hat{\mathcal{E}}$, and $\mathcal{C}$ have genus one and  a basis of their holomorphic one-forms given by $\frac{dU}{V}$, $\frac{dX}{Y}$, and $\frac{dx}{y}$, respectively. A computation shows that $\varphi^*\big(\frac{dx}{y}\big) = 2 \frac{dX}{Y}$ and $\psi^*\big(\frac{dx}{y}\big) = 2 \frac{dU}{V}$. It follows that the Jacobians of $\hat{\mathcal{E}}$ and $\hat{\mathcal{C}}$ are isomorphic.  But $\hat{\mathcal{E}}$ is an elliptic curve and isomorphic to its Jacobian.
\end{proof}
\section{Application to elliptic fibrations}\label{construction}
$\mathcal{X}$ is an elliptic surface over $\mathbb{P}^1$ if $\mathcal{X}$ is a proper smooth minimal surface  together with a proper morphism $\pi_{\mathcal{X}}: \mathcal{X} \rightarrow \mathbb{P}^1$ with connected fibers such that almost all fibers are smooth curves of genus one.  It will always be assumed that $\mathcal{X}$ does not have exceptional curves of the first kind in the fibers or multiple singular fibers. The $j$-function is the rational function on $\mathbb{P}^1$ whose values in $\mathbb{C} \cup \{\infty\}$ are the $J$-invariants of the fibers $\mathcal{X}_{[t_0:t_1]}$ over $[t_0:t_1] \in \mathbb{P}^1$. It follows that a smooth minimal elliptic surfaces over $\mathbb{P}^1$ are in one-to one correspondence with genus-one curves over $\mathbb{C}(t_0,t_1)$ or \emph{genus-one fibrations}. An elliptic surface  $\pi_{\mathcal{X}}: \mathcal{X} \rightarrow \mathbb{P}^1$ is said to have a section if there exists a global section $\sigma: \mathbb{P}^1 \rightarrow \mathcal{X}$ such that $\pi_{\mathcal{X}}\circ \sigma=\operatorname{id}$. The section corresponds to a $\mathbb{C}(t_0,t_1)$-rational point on the generic fiber which in turn becomes an elliptic curve. 
\par We will restrict ourselves to elliptic K3 surfaces with section, i.e., triples $(\mathcal{X},\pi_{\mathcal{X}},\sigma)$, whose minimal Weierstrass equation is given by
\begin{equation}
  y^2 = x^3 + f x + g ,
\end{equation} 
where $f$ and $g$ are homogeneous polynomials in the variables $[t_0:t_1] \in \mathbb{P}^1$ of degrees $8$ and $12$, respectively. If necessary, we will also use the affine coordinate $t=t_0/t_1$ for $t_1\not =0$. We denote the Mordell-Weil group of sections by $\operatorname{MW}(\pi_{\mathcal{X}}, \sigma)$ or $\operatorname{MW}(\pi_{\mathcal{X}})$ for short. We denote N\'eron-Severi  lattice by $\operatorname{NS}(\mathcal{X})=H^2(\mathcal{X},\mathbb{Z}) \cap H^{1,1}(\mathcal{X})$ and the transcendental lattice by $T_{\mathcal{X}} = \operatorname{NS}(\mathcal{X})^{\perp} \in H^2(\mathcal{X},\mathbb{Z})$.
\par In the following, we will restrict to a situation where $\mathbb{Z}/2\mathbb{Z} \subset \operatorname{MW}(\pi_{\mathcal{X}})$, that is, we will assume that there is a section $\tau: \mathbb{P}^1\rightarrow \mathcal{X}$ different from $\sigma$ such that $2  \tau=\sigma$. A minimal Weierstrass model for a K3 surface with fiber $\mathcal{X}_{[t_0:t_1]}\cong \mathcal{E}$ is given by
\begin{equation}
\label{Xsurface}
 \mathcal{X}_{[t_0:t_1]}: \quad y^2 = x \, \left(x^2 + b   x +  a c\right) ,
\end{equation} 
where $a, b, c$ are homogeneous polynomials in $k(t_0,t_1)$ such that $b$ and $ac$ have degrees $4$ and $8$, respectively. As in the case of elliptic curves, we require the last coefficient $ac$ to be factored over $\mathbb{C}(t_0,t_1)$. The singular fibers of $\mathcal{X}$ are located over the support of $\Delta_{\mathcal{X}}=a^2c^2 (b^2-4ac)$. The two-torsion section $\tau$ is given by $\tau: [t_0:t_1] \mapsto (x,y)=(0,0)$. The holomorphic two-form on $\mathcal{X}$  is $\omega_{\mathcal{X}}=dt\wedge dx/y$.  Because of the two-torsion, the fibration $\pi_{\mathcal{X}}$ cannot have singular fibers of type $II, II^*$ or $IV, IV^*$. Without loss of generality, we will assume that $\deg{a} \le \deg{c}$, $a$ is $\emph{not constant}$, and the \emph{degrees are even}, i.e., $\deg{(a)}=2k$ and $\deg{(c)}=8-2k$ for $k \in \{1, 2\}$.
\subsection{Van Geemen-Sarti involutions}
A \emph{Nikulin involution} on a K3 surface $\mathcal{X}$ is a symplectic involution $\imath_{\mathcal{X}}: \mathcal{X} \to \mathcal{X}$, i.e., an involution such that $\imath_{\mathcal{X} }^*(\omega)=\omega$. If a Nikulin involution exists on a K3 surface $\mathcal{X}$, then it necessarily has eight fixed points, and the minimal resolution of the quotient surface with eight rational double points is another K3 surface $\mathcal{Y}=\widehat{\mathcal{X}/\{1,\imath_{\mathcal{X} }\}}$ \cite{MR544937}. Special Nikulin involution are obtained in our situation: the fiberwise translation by the two-torsion section $p \mapsto p+\tau$ for all  $p \in \mathcal{X}_{[t_0:t_1]}$ extends to a Nikulin involution $\imath_{\mathcal{X} }$ on $\mathcal{X}$, called \emph{van Geemen-Sarti} involution, and  the isogeny in Equation~(\ref{isog}) extends  to a degree-two rational map $\hat{\Phi}: \mathcal{X} \dasharrow \mathcal{Y}$. To prove this, it suffices to check that the fiberwise involution in  Equation~(\ref{involution_fiber}) preserves the  holomorphic two-form. It turns out that the new K3 surface $\mathcal{Y}$ admits again an elliptical fibration $\pi_{\mathcal{Y}}:\mathcal{Y}\to\mathbb{P}^1$ with section $\Sigma$. In fact, when applying the two-isogeny fiberwise, we obtain a Weierstrass model for a K3 surface from Equation~(\ref{fiber_isog}) with typical fiber $\mathcal{Y}_{[t_0:t_1]}\cong \hat{\mathcal{E}}$ given by
\begin{equation}
\label{Ysurface}
 \mathcal{Y}_{[t_0:t_1]}: \quad Y^2 = X \big(X^2- 2 b   X +  b^2-4ac\big).
\end{equation}
The singular fibers of $\mathcal{Y}$ are located over the support of $\Delta_{\mathcal{Y}}=16ac\,(b^2-4ac)^2$. The holomorphic two-form on $\mathcal{Y}$ is $\omega_{\mathcal{Y}}=dt\wedge dX/Y$.  We observe that the K3 surface $\mathcal{Y}$ satisfies $\mathbb{Z}/2\mathbb{Z} \subset \operatorname{MW}(\pi_{\mathcal{Y}})$ with a two-torsion section $T$ given by $T: [t_0:t_1] \mapsto (X,Y)=(0,0)$. Therefore, the surface $\mathcal{Y}$ is equipped with a van Geemen-Sarti involution $\imath_{\mathcal{Y}}$ covering the rational map $\Phi$ extending the fiberwise dual isogeny $P \mapsto P + T$ for all $P \in \mathcal{Y}_t$ in Equation~(\ref{dual_isog}) (cf.~\!\cite{MR2824841}).  The situation is summarized in the following diagram: 
\begin{equation}
\label{pic:VGS}
\xymatrix{
\mathcal{X} \ar @(dl,ul) _{\imath_{\mathcal{X}}} \ar [dr] _{\pi_{\mathcal{X}}} \ar @/_0.5pc/ @{-->} _{\hat{\Phi}} [rr]
&
& \mathcal{Y} \ar @(dr,ur) ^{\imath_{\mathcal{Y}}} \ar [dl] ^{\pi_{\mathcal{Y}}} \ar @/_0.5pc/ @{-->} _{\Phi} [ll] \\
& \mathbb{P}^1 }
\end{equation}
\subsubsection{The generic case}
\label{sec:generic}
For generic polynomials $a, b, c$, the fibrations $\pi_{\mathcal{X}}$ and $\pi_{\mathcal{Y}}$ form two ten-dimensional lattice polarized families of K3 surfaces. The surface $\mathcal{X}$ with $\Delta_{\mathcal{X}}=a^2 c^2 (b^2-4ac)$ has eight fibers of Kodaira type  $I_2$ at the roots of $ac$ and eight fibers of type $I_1$ at the roots of $b^2-4ac$.  The non-neutral components of the reducible fibers on $\mathcal{X}$, i.e., components which do not meet the zero section $\Sigma$, form a sub-lattice $\langle -2 \rangle^{\oplus 8}$ of rank eight perpendicular to  the lattice generated by the section and smooth fiber.  Similar statements hold for $\mathcal{Y}$. It was proven in~\cite{MR2274533} that the N\'eron-Severi groups $\operatorname{NS}(\mathcal{X})$ and $\operatorname{NS}(\mathcal{Y})$ are in fact given by
\begin{equation}
 \operatorname{NS}(\mathcal{X}) \cong \operatorname{NS}(\mathcal{Y}) \cong H \oplus N .
\end{equation} 
Here, $N$ is the rank-eight Nikulin lattice, i.e., the lattice spanned by eight curves $N_i$ for $1\le i \le8$ (with $N_i \cdot N_j=0$ for $i \not =j$ and $N_i \cdot N_i=-2$) and the class $\hat{N} =(N_1 + \dots + N_8)/2$.  In particular, the Picard ranks are ten, i.e., $\rho_X=\rho_Y=10$. It was also proven in~\cite{MR2274533} that the transcendental lattices $T_{\mathcal{X}}$ and $T_{\mathcal{Y}}$ satisfy
\begin{equation}
 T_{\mathcal{X}} \cong T_{\mathcal{Y}} \cong H \oplus (H \oplus N) \;.
\end{equation} 
\par We will show that in the reducible fibers of the K3 surface $\mathcal{X}$ there are eight disjoint smooth rational curves $\lbrace C_1, \dots, C_8\rbrace$ consisting only of exceptional divisors,  that form an even eight, i.e., satisfy $C_1 + \dots + C_8  \in 2 \, \operatorname{NS}(\mathcal{X})$. One then obtains the surface $\mathcal{Y}$ as double cover of $\mathcal{X}$ branched along the even eight after the inverse images of the curves $C_i$ for $1\le i \le 8$ are blown down \cite{MR2306633}. We have the following:
\begin{proposition}
\label{prop_EE}
For generic polynomials $a, b, c$, the maps $\hat{\Phi}$ and $\Phi$ are double covers branched along two even eights on $\mathcal{Y}$ and $\mathcal{X}$, respectively. The even eights are the eight disjoint non-neutral components of the reducible fibers.
\end{proposition}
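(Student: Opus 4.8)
The plan is to build on what has already been established, namely that $\imath_{\mathcal{X}}$ is a Nikulin involution. By \cite{MR544937} this alone supplies the scaffolding we need: the fixed locus consists of exactly eight points, the quotient acquires eight ordinary double points, and the eight curves created on $\mathcal{Y}=\widehat{\mathcal{X}/\{1,\imath_{\mathcal{X}}\}}$ upon resolving them automatically form an even eight. What is left is the \emph{geometric identification} of these eight curves with the non-neutral fibre components. First I would pin down the fixed points explicitly. Imposing $\imath_{\mathcal{E}}(x,y)=(x,y)$ in Equation~(\ref{involution_fiber}), for a point other than $\sigma,\tau$, forces $x^{2}=ac$ and $y=0$; substituting back into Equation~(\ref{Xsurface}) then yields $b^{2}=4ac$ and $x=-b/2$. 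Since $b^{2}-4ac$ has degree eight and is squarefree for generic $a,b,c$, this produces exactly eight fixed points, one at the node of each $I_{1}$ fibre. I would then confirm that none are hidden elsewhere: $\sigma$ is interchanged with $\tau$, and over the roots of $ac$ translation by $\tau$ exchanges the two components of each $I_{2}$ fibre and is fixed-point-free on it, as is forced once the eight fixed points above are accounted for.

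Next I would read off the local action. Expanding Equation~(\ref{involution_fiber}) about $(x,y)=(-b/2,0)$ in the coordinates $\xi=x+b/2$, $\eta=y$ shows that $\imath_{\mathcal{X}}$ acts to leading order as $(\xi,\eta)\mapsto(-\xi,-\eta)$, i.e.\ as $-\mathrm{id}$ on the tangent plane, which is the standard local model of a symplectic involution. Each fixed point therefore contributes an $A_{1}$ singularity to the quotient and a single smooth rational $(-2)$-curve upon resolution. Since the two branches of the nodal $I_{1}$ fibre are each preserved by $-\mathrm{id}$, its image in $\mathcal{Y}$ is again nodal, and together with the resolving curve it assembles into a fibre of Kodaira type $I_{2}$; this matches $\Delta_{\mathcal{Y}}=16ac\,(b^{2}-4ac)^{2}$ carrying $I_{2}$ fibres precisely over $b^{2}=4ac$. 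The eight resolving curves thus sit one apiece in the eight $I_{2}$ fibres of $\pi_{\mathcal{Y}}$ and, lying in distinct fibres, are pairwise disjoint.

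To see that they are the \emph{non-neutral} components I would follow the zero section. The fixed points are the finite nodes $(-b/2,0)$, disjoint from $\sigma$ at infinity; under the isogeny of Equation~(\ref{isog}) both $\sigma$ and $\tau$ map to $\Sigma$, so the zero section of $\mathcal{Y}$ meets the image of the original nodal cubic at a smooth point and misses the node. The resolving curve, introduced exactly over that node, is therefore disjoint from $\Sigma$, i.e.\ non-neutral. This identifies the even eight on $\mathcal{Y}$ with the eight non-neutral $I_{2}$-components and realises $\hat{\Phi}\colon\mathcal{X}\dashrightarrow\mathcal{Y}$ as the double cover branched along it, the $(-1)$-curves over the branch components being contracted to recover the minimal model $\mathcal{X}$ as in \cite{MR2306633}. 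Alternatively, the relation $C_{1}+\dots+C_{8}\in 2\operatorname{NS}(\mathcal{Y})$ is visible from $\operatorname{NS}(\mathcal{Y})\cong H\oplus N$ of \cite{MR2274533}, under the identification $C_{i}=N_{i}$ with $(N_{1}+\dots+N_{8})/2\in N$. The assertion for $\Phi\colon\mathcal{Y}\dashrightarrow\mathcal{X}$ follows verbatim by running the same argument for $\imath_{\mathcal{Y}}$: its eight fixed points lie over the roots of $ac$, where $\Delta_{\mathcal{Y}}$ carries $I_{1}$ and $\Delta_{\mathcal{X}}$ carries $I_{2}$ fibres, and resolving them produces the non-neutral components of the $I_{2}$ fibres of $\mathcal{X}$.

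I expect the main obstacle to be the identification in the third step, namely certifying that each resolving $(-2)$-curve is the non-neutral rather than the neutral component. This is not a lattice statement but a genuinely local one: it requires resolving the indeterminacy of the rational map $\hat{\Phi}$ over the $I_{1}$ nodes and checking that the strict transform of the zero section lands on the component meeting $\hat{\Phi}(\sigma)$. The fixed-point count and the even-eight relation, by contrast, are either an explicit root count or inherited from \cite{MR544937,MR2274533}, so the real work of the proof is concentrated in this local component analysis.
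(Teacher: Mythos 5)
Your proposal is correct, but it argues in the opposite direction from the paper. The paper's proof works top-down from the branch locus: it invokes the result of van Geemen--Sarti \cite{MR2824841} that a double cover branched along eight disjoint non-neutral components of reducible fibers is automatically a fiberwise isogeny covered by a van Geemen--Sarti involution (hence a Nikulin involution with the eight curves an even eight), and then identifies this abstract double cover with the concrete map $\Phi$ by bookkeeping: the cover exchanges fiber types $I_1 \leftrightarrow I_2$, the branch locus misses $\sigma$, the preimage of $\sigma$ splits into the two sections $\Sigma$ and $T$, while the preimage of $\tau$ (which passes through the nodes $(x,y)=(0,0)$ of the $I_2$ fibers, in contrast to the $I_1$ nodes at $(-b/2,0)$) is not a section. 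You instead work bottom-up from the involution: you locate the eight fixed points of $\imath_{\mathcal{X}}$ explicitly at the nodes $(x,y)=(-b/2,0)$ of the $I_1$ fibers over the simple roots of $b^2-4ac$ (your computation from Equation~(\ref{involution_fiber}) is correct, as is the exclusion of further fixed points on the $I_2$ fibers via Nikulin's count of exactly eight \cite{MR544937}), verify the local action is $-\mathrm{id}$, and track the resolution of the resulting $A_1$ points to see each exceptional $(-2)$-curve complete the image nodal curve to an $I_2$ fiber of $\pi_{\mathcal{Y}}$, with non-neutrality certified by following $\sigma$ (which misses the fixed nodes) to $\Sigma$; the even-eight relation is then inherited from the general Nikulin construction \cite{MR544937,MR1922094} rather than from \cite{MR2824841}. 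Both routes share the same unstated input, namely the identification from Section~3.1 of the resolved quotient with the Weierstrass model~(\ref{Ysurface}) and of the quotient map with $\hat{\Phi}$. What the paper's argument buys is brevity, outsourcing the structural content to \cite{MR2824841}; what yours buys is an explicit, self-contained local picture (fixed points, tangent action, component tracking) that is exactly the kind of analysis the paper later needs anyway in Propositions~\ref{prop_EEP}, \ref{prop_EEP_I4}, and \ref{prop_EEP_D4} for the degenerate fiber configurations. One small caution: your alternative remark that the even-eight relation is ``visible'' from $\operatorname{NS}(\mathcal{Y}) \cong H \oplus N$ of \cite{MR2274533} under $C_i = N_i$ is mildly circular, since identifying the exceptional curves with the Nikulin generators is part of what is being proved; your main line of argument does not rely on it, so this does not affect correctness.
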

\begin{proof}
It suffices to give a proof for $\Phi: \mathcal{Y} \dasharrow \mathcal{X}$. The surface $\mathcal{X}$ with $\Delta_{\mathcal{X}}=a^2 c^2 (b^2-4ac)$ has eight fibers of type $I_2$ at the roots of $ac$ and eight fibers of type $I_1$ at the roots of $b^2-4ac$. A double cover branched along eight disjoint rational curves that are all non-neutral components of reducible fibers is a fiberwise isogeny covered by a van Geemen-Sarti involution~\cite{MR2824841}. In particular, it is a Nikulin involution and the eight disjoint rational curves are an even eight. It only remains to show that the branched double cover is the map $\Phi$. The inverse image of a  node on $\mathcal{X}$ that does not intersect the branching locus is a reducible fiber of type $A_1$, and the inverse image of a reducible fiber of type $A_1$ with its non-neutral component in the branch locus becomes a node, i.e., a fiber of type $I_1$. The branch locus does not meet the section $\sigma$. The preimage of the section $\sigma$ are two sections $\Sigma$ and $T$. On $\mathcal{X}$ the singularities in the fibers of type $I_2$ are located over the roots of $ac$ at the points $(x,y)=(0,0)$. Therefore, the two-torsion section $\tau$ meets all non-neutral components of the reducible $A_1$ fibers which are in the branch locus. The singularities in the fibers of type $I_1$ are located over the roots of $b^2-4ac$ at the points $(x,y)=(-b/2,0)$ which is not met by the two-torsion section. Therefore, the preimage of the section $\tau$ is not a section. As fiberwise two-isogeny changes the singular fibers according to $I_1 \leftrightarrow I_2$ and the new surface has a section and a two-torsion section, it is the double cover branched at the given even eight. 
\end{proof}
\begin{remark}
\label{rem:Hirzebruch}
A projective model for the surface $\pi_{\mathcal{X}}: \mathcal{X} \to \mathbb{P}^1$ is given by
\begin{equation}
\label{K3_X}
 \mathcal{X}_{[t_0:t_1]}: \quad y^2 z  = x \Big( x^2 + b \, x z+ ac  \, z^2\Big ), \quad  \Delta_{\mathcal{X}}=a^2 c^2 (b^2-4ac).
\end{equation} 
with $[x:y:z] \in \mathbb{P}^2$. There are two $\mathbb{C}^*$ actions on Equation~(\ref{K3_X}), namely for $\lambda \in \mathbb{C}^*$,
\begin{equation}
\begin{split}
 \mu_1: \Big(t_0,t_1,x,y,z\Big) & \mapsto \Big( t_0,t_1, \lambda x, \lambda y, \lambda z\Big),\\
 \mu_2: \Big(t_0,t_1,x,y,z\Big) & \mapsto \Big( \lambda t_0, \lambda t_1, x, y,  \lambda^{-4} z\Big).
\end{split}
\end{equation}
By means of the projection $\big(t_0,t_1,x,y,z\big) \mapsto \big(t_0,t_1,x,z\big)$, the surface $\mathcal{X}$ is a double cover of the Hirzebruch surface $\mathbb{F}_4$. In fact, the quotient of $\mathcal{X}$ by the hyperelliptic involution is a rationally ruled surface whose minimal resolution is $\mathbb{F}_4$ with eight blow-ups. The same applies to the surface $\mathcal{Y}$.
\end{remark}
\subsection{Elliptic fibrations without sections}
For the elliptically fibered K3 surfaces $\pi_{\mathcal{X}} : \mathcal{X} \to \mathbb{P}^1$ and $\pi_{\mathcal{Y}} : \mathcal{Y} \to \mathbb{P}^1$ with section $\sigma$ and $\Sigma$, respectively, related by van Geemen-Sarti involutions described above, we define a genus-one fibration $\pi_{\mathcal{Z}}: \mathcal{Z} \to \mathbb{P}^1$ by specifying for each $[t_0:t_1] \in \mathbb{P}^1$ a fiber $\mathcal{Z}_{[t_0:t_1]}\cong \hat{\mathcal{C}}$ given by the equation
\begin{equation}
\label{Zsurface}
 \mathcal{Z}_{[t_0:t_1]}: \quad V^2 = a  U^4 + b U^2 + c ,
\end{equation}
where the polynomials $a, b, c$ are the same as in the definition of $\mathcal{X}$ and $\mathcal{Y}$, i.e., $a, b, c$ are homogeneous polynomials in $\mathbb{C}(t_0,t_1)$ of degree $2k$, $4$, and $8-2k$, respectively, with $k \in \{1,2\}$. The singular fibers of $\mathcal{Z}$ are located over the support of $\Delta_{\mathcal{Z}}=16ac(b^2-4ac)^2$ whence the base loci of the singular fibers for $\mathcal{X}, \mathcal{Y}, \mathcal{Z}$ are the same.  An involution $\imath_{\mathcal{Z}}$ on $\mathcal{Z}$ is given by $\imath_{\mathcal{Z}} : ([t_0:t_1],(U,V)) \mapsto ([t_0:t_1],(-U,-V))$. We have the following:
\begin{proposition}
\label{prop_K3}
The genus-one fibration $\pi_{\mathcal{Z}}: \mathcal{Z} \to \mathbb{P}^1$ has the following properties:
\begin{enumerate}
 \item $\mathcal{Z}$ is a K3 surface with holomorphic two-form $\omega_{\mathcal{Z}} = dt \wedge dU/V$.
 \item $\mathcal{Z}$ is a double cover of the Hirzebruch surface $\mathbb{F}_{2-k}$.
 \item  For each $[t_0:t_1] \in \mathbb{P}^1 - \operatorname{supp}(\Delta_{\mathcal{Z}})$ it follows $\operatorname{Jac}(\mathcal{Z}_{[t_0:t_1]})\cong \mathcal{Y}_{[t_0:t_1]}$. 
 \item The fibrations $\pi_{\mathcal{Y}}, \pi_{\mathcal{Z}}$ have the same singular fibers.
 \item The map in Equation~(\ref{isog}) extends to degree-two rational map 
 $\Psi:  \mathcal{Z} \dasharrow \mathcal{X}$ that preserves the base points of every fiber and satisfies $\Psi \circ \imath_{\mathcal{Z}} =\Psi$.
\item $\imath_{\mathcal{Z}}$ is a Nikulin involution such that $\mathcal{X}=\widehat{\mathcal{Z}/\{1,\imath_{\mathcal{Z}}\}}$.
\item For $k=2$, $\mathcal{Z}$ admits a second genus-one $\check{\pi}_{\mathcal{Z}} : \mathcal{Z} \to \mathbb{P}^1$ with singular fibers $8 I_2 + 8 I_1$ coming from interchanging the roles of base and fiber coordinates.

\end{enumerate}
\end{proposition}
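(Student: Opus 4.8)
The plan is to read the second fibration directly off the defining equation. For $k=2$ one has $\deg a = \deg b = \deg c = 4$, so after homogenizing $U = u_0/u_1$ the branch equation $V^2 = a\,u_0^4 + b\,u_0^2 u_1^2 + c\,u_1^4$ exhibits $\mathcal{Z}$ as the double cover of $\mathbb{F}_0 = \mathbb{P}^1 \times \mathbb{P}^1$ from part (2), branched along a curve of bidegree $(4,4)$. The two rulings of $\mathbb{P}^1\times\mathbb{P}^1$ pull back to two genus-one fibrations: projection onto the first factor is $\pi_{\mathcal{Z}}$, while projection onto the second factor, which is precisely the interchange of base and fiber coordinates, defines $\check{\pi}_{\mathcal{Z}} \colon \mathcal{Z} \to \mathbb{P}^1$ with affine base coordinate $U$. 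For fixed $U$ the fiber is the double cover of $\mathbb{P}^1_t$ branched at the four roots of the quartic $Q_U(t) := a(t)\,U^4 + b(t)\,U^2 + c(t)$, hence a curve of genus one; so $\check{\pi}_{\mathcal{Z}}$ is indeed a genus-one fibration on the same K3 surface $\mathcal{Z}$.

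Next I would determine the singular fibers from the discriminant. The fiber over $U$ degenerates exactly at the zeros of $\check{\Delta}(U) := \operatorname{disc}_t Q_U$. Since only even powers of $U$ occur, $\check{\Delta}$ is a polynomial in $s = U^2$; a degree count gives $\deg_s \check{\Delta} = 12$, i.e. $\deg_U \check{\Delta} = 24$, and the two fibers over the fixed points $U = 0, \infty$ of $\imath_{\mathcal{Z}}$ are smooth because $c$ and $a$ have distinct roots for generic data. Applying the same local dictionary already used for $\pi_{\mathcal{Z}}$, namely that a simple zero of the discriminant gives a nodal $I_1$ fiber and a double zero a reducible $I_2$ fiber, and checking that the quartic invariant $I$ (equivalently the coefficient $c_4$ of the Jacobian Weierstrass model) is nonzero at each zero of $\check{\Delta}$ so that no additive fibers arise, reduces the statement to the multiplicity pattern of $\check{\Delta}$.

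The heart of the matter is therefore the factorization
\[
\check{\Delta}(s) = \kappa\, P_1(s)\, P_2(s)^2, \qquad \deg_s P_1 = \deg_s P_2 = 4,
\]
with $\kappa$ a nonzero constant and $P_2$ cutting out the locus where $Q_s$ acquires a second double root. Granting it, each of the four roots of $P_2$ is a double zero of $\check{\Delta}(s)$, hence a double zero of $\check{\Delta}(U)$ at each of the two preimages $U = \pm\sqrt{s}$, producing $8$ fibers of type $I_2$; the four roots of $P_1$ similarly give $8$ fibers of type $I_1$, and the Euler-number identity $8\cdot 2 + 8\cdot 1 = 24 = e(\mathcal{Z})$ shows the list is complete. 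To establish the factorization I would pass to the Jacobian Weierstrass model $y^2 = x^3 - \tfrac{1}{3}I(s)\,x - \tfrac{1}{27}J(s)$ built from the classical invariants of the binary quartic $Q_s$, whose coefficients $p_j(s) = s^2 a_j + s b_j + c_j$ are quadratic in $s$, so that $I(s)$ and $J(s)$ have degrees $4$ and $6$ and $\check{\Delta} \propto 4I^3 - J^2$; the candidate factor $P_2$ is the common-double-root condition, which I would organize using the Wronskian identity $a\,W_{bc} - b\,W_{ac} + c\,W_{ab} = 0$ among the pairwise Wronskians $W_{\bullet\bullet}$ of $a, b, c$.

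The main obstacle is exactly this factorization, and in particular that the reducible locus $P_2$ has degree precisely four in $s$. For $\pi_{\mathcal{Z}}$ the analogous computation is transparent: the even-in-$U$ structure turns ``$Q$ is a perfect square'' into the codimension-one conic condition $b^2 = 4ac$, giving $\check{\Delta} = 16\,ac\,(b^2-4ac)^2$ by inspection. For $\check{\pi}_{\mathcal{Z}}$ the discriminant is instead taken in $t$, where $a, b, c$ carry no such symmetry, so the degree-four count of the reducible locus must be extracted from the explicit invariants $I(s), J(s)$ rather than read off. Controlling this elimination, while confirming that the remaining simple zeros stay multiplicative and that the fibers over $U = 0, \infty$ and over the roots of the leading coefficient $s^2 a_0 + s b_0 + c_0$ contribute nothing spurious, is where I expect the real work to lie.
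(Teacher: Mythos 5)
Your proposal addresses only item (7) of the proposition, with item (2) touched in passing for $k=2$. Items (1) and (3)--(6) --- that $\mathcal{Z}$ is a K3 surface with the stated two-form, the fiberwise identification $\operatorname{Jac}(\mathcal{Z}_{[t_0:t_1]})\cong\mathcal{Y}_{[t_0:t_1]}$, the equality of singular fibers with $\pi_{\mathcal{Y}}$, the degree-two map $\Psi$ extending Equation~(\ref{isog}), and the Nikulin-involution statement $\mathcal{X}=\widehat{\mathcal{Z}/\{1,\imath_{\mathcal{Z}}\}}$ --- are never discussed. The paper proves (1) from the two $\mathbb{C}^*$-actions in Equation~(\ref{scaling}) and a weight count giving trivial canonical class, (3) from Proposition~\ref{Jac_prop}, (4) from $\Delta_{\mathcal{Z}}=\Delta_{\mathcal{Y}}$ together with the absence of multiple fibers, and (5)--(6) fiberwise; as a proof of Proposition~\ref{prop_K3} your text therefore establishes at most one of its seven claims.

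For (7) itself, the step you isolate as ``the real work'' --- the factorization $\check\Delta(s)=\kappa\,P_1(s)P_2(s)^2$ with $\deg_s P_1=\deg_s P_2=4$ --- is not merely left open: it cannot hold for generic $a,b,c$. The locus of binary quartics with two double roots has codimension two, so the conic $s\mapsto s^2a+sb+c$ in the net spanned by $a,b,c$ generically misses it. Concretely, for generic data the branch curve $B=\{aU^4+bU^2W^2+cW^4=0\}\subset\mathbb{P}^1_t\times\mathbb{P}^1_{[U:W]}$ is smooth; since only even powers of $U$ occur, it descends to a smooth curve $\tilde B$ of bidegree $(4,2)$ in the variables $(t,s)$ with $s=U^2$, a genus-three curve (indeed the double cover of $\mathbb{P}^1_t$ branched at the eight roots of $b^2-4ac$), and Riemann--Hurwitz for the degree-four projection $\tilde B\to\mathbb{P}^1_s$ gives $\deg R=2\cdot 3-2+2\cdot 4=12$ ramification points, simple for generic data. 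So $\check\Delta(s)$ is generically squarefree with twelve zeros, $\check\Delta(U)$ has twenty-four simple zeros, and $\check\pi_{\mathcal{Z}}$ generically has $24\,I_1$ fibers; note your Euler-number check is powerless here, since $24\cdot 1=24$ as well. The square factor you posit appears exactly on the special subfamilies where the paper actually invokes (7): when $a,b,c$ acquire a common factor such as $t_0t_1$ (Proposition~\ref{Gamma-fibration} and Section 6), one has $Q_s=t_0t_1\,q_s$ and $\check\Delta\propto\operatorname{disc}_t(q_s)\operatorname{Res}_t(t_0t_1,q_s)^2$, the squared resultant recording nodes of $B$ along the fixed lines, whence $8I_2+8I_1$. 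By contrast, the paper's own proof of (7) is deliberately soft: it only produces the second genus-one fibration by interchanging the roles of $[t_0:t_1]$ and $[U:W]$ in the $\mathbb{C}^*$-actions, leaving fiber types to direct computation in the cases used. Read as a proof of the fiber count in the stated generality, your plan fails at its central step rather than merely postponing it.
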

\begin{proof}
(1) The polynomials $a, b$, and $c$ are homogeneous polynomials in the variables $[t_0:t_1] \in \mathbb{P}^1$ of degree $2k$, $4$, and $8-2k$, respectively. By setting $V \mapsto V/W^2$, $U \mapsto U/W$, we obtain the homogeneous equation
\begin{equation}
\label{K3}
 \mathcal{Z}_{[t_0:t_1]}: \quad V^2 = a U^4 + b  U^2 W^2 + c  W^4 .
\end{equation}
There are two $\mathbb{C}^*$-actions on Equation~(\ref{K3}), namely for $\lambda \in \mathbb{C}^*$,
\begin{equation}
\label{scaling}
\begin{split}
 \mu_1: \Big(t_0,t_1,U,V,W\Big) & \mapsto \Big( t_0,t_1, \lambda U, \lambda^2 V, \lambda W\Big),\\
 \mu_2: \Big(t_0,t_1,U,V,W\Big) & \mapsto \Big( \lambda t_0, \lambda t_1, U, \lambda^{k} V,  \lambda^{k-2} W\Big).
\end{split}
\end{equation}
With respect to the action $\mu_1$ (or $\mu_2$), Equation~(\ref{K3}) has degree $4$ (or $2k$) which equals the sum of the weights for the variables $(t_0,t_1,U,V,W)$. Moreover, the action $\mu_1$ leaves the homogeneous coordinates on $\mathbb{P}^1$ invariant and $\mathcal{Z}$ is a genus-one fibration over $\mathbb{P}^1$. The coordinates $[U:V:W] \in \mathbb{P}(1,2,1)$ are sections of  line bundles over the base $\mathbb{P}^1$ specified by means of their transformation properties under the group $\mu_2$.  Since the condition is satisfied that the total weight of Equation~(\ref{K3}) equals  the sum of weights of the defining variables for each acting $\mathbb{C}^*$, a surface with trivial canonical class is obtained by removing the loci  $\lbrace t_0 = t_1 = 0\rbrace$, $\lbrace U = V = W = 0\rbrace$ from the solution set of Equation~(\ref{K3}), and taking the quotient by $(\mathbb{C}^*)^2$. Therefore, $\mathcal{Z}$ is a K3 surface, and the holomorphic two-form invariant under the two $\mathbb{C}^*$ actions is given by
\begin{equation}
 \Big(t_1 dt_0-t_0 dt_1\Big) \wedge \frac{W \, dU - U \, dW}{V} .
\end{equation}
\par (2) By means of the projection $\big(t_0,t_1,U,V,W\big) \mapsto \big(t_0,t_1,U,W\big)$, the surface $\mathcal{Z}$ is a double cover of the Hirzebruch surface $\mathbb{F}_{2-k}$.
\par (3) This follows from Proposition~\ref{Jac_prop}.
\par (4) The discriminants of $\mathcal{Y}$ and $\mathcal{Z}$ are identical.  In particular, the fiber $\mathcal{Z}_t$ is smooth if and only if $\mathcal{Y}_t$ is.  The Kodaira classification of singular fibers is the same for elliptically fibered  surfaces without section with the following difference: there is the possibility with $I_n$ fibers with $n \ge 0$  of having multiple fibers. Since $\mathcal{Z}$ does not have multiple fibers and each fiber $\mathcal{Z}_t$ is isomorphic to $\mathcal{Y}_t$ the claim follows.
\par (5) This follows from the fact that $\mathcal{Z}$ is a genus-one fibration without multiple fibers and the map in each fiber is given by Equation~(\ref{eq1}).
\par (6) The involution $\imath_{\mathcal{Z}}$ leaves the holomorphic two-form $\omega_{\mathcal{Z}}$ invariant and interchanges the sheets of $\Psi$.
\par (7) For $k=2$ the roles of $[t_0:t_1]$ and $[U:W]$ in the $\mathbb{C}^*$-actions in Equation~(\ref{scaling}) can be interchanged. We obtain a second projection $\check{\pi}_{\mathcal{Z}} : \mathcal{Z} \to \mathbb{P}^1$ by $(t_0,t_1,U,V,W) \mapsto [U:W]$. Because the degree of $a,b,c$ equals four for $k=2$, the projection induces another genus-one fibration.
\end{proof}
We have the following:
\begin{theorem}
\label{thm}
\begin{enumerate} 
\item[]
\item For each pair of elliptically fibered K3 surfaces with sections $\pi_{\mathcal{X}} : \mathcal{X} \to \mathbb{P}^1$ and $\pi_{\mathcal{Y}} : \mathcal{Y} \to \mathbb{P}^1$ related by van Geemen-Sarti involutions given by Equations~(\ref{Xsurface}) and~(\ref{Ysurface}), there exists a K3 surface $\mathcal{Z}$ together with a genus-one fibration $\pi_{\mathcal{Z}} : \mathcal{Z} \to \mathbb{P}^1$ given by Equation~(\ref{Zsurface}) with the same singular fibers as $\pi_{\mathcal{Y}}$.
\item If there exist
\begin{equation}
\label{condition}
\begin{split}
 \exists \, \alpha \in k(t_0,t_1): a=\alpha^2 \;\; \text{or} \quad \exists \, e,f,g \in k(t_0,t_1): \left\lbrace \begin{array}{ll} b&=e +2af^2, \\c&=f^2(e+af^2)+g^2, \end{array}\right.
\end{split}
\end{equation}
 then $\pi_{\mathcal{Z}}$ admits a section. In particular, the surfaces $\mathcal{Y}$ and $\mathcal{Z}$ are isomorphic as elliptically fibered surfaces with section, i.e., $\mathcal{Y} \cong \mathcal{Z}$, and the Picard numbers (which coincide) are bigger than ten, i.e., $\rho_{\mathcal{X}}=\rho_{\mathcal{Y}}=\rho_{\mathcal{Z}}>10$.
\item If condition~(\ref{condition}) is not satisfied, then the genus-one fibration $\pi_{\mathcal{Z}}$ does not admit a section. In particular,  $Y$ and $Z$ are different K3 surfaces.
\end{enumerate}
\end{theorem}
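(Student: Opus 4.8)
The plan is to treat part (1) as essentially already in hand and to reduce parts (2) and (3) to the function-field version of Lemma~\ref{lemma_reduction}. Part (1) is immediate from Proposition~\ref{prop_K3}: items (1) and (4) give that $\mathcal{Z}$ is a K3 surface carrying a genus-one fibration $\pi_{\mathcal{Z}}$ with the same singular fibers as $\pi_{\mathcal{Y}}$. For the remaining two parts the single organizing observation is that a relatively minimal genus-one fibration over $\mathbb{P}^1$ admits a section precisely when its generic fiber --- here the curve $\hat{\mathcal{C}}$ over the function field $k=\mathbb{C}(t_0,t_1)$ --- carries a $k$-rational point, equivalently when $\hat{\mathcal{C}}$ is isomorphic over $k$ to its Jacobian. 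Since $\operatorname{Jac}(\hat{\mathcal{C}})\cong\hat{\mathcal{E}}$ by Proposition~\ref{Jac_prop}, Lemma~\ref{lemma_reduction} applied over $k=\mathbb{C}(t_0,t_1)$ tells us that such a $k$-rational point exists if and only if condition~(\ref{condition}) holds. Here the difference in sign between condition~(\ref{condition}) and the statement of Lemma~\ref{lemma_reduction} is immaterial: since $\sqrt{-1}\in\mathbb{C}\subset k$, the $k$-rational point (of the form $(\sqrt{-1}f,g)$ in the normalization of~(\ref{condition})) exists in either form. This one equivalence drives both of the remaining parts.

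For part (2) I would first use the equivalence above to produce a section of $\pi_{\mathcal{Z}}$, so that the generic fibers of $\pi_{\mathcal{Z}}$ and $\pi_{\mathcal{Y}}$ become isomorphic genus-one curves with a rational point over $k$, i.e.\ isomorphic elliptic curves $\hat{\mathcal{C}}\cong\hat{\mathcal{E}}$. Because a relatively minimal elliptic surface over $\mathbb{P}^1$ is determined up to isomorphism by its generic fiber, this forces $\mathcal{Z}\cong\mathcal{Y}$ as Jacobian elliptic surfaces with section. The subtler point is the Picard-number jump. I would extract from the $k$-rational point furnished by Lemma~\ref{lemma_reduction} an explicit additional section $P$ of $\pi_{\mathcal{Y}}\cong\pi_{\mathcal{Z}}$, verify that $P$ has infinite order in $\operatorname{MW}(\pi_{\mathcal{Y}})$ (by a height-pairing computation, or by checking that the component-pattern in which $P$ meets the reducible fibers is incompatible with the torsion sections $\Sigma,T$), and then invoke the Shioda--Tate formula: with the configuration $8I_2+8I_1$ contributing $2+8=10$ and $\operatorname{rank}\operatorname{MW}\ge 1$, one gets $\rho_{\mathcal{Z}}=\rho_{\mathcal{Y}}\ge 11>10$, while $\rho_{\mathcal{X}}=\rho_{\mathcal{Y}}$ follows from the two-isogeny.

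For part (3) I would run the same equivalence in the contrapositive direction: if condition~(\ref{condition}) fails then Lemma~\ref{lemma_reduction} shows $\hat{\mathcal{C}}$ has no $k$-rational point, hence $\pi_{\mathcal{Z}}$ has no section and $\mathcal{Z}$ is a genuinely non-Jacobian genus-one fibration whose Jacobian fibration is $\pi_{\mathcal{Y}}$. Since $\mathcal{Y}$ is Jacobian while $\mathcal{Z}$ is not, and the relatively minimal surface with prescribed generic fiber is unique, the generic fibers $\hat{\mathcal{E}}$ and $\hat{\mathcal{C}}$ are non-isomorphic torsors (trivial versus nontrivial), so $\mathcal{Z}$ and $\mathcal{Y}$ cannot be isomorphic as fibered surfaces; equivalently, $\pi_{\mathcal{Z}}$ represents a nontrivial class in the Tate--Shafarevich group of $\pi_{\mathcal{Y}}=\operatorname{Jac}(\pi_{\mathcal{Z}})$.

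I expect the main obstacle to be the Picard-number statement in part (2): certifying that the section $P$ produced from Lemma~\ref{lemma_reduction} is of infinite order rather than merely torsion, since only then does Shioda--Tate force $\rho>10$. The height computation, carried out separately in the two cases $a=\alpha^2$ and $b=e+2af^2,\ c=f^2(e+af^2)+g^2$ of condition~(\ref{condition}), is where the real work lies. A secondary subtlety is the phrase ``different K3 surfaces'' in part (3): establishing that $\mathcal{Y}$ and $\mathcal{Z}$ are distinct as \emph{abstract} K3 surfaces, rather than merely non-isomorphic as fibered surfaces, would require comparing the transcendental lattices of a torsor and its Jacobian.
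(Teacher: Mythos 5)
Your skeleton --- part (1) via Proposition~\ref{prop_K3}, and the criterion ``$\pi_{\mathcal{Z}}$ has a section $\Leftrightarrow$ $\hat{\mathcal{C}}$ has a $\mathbb{C}(t_0,t_1)$-rational point $\Leftrightarrow$ condition~(\ref{condition})'' via Proposition~\ref{Jac_prop} and Lemma~\ref{lemma_reduction} --- is exactly the paper's, and your resolution of the sign discrepancy through $(e,f)\mapsto(-e,\sqrt{-1}f)$ is correct. However, your Picard-rank argument in part (2) fails in one of the two sub-cases of condition~(\ref{condition}). When the condition holds because $a=\alpha^2$ (or because $f=0$, i.e.\ $c=g^2$), the rational points of $\hat{\mathcal{C}}$ at infinity, $[U:V:W]=[1:\pm\alpha:0]$, are carried by the isomorphisms in the proof of Lemma~\ref{lemma_reduction} to the torsion sections $\Sigma$ and $T$: there is no infinite-order section $P$ to be found, and any height computation returns zero. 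Worse, your Shioda--Tate count presumes the configuration $8I_2+8I_1$, which is precisely what degenerates here: the double roots of $ac$ make the $2k$ (resp.\ $8-2k$) fibers of type $I_2$ on $\mathcal{X}$ coalesce into $k$ (resp.\ $4-k$) fibers of type $I_4$, and it is this enlargement of the trivial lattice --- giving $\rho\ge 10+k$ (resp.\ $14-k$) --- and not Mordell--Weil rank that forces $\rho>10$. The paper reserves the explicit non-torsion section, namely $(x,y)=(-af^2,\, \sqrt{-1}\,afg)$ on $\mathcal{X}$ (note: on $\mathcal{X}$, not on $\mathcal{Y}$), for the sub-case $f\neq 0$, and then propagates $\rho_{\mathcal{X}}=\rho_{\mathcal{Y}}=\rho_{\mathcal{Z}}$ using Inose's theorem \cite[Cor.~1.2]{MR0429915} that K3 surfaces connected by a dominant rational map of finite degree have equal Picard number --- a tool you also need, since your isogeny remark handles $\rho_{\mathcal{X}}=\rho_{\mathcal{Y}}$ but not $\rho_{\mathcal{Z}}$ when $\mathcal{Z}\not\cong\mathcal{Y}$ as fibered surfaces.

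In part (3) you prove only the weaker statement: that $\pi_{\mathcal{Z}}$ is a nontrivial torsor under $\pi_{\mathcal{Y}}$, hence the two are not isomorphic \emph{as fibered surfaces}. You flag the abstract non-isomorphism as a ``secondary subtlety,'' but that is the actual content of the claim ``$\mathcal{Y}$ and $\mathcal{Z}$ are different K3 surfaces'': an abstract isomorphism need not respect the given fibrations (a K3 surface generally carries many genus-one fibrations, with and without sections), so the torsor/Jacobian dichotomy alone does not decide it. The paper closes this gap lattice-theoretically: $\mathcal{Z}$ visibly carries the two bisections $U=0$ and $W=0$, so once no section exists the multisection index is $l=2$; since $\pi_{\mathcal{Y}}$ is the relative Jacobian fibration of $\pi_{\mathcal{Z}}$, the result cited as \cite{MR986969} shows $\operatorname{NS}(\mathcal{Z})$ is a sublattice of $\operatorname{NS}(\mathcal{Y})$ of index $l$, whence $\det q_{\operatorname{NS}(\mathcal{Z})}=l^2\det q_{\operatorname{NS}(\mathcal{Y})}$; the discriminants differ, so the N\'eron--Severi lattices are non-isometric and the surfaces cannot be isomorphic at all. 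Your closing suggestion to compare transcendental lattices of torsor and Jacobian points in the right direction, but without an argument of this kind part (3), as stated, remains unproved.
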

\begin{proof}
(1) follows from Proposition~\ref{prop_K3}. (2) The statement follows from Lemma~\ref{lemma_reduction} except for the statement about the Picard number. The condition in Equation~(\ref{condition}) is satisfied if $a=\alpha^2$ or $c=g^2$. On $\mathcal{X}$ we then have $k$ or $4-k$ fibers of Kodaira type $I_4$ instead of $2k$ or $8-2k$ fibers of Kodaira type $I_2$, and the Picard number is at least $10+k$ or $14-k$ for $k=1, 2$. If the condition in Equation~(\ref{condition}) is satisfied for $f \not =0$, then there is a non-torsion section $(x,y)=(-af^2,iafg)$ on $\mathcal{X}$. In particular, if either of the conditions in Equation~(\ref{condition}) is satisfied, the Picard number is strictly bigger than ten. A theorem by Inose states \cite[Cor.~1.2]{MR0429915} that two K3 surfaces defined over a number field and connected by a rational map of finite degree have the same Picard rank. (3) In the generic case, $\mathcal{Y}$ has singular fibers $8 I_2 + 8 I_1$ and $\operatorname{MW}(\pi_{\mathcal{Y}})=\mathbb{Z}/2\mathbb{Z}$ it follows that  $\det{q_{T_{\mathcal{Y}}}}=\det{q_{\operatorname{NS}(\mathcal{Y})}}=2^8/2^2=2^6$.  The K3 surface $\mathcal{Z}$ does not admit as section, but it is obvious that it admits at least two bi-sections, obtained by setting $U=0, W=1$ or $U=1,W=0$ in Equation~(\ref{K3}). In particular, the multi-section index of $\mathcal{Z}$ equals $l=2$. By construction, the relative Jacobian fibration of the genus-one fibration $\pi_{\mathcal{Z}}$ is the elliptic fibration with section $\pi_{\mathcal{Y}}$. It was proven in \cite{MR986969} that the N\'eron-Severi lattice $\operatorname{NS}(\mathcal{Z})$ is a sub-lattice of the lattice $\operatorname{NS}(\mathcal{Y})$ of index $l$. It follows that the determinants of the discriminant forms are related by $\det{q_{\operatorname{NS}(\mathcal{Z})}}= l^2 \det{q_{\operatorname{NS}(\mathcal{Y})}}$.  In particular, $Y$ and $Z$ cannot be isomorphic.
\end{proof}
\subsubsection{The generic case}
We have the following:
\begin{proposition}
\label{prop_EEP}
For generic polynomials $a, b, c$, the map $\Psi:\mathcal{Z} \dasharrow \mathcal{X}$ is the double cover branched along the even eight on $\mathcal{X}$ given by the neutral components of the reducible fibers over $a=0$ and the non-neutral components over $c=0$. In particular, $\mathcal{Z}$ does not have a section.
\end{proposition}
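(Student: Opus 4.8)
The plan is to reduce the statement to a computation with the fixed points of $\imath_{\mathcal{Z}}$. By Proposition~\ref{prop_K3}(6) the involution $\imath_{\mathcal{Z}}$ is Nikulin with $\mathcal{X}=\widehat{\mathcal{Z}/\{1,\imath_{\mathcal{Z}}\}}$, so $\Psi$ is \emph{a priori} a double cover branched along some even eight, namely the eight exceptional $(-2)$-curves produced by resolving the nodes of $\mathcal{Z}/\{1,\imath_{\mathcal{Z}}\}$; the only real content is to identify these curves. I would therefore first locate the eight fixed points. Writing $\imath_{\mathcal{Z}}$ projectively on $\mathcal{Z}_{[t_0:t_1]}\colon V^2=aU^4+bU^2W^2+cW^4$ in $\mathbb{P}(1,2,1)$ as $[U:V:W]\mapsto[U:-V:-W]$, its only candidate fixed points are $[1:0:0]$ and $[0:0:1]$; the first lies on $\mathcal{Z}$ precisely over the roots of $a$ and the second precisely over the roots of $c$. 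This yields $2k+(8-2k)=8$ fixed points lying in the fibers over $\operatorname{supp}(ac)$, which confirms that the even eight consists of components of the $I_2$-fibers of $\mathcal{X}$ sitting over $a=0$ and $c=0$.

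The key step is to decide, at each fixed point, whether the resulting even-eight curve is the neutral or the non-neutral component. I would use the two natural bisections of $\pi_{\mathcal{Z}}$, the locus $\{U=0\}$ and the locus at infinity $\{W=0\}$, and follow them through the fiberwise map $\psi\colon(U,V)\mapsto(x,y)=(aU^2,aUV)$ of Equation~(\ref{eq1}). A direct substitution gives $\psi(\{U=0\})\subset\{(x,y)=(0,0)\}=\tau$ and $\psi(\{W=0\})\subset\sigma$. Now the fixed point $[1:0:0]$ over a root of $a$ lies on $\{W=0\}$ and so is carried to the zero section $\sigma$, whereas the fixed point $[0:0:1]$ over a root of $c$ lies on $\{U=0\}$ and so is carried to the two-torsion section $\tau$. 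Since on $\mathcal{X}$ the node of each $I_2$-fiber sits at $\tau\colon(x,y)=(0,0)$, the section $\tau$ meets exactly the non-neutral component while $\sigma$ meets exactly the neutral component; hence the even-eight curve is the neutral component over $a=0$ and the non-neutral component over $c=0$, as asserted.

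The step I expect to be the main obstacle is making this incidence rigorous at the level of the Nikulin resolution, because the formula $x=aU^2$ collapses identically to $x=0$ over $a=0$: the rational map $\Psi$ is in fact undefined along the fixed fibers and must be resolved before one can read off which component of the $I_2$-fiber the inserted $(-2)$-curve is. Concretely, in the chart $U=1$ the fiber of $\mathcal{Z}$ over $a=0$ is $V^2=W^2(b+cW^2)$, a node at $[1:0:0]$ whose two branches are each preserved by $\imath_{\mathcal{Z}}$; one must set up the local $A_1$ model for $\mathcal{Z}/\{1,\imath_{\mathcal{Z}}\}$ there, resolve, and verify that the new curve lies in the fiber of $\pi_{\mathcal{X}}$ and is met by the proper transform of $\psi(\{W=0\})=\sigma$. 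The analogous local computation at $[0:0:1]$ over $c=0$ must single out the component met by $\tau$. It is precisely the asymmetric appearance of $a$ (and not $c$) in $\psi$ that makes these two local models differ and selects neutral versus non-neutral; checking them carefully is the crux of the proof. As a consistency check I would confirm the fiber bookkeeping of Proposition~\ref{prop_K3}(4): smooth fibers pull back to smooth fibers, the $I_1$-fibers of $\mathcal{X}$ over $b^2-4ac=0$ (whose nodes avoid the branch locus) pull back to $I_2$-fibers of $\mathcal{Z}$, and each $I_2$-fiber of $\mathcal{X}$ over $ac=0$ has exactly one component branched, so that the eight curves indeed form an even eight of the claimed type.

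Finally, for the assertion that $\mathcal{Z}$ carries no section, I would observe that the even eight just identified is \emph{non-canonical}: since $k\ge 1$ it contains the $2k\ge 2$ neutral components over $a=0$, and therefore differs from the canonical even eight of Proposition~\ref{prop_EE}, which consists entirely of non-neutral components and produces the Jacobian elliptic surface $\mathcal{Y}$. A genus-one fibration admits a section if and only if it is isomorphic to its relative Jacobian; since $\operatorname{Jac}(\pi_{\mathcal{Z}})\cong\mathcal{Y}$ by Proposition~\ref{prop_K3}(3), a section on $\pi_{\mathcal{Z}}$ would force $\mathcal{Z}\cong\mathcal{Y}$, which is excluded in the generic case by the lattice-index comparison $\det q_{\operatorname{NS}(\mathcal{Z})}=l^2\det q_{\operatorname{NS}(\mathcal{Y})}$ with $l=2$ recorded in Theorem~\ref{thm}(3). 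Hence $\pi_{\mathcal{Z}}$ has no section.
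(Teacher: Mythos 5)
Your proposal is correct and follows essentially the same route as the paper's proof: the fixed points of $\imath_{\mathcal{Z}}$ that you locate at $[1:0:0]$ over $a=0$ and $[0:0:1]$ over $c=0$ are exactly the singular points of the fibers of $\mathcal{Z}$ used in the paper, and your incidence argument via the bisections $\{W=0\}\mapsto\sigma$ and $\{U=0\}\mapsto\tau$ is precisely the paper's argument with the bisections $\kappa$ and $\upsilon$. The only differences are cosmetic: you are more explicit about the indeterminacy of $\Psi$ at the fixed points and the need to pass to the resolution, and you spell out the ``no section'' claim via the discriminant comparison of Theorem~\ref{thm}(3), which the paper leaves implicit.
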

\begin{proof}
We write down projective models for $\mathcal{X}, \mathcal{Z}, \mathcal{Y}$ with coordinates $[x:y:z], [X:Y:Z]\in \mathbb{P}^2$, $[U:V:W]\in \mathbb{P}(1,2,1)$ for the fibers and $[t_0:t_1]\in \mathbb{P}^1$ for the base from the equations
\begin{equation}
\label{general_model}
\scalemath{\MyScaleBig}{
\begin{array}{lll}
 \mathcal{Y}_{[t_0:t_1]}:& Y^2 Z  = X \Big( X^2 - 2 b  X Z + \big(b^2-4ac\big)  Z^2\Big), &\Delta_{\mathcal{Y}}=\Delta_{\mathcal{Z}},\\[0.5em]
 \mathcal{Z}_{[t_0:t_1]}:& V^2  = a  U^4 + b  U^2W^2 + c W^4, &\Delta_{\mathcal{Z}}=16 \, ac \, (b^2-4ac)^2, \\[0.5em]
 \mathcal{X}_{[t_0:t_1]}:& y^2 z  = x \big( x^2 + b  x z+ ac  z^2\big),   &\Delta_{\mathcal{X}}=a^2 c^2 (b^2-4ac).
\end{array}}
\end{equation} 
On $\mathcal{Y}$ there are sections $\Sigma: [X:Y:Z]=[0:1:0]$ and $T: [X:Y:Z]=[0:0:1]$, and a bisection $\Upsilon: [X:Y:Z]=[b\pm 2\sqrt{ac}:0:1]$. On $\mathcal{Z}$ there are two bi-sections $\kappa: [U:V:W]=[1:\pm\sqrt{a}:0]$ and $\upsilon: [U:V:W]=[0:\pm \sqrt{c}:1]$. On $\mathcal{X}$ there are sections $\sigma: [x:y:z]=[0:1:0]$ and $\tau: [x:y:z]=[0:0:1]$. The rational maps $\Psi: \mathcal{Z} \dasharrow \mathcal{X}$ and $\Phi:\mathcal{Y} \dasharrow \mathcal{Z}$ -- for $(X,Y)\not =(0,0)$ and $(Y,Z)\not =(0,0)$ -- are given by
\begin{equation}
\begin{split}
  [x:y:z] & =\Phi([X:Y:Z])= \left[2 Y^2 Z, Y\Big(X^2- \big(b^2-4ac\big) Z^2\Big): 8X^2Z\right], \\
  [x:y:z] & =\Psi([U:V:W])= [aU^2W: a U V : W^3].
\end{split}
\end{equation} 
The fibers of type $I_2$ on $\mathcal{X}$ are located over $ac=0$ with singular point $[x:y:z]=[0:0:1]\in \tau$. Therefore, the section $\tau$ intersects all non-neutral components of the reducible $A_1$ fibers. By definition of $\Phi$, the preimages of $\sigma$ are $\Sigma$ and $T$; one checks that the preimage of $\tau$ is $\Upsilon$. The preimage of $\sigma$ under $\Psi$ is $\kappa$, the preimage of $\tau$ is $\upsilon$.
\par The fibers of type $I_1$ on $\mathcal{Y}$ are located over $ac=0$ with singular point $[X:Y:Z]=[b:0:1] \in \Upsilon - T$. It follows that $\mathcal{Y}$ is obtained as double cover branched along the components of the reducible fibers on $\mathcal{X}$ intersected by $\tau$. $\mathcal{Z}$ has the same singular fibers as $\mathcal{Y}$. However, over $a=0$ the fiber $\mathcal{Z}_t$ has the form $V^2=W^2(b\, U^2+c \, W^2)$ with singular point $[1:0:0] \in \kappa -\upsilon$, and over $c=0$ the fiber $\mathcal{Z}_t$ has the form $V^2=U^2(a\, U^2+b \, W^2)$ with singular point $[0:0:1] \not \in \upsilon - \kappa$. Therefore, $\mathcal{Z}$ is a double cover branched along the $2k$ neutral components of the reducible fibers over $a=0$ and the $8-2k$ non-neutral components over $c=0$. 
\end{proof}
\noindent
The different even eights chosen in $\mathcal{X}$ as the branch locus to obtain the double covers $\mathcal{Y}$ (green) and $\mathcal{Z}$ (yellow) in Proposition~\ref{prop_EEP} are shown in the following diagram:
\begin{center}
\includegraphics[scale=\MyScalePic]{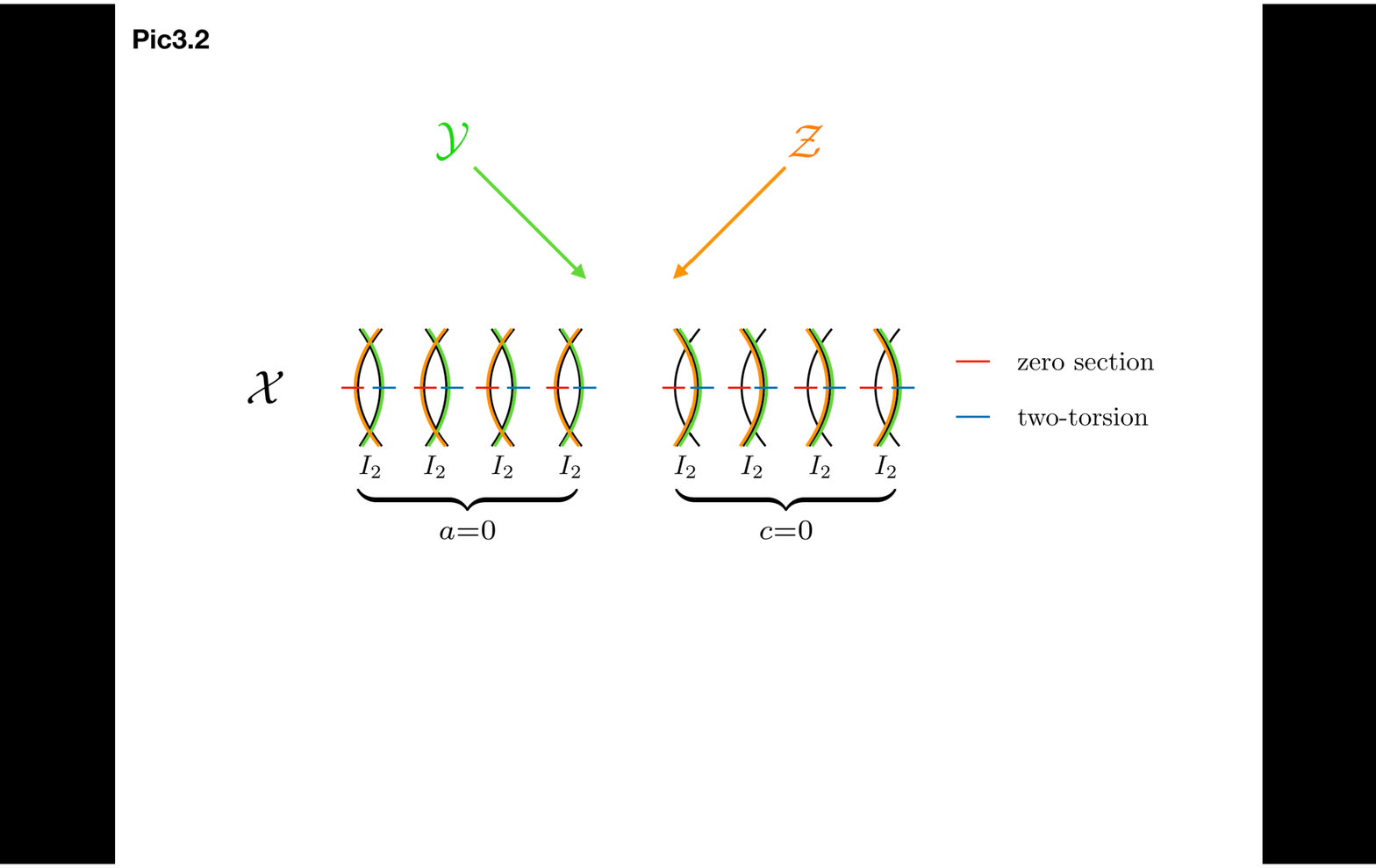}
\end{center}
\section{Examples with Picard numbers $14$ and $18$}
We analyze different situations in which fibers of Kodaira type $I_2$ in the generic fibration with eight $I_2$ and eight $I_1$ coalesce to form four fibers of Kodaira type $I_4$ or $I_0^*$. These configurations occur for $k=2$ in Theorem~\ref{thm} and Proposition~\ref{prop_K3} in Picard rank $14$ and $18$.
\subsection{Four $I_4$ fibers in Picard rank $\rho=14$}
We specialize Theorem~\ref{thm} and assume that $c= a$ and $a, b$ are homogeneous polynomials of degree four in $\mathbb{C}(t_0,t_1)$ with no common factor or repeated roots. We obtain projective models for K3 surfaces $\mathcal{X}, \mathcal{Y}, \mathcal{Z}$ with coordinates $[x:y:z], [X:Y:Z]\in \mathbb{P}^2$, $[U:V:W]\in \mathbb{P}(1,2,1)$ for the fibers and $[t_0:t_1]\in \mathbb{P}^1$ for the base from the equations
\begin{equation}
\scalemath{\MyScaleBig}{
\begin{array}{lll}
 \mathcal{Y}_{[t_0:t_1]}:&  Y^2 Z  = X \Big(X - (b - 2a)Z\Big) \Big(X - (b +2a)Z\Big), &\Delta_{\mathcal{Y}}=\Delta_{\mathcal{Z}},\\[0.5em]
 \mathcal{Z}_{[t_0:t_1]}:& V^2  =  a U^4 + b U^2W^2 + a W^4, & \Delta_{\mathcal{Z}}=2^4 a^2  (b^2-4a^2)^2, \\[0.5em]
 \mathcal{X}_{[t_0:t_1]}:& y^2 z  = x \big( x^2 + b  x z+  a^2 z^2\big), & \Delta_{\mathcal{X}}=a^4 (b^2-4a^2).
\end{array}}
\end{equation} 
Condition~(\ref{condition}) is not satisfied, and the genus-one fibration $\pi_{\mathcal{Z}}$ has no sections. The elliptic fibrations with section $\pi_{\mathcal{X}}$ and $\pi_{\mathcal{Y}}$ have the singular fibers $4 I_4+8I_1$ and $\operatorname{MW}(\pi_{\mathcal{X}})=\mathbb{Z}/2\mathbb{Z}$, and $12 I_2$ and $\operatorname{MW}(\pi_{\mathcal{Y}})=(\mathbb{Z}/2\mathbb{Z})^2$, respectively. Accordingly, we have $\rho_{\mathcal{X}} = \rho_{\mathcal{Y}}= \rho_{\mathcal{Z}} = 14$. We have the following:
\begin{proposition}
\label{prop_EEP_I4}
In the situation described above, the map $\Phi:\mathcal{Y} \dasharrow \mathcal{X}$ is the double cover branched along the even eight on $\mathcal{X}$ given by the components of the reducible fibers of type $A_3$ over $a=0$ not meeting the sections $\sigma$ or $\tau$. Similarly, the map $\Psi: \mathcal{Z} \dasharrow \mathcal{X}$ is the double cover branched along the even eight on $\mathcal{X}$ given by the components meeting sections $\sigma$ or $\tau$.
\end{proposition}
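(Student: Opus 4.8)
The plan is to rerun the preimage-of-section analysis from the proof of Proposition~\ref{prop_EEP}, but now tracking how the coincidence $c=a$ changes the splitting behaviour of the relevant (bi)sections and thereby relocates the branch loci inside the $I_4$ fibers. First I would fix the combinatorics of the fibers of type $A_3$ (i.e.\ $I_4$) of $\pi_{\mathcal{X}}$ over the four roots of $a$. Writing such a fiber as a cycle $\Theta_0+\Theta_1+\Theta_2+\Theta_3$ of rational curves with component group $\mathbb{Z}/4\mathbb{Z}$, the zero section $\sigma$ meets the identity component $\Theta_0$, while the two-torsion section $\tau$, being an order-two element of the component group, meets the opposite component $\Theta_2$; the components $\Theta_1,\Theta_3$ meet neither section. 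The two disjoint pairs in the cycle are therefore exactly $\{\Theta_0,\Theta_2\}$ (the section-meeting components) and $\{\Theta_1,\Theta_3\}$ (the non-section components), and over the four roots of $a$ each pair gives eight pairwise disjoint $(-2)$-curves. That $\tau$ meets the central curve $\Theta_2$ is most cleanly seen from the component-group argument, or by observing that the singular point of the Weierstrass fiber $x^2(x+b)$ sits at $(x,y)=(0,0)\in\tau$ and that resolving the resulting $A_3$ surface singularity places $\tau$ on the central exceptional curve.

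Next I would carry over the explicit maps $\Phi$ and $\Psi$ from the proof of Proposition~\ref{prop_EEP}, whose formulae are unchanged under the specialization $c=a$, and compute the preimages of $\sigma$ and $\tau$. For $\Phi$ one finds $\Phi^{-1}(\sigma)=\Sigma\sqcup T$, two disjoint sections, so $\Phi$ is unramified over $\sigma$ and $\Theta_0\notin B$, where $B\subset\mathcal{X}$ denotes the branch even eight. For $\tau$ the preimage is $\Upsilon=\{[X:Y:Z]=[b\pm 2\sqrt{ac}:0:1]\}$, and here is the crucial point: with $c=a$ one has $\sqrt{ac}=\sqrt{a^2}=a\in\mathbb{C}(t_0,t_1)$, so $\Upsilon$ splits into the two sections $[b\pm 2a:0:1]$, precisely the extra two-torsion responsible for $\operatorname{MW}(\pi_{\mathcal{Y}})=(\mathbb{Z}/2\mathbb{Z})^2$. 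Hence $\Phi$ is unramified over $\tau$ as well, so $\Theta_2\notin B$, and since $B$ is an even eight lying in the $I_4$ fibers it must be the complementary disjoint pair $B=\{\Theta_1,\Theta_3\}$. For $\Psi$ the same computation yields $\Psi^{-1}(\sigma)=\kappa=\{[1:\pm\sqrt{a}:0]\}$ and $\Psi^{-1}(\tau)=\upsilon=\{[0:\pm\sqrt{a}:1]\}$; since $a$ is not a square in $\mathbb{C}(t_0,t_1)$ (condition~(\ref{condition}) fails in this case) both $\kappa$ and $\upsilon$ are irreducible bisections, so $\Psi$ is ramified over both $\sigma$ and $\tau$, forcing $\Theta_0,\Theta_2\in B'$ and hence $B'=\{\Theta_0,\Theta_2\}$ over each root of $a$.

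Finally I would confirm that these configurations really are the branch even eights of $\Phi$ and $\Psi$, and not merely compatible with the section data, exactly as in Propositions~\ref{prop_EE} and~\ref{prop_EEP}: each candidate consists of eight pairwise disjoint $(-2)$-curves, a double cover branched along eight disjoint fiber components is a fiberwise two-isogeny covered by a van Geemen--Sarti involution, and the induced fiber-type change $I_2\leftrightarrow I_4$ over $a=0$ and $I_2\leftrightarrow I_1$ over $b^2-4a^2=0$ is consistent with $\Delta_{\mathcal{Y}}=\Delta_{\mathcal{Z}}=2^4a^2(b^2-4a^2)^2$ and $\Delta_{\mathcal{X}}=a^4(b^2-4a^2)$; together with the preimage computation locating $\sigma$ and $\tau$, this pins the two covers down as $\Phi$ and $\Psi$ respectively. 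The main obstacle is conceptual rather than computational: one must recognise that the single coincidence $c=a$ simultaneously makes $ac$ a square (splitting $\Upsilon$ and rendering $\Phi$ unramified over $\tau$) while leaving $a$ a non-square (keeping $\kappa,\upsilon$ irreducible and $\Psi$ ramified over both sections), and it is precisely this asymmetry that carries the $\Phi$-even eight to the non-section components $\{\Theta_1,\Theta_3\}$ and the $\Psi$-even eight to the section components $\{\Theta_0,\Theta_2\}$.
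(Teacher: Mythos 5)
Your proposal is correct and takes essentially the same route as the paper's proof: both arguments compute the preimages of $\sigma$ and $\tau$ under $\Phi$ and $\Psi$, exploit that $c=a$ makes $ac=a^2$ a square so that the bisection $\Upsilon$ splits into the two sections $T_{\pm}=[b\pm 2a:0:1]$ while $\kappa$ and $\upsilon$ remain irreducible, and conclude that the $\Phi$-even eight consists of the components avoiding $\sigma,\tau$ and the $\Psi$-even eight of the components meeting them. Your extra details -- the component-group argument placing $\tau$ on the middle component $\Theta_2$ of each $A_3$ cycle, and the simple-connectedness of $\mathbb{P}^1$ converting irreducibility of $\kappa,\upsilon$ into ramification over $\sigma,\tau$ -- are precisely what the paper's terser phrases (``one checks'', ``must be disjoint'') leave implicit, so this is a fleshed-out version of the same proof rather than a different one.
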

\begin{proof} 
On $\mathcal{X}$ there are sections $\sigma: [x:y:z]=[0:1:0]$ and $\tau: [x:y:z]=[0:0:1]$.  By definition of $\Phi$, the preimages of $\sigma$ are $\Sigma$ and $T$; one checks that the preimages of $\tau$ under $\Phi$ are sections  $T_{\pm}: [X:Y:Z]=[b\pm 2a:0:1]$ on $\mathcal{Y}$. The even eight is formed by two disjoint components in each reducible fiber of type $A_3$.  The situation must be the same for all four reducible fibers. Because we have four sections $\Sigma, T, T_{\pm}$ on $Y$ which are the preimages of $\sigma$ and $\tau$, the components that are met by $\sigma$ in $\tau$ in each $A_3$ fiber on $\mathcal{X}$ must be disjoint and not part of the even eight. The complimentary rational components form the even eight. It follows from (3) in Theorem~\ref{thm} that $\pi_{\mathcal{Z}}$ has no sections. One checks that the preimages of $\sigma$ is the bisection; one checks that the preimages of $\tau$ under $\Phi$ are sections  $T_{\pm}: [X:Y:Z]=[b\pm 2a:0:1]$ on $\mathcal{Y}$.  Therefore, both components in each reducible fiber on $\mathcal{X}$ that are met by $\sigma$ and $\tau$ must be part of the ramification locus.  
\end{proof}
\noindent
The different even eights chosen in $\mathcal{X}$ as the branch locus to obtain the double covers $\mathcal{Y}$ (green) and $\mathcal{Z}$ (yellow) in Proposition~\ref{prop_EEP_I4} are shown in the following diagram:
\begin{center}
\includegraphics[scale=\MyScalePic]{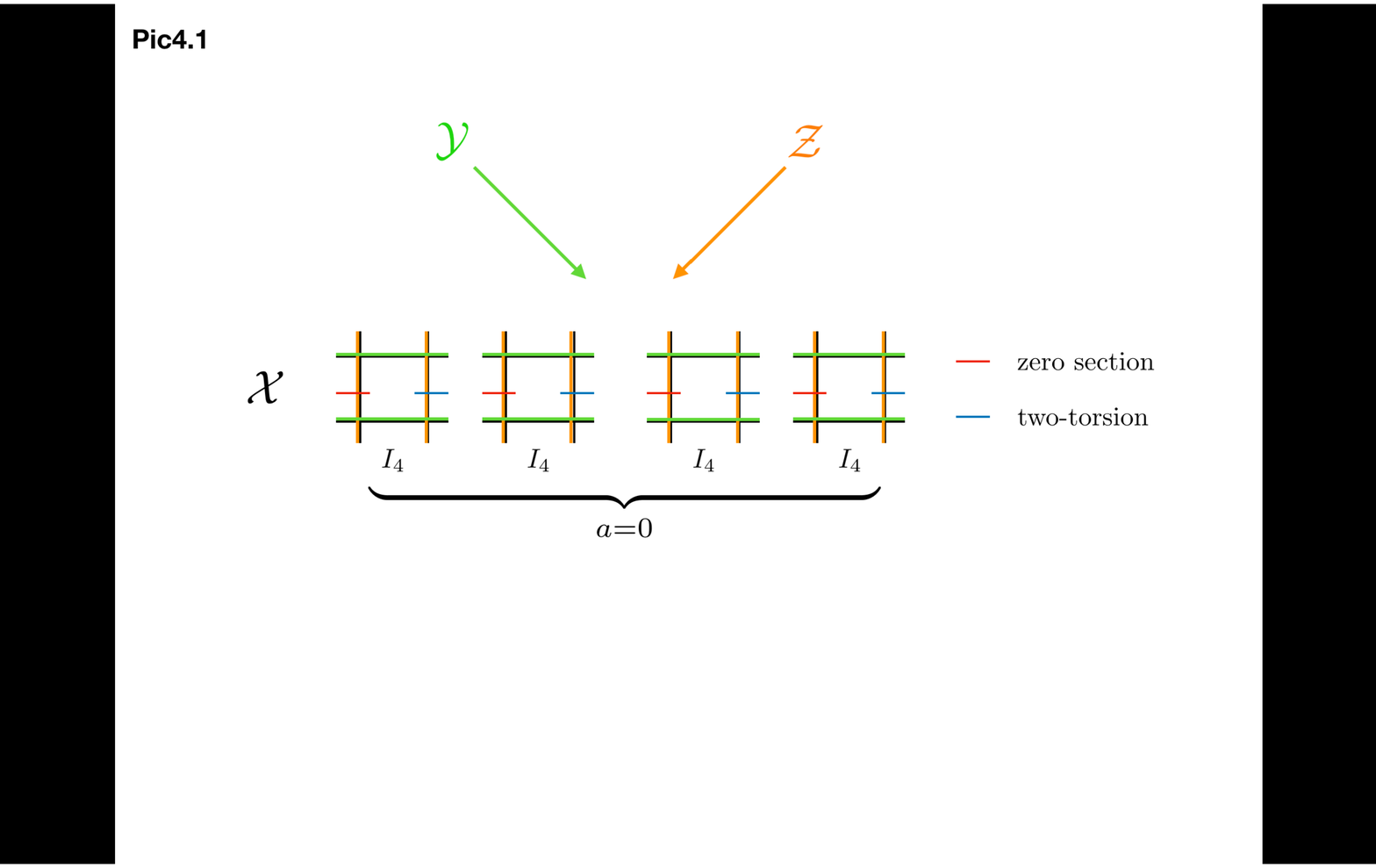}
\end{center}
\subsection{Four $I_0^*$ fibers in Picard rank $\rho=18$}
We assume that $c= a$ and $b=2\beta a$ in Theorem~\ref{thm} where $a$ is homogeneous of degree four in $\mathbb{C}(t_0,t_1)$ with no repeated root and $\mathbb{C} \ni \beta \not = \pm 1$.  We obtain projective models for the K3 surfaces $\mathcal{X}, \mathcal{Y}, \mathcal{Z}$ from the equations
\begin{equation}
\scalemath{\MyScaleMedium}{
\begin{array}{lll}
 \mathcal{Y}_{[t_0:t_1]}:& Y^2 Z  = X  \Big(X - 2(\beta-1)aZ\Big) \Big(X - 2(\beta+1)aZ\Big), & \Delta_{\mathcal{Y}}= \Delta_{\mathcal{Z}},\\[0.5em]
  \mathcal{Z}_{[t_0:t_1]}:& V^2  =  a  U^4 + 2 \beta a U^2W^2 + a W^4, & \Delta_{\mathcal{Z}}=2^8 a^6(\beta^2-1)^2, \\[0.5em]
 \mathcal{X}_{[t_0:t_1]}:& y^2 z  = x \big( x^2 + 2 \beta a x z+  a^2 z^2\big), & \Delta_{\mathcal{X}}=2^4 a^6(\beta^2-1).
\end{array}}
\end{equation} 
In this case, condition~(\ref{condition}) is satisfied, and  the genus-one fibration $\pi_{\mathcal{Z}}$ has sections. The elliptic fibrations with section $\pi_{\mathcal{X}}$ and $\pi_{\mathcal{Y}}$ have the singular fibers $4 I_0^*$ and $\operatorname{MW}(\pi_{\mathcal{X}})=\operatorname{MW}(\pi_{\mathcal{Y}})=(\mathbb{Z}/2\mathbb{Z})^2$. Accordingly, we have $\rho_{\mathcal{X}}=\rho_{\mathcal{Y}}=\rho_{\mathcal{Z}}=18$.  We have the following:
\begin{proposition}
\label{prop_EEP_D4}
In the situation described above, the map $\Phi:\mathcal{Y} \dasharrow \mathcal{X}$ is the double cover branched along the even eight on $\mathcal{X}$ given by the non-central components of the reducible fibers of type $D_4$ over $a=0$ not meeting the sections $\sigma$ or $\tau$. Similarly, the map $\Psi: \mathcal{Z} \dasharrow \mathcal{X}$ is the double cover branched along the even eight on $\mathcal{X}$ given by the components meeting sections $\sigma$ or $\tau$.
\end{proposition}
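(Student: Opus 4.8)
The plan is to follow the template of the proofs of Propositions~\ref{prop_EEP} and~\ref{prop_EEP_I4}: specialize the explicit rational maps $\Phi$ and $\Psi$ to the present case $c=a$, $b=2\beta a$ (so that $b^2-4ac=4a^2(\beta^2-1)$), list all torsion sections, and then decide fiber by fiber which non-central components of the $I_0^*$ fibers lie in each branch locus by tracking the pre-images of $\sigma$ and $\tau$. First I would record the four two-torsion sections of $\pi_{\mathcal{X}}$: since the cubic $x^2+2\beta a x+a^2$ splits over $\mathbb{C}(t_0,t_1)$ with roots $a(-\beta\pm\sqrt{\beta^2-1})$ and $\sqrt{\beta^2-1}\in\mathbb{C}$ is constant, the group $\operatorname{MW}(\pi_{\mathcal{X}})=(\mathbb{Z}/2\mathbb{Z})^2$ is generated by $\sigma=[0:1:0]$, $\tau=[0:0:1]$ and $\tau_\pm:(x,y)=(a(-\beta\pm\sqrt{\beta^2-1}),0)$, while on $\mathcal{Y}$ the four two-torsion sections are $\Sigma$, $T$ and $T_\pm:[X:Y:Z]=[2(\beta\pm1)a:0:1]$.

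Next I would fix the local structure of the four $I_0^*$ fibers over the roots of $a$: the discriminant $\Delta_{\mathcal{X}}=2^4a^6(\beta^2-1)$ vanishes to order six at each simple root of $a$ while the $j$-invariant stays finite, so Tate's algorithm produces a fiber of Kodaira type $I_0^*$ whose four multiplicity-one components are pairwise disjoint $(-2)$-curves, and with full two-torsion the sections $\sigma,\tau,\tau_+,\tau_-$ meet these four tips bijectively. Since these tips are the only disjoint rational fiber components available, each branch even eight must consist of exactly two tips in each of the four (equivalent) $I_0^*$ fibers. I would then run the splitting argument. For $\Phi$ one checks that $\sigma$ pulls back to $\{\Sigma,T\}$ and that the pre-image of $\tau$, which in the generic situation is the bisection $\Upsilon:X=(b\pm2\sqrt{ac})Z$, now breaks into the two honest sections $T_\pm$ precisely because $\sqrt{ac}=a$ has become rational; hence $\sigma$ and $\tau$ are both disjoint from the branch locus of $\Phi$, their tips are excluded, and the complementary pair of tips in each fiber --- those met by $\tau_\pm$ --- is forced to be the even eight. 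For $\Psi$ one checks instead that $\sigma$ and $\tau$ pull back to the bisections $\kappa$ and $\upsilon$, so they do meet the branch locus; the tips met by $\sigma$ and $\tau$ therefore lie in the even eight, and since they already furnish eight disjoint curves the count pins the branch locus down exactly. As a consistency check $\tau_\pm$ pull back to sections under $\Psi$, so their tips are excluded, in agreement with $\pi_{\mathcal{Z}}$ now admitting a section.

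The step I expect to be the main obstacle is the local analysis: rigorously establishing that the four two-torsion sections meet four \emph{distinct} tips of each $I_0^*$ fiber. In the singular Weierstrass model all four two-torsion points collapse to $x=0$ as $a\to0$, so their separation is invisible before the additive fiber is resolved. I would settle this either by carrying out the explicit blow-up or, more cleanly, by invoking the injection of the torsion group $(\mathbb{Z}/2\mathbb{Z})^2$ into the $I_0^*$ component group $(\mathbb{Z}/2\mathbb{Z})^2$, which forces a bijection between the four sections and the four tips and in particular shows $\sigma$ and $\tau$ meet distinct tips --- the fact that makes the count $4\times2=8$ come out right for both branch loci. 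The remaining ingredients --- that $\Phi$ and $\Psi$ are Nikulin double covers branched along \emph{some} even eight, and the base-point behavior of the two maps --- are already supplied by Proposition~\ref{prop_K3} and the proof of Proposition~\ref{prop_EEP}, so only their specialization to $c=a$, $b=2\beta a$ needs to be recorded.
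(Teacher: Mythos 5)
Your proposal is correct and follows essentially the same route as the paper's proof: list the two-torsion sections $\sigma,\tau,\tau_\pm$ on $\mathcal{X}$ together with their counterparts $\Sigma, T, T_\pm$ on $\mathcal{Y}$ and the four sections $[\pm U_0:0:1]$ on $\mathcal{Z}$, track the preimages under $\Phi$ and $\Psi$ (sections splitting into pairs of sections versus pulling back to the bisections $\kappa,\upsilon$), and use the bijection between the four two-torsion sections and the four multiplicity-one components of each $I_0^*$ fiber to force both even eights. The only difference is that you supply a justification for this bijection (injectivity of $(\mathbb{Z}/2\mathbb{Z})^2$ into the $I_0^*$ component group, equivalently a height-pairing computation), a fact the paper simply asserts as ``each non-central component is met by exactly one two-torsion section.''
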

\begin{proof}
On $\mathcal{X}$ there are sections $\sigma: [x:y:z]=[0:1:0]$, $\tau: [x:y:z]=[0:0:1]$, and $\tau_{\pm}: [x:y:z]=[(-\beta\pm \sqrt{\beta^2-1})a:0:1]$. In this case, condition~(\ref{condition}) is satisfied with $g=0$, $f^2=\beta-\sqrt{\beta^2-1}\in \mathbb{C}$, $e=2\sqrt{\beta^2-1}\,a$, and the K3 surface $\mathcal{Z}$ has four sections. The four sections are given by $[U:V:W]=[\pm U_0:0:1]$ with $U_{\pm}=(-\beta\pm \sqrt{\beta^2-1})^{1/2}$. By definition of $\Psi$, the preimages of $\sigma$ and $\tau$ are the bisections $\kappa: [U:V:W]=[1:\pm\sqrt{a}:0]$ and $\upsilon: [U:V:W]=[0:\pm \sqrt{a}:1]$, respectively.  Each preimage of $\tau_{\pm}$ consists of the two sections $[U:V:W]=[\pm U_0:0:1]$. By definition of $\Phi$, the preimages of $\sigma$ are $\Sigma$ and $T$ on $\mathcal{Y}$; one checks that the preimage of $\tau$ are the sections $[X:Y:Z]=[2(\beta\pm1)a:0:1]$. The even eight consists of two disjoint components in each reducible fiber of type $D_4$. The situation is the same for all four reducible fibers. In each fiber of type $D_4$, each non-central component is met by exactly one two-torsion section.  Because we have four sections $\Sigma, T, T_{\pm}$ on $\mathcal{Y}$ which are the preimages of $\sigma$ and $\tau$, the components that are met by $\sigma$ in $\tau$ in each $D_4$ fiber on $\mathcal{X}$ must not part of the ramification locus. The complimentary rational components form the even eight. For $\mathcal{Z}$ it follows that the four sections are the preimages of the other two sections $\tau_{\pm}$ on $\mathcal{X}$. Accordingly, the even eight is formed by the non-central components of the fibers of type $D_4$ met by $\sigma$ and $\tau$. 
\end{proof}
\noindent
The different even eights chosen in $\mathcal{X}$ as the branch locus to obtain the double covers $\mathcal{Y}$ (green) and $\mathcal{Z}$ (yellow) in Proposition~\ref{prop_EEP_D4} are shown in the following diagram:
\begin{center}
\includegraphics[scale=\MyScalePic]{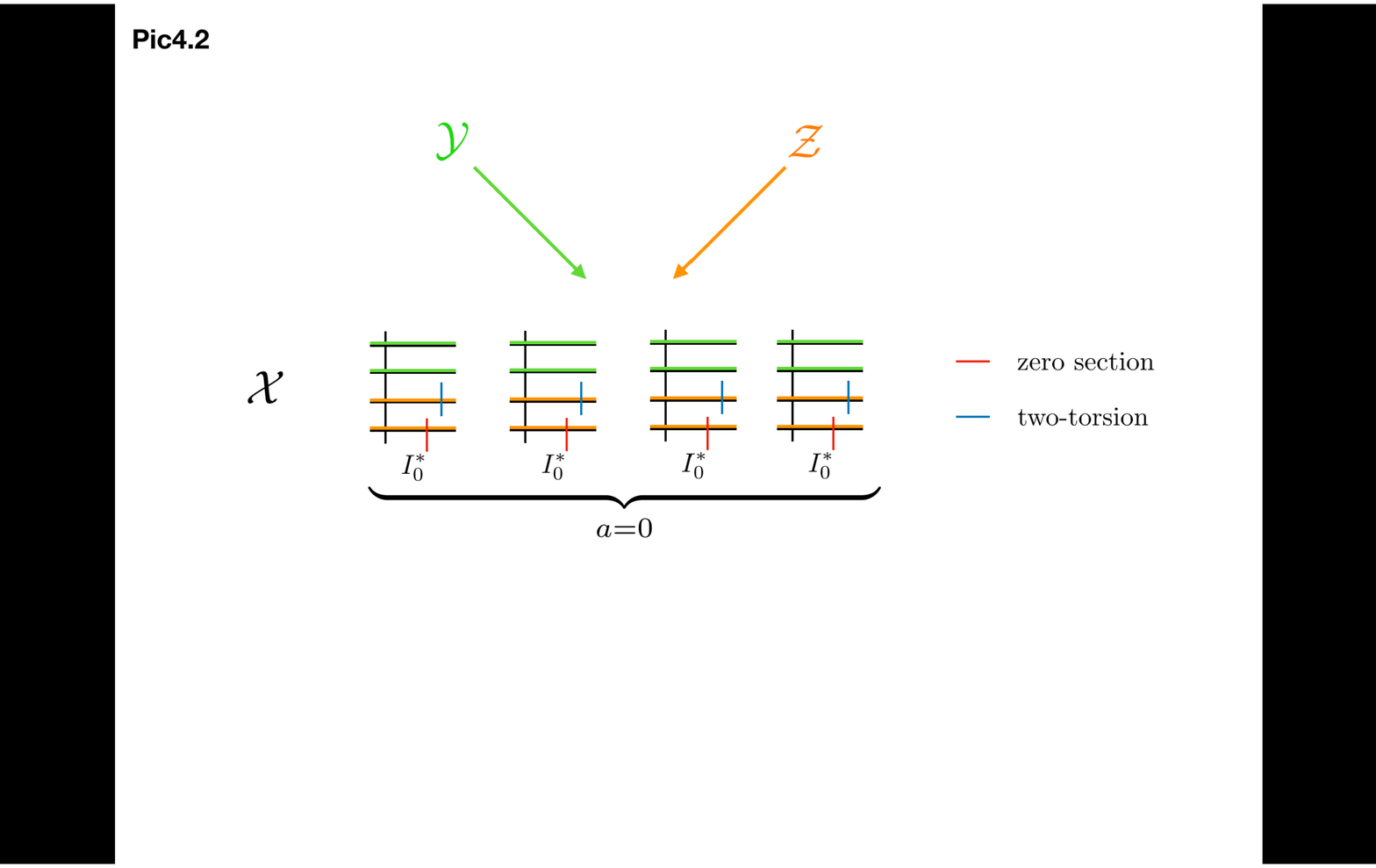}
\end{center}
\section{Examples with Picard numbers $16$ and $17$}
We analyze the situation in which fibers of Kodaira type $I_2$  in the generic fibration with eight $I_2$ and eight $I_1$ coalesce and form configurations with noticeable geometric meaning, namely the so-called double sextic and Kummer configuration. These configurations occur for $k=2$ in Theorem~\ref{thm} and Proposition~\ref{prop_K3} in Picard rank $16$ and $17$. In these cases, our construction in Theorem~\ref{thm} identifies and relates elliptic fibrations, lattice polarizations, and moduli of normal forms for these special geometries. 
\subsection{Kummer surfaces in Picard rank $17$}
We specialize Theorem~\ref{thm} and assume $a=c= \alpha\rho$ and $b=2\beta\rho$ in Theorem~\ref{thm} where $\rho$ is a homogeneous polynomial of degree three in $\mathbb{C}(t_0,t_1)$, $\rho$ has no repeated roots, $\alpha, \beta$ are homogeneous of degree one with no common factor with $\rho$ or each other.  We obtain projective models for K3 surfaces $\mathcal{X}, \mathcal{Y}, \mathcal{Z}$ from the equations
\begin{equation}
\label{eqns_6lines}
\scalemath{\MyScaleMedium}{
\begin{array}{lll}
 \mathcal{Y}_{[t_0:t_1]}:& Y^2 Z  = X \Big(X - 2(\beta-\alpha) \rho Z \Big)  \Big(X - 2(\beta+\alpha) \rho Z\Big), & \Delta_{\mathcal{Y}}=\Delta_{\mathcal{Z}},\\[0.5em]
 \mathcal{Z}_{[t_0:t_1]}:& V^2  =  \alpha\rho U^4 + 2\beta\rho U^2W^2 + \alpha\rho W^4, & \Delta_{\mathcal{Z}}=2^8 \alpha^2 \rho^6 (\beta^2-\alpha^2)^2, \\[0.5em]
 \mathcal{X}_{[t_0:t_1]}:& y^2 z  = x \big( x^2 + 2\beta \rho  x z+  \rho^2 \alpha^2\big), &  \Delta_{\mathcal{X}}=2^4 \alpha^4 \rho^6(\beta^2-\alpha^2) .
\end{array}}
\end{equation}
Condition~(\ref{condition}) is not satisfied, and the genus-one fibration $\pi_{\mathcal{Z}}$ has no sections. The elliptic fibrations with section $\pi_{\mathcal{X}}$ and $\pi_{\mathcal{Y}}$ have the singular fibers $3 I_0^* + I_4 + 2 I_1$ and $\operatorname{MW}(\pi_{\mathcal{X}})=\mathbb{Z}/2\mathbb{Z}$, and $3 I_0^* + 3 I_2$ and $\operatorname{MW}(\pi_{\mathcal{Y}})=(\mathbb{Z}/2\mathbb{Z})^2$, respectively. Accordingly, we have $\rho_{\mathcal{X}} = \rho_{\mathcal{Y}}= \rho_{\mathcal{Z}} = 17$. We have the following:
\begin{proposition}
\label{prop_EEP_Kummer}
In the situation described above,  the map $\Phi:\mathcal{Y} \dasharrow \mathcal{X}$ is the double cover branched along the even eight on $\mathcal{X}$ given by the non-central  components of the reducible fibers of type $D_4$ over $\rho=0$ and of the reducible fiber of type $A_3$ over $a=0$ not meeting the sections $\sigma$ or $\tau$. Similarly, the map $\Psi: \mathcal{Z} \dasharrow \mathcal{X}$ is the double cover branched along the even eight on $\mathcal{X}$ given by the components meeting sections $\sigma$ or $\tau$.
\end{proposition}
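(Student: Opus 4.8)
The plan is to follow the strategy of the proofs of Propositions~\ref{prop_EEP_I4} and~\ref{prop_EEP_D4}, reducing the statement to an explicit identification of branch loci on $\mathcal{X}$. Because $\Phi$ and $\Psi$ are the fiberwise maps of Equations~(\ref{dual_isog}) and~(\ref{eq1}) and are covered by the van Geemen-Sarti/Nikulin involutions of $\mathcal{Y}$ and $\mathcal{Z}$ (Propositions~\ref{prop_EEP} and~\ref{prop_K3}), they are automatically double covers branched along even eights supported on the reducible fibers of $\pi_{\mathcal{X}}$. In particular the even-eight property (disjointness together with the half-sum lying in $\operatorname{NS}(\mathcal{X})$) is guaranteed by the general mechanism, so the entire content of the statement is to determine which fiber components make up each branch divisor.

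First I would record the relevant sections and bisections. Since condition~(\ref{condition}) fails and $\operatorname{MW}(\pi_{\mathcal{X}})=\mathbb{Z}/2\mathbb{Z}$, the only sections of $\pi_{\mathcal{X}}$ are $\sigma:[x:y:z]=[0:1:0]$ and the two-torsion $\tau:[x:y:z]=[0:0:1]$. The fully factored Weierstrass equation gives $\operatorname{MW}(\pi_{\mathcal{Y}})=(\mathbb{Z}/2\mathbb{Z})^2$ with four sections $\Sigma$, $T$, and $T_{\pm}:[X:Y:Z]=[2(\beta\pm\alpha)\rho:0:1]$, while $\pi_{\mathcal{Z}}$ has no section but the two bisections $\kappa:[U:V:W]=[1:\pm\sqrt{\alpha\rho}:0]$ and $\upsilon:[U:V:W]=[0:\pm\sqrt{\alpha\rho}:1]$. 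Using the isogeny description $\varphi(\{\Sigma,T\})=\sigma$ from Equation~(\ref{dual_isog}) together with a direct substitution into the explicit formulas for $\Phi$ and $\Psi$, one checks that $\Phi^{-1}(\sigma)=\{\Sigma,T\}$ and $\Phi^{-1}(\tau)=\{T_+,T_-\}$, whereas $\Psi^{-1}(\sigma)=\kappa$ and $\Psi^{-1}(\tau)=\upsilon$. The decisive dichotomy is that, for a double cover branched along curves contained in the fibers, a rational section disjoint from the branch locus splits into two sections, while a section meeting the branch locus pulls back to a single connected bisection. Hence $\sigma$ and $\tau$ are disjoint from the even eight of $\Phi$ but meet the even eight of $\Psi$.

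It then remains to perform the local analysis at the four reducible fibers. In each $I_0^*=D_4$ fiber over $\rho=0$ the component group $(\mathbb{Z}/2\mathbb{Z})^2$ forces $\sigma$ and $\tau$ to meet two of the four non-central legs, and these two legs are disjoint, exactly as in Proposition~\ref{prop_EEP_D4}. In the $I_4=A_3$ fiber over $\alpha=0$ the component group is $\mathbb{Z}/4\mathbb{Z}$, so $\sigma$ meets the identity component and the two-torsion $\tau$ meets the opposite, disjoint component, as in Proposition~\ref{prop_EEP_I4}. Counting gives $3\cdot 2+2=8$ components met by $\sigma$ or $\tau$ and, complementarily, $3\cdot 2+2=8$ non-central components disjoint from both. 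By the dichotomy above, the former eight curves constitute the branch even eight of $\Psi$ and the latter eight that of $\Phi$, which is precisely the assertion.

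The main obstacle is the local bookkeeping at this mixed configuration of Kodaira fibers. One must verify not only which components $\sigma$ and $\tau$ meet in a $D_4$ and in an $A_3$ fiber, but---crucially---that in every fiber the two components met by $\sigma$ and $\tau$ are mutually disjoint, so that the two collections of eight curves genuinely are disjoint even eights and not merely eight-element sets. This disjointness is automatic for distinct legs of a $D_4$ fiber and for opposite components of the $A_3$ cycle; the delicate point is that fibers of two different Kodaira types now contribute simultaneously, so that assembling the six $D_4$-legs and the two $A_3$-components into a single even eight---and confirming the same for the complementary eight---is the step demanding the most care.
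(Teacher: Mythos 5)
Your proposal is correct and follows exactly the route the paper takes: the paper's proof of Proposition~\ref{prop_EEP_Kummer} is precisely ``combine the proofs of Propositions~\ref{prop_EEP_I4} and~\ref{prop_EEP_D4},'' and you carry out that combination with the same ingredients --- the preimages $\Phi^{-1}(\sigma)=\{\Sigma,T\}$, $\Phi^{-1}(\tau)=\{T_\pm\}$ and $\Psi^{-1}(\sigma)=\kappa$, $\Psi^{-1}(\tau)=\upsilon$, the splitting-versus-bisection dichotomy, and the per-fiber bookkeeping in the three $D_4$ fibers and the single $A_3$ fiber yielding the complementary counts $3\cdot 2+2=8$. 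Your write-up is in fact more explicit than the paper's one-line proof (notably in flagging that the components met by $\sigma$ and $\tau$ must be pairwise disjoint across the mixed Kodaira types), so no gap needs to be addressed.
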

\begin{proof}
The proof is obtained by combining the proofs of Propositions~\ref{prop_EEP_I4} and \ref{prop_EEP_D4}. 
\end{proof}
\noindent
The different even eights chosen in $\mathcal{X}$ as the branch locus to obtain the double covers $\mathcal{Y}$ (green) and $\mathcal{Z}$ (yellow) in Proposition~\ref{prop_EEP_Kummer} are shown in the following diagram:
\begin{center}
\includegraphics[scale=\MyScalePic]{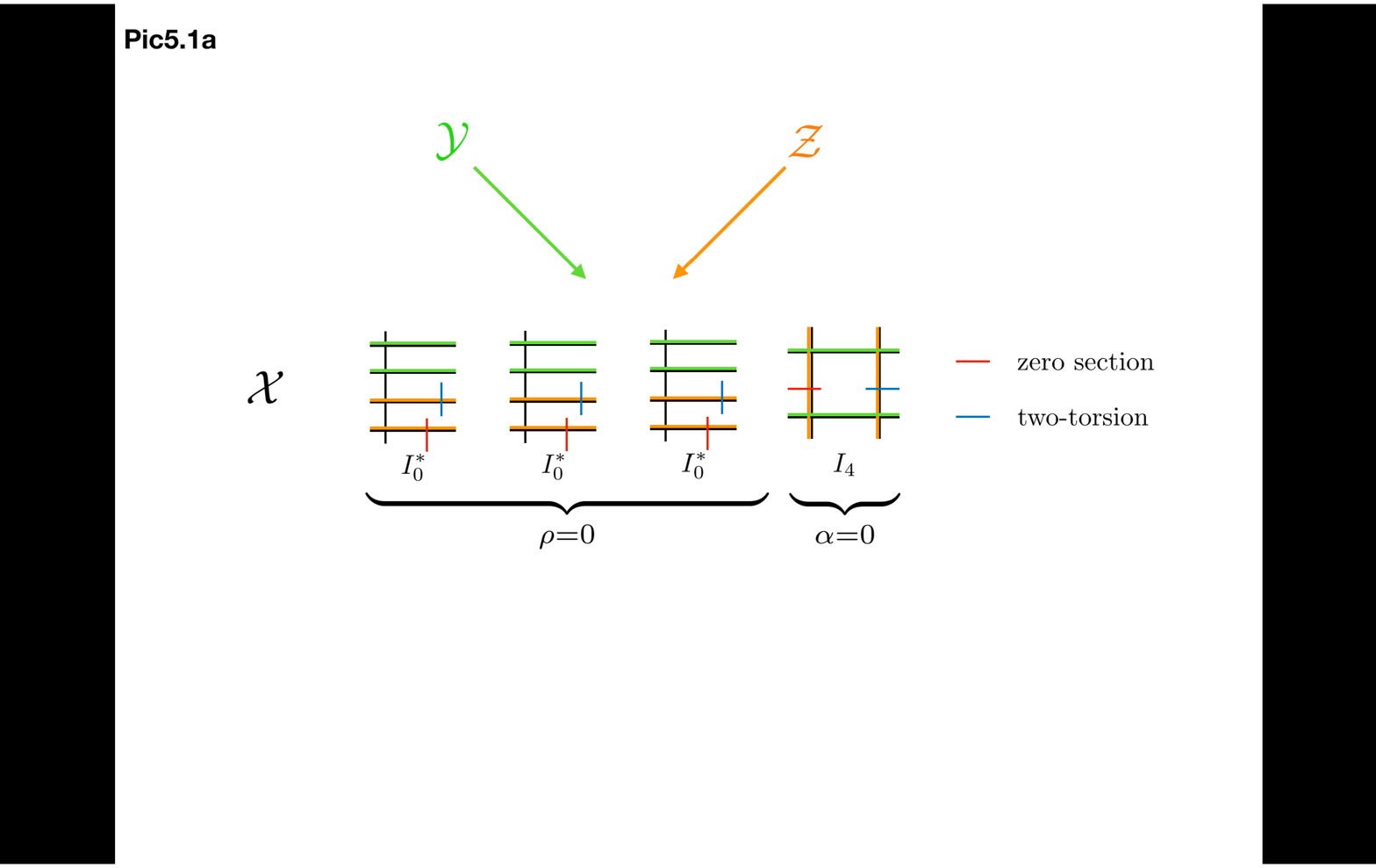}
\end{center}
\par For the Kummer surface $\operatorname{Kum}(\operatorname{Jac} \mathcal{D})$ of the Jacobian variety of a generic smooth genus-two curve for a generic curve  $\mathcal{D}$ of genus two, there are always two sets of sixteen $(-2)$-curves, called nodes and tropes, which are either the exceptional divisors corresponding to blow-up of the 16 two-torsion points $p_0, p_{ij}$ for $1 \le i < j \le6$ or they arise as tropes $T_i, T_{ij6}$ with $1\le i < j <6$ from the embedding of the symmetric theta divisors. These two sets of smooth rational curves have a rich symmetry, the so-called $16_6$-configuration where each node intersects exactly six tropes and vice versa \cite{MR1097176}.   It is now easy to prove the following:
\begin{lemma}
\label{prop_Kummers}
The K3 surface $\mathcal{X}$  is the Kummer surface $\operatorname{Kum}(\operatorname{Jac} \mathcal{D})$ of the principally polarized abelian surface $\operatorname{Jac}(\mathcal{D})$, i.e., the Jacobian variety of a generic smooth genus-two curve $\mathcal{D}$.
\end{lemma}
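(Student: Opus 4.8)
The plan is to recover the genus-two curve $\mathcal{D}$ explicitly from the discriminant of $\pi_{\mathcal{X}}$ and then to identify $\mathcal{X}$ with $\operatorname{Kum}(\operatorname{Jac}\mathcal{D})$ by comparing transcendental Hodge structures, using the $16_6$-configuration as geometric corroboration. The reduced support of $\Delta_{\mathcal{X}}=2^4\alpha^4\rho^6(\beta^2-\alpha^2)$ consists of exactly six points of $\mathbb{P}^1$: the three roots of $\rho$ (carrying the $I_0^*$ fibers), the root of $\alpha$ (the $I_4$ fiber), and the two roots of $\beta^2-\alpha^2$ (the $I_1$ fibers). Under the standing genericity hypotheses these six points are distinct, and since $\alpha\rho(\beta^2-\alpha^2)$ is a sextic they are the branch points of a smooth curve of genus two. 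I would therefore \emph{define}
\[
 \mathcal{D}:\quad s^2=\alpha(t)\,\rho(t)\,\big(\beta(t)^2-\alpha(t)^2\big),
\]
and the content of the lemma is that the abelian surface underlying $\mathcal{X}$ is $\operatorname{Jac}\mathcal{D}$.

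The computational heart is a lattice calculation. From the fiber data $3I_0^*+I_4+2I_1$ and $\operatorname{MW}(\pi_{\mathcal{X}})=\mathbb{Z}/2\mathbb{Z}$, the Shioda--Tate formula presents $\operatorname{NS}(\mathcal{X})$ as the overlattice of the trivial lattice $U\oplus D_4^{\oplus3}\oplus A_3$ obtained by adjoining the two-torsion section, so that $|{\det}\operatorname{NS}(\mathcal{X})|=|{\det}(U\oplus D_4^{\oplus3}\oplus A_3)|/|\operatorname{MW}_{\mathrm{tor}}|^2=2^8/2^2=2^6$ and $T_{\mathcal{X}}$ has rank five, signature $(2,3)$ and discriminant of order $64$. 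Tracking which non-identity components of the $D_4$ and $A_3$ fibers are met by the two-torsion section, I would compute the discriminant form $q_{T_{\mathcal{X}}}$ and identify $T_{\mathcal{X}}\cong H(2)\oplus H(2)\oplus\langle-4\rangle$. Because $H(2)\oplus H(2)\oplus\langle-4\rangle=\big(H\oplus H\oplus\langle-2\rangle\big)(2)$ and $H\oplus H\oplus\langle-2\rangle$ is the transcendental lattice of the generic principally polarized abelian surface, this exhibits $T_{\mathcal{X}}$ as $T_A(2)$.

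To conclude I would appeal to the transcendental characterization of Kummer surfaces due to Nikulin and Morrison: a K3 surface whose transcendental lattice is Hodge-isometric to $T_A(2)$ for the transcendental lattice $T_A$ of an abelian surface $A$ is the Kummer surface $\operatorname{Kum}(A)$. Applied here this gives $\mathcal{X}=\operatorname{Kum}(A)$ with $T_A\cong H\oplus H\oplus\langle-2\rangle$; since $\rho_{\mathcal{X}}=17$ forces $A$ to have Picard number one and hence to be indecomposable, $A=\operatorname{Jac}\mathcal{D}'$ for a smooth genus-two curve $\mathcal{D}'$, and the Torelli theorem together with reconstruction of the branch locus identifies $\mathcal{D}'$ with the curve $\mathcal{D}$ above. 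Geometrically, the twelve non-central legs of the three $D_4$ fibers and components of the $A_3$ fiber, supplemented by section classes, realize the sixteen nodes and their dual tropes, recovering the $16_6$-configuration of \cite{MR1097176}.

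The main obstacle is the exact lattice-and-moduli identification, which cannot be settled by invariants alone. First, rank five equals the minimal number of generators of the discriminant group $(\mathbb{Z}/2)^4\oplus\mathbb{Z}/4$, so Nikulin's uniqueness theorem does \emph{not} immediately force $T_{\mathcal{X}}\cong H(2)\oplus H(2)\oplus\langle-4\rangle$ from signature and determinant; one genuinely needs the gluing data of the two-torsion section to pin down $q_{T_{\mathcal{X}}}$ and to rule out competing classes in the genus. Second, producing sixteen fiber components and sections that are \emph{simultaneously} disjoint is delicate, since $\sigma$ and $\tau$ meet the very $D_4$-legs one wants to use. Finally, the transcendental argument yields only the isomorphism class $\operatorname{Kum}(\operatorname{Jac}\mathcal{D})$, so confirming the precise point of $\mathcal{M}_2$ is cleanest to verify by matching $\pi_{\mathcal{X}}$ against a known Weierstrass model of this Jacobian Kummer fibration and checking that genericity keeps $\operatorname{Jac}\mathcal{D}$ indecomposable.
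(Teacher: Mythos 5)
Your route is genuinely different from the paper's, which disposes of the lemma in three lines: it matches the Weierstrass model of $\pi_{\mathcal{X}}$ in Equation~(\ref{eqns_6lines}) against Kumar's explicit classification of all twenty-five elliptic fibrations on a generic Jacobian Kummer surface -- it is fibration {\tt (7)} of \cite[Thm.~2]{MR3263663} -- and this yields the statement and the exact moduli point simultaneously. Your transcendental alternative is viable in outline, and the bookkeeping is correct as far as it goes: $\det q_{\operatorname{NS}(\mathcal{X})}=2^{8}/2^{2}=2^{6}$ by Shioda--Tate with the two-torsion section, $T_{\mathcal{X}}$ of rank $5$ and signature $(2,3)$, and $\rho(A)=\rho_{\mathcal{X}}-16=1$ forcing $A$ indecomposable, hence a Jacobian. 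But the decisive step, $T_{\mathcal{X}}\cong H(2)\oplus H(2)\oplus\langle-4\rangle$, is only announced, and your own caveat shows the plan cannot close as written: tracking the gluing data of the torsion section determines $q_{T_{\mathcal{X}}}$ and hence the genus, but since the discriminant group $(\mathbb{Z}/2)^{4}\oplus\mathbb{Z}/4$ needs five generators while $\operatorname{rank}T_{\mathcal{X}}=5$, Nikulin's uniqueness criterion is inapplicable, and you supply no mechanism for passing from genus to isometry class. The standard repair, which you do not mention, is that one never needs the isometry class of $T_{\mathcal{X}}$ itself: read off from the $2$-adic discriminant form that $T_{\mathcal{X}}$ is twice an even lattice, pass to $T'=T_{\mathcal{X}}(1/2)$, whose genus (that of $H\oplus H\oplus\langle-2\rangle$, where rank exceeds the number of generators of the discriminant group by more than two) consists of a single class, and feed the tautological Hodge isometry $T_{\mathcal{X}}\cong T'(2)$ into the Nikulin--Morrison criterion together with the surjectivity of the period map for abelian surfaces. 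Without some such argument the proof stalls exactly at the point you flag.

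The second gap is your explicit curve. Defining $\mathcal{D}$ by $s^{2}=\alpha\rho\,(\beta^{2}-\alpha^{2})$ -- the double cover of the base branched at the six special points of $\Delta_{\mathcal{X}}$ -- is an unsupported guess, and ``Torelli together with reconstruction of the branch locus'' is not an argument: the Hodge-theoretic step produces \emph{some} genus-two curve $\mathcal{D}'$ and gives no control on its moduli in terms of $(\alpha,\beta,\rho)$. The paper's own normalization speaks against the guess: by Corollary~\ref{special_X} the fibration can be written as $y^{2}=x(x^{2}+2ux+1)\prod_{i=1}^{3}(u-\check{\mu}_i)$, so the six special points are $\{\check{\mu}_1,\check{\mu}_2,\check{\mu}_3,\pm1,\infty\}$, whereas $\mathcal{X}\cong\operatorname{Kum}(\operatorname{Jac}\check{\mathcal{D}})$ for the curve whose Rosenhain branch points are $\{0,1,\infty,\check{\lambda}_1,\check{\lambda}_2,\check{\lambda}_3\}$; the $\check{\mu}_i$ are the derived moduli of Equation~(\ref{Rosenhain_roots}), not Weierstrass points, so your identification would require a M\"obius equivalence between the two six-point sets that you neither establish nor have reason to expect. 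Since the lemma only asserts $\mathcal{X}\cong\operatorname{Kum}(\operatorname{Jac}\mathcal{D})$ for some generic genus-two curve, your argument survives if the explicit $\mathcal{D}$ is dropped; but the precise curve is what the subsequent statements (Corollary~\ref{Kum_B_12}, Corollary~\ref{special_X}) rely on, and obtaining it is exactly what the comparison with Kumar's fibration {\tt (7)} buys -- a fallback you yourself concede in your closing sentence.
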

\begin{proof}
All inequivalent elliptic fibrations on a generic Kummer surface where determined explicitly by Kumar in \cite{MR3263663}. In fact, Kumar computed elliptic parameters and Weierstrass equations for all twenty five different fibrations that appear, and analyzed the reducible fibers and Mordell-Weil lattices. Equation~(\ref{eqns_Kummer}) is the Weierstrass model of the  elliptic fibration {\tt (7)} in the list of all possible elliptic fibrations in~\cite[Thm.~2]{MR3263663}.
\end{proof}
Mehran proved in \cite{MR2804549} that there are 15 distinct isomorphism classes of rational double covers of $\operatorname{Kum}(\operatorname{Jac} \mathcal{D})$ and computed the fifteen even eights consisting of exceptional curves (up to taking complements) on the Kummer surface $\operatorname{Kum}(\operatorname{Jac} \mathcal{D})$ that give rise to all distinct 15 isomorphism classes of rational double  covers \cite[Prop.~4.2]{MR2804549}. Each even eight is enumerated by a node $p_{ij}$ and given as a sum
\[
 \Delta_{ij} = P_{1i} + \dots + \widehat{P_{ij}} + \dots + P_{i6} + P_{1j} + \dots + \widehat{P_{ij}} + \dots + P_{j6} \;,
 \]
where $P_{11}=0$, and $P_{ij}$ are the exceptional divisors obtained by resolving the nodes $p_{ij}$ and the hat indicates divisors that are not part of the even eight. Moreover, Mehran proved that every rational map $\Psi_{\Delta}: \operatorname{Kum}(\mathbf{B}) \dashrightarrow \operatorname{Kum}(\operatorname{Jac} \mathcal{D})$ from a $(1,2)$-polarized to a principally polarized Kummer surface is induced by an isogeny $\psi_{\Delta}:  \mathbf{B} \to \operatorname{Jac}(\mathcal{D})$ of abelian surfaces of  degree two~\cite{MR2804549}, and that all inequivalent $(1,2)$-polarized abelian surfaces $\mathbf{B}$ can be obtained in this way. We have the following:
\begin{corollary}
\label{Kum_B_12}
$\mathcal{Z}$  is the Kummer surface $\operatorname{Kum}(\mathbf{B}_{56})$ of the $(1,2)$-polarized abelian surface $\mathbf{B}_{56}$ that covers $\psi_{56}: \mathbf{B}_{56} \to \operatorname{Jac} (\mathcal{D})$ by an isogeny of degree two such that the induced rational map $\Psi_{56}: \mathcal{Z} = \operatorname{Kum}(\mathbf{B}_{56}) \dashrightarrow \mathcal{Y}=\operatorname{Kum}(\operatorname{Jac} \mathcal{D})$ associated with the even eight $\Delta_{56}$ realizes the double cover $\Psi: \mathcal{Z} \dashrightarrow \mathcal{X}$ in Proposition~\ref{prop_EEP_Kummer}.
\end{corollary}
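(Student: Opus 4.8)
The plan is to recognize $\Psi\colon \mathcal{Z}\dashrightarrow\mathcal{X}$ as one of Mehran's fifteen rational double covers of the principal Kummer surface, and then to pin down its index. By Lemma~\ref{prop_Kummers} the base $\mathcal{X}$ is the Kummer surface $\operatorname{Kum}(\operatorname{Jac}\mathcal{D})$, and by Proposition~\ref{prop_EEP_Kummer} the map $\Psi$ is the double cover branched along the even eight $E$ consisting of the two fiber components met by $\sigma$ or $\tau$ in each of the three $I_0^*$ fibers (over $\rho=0$) together with the two such components in the $I_4$ fiber (over $\alpha=0$), for a total of $3\cdot 2 + 2 = 8$ curves. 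Since an even eight determines its associated double cover uniquely \cite{MR1922094}, it suffices to show that $E$ coincides, up to taking the complement, with Mehran's even eight $\Delta_{56}$; Mehran's theorem \cite{MR2804549} then automatically supplies the $(1,2)$-polarized surface $\mathbf{B}_{56}$, the degree-two isogeny $\psi_{56}\colon \mathbf{B}_{56}\to\operatorname{Jac}(\mathcal{D})$, the identification $\mathcal{Z}=\operatorname{Kum}(\mathbf{B}_{56})$, and the equality $\Psi=\Psi_{56}$.

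The substantive point to establish is that the eight curves of $E$ are \emph{node} classes, i.e.\ exceptional divisors $P_{ij}$ of the sixteen two-torsion points, rather than tropes or other $(-2)$-curves; only then is $E$ eligible to be one of the $\Delta_{ij}$. For this I would invoke Kumar's explicit description of fibration \texttt{(7)} in \cite{MR3263663}, which realizes precisely the Weierstrass model of $\mathcal{X}$ and pairs every reducible fiber component with a node or trope of the $16_6$-configuration. Tracking the zero section $\sigma$ and the two-torsion section $\tau$ through this dictionary, the eight components of $E$ should be read off as the node curves $P_{k5}, P_{k6}$ for $k=1,2,3,4$, which is exactly the combinatorial content of $\Delta_{56}=P_{15}+P_{25}+P_{35}+P_{45}+P_{16}+P_{26}+P_{36}+P_{46}$. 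The specific value $56$ of the index is fixed once the labeling of the Weierstrass points of $\mathcal{D}$ has been aligned with the fibration data; alternatively, since $S_6$ acts transitively on the fifteen nodes $p_{ij}$ and hence on the fifteen even eights $\Delta_{ij}$, one is free to normalize the labeling so that the even eight takes the form $\Delta_{56}$.

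With $E=\Delta_{56}$ in hand, Mehran's classification \cite[Prop.~4.2]{MR2804549} identifies the double cover $\Psi$ with $\Psi_{56}$, produces the $(1,2)$-polarized abelian surface $\mathbf{B}_{56}$ together with the degree-two isogeny $\psi_{56}\colon\mathbf{B}_{56}\to\operatorname{Jac}(\mathcal{D})$, and yields $\mathcal{Z}=\operatorname{Kum}(\mathbf{B}_{56})$, as claimed. The main obstacle is the bookkeeping in the middle step: matching the fiber components of Kumar's fibration \texttt{(7)} to the node curves of the $16_6$-configuration and verifying that the eight curves of $E$ are exactly the eight nodes indexed by $\{k5,k6 : k=1,\dots,4\}$, with no trope curve slipping in. This is the place where the explicit geometry of $\operatorname{Kum}(\operatorname{Jac}\mathcal{D})$ must be brought to bear, and where one has to be careful that the even eight of \emph{fiber components} produced by Proposition~\ref{prop_EEP_Kummer} genuinely coincides with the even eight of \emph{node classes} singled out by Mehran.
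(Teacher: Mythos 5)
Your proposal matches the paper's proof in both substance and structure: the paper likewise identifies $\sigma$ and $\tau$ with the tropes $T_6$ and $T_5$ via the explicit fibration from Lemma~\ref{prop_Kummers} (Kumar's fibration \texttt{(7)}), reads off the eight branch components of $\Psi$ as the node classes $P_{15},\dots,P_{45},P_{16},\dots,P_{46}$ forming $\Delta_{56}$, and then concludes by Mehran's result in \cite{MR2804549}. The bookkeeping step you flag as the main obstacle is exactly the content of the paper's node/trope dictionary for the fiber components, so your outline is correct and essentially identical to the published argument.
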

\begin{proof}
The sum of the components in the even eight that forms the ramification locus of $\Psi$ in Proposition~\ref{prop_EEP_Kummer} is given by
 \[
 \Delta_{56} = P_{15} +  P_{25} + P_{35} + P_{45} + P_{16} + P_{26}+ P_{36} + P_{46}  .
\] 
Moreover, the zero section $\sigma$ and two-torsion section $\tau$ correspond to the tropes $T_6$ and $T_5$, respectively. It follows that the even eight on $\mathcal{X}$ composed of the non-central components of the reducible fiber of type $A_3$ over $a=0$ and of the reducible fiber of type $D_4$ over $\rho=0$ meeting the sections $\sigma$ or $\tau$ or, equivalently, the tropes $T_6$ and $T_5$ form the even eight $ \Delta_{56}$. But this is precisely the even eight forming the branch locus of the map $\Psi$. The result then follows from \cite[Prop.~5.1]{MR2804549}. 
\end{proof}
\noindent
Labels of the components of the reducible fibers of $\mathcal{X}$ in terms of nodes and tropes of the Kummer geometry are shown in the following diagram:
\begin{center}
\includegraphics[scale=\MyScalePic]{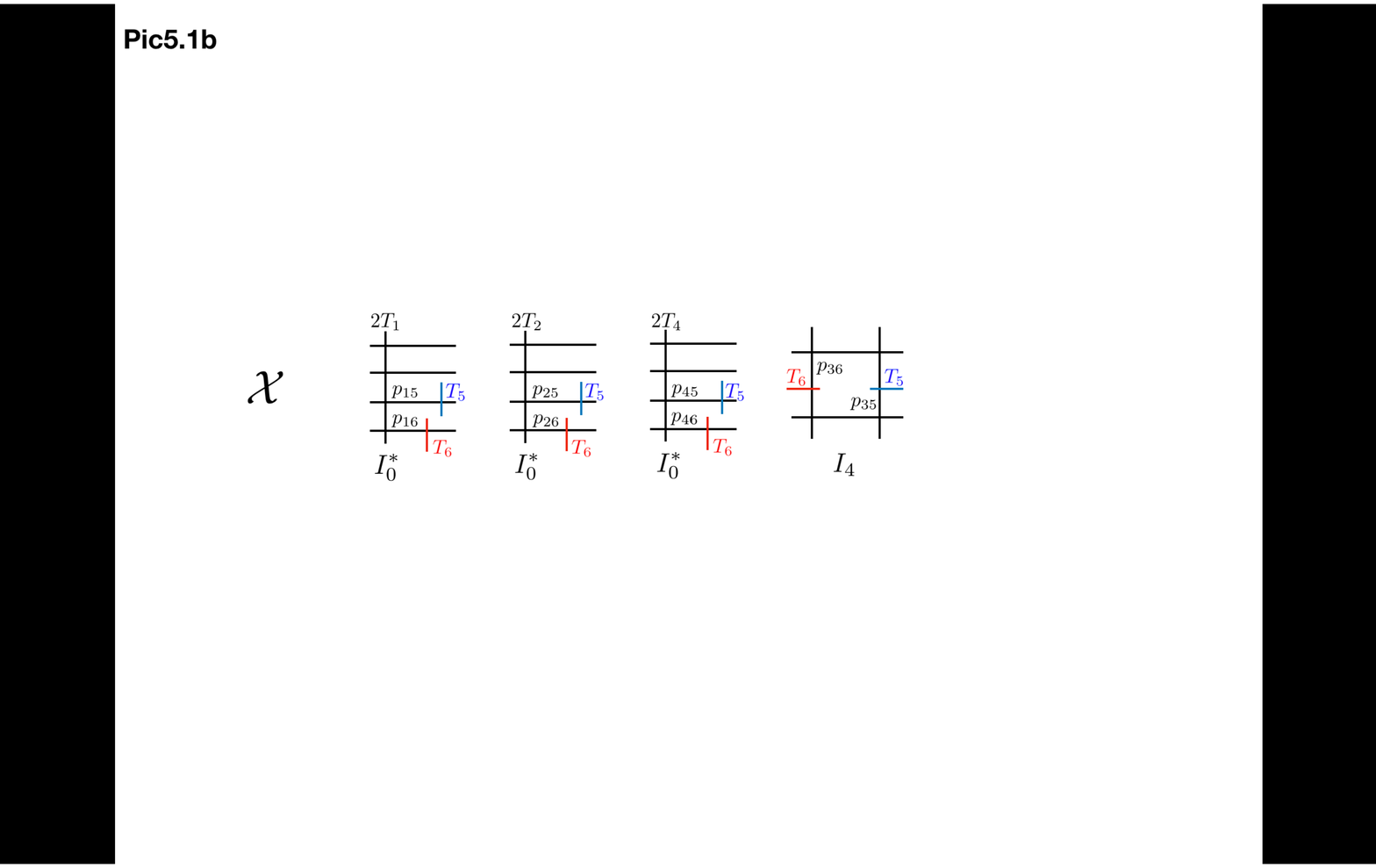}
\end{center}
We have the following:
\begin{proposition}
\begin{enumerate}
\item[]
\item The K3 surfaces $\mathcal{Y}$ and $\mathcal{Z}$ have transcendental lattices $T_{\mathcal{Y}} \cong H(2) \oplus H(2) \oplus \langle -2 \rangle$ and
$T_{\mathcal{Z}} \cong H(2) \oplus H(2) \oplus \langle -8 \rangle$, respectively.
\item There is a one-to-one correspondence between the isomorphism classes of K3 surfaces admitting a rational map of degree two into $\operatorname{Kum}(\operatorname{Jac} \mathcal{D})$ with transcendental lattice either $H(2) \oplus H(2) \oplus \langle -2 \rangle$ or $H(2) \oplus H(2) \oplus \langle -8 \rangle$.
\end{enumerate}
\end{proposition}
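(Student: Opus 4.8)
The plan is to compute $T_{\mathcal{Y}}$ and $T_{\mathcal{Z}}$ by different routes, matched to the geometry available for each surface, and then to read off the correspondence in part~(2) from the global Torelli theorem. I would begin with $\mathcal{Z}$, where Corollary~\ref{Kum_B_12} gives $\mathcal{Z}=\operatorname{Kum}(\mathbf{B}_{56})$. Since $\rho_{\mathcal{Z}}=17$ forces $\rho_{\mathbf{B}_{56}}=1$, the $(1,2)$-polarized abelian surface $\mathbf{B}_{56}$ is generic, so $\operatorname{NS}(\mathbf{B}_{56})=\langle 4\rangle$ and $T_{\mathbf{B}_{56}}=\langle 4\rangle^{\perp}=H\oplus H\oplus\langle-4\rangle$ inside $H^{2}(\mathbf{B}_{56},\mathbb{Z})\cong H^{\oplus 3}$. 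The classical identity $T_{\operatorname{Kum}(A)}\cong T_{A}(2)$ then yields $T_{\mathcal{Z}}\cong H(2)\oplus H(2)\oplus\langle-8\rangle$ with no ambiguity, because here the lattice is produced directly as a rescaling rather than reconstructed from its genus.

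For $\mathcal{Y}$ this route is unavailable: $H(2)\oplus H(2)\oplus\langle-2\rangle$ is not of the form $L(2)$ with $L$ even, so $\mathcal{Y}$ is not a Kummer surface. Instead I would use the Jacobian elliptic fibration $\pi_{\mathcal{Y}}$ with singular fibers $3I_0^{*}+3I_2$ and $\operatorname{MW}(\pi_{\mathcal{Y}})=(\mathbb{Z}/2\mathbb{Z})^2$. The trivial lattice is $U\oplus D_4^{\oplus 3}\oplus A_1^{\oplus 3}$ of determinant $4^3\cdot 2^3=512$, and $\operatorname{NS}(\mathcal{Y})$ is its overlattice of index $|\mathrm{Tor}|=4$ obtained by adjoining the two independent two-torsion sections; hence $|\det\operatorname{NS}(\mathcal{Y})|=512/16=32$. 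Because all fiber and discriminant data are $2$-elementary, so is $\operatorname{NS}(\mathcal{Y})$, and therefore so is $T_{\mathcal{Y}}=\operatorname{NS}(\mathcal{Y})^{\perp}$ in the unimodular K3 lattice, with discriminant group $(\mathbb{Z}/2\mathbb{Z})^{5}$ and signature $(2,3)$. By Nikulin's classification an even $2$-elementary lattice is determined by its invariants $(r,a,\delta)$; I would compute $(r,a,\delta)=(5,5,1)$ and match it to $H(2)\oplus H(2)\oplus\langle-2\rangle$. As a consistency check, the relation $\det q_{\operatorname{NS}(\mathcal{Z})}=4\det q_{\operatorname{NS}(\mathcal{Y})}$ of Theorem~\ref{thm}(3) recovers $128=4\cdot 32$, in agreement with $T_{\mathcal{Z}}$ being the index-two sublattice of $T_{\mathcal{Y}}$ obtained by doubling the generator of the $\langle-2\rangle$ summand.

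For part~(2) I would combine the even-eight analysis with Torelli. A degree-two rational map $\mathcal{W}\dashrightarrow\operatorname{Kum}(\operatorname{Jac}\mathcal{D})=\mathcal{X}$ corresponds to an even eight on $\mathcal{X}$, and by Proposition~\ref{prop_EEP_Kummer} (the two complementary fiber-component configurations) the cover is either of ``Jacobian'' type, carrying a section -- this is $\mathcal{Y}$, with $T\cong H(2)\oplus H(2)\oplus\langle-2\rangle$ -- or of genus-one ``torsor'' type of multisection index two -- this is $\mathcal{Z}$, with $T\cong H(2)\oplus H(2)\oplus\langle-8\rangle$. The correspondence is then the assignment $\mathcal{Y}\mapsto\mathcal{Z}$ sending a covering of the first type to the genus-one fibration of index two whose relative Jacobian it is, with inverse $\mathcal{Z}\mapsto\operatorname{Jac}(\pi_{\mathcal{Z}})$; that these are mutually inverse is exactly the content of Theorem~\ref{thm} together with Proposition~\ref{prop_K3}(3). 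Since $T_{\mathcal{Y}}$ and $T_{\mathcal{Z}}$ are rationally isometric and carry the same weight-two Hodge structure, they share a period domain, and the global Torelli theorem turns this assignment into a bijection on isomorphism classes.

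The main obstacle I anticipate is the precise evaluation of the discriminant form of $\operatorname{NS}(\mathcal{Y})$, and in particular the parity $\delta$. The order $32$ is forced by Shioda--Tate, but identifying the glue subgroup $(\mathbb{Z}/2\mathbb{Z})^2\hookrightarrow A_{D_4}^{\oplus 3}\oplus A_{A_1}^{\oplus 3}$ cut out by the two two-torsion sections requires knowing which components of the $I_0^{*}$ and $I_2$ fibers each section meets, read off from the Weierstrass data of $\pi_{\mathcal{Y}}$; only then can one verify that the form is $2$-elementary with $\delta=1$ rather than a neighboring form, and hence conclude $T_{\mathcal{Y}}\cong H(2)\oplus H(2)\oplus\langle-2\rangle$. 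A secondary point is that $\operatorname{rank}=\ell(A)=5$ lies just outside the range of Nikulin's simplest uniqueness criterion, so for $\mathcal{Y}$ one must invoke the finer $2$-elementary classification, whereas for $\mathcal{Z}$ this is sidestepped by obtaining $T_{\mathcal{Z}}$ directly as $T_{\mathbf{B}_{56}}(2)$.
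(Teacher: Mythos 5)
Your part~(1) is correct. For $T_{\mathcal{Z}}$ you follow exactly the paper's route: $\mathcal{Z}\cong\operatorname{Kum}(\mathbf{B}_{56})$ with $\mathbf{B}_{56}$ of polarization type $(1,2)$, so $T_{\mathbf{B}_{56}}\cong H\oplus H\oplus\langle-4\rangle$, and the Hodge isometry $T_{\operatorname{Kum}(A)}\cong T_A(2)$ gives $H(2)\oplus H(2)\oplus\langle-8\rangle$. For $T_{\mathcal{Y}}$, however, the paper does not compute anything: it simply cites its earlier work \cite{Clingher:2017aa}. Your Shioda--Tate/Nikulin computation is therefore a genuinely different, self-contained alternative, and it works: $\operatorname{NS}(\mathcal{Y})$ is the index-four overlattice of $H\oplus D_4^{\oplus3}\oplus A_1^{\oplus3}$ glued by the two torsion sections, with $|\det|=512/16=32$ and $2$-elementary discriminant group, so $T_{\mathcal{Y}}$ has invariants $(r_+,r_-,a)=(2,3,5)$. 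The obstacle you flag at the end in fact dissolves: since $a=5$ is odd and the $\delta=0$ finite quadratic forms $u_1,v_1$ have rank-two discriminant groups, $\delta=1$ is forced, so you never need to trace which fiber components the sections meet; Nikulin's uniqueness theorem for indefinite even $2$-elementary lattices (which does apply here -- signature $(2,3)$ is indefinite) then pins down $H(2)\oplus H(2)\oplus\langle-2\rangle$.

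Part~(2) has a genuine gap. The paper's proof rests on Mehran's classification \cite{MR2306633}: there are exactly fifteen isomorphism classes of K3 surfaces admitting a degree-two rational map into $\operatorname{Kum}(\operatorname{Jac}\mathcal{D})$ of \emph{each} lattice type, indexed by the even eights of nodes $\Delta_{ij}$, and the bijection is realized by pairing each $\Delta_{ij}$ (the branch locus of a $\Psi$-type cover, giving the $\langle-8\rangle$ lattice) with its complementary even eight inside the reducible fibers of the corresponding fibration $\pi_{\mathcal{X}}$ (the branch locus of a $\Phi$-type cover, giving the $\langle-2\rangle$ lattice). You omit this input, and what you cite cannot replace it: Proposition~\ref{prop_EEP_Kummer} only identifies the branch loci of the two specific maps $\Phi$ and $\Psi$ for one fibration; it does not assert that every even eight on $\mathcal{X}$ -- hence every degree-two cover -- is supported on fiber components of that fibration, so your dichotomy ``Jacobian type or torsor type'' is unjustified as a classification of all covers with the stated transcendental lattices.

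Two further steps would fail as written. Your inverse $\mathcal{Y}\mapsto\mathcal{Z}$, ``the genus-one fibration of index two whose relative Jacobian it is,'' is not well-defined: a Jacobian elliptic K3 surface can be the relative Jacobian of several non-isomorphic torsors (in the paper's construction $\mathcal{Z}$ is built from the factorization data, e.g.\ the choice of $a$ and $c$, not from $\mathcal{Y}$ alone), so without a count of the relevant order-two torsors -- or Mehran's fifteen-equals-fifteen count -- injectivity and surjectivity do not follow. And the closing appeal to Torelli does not do the remaining work: the fact that $T_{\mathcal{Y}}$ and $T_{\mathcal{Z}}$ are rationally isometric and share a period domain says nothing about matching up isomorphism classes of covers of the fixed surface $\mathcal{X}$; the global Torelli theorem recovers a surface from its polarized Hodge structure but does not produce the bijection between the two finite sets. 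The counting/classification step the paper takes from \cite{MR2306633} is exactly what is missing.
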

\begin{proof}
(1) The K3 surface $\mathcal{Z}$  is the Kummer surface $\operatorname{Kum}(\mathbf{B}_{56})$ obtained by taking the double cover branched along  an even eight of exceptional curves where the polarization of $\mathbf{B}_{56}$ is of type $(1, 2)$, i.e., $T_{\mathbf{B}_{56}} = H \oplus H \oplus \langle -4 \rangle$. There is a Hodge isometry $T_{\mathcal{Z}} = T_{\mathbf{B}_{56}}(2)$. The transcendental lattice of $\mathcal{Y}$ was derived in \cite{Clingher:2017aa}.
\par (2) Mehran proved in \cite{MR2306633} that there are 15 isomorphism classes (each) of K3 surfaces admitting a rational map of degree two into $\operatorname{Kum}(\operatorname{Jac} \mathcal{D})$ with transcendental lattice either $H(2) \oplus H(2) \oplus \langle -2 \rangle$ or $H(2) \oplus H(2) \oplus \langle -8 \rangle$.  These even eights are exactly the ramification loci of the maps $\Phi:\mathcal{Y} \dasharrow \mathcal{X}$ and $\Psi: \mathcal{Z} \dasharrow \mathcal{X}$, respectively. For each even eight of nodes $\Delta_{ij}$ on the Kummer surface $\mathcal{X}\cong \operatorname{Kum}(\operatorname{Jac} \mathcal{D})$, a K3 surface $\mathcal{Z}$ with transcendental lattice $H(2) \oplus H(2) \oplus \langle -8 \rangle$ is obtained by the double cover $\Psi$ in Proposition~\ref{prop_EEP_Kummer}. A complimentary even eight for $\Delta_{ij}$ is then obtained within the components of the reducible fibers of the particular fibration $\pi_{\mathcal{X}}$ used in Proposition~\ref{prop_EEP_Kummer}. And the double cover $\Phi$ is a K3 surface with transcendental lattice $H(2) \oplus H(2) \oplus \langle -2 \rangle$. The correspondence between the two even eights in Proposition~\ref{prop_EEP_Kummer} is one-to-one.
\end{proof}
\subsection{Configuration of six lines in Picard rank $16$}
\label{chap:sixlines}
We assume that $a=c= \alpha\rho$ and $b=2\beta\rho$ in Theorem~\ref{thm} where $\alpha, \beta, \rho$ are homogeneous of degree two in $\mathbb{C}(t_0,t_1)$, and $\alpha, \beta, \rho$ have no repeated roots or common factor. We obtain projective models for K3 surfaces $\mathcal{X}, \mathcal{Y}, \mathcal{Z}$ from the equations
\begin{equation}
\label{eqns_Kummer}
\scalemath{\MyScaleMedium}{
\begin{array}{lll}
 \mathcal{Y}_{[t_0:t_1]}:& Y^2 Z  = X \Big(X - 2(\beta-\alpha) \rho Z \Big)  \Big(X - 2(\beta+\alpha) \rho Z\Big), & \Delta_{\mathcal{Y}}=\Delta_{\mathcal{Z}},\\[0.5em]
 \mathcal{Z}_{[t_0:t_1]}:& V^2  =  \alpha\rho U^4 + 2\beta\rho U^2W^2 + \alpha\rho W^4, & \Delta_{\mathcal{Z}}=2^8 \alpha^2 \rho^6 (\beta^2-\alpha^2)^2, \\[0.5em]
 \mathcal{X}_{[t_0:t_1]}:& y^2 z  = x \big( x^2 + 2\beta \rho  x z+  \rho^2 \alpha^2z^2\big), &  \Delta_{\mathcal{X}}=2^4 \alpha^4 \rho^6 (\beta^2-\alpha^2).
\end{array}}
\end{equation} 
Condition~(\ref{condition}) is not satisfied, and the genus-one fibration $\pi_{\mathcal{Z}}$ has no sections. The elliptic fibrations with section $\pi_{\mathcal{X}}$ and $\pi_{\mathcal{Y}}$ have the singular fibers $2 I_0^* + 2 I_4 + 4 I_1$ and $\operatorname{MW}(\pi_{\mathcal{X}})=\mathbb{Z}/2\mathbb{Z}$, and $2 I_0^* + 6 I_2$ and $\operatorname{MW}(\pi_{\mathcal{Y}})=(\mathbb{Z}/2\mathbb{Z})^2$, respectively. Accordingly, we have $\rho_{\mathcal{X}} = \rho_{\mathcal{Y}} = \rho_{\mathcal{Z}} = 16$. We have the following:
\begin{proposition}
\label{prop_EEP_6lines}
In the situation described above, the map $\Phi:\mathcal{Y} \dasharrow \mathcal{X}$ is the double cover branched along the even eight on $\mathcal{X}$ given by the non-central components of the reducible fibers of type $D_4$ over $\rho=0$ and of the reducible fibers of type $A_3$ over $a=0$ not meeting the sections $\sigma$ or $\tau$. Similarly, the map $\Psi: \mathcal{Z} \dasharrow \mathcal{X}$ is the double cover branched along the even eight on $\mathcal{X}$ given by the components meeting sections $\sigma$ or $\tau$.
\end{proposition}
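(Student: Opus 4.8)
The plan is to reduce the statement to the two local analyses already carried out: the configuration over $\rho=0$ is governed by the argument of Proposition~\ref{prop_EEP_D4}, and the configuration over $\alpha=0$ by that of Proposition~\ref{prop_EEP_I4}. Here the discriminant $\Delta_{\mathcal{X}}=2^4\alpha^4\rho^6(\beta^2-\alpha^2)$ produces two fibers of type $I_0^*$ over the two roots of $\rho$, two fibers of type $I_4$ over the two roots of $\alpha$, and four fibers of type $I_1$ over the roots of $\beta^2-\alpha^2$. Since every curve of an even eight lies inside a single reducible fiber, and since the even-eight property itself is guaranteed abstractly by the van Geemen-Sarti structure (Proposition~\ref{prop_EEP} for $\Phi$, Proposition~\ref{prop_K3}(6) for $\Psi$), the two local pictures should simply patch together with no interaction between distinct fibers. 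Because condition~(\ref{condition}) fails, one has $\operatorname{MW}(\pi_{\mathcal{X}})=\mathbb{Z}/2\mathbb{Z}$ and $\mathcal{Z}$ is sectionless by Theorem~\ref{thm}(3), exactly as required for the statement about $\Psi$.

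First I would record the sections. On $\mathcal{X}$ one has only $\sigma\colon[x:y:z]=[0:1:0]$ and $\tau\colon[x:y:z]=[0:0:1]$, while the factored model of $\mathcal{Y}$ exhibits the full two-torsion $\Sigma$, $T$, and $T_\pm\colon[X:Y:Z]=[2(\beta\pm\alpha)\rho:0:1]$, so $\operatorname{MW}(\pi_{\mathcal{Y}})=(\mathbb{Z}/2\mathbb{Z})^2$. Using the explicit expression for $\Phi$ in Proposition~\ref{prop_EEP}, I would verify that $\Phi^{-1}(\sigma)=\{\Sigma,T\}$ and $\Phi^{-1}(\tau)=\{T_+,T_-\}$; dually, since $\mathcal{Z}$ has no section, the preimages $\Psi^{-1}(\sigma)$ and $\Psi^{-1}(\tau)$ are the two bisections $\kappa$ and $\upsilon$ of Proposition~\ref{prop_EEP}.

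With the sections in hand the component analysis runs fiber by fiber. Over each root of $\rho$ the fiber of $\mathcal{X}$ is a $D_4$ configuration in which $\sigma$ and $\tau$ each meet exactly one non-central component; the existence of the four sections $\Sigma,T,T_\pm$ on $\mathcal{Y}$ lying over $\sigma,\tau$ forces these two components to be unramified under $\Phi$, so that the two complementary non-central components make up the part of the branch locus of $\Phi$ in that fiber, exactly as in Proposition~\ref{prop_EEP_D4}. Over each root of $\alpha$ the fiber is an $A_3$ cycle whose two components off $\sigma$ and $\tau$ enter the branch locus of $\Phi$, exactly as in Proposition~\ref{prop_EEP_I4}. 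The eight curves so obtained form the even eight branching $\Phi$, and the complementary eight curves, namely the non-central components actually met by $\sigma$ or $\tau$, form the even eight branching $\Psi$. The only genuinely new point is to check that the incidence of $\sigma$ and $\tau$ is the same in both $D_4$ fibers and in both $A_3$ fibers, so that the eight complementary curves are mutually disjoint and their half-sum lies in $\operatorname{NS}(\mathcal{X})$; I expect this to be the main, though entirely routine, obstacle, and it is immediate because the local Weierstrass model over $\rho=0$ (respectively $\alpha=0$) is identical at both roots, with the even-eight property then supplied by the van Geemen-Sarti framework. No deeper difficulty is anticipated.
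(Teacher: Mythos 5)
Your proposal is correct and follows essentially the same route as the paper, whose proof is literally the one-line statement that one combines the arguments of Propositions~\ref{prop_EEP_I4} and~\ref{prop_EEP_D4}: the $A_3$ fibers over the roots of $\alpha$ are handled as in the $I_4$ case and the $D_4$ fibers over the roots of $\rho$ as in the $I_0^*$ case, using the sections $\Sigma, T, T_\pm$ on $\mathcal{Y}$ and the bisections $\kappa, \upsilon$ on $\mathcal{Z}$ exactly as you do. Your write-up merely makes explicit the fiberwise patching and section/preimage bookkeeping that the paper leaves implicit.
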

\begin{proof}
The proof is simply a combination of the proofs of Propositions~\ref{prop_EEP_I4} and \ref{prop_EEP_D4}. The situation is summarized in the following diagram:
\end{proof}
\par We introduce a fourth K3 surface as the genus-one fibration $\pi_{\mathcal{Z}'}: \mathcal{Z}' \to \mathbb{P}^1$ using coordinates $[U':V':W']\in \mathbb{P}(1,2,1)$ for the fiber and the following equation
\begin{equation}
\label{eqns_Kummer_b}
 \mathcal{Z}'_{[t_0:t_1]}:\; V^{\prime 2}  =  \rho U^{\prime 4} + 2\beta\rho U^{\prime 2} W^{\prime 4}  + \alpha^2\rho W^{\prime 4} , \;
  \Delta_{\tilde{\mathcal{Z}}}=2^8 \alpha^2 \rho^6 (\beta^2-\alpha^2)^2.
\end{equation} 
As before, for generic polynomials $\pi_{\mathcal{Z}'}: \mathcal{Z}' \to \mathbb{P}^1$ does not admit a section. A rational map $\Psi': \mathcal{Z}' \dasharrow \mathcal{X}$ of degree two is given by
\begin{equation}
\label{tildeZ}
\begin{split}
  [x:y:z] & =\Psi'([U':V':W'])= [\rho U^{\prime 2} W': \rho U' V' : W^{\prime 3} ].
\end{split}
\end{equation} 
We have the following:
\begin{corollary}
\label{prop_EEP_6lines_b}
In the situation described above, the map $\Psi': \mathcal{Z}' \dasharrow \mathcal{X}$ is the double cover branched along the even eight on $\mathcal{X}$ given by the components of the reducible fibers of type $A_3$ over $a=0$ not meeting the sections $\sigma$ or $\tau$ and of the reducible fibers of type $D_4$ over $\rho=0$ meeting the sections $\sigma$ or $\tau$. 
\end{corollary}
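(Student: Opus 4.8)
The plan is to recognize $\Psi'$ as nothing but the double cover $\Psi$ of Proposition~\ref{prop_EEP} built from the \emph{alternative factorization} $a'=\rho$, $c'=\alpha^2\rho$ of the product $ac=\alpha^2\rho^2$, keeping $b'=b=2\beta\rho$ unchanged, and then to read off its branch locus from the general rule recorded there, refined by the coalescing analyses of Propositions~\ref{prop_EEP_I4} and~\ref{prop_EEP_D4}. First I would check that $\Psi'$ really lands in $\mathcal{X}$: substituting $[x:y:z]=[\rho U'^2W':\rho U'V':W'^3]$ into the projective Weierstrass equation of $\mathcal{X}$ and cancelling the common factor $\rho^2U'^2W'^3$ returns precisely the defining equation $V'^2=\rho U'^4+2\beta\rho U'^2W'^2+\alpha^2\rho W'^4$ of $\mathcal{Z}'$. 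Since $a'c'=ac$ and $b'=b$, the surface $\mathcal{Z}'$ is another genus-one fibration with the same discriminant, hence the same singular fibers $2I_0^*+6I_2$ and the same relative Jacobian $\mathcal{Y}$ as $\mathcal{Z}$ (Proposition~\ref{Jac_prop}); only the factorization of $ac$, and therefore the even eight, has changed.

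Next I would locate the preimages of the two sections of $\mathcal{X}$ exactly as in the proof of Proposition~\ref{prop_EEP_D4}. Imposing $z=W'^3=0$ forces $W'=0$, and the fiber equation gives $V'^2=\rho U'^4$, so $\Psi'^{-1}(\sigma)$ is the bisection $\kappa'\colon[U':V':W']=[1:\pm\sqrt{\rho}:0]$; imposing $x=y=0$, $z\neq0$ forces $U'=0$, and the fiber equation gives $V'^2=\alpha^2\rho W'^4$, so $\Psi'^{-1}(\tau)$ is the bisection $\upsilon'\colon[0:\pm\alpha\sqrt{\rho}:1]$. Both preimages are bisections, consistent with $\mathcal{Z}'$ having no section. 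Tracking the degenerate fibers then pins down which components lie in the ramification: over $\alpha=0$ the fiber of $\mathcal{Z}'$ becomes $V'^2=\rho U'^2(U'^2+2\beta W'^2)$ with node at $[0:0:1]$, the specialization of $\upsilon'$ alone, whereas over $\rho=0$ the whole quartic acquires the factor $\rho$ and both $\kappa'$ and $\upsilon'$ specialize onto the degenerate fiber.

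These two behaviours match the general dictionary of Proposition~\ref{prop_EEP}: the branch locus consists of the \emph{neutral} components over the zeros of $a'$ together with the \emph{non-neutral} components over the zeros of $c'$. Because $\rho$ divides both $a'=\rho$ and $c'=\alpha^2\rho$, over each zero of $\rho$ both prescriptions apply, so in the coalesced $I_0^*$ fiber the two non-central components met by $\sigma$ and by $\tau$ are ramified -- exactly as for $\Psi$ in Proposition~\ref{prop_EEP_D4}. Because $\alpha$ divides only $c'$, over each zero of $\alpha$ only the non-neutral prescription applies, so in the coalesced $I_4$ fiber the two components \emph{not} met by $\sigma$ or $\tau$ are ramified -- exactly as for $\Phi$ in Proposition~\ref{prop_EEP_I4}. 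Assembling the four fibers yields the eight curves claimed in the statement. I expect the only real obstacle to be the analysis over $\rho=0$: there the weighted-projective fiber is non-reduced ($V'^2=0$), so one must resolve the model to exhibit the $D_4$ configuration and verify that $\kappa'$ and $\upsilon'$ meet the two distinct non-central components picked out by $\sigma$ and $\tau$; once this is done the bookkeeping reduces verbatim to the cited propositions.
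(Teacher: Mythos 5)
Your proposal is correct and takes essentially the same approach as the paper: the paper's entire proof of Corollary~\ref{prop_EEP_6lines_b} is the single sentence that one combines the proofs of Propositions~\ref{prop_EEP_I4} and~\ref{prop_EEP_D4}, which is precisely your strategy of viewing $\Psi'$ as the general double-cover construction for the alternative factorization $a'=\rho$, $c'=\alpha^2\rho$ of $ac=\alpha^2\rho^2$ and then reading the $A_3$ fibers over $\alpha=0$ through the $\Phi$-type analysis and the $D_4$ fibers over $\rho=0$ through the $\Psi$-type analysis. Your explicit checks -- the substitution verifying $\Psi'$ lands in $\mathcal{X}$, the identification of the bisections $\kappa'=[1:\pm\sqrt{\rho}:0]$ and $\upsilon'=[0:\pm\alpha\sqrt{\rho}:1]$ as the preimages of $\sigma$ and $\tau$, and the observation that these bisections ramify only over $\rho=0$ (so $\sigma$, $\tau$ meet even-eight curves only in the $D_4$ fibers) -- supply exactly the details the paper leaves implicit, making your write-up more complete than the published one-line argument.
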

\begin{proof}
The proof is obtained by combining the proofs of Propositions~\ref{prop_EEP_I4} and \ref{prop_EEP_D4}. 
\end{proof}
\noindent
The different even eights chosen in $\mathcal{X}$ as the branch locus to obtain the double covers $\mathcal{Y}$ (green), $\mathcal{Z}$ (yellow) in Proposition~\ref{prop_EEP_6lines}, and $\mathcal{Z}'$ (magenta)  in Corollary~\ref{prop_EEP_6lines_b} are shown in the following diagram:
\begin{center}
\includegraphics[scale=\MyScalePic]{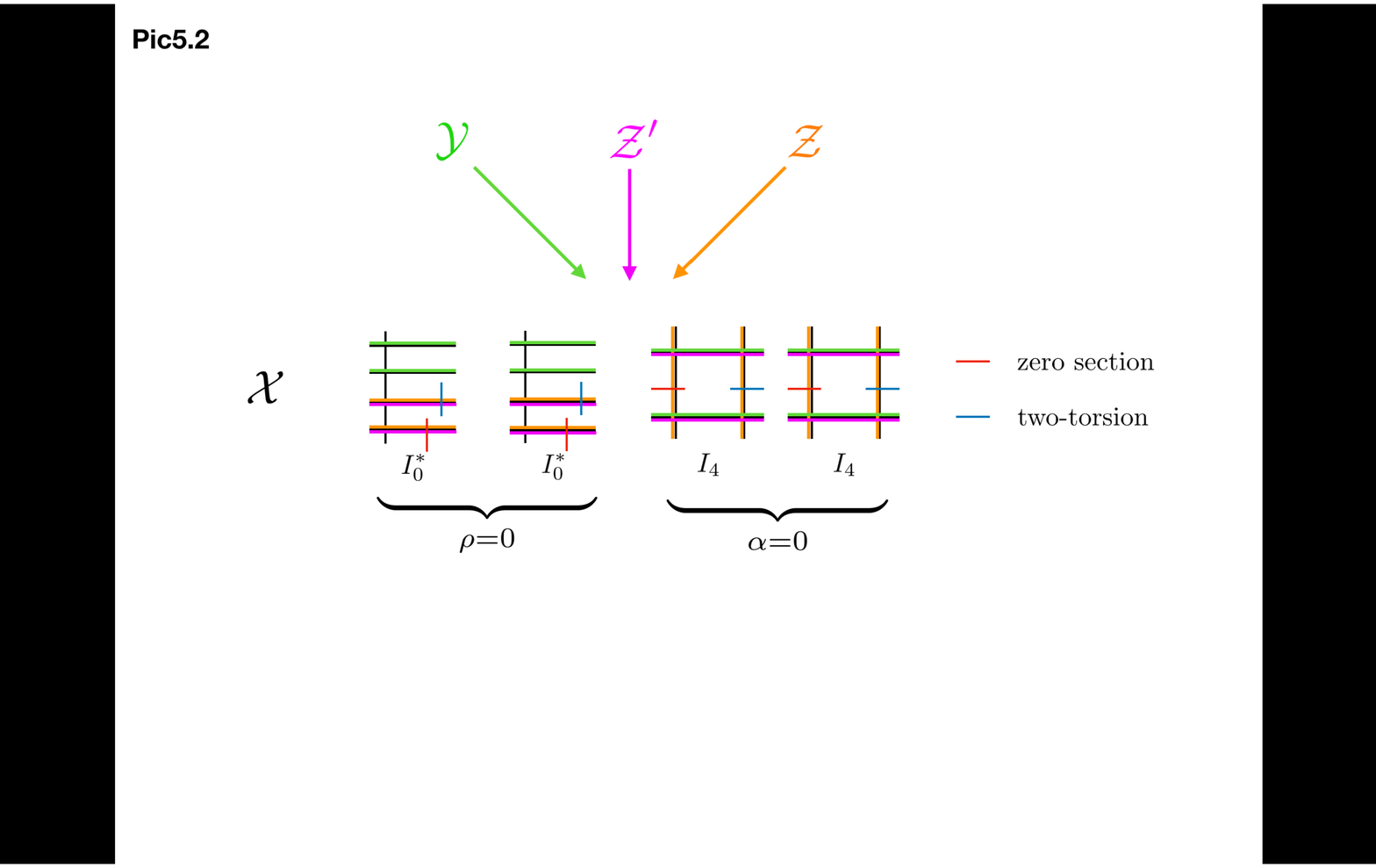}
\end{center}
\par We consider six lines $\ell_i$ in $\mathbb{P}^2$ with $i=1,\dots,6$, no three of which are concurrent. Such a configuration has four moduli which we will denote by $a, b, c, d$. The lines can always be brought into the form
\begin{equation}
 \ell_1=z_1, \ell_2 = z_2, \ell_3= z_3,  \ell_4=z_1 + z_2 + z_3,  \ell_5=a z_1 + b z_2 + z_3, \ell_6=c z_1 + d z_2 + z_3,
\end{equation}
such that the lines intersect as follows:
\begin{equation}
\label{tab:6lines}
\scalemath{\MyScaleTiny}{
\begin{array}{c|c|c|c|c|c|c}
		& \ell_1	& \ell_2	& \ell_3	& \ell_4		& \ell_5			& \ell_6			\\ \hline &&&&&& \\[-0.9em]
\ell_1	& -		& [0:0:1]	& [0:1:0]	& [0:1:-1]		& [0:1:-b]			& [0:1:-d]			\\ &&&&&& \\[-0.9em]
\ell_2	& [0:0:1]	& -		& [1:0:0]	& [1:0:-1]		& [1:0:-a]			& [1:0:-c]			\\ &&&&&& \\[-0.9em]
\ell_3	& [0:1:0]	& [1:0:0]	& -		& [1:-1:0]		& [b:-a:0]			& [d:-c:0]			\\ &&&&&& \\[-0.9em]
\ell_4	& [0:1:-1]	& [1:0:-1]	& [1:-1:0]	& -			& [b-1:1-a:a-b]		& [d-1:1-c:c-d]		\\ &&&&&& \\[-0.9em]
\ell_5	& [0:1:-b]	& [1:0:-a]	& [b:-a:0]	& [b-1:1-a:a-b]	& -				& [b-d:c-a:ad-bc]	\\ &&&&&& \\[-0.9em]
\ell_6	& [0:1:-d]	& [1:0:-c]	& [d:-c:0]	& [d-1:1-c:c-d]	& [b-d:c-a:ad-bc]	& -				\\ &&&&&& \\[-0.9em]
\end{array}}
\end{equation}
The double cover branched along six lines $\ell_i$ in $\mathbb{P}^2$ with $i=1,\dots,6$ is given by
\begin{equation}
\label{six-lines-cover}
z_4^2 = \ell_1\ell_2\ell_3\ell_4\ell_5\ell_6 .
\end{equation}
Setting $z_1=t_0$, $z_2=t_1$, Equation~(\ref{six-lines-cover}) becomes the Weierstrass model for $\mathcal{Y}_{[t_0:t_1]}$ in Equation~(\ref{eqns_Kummer}) with
\begin{equation}
\label{coeffs}
\begin{split}
\rho & = t_0 t_1,\\
 2(\beta-\alpha) & = (at_0 +b t_1) ((c-1) t_0 + (d-1) t_1) ,\\
 2(\beta+\alpha) & = (c t_0 +d t_1) ((a-1) t_0 + (b-1) t_1).
\end{split}
\end{equation}
Conversely, in every Weierstrass model for $\mathcal{Y}$ in Equation~(\ref{eqns_Kummer}) the coordinates $t_0, t_1$ on the base curve can be chosen such that Equations~(\ref{coeffs}) hold. We have the following lemma:
\begin{proposition}
\label{lem_6lines}
For generic $a, b, c, d$ we have the following:
\begin{enumerate}
\item The K3 surface $\mathcal{Y}$ is the double cover of $\mathbb{P}^2$ branched along six lines, no three of which are concurrent.
\item $\mathcal{Y}$  has the transcendental lattice $T_{\mathcal{Y}} \cong H(2) \oplus H(2) \oplus \langle -2 \rangle^{\oplus 2}$.
\end{enumerate}
\end{proposition}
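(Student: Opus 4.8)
\textbf{Part (1).} The identification recorded just before the proposition---that setting $z_1=t_0$, $z_2=t_1$ turns Equation~(\ref{six-lines-cover}) into the Weierstrass model~(\ref{eqns_Kummer}) for $\mathcal{Y}$ under the substitution~(\ref{coeffs})---does the computational work; the plan is to promote it to an isomorphism of K3 surfaces and to match the genericity hypotheses. Writing $z_3$ for the fibre coordinate, the six-line cover becomes
\[
 z_4^2 = t_0 t_1 \, z_3\,(z_3 + t_0 + t_1)(z_3 + a t_0 + b t_1)(z_3 + c t_0 + d t_1),
\]
a genus-one fibration over the pencil of lines through $\ell_1\cap\ell_2=[0:0:1]$. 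First I would note that the four branch points $z_3\in\{0,-(t_0+t_1),-(a t_0+b t_1),-(c t_0+d t_1)\}$ are defined over $\mathbb{C}(t_0,t_1)$, so the four ramification points (where $z_4=0$) are rational; choosing one as the origin exhibits this fibration as \emph{Jacobian} with full two-torsion $(\mathbb{Z}/2\mathbb{Z})^2\subseteq\operatorname{MW}$, and the standard quartic-to-Weierstrass reduction produces exactly $\mathcal{Y}_{[t_0:t_1]}$. Since the resolved six-line double cover and $\mathcal{Y}$ are both K3 surfaces, this birational identification extends to an isomorphism.

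It then remains to match the two genericity conditions. Under~(\ref{coeffs}) one has $\rho=t_0t_1=\ell_1\ell_2$, so the two $I_0^*$ fibres lie over $[1:0]$ and $[0:1]$, i.e.\ over the members $\ell_2,\ell_1$ of the pencil, while the six $I_2$ fibres lie over the $\binom{4}{2}=6$ pencil-lines through the nodes $\ell_i\cap\ell_j$ with $3\le i<j\le 6$. I would check that ``no three of the $\ell_i$ concurrent'' is precisely the condition that these six nodes are distinct and avoid $\ell_1\cap\ell_2$, which is in turn the condition that $\alpha,\beta,\rho$ are degree-two forms without repeated roots and pairwise without common factor---the genericity assumed in this subsection.

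\textbf{Part (2).} The plan is to compute $T_{\mathcal{Y}}$ from the elliptic fibration and then to invoke the classification of $2$-elementary lattices. Applying Shioda--Tate to the singular fibres $2I_0^*+6I_2$ with $\operatorname{MW}(\pi_{\mathcal{Y}})=(\mathbb{Z}/2\mathbb{Z})^2$, the trivial lattice is $U\oplus D_4^{\oplus 2}\oplus A_1^{\oplus 6}$ of rank $16$, and $\operatorname{NS}(\mathcal{Y})$ is the index-four overlattice obtained by adjoining the two torsion sections; hence $\lvert\det\operatorname{NS}(\mathcal{Y})\rvert=2^{10}/4^{2}=2^{6}$. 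Because $D_4$, $A_1$ and the two-torsion glue are all $2$-elementary, both $\operatorname{NS}(\mathcal{Y})$ and its orthogonal complement $T_{\mathcal{Y}}$ in the unimodular K3 lattice are even $2$-elementary; for $T_{\mathcal{Y}}$ this gives rank $r=6$, signature $(2,4)$, and a discriminant group of order $2^6$, hence length $a=6$. Determining which components of the $D_4$- and $A_1$-fibres the two-torsion sections meet pins down the isotropic glue subgroup and exhibits the half-integer value $-\tfrac12$ in the discriminant form, giving $\delta=1$.

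By Nikulin's theorem an even indefinite $2$-elementary lattice is determined up to isometry by its signature together with $(r,a,\delta)$. The lattice $H(2)^{\oplus 2}\oplus\langle-2\rangle^{\oplus 2}$ has signature $(2,4)$, satisfies $r=a=6$ (each summand being $2$-elementary with determinant $-4$ or $-2$), and has $\delta=1$ coming from the $\langle-2\rangle$ factors, so it shares all invariants of $T_{\mathcal{Y}}$; the asserted isomorphism follows. I expect the main obstacle to be the middle step---tracking the two-torsion sections through the individual fibre components to fix the isotropic subgroup and the value of $\delta$; alternatively, given Part (1), one may simply appeal to the classical computation of the period lattice of the double cover of $\mathbb{P}^2$ branched along six general lines.
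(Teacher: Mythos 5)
Your proposal is correct, but for part (2) it takes a genuinely different route from the paper. For part (1) you and the paper do the same thing: the paper establishes the identification $z_1=t_0$, $z_2=t_1$ with the substitution in Equation~(\ref{coeffs}) immediately before the proposition and then simply says ``(1) is already established''; your added details (rationality of the four ramification points giving a section and full two-torsion, quartic-to-Weierstrass reduction, birational map between two K3s being an isomorphism, and the match between ``no three concurrent'' and the genericity of $\alpha,\beta,\rho$ --- your identification of the six $I_2$ fibers with the pencil lines through the nodes $\ell_i\cap\ell_j$, $3\le i<j\le 6$, checks out) just make this explicit. For part (2), however, the paper does no lattice computation at all: it cites the known result that the transcendental lattice of the double cover $z_4^2=\ell_1\cdots\ell_6$ is $H(2)^{\oplus 2}\oplus\langle-2\rangle^{\oplus 2}$ --- i.e., exactly your fallback option. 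Your primary route --- Shioda--Tate with trivial lattice $U\oplus D_4^{\oplus 2}\oplus A_1^{\oplus 6}$, index-four overlattice from $\operatorname{MW}_{\mathrm{tor}}=(\mathbb{Z}/2\mathbb{Z})^2$, $|\det\operatorname{NS}|=2^{10}/4^2=2^6$, and Nikulin's classification of even indefinite $2$-elementary lattices by $(\text{signature},a,\delta)$ --- is sound and has the virtue of being self-contained, though it leans on the facts stated just before the proposition (fiber configuration $2I_0^*+6I_2$, $\operatorname{MW}=(\mathbb{Z}/2\mathbb{Z})^2$ of rank zero, $\rho_{\mathcal{Y}}=16$ for generic parameters). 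One simplification you missed: the step you flag as the main obstacle, pinning down $\delta$ by tracing the torsion sections through fiber components, is unnecessary. Since the signature of $T_{\mathcal{Y}}$ is $2-4=-2\equiv 6\bmod 8$, while a $2$-elementary discriminant form with $\delta=0$ is a sum of copies of $u$ and $v$ whose signature is $\equiv 0$ or $4\bmod 8$, the case $\delta=0$ is impossible and $\delta=1$ is forced; with $(r,a,\delta)=(6,6,1)$ and signature $(2,4)$ matching those of $H(2)^{\oplus 2}\oplus\langle-2\rangle^{\oplus 2}$, Nikulin's uniqueness theorem finishes the argument without any component-tracking.
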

\begin{proof}
(1) is already established. (2) It was proven in~\cite{MR1877757} that the transcendental lattice of $z_4^2 = \ell_1\ell_2\ell_3\ell_4\ell_5\ell_6$ is $H(2) \oplus H(2) \oplus \langle -2 \rangle^{\oplus 2}$.
\end{proof}
\begin{remark}
In \cite{MR1871336} a six-parameter family of K3 surfaces of Picard rank 14 with transcendental lattice $H(2)^{\oplus 2} \oplus \langle -2 \rangle^{\oplus 4}$ was described.  The general member of this family is exhibited as a double cover of $\mathbb{P}^1\times\mathbb{P}^1$ branched along the union of four curves of bi-degree $(1,1)$. The standard form used in \cite{MR1871336} was given in affine coordinates by
\begin{equation}
\label{KSTT}
 \rho^2 = \prod_{k=1}^4 \left( \alpha^{(k)} \xi \eta + \beta^{(k)} \xi + \gamma^{(k)} \eta + \delta^{(k)} \right) 
\end{equation} 
with matrices 
\begin{equation}
 M_k = \left( \begin{array}{cc}  \alpha^{(k)} & \beta^{(k)} \\ \gamma^{(k)} & \delta^{(k)} \end{array} \right) 
\end{equation}
for $k=1,\dots,4$. Setting $t_0=\xi$, $t_1=1$, $Z=1$, and
\begin{equation}
 X = \xi (a\xi+b)(c\xi+d)\eta, \quad Y =\xi (a\xi+b)(c\xi+d)\rho
\end{equation}
in Equation~(\ref{eqns_Kummer}) defining $\mathcal{Y}$, we obtain Equation~(\ref{KSTT}) with matrices
\begin{equation}
\scalemath{\MyScaleSmall}{
 M_1 = \left( \begin{array}{cc} 0 & 1 \\ 0 & 0  \end{array} \right), \;
 M_2 = \left( \begin{array}{cc} 0 & 0 \\ 1 & 0  \end{array} \right), \;
 M_3 = \left( \begin{array}{cc} a & b \\ 1-a & 1-b  \end{array} \right), \;
 M_4 = \left( \begin{array}{cc} c & d \\ 1-c & 1-d  \end{array} \right),
}
\end{equation}
and determinants $\det M_1=\det M_2=0$, $\det M_3=a-b$ and $\det M_2=c-d$.
\end{remark}
We also have the following:
\begin{proposition}
\label{Gamma-fibration}
The K3 surface $\mathcal{Z}$ admits a second elliptic fibration $\check{\pi}_{\mathcal{Z}}: \mathcal{Z} \to \mathbb{P}^1$ with section, with singular fibers $8I_2 + 8 I_1$, and a Mordell-Weil group such that $\operatorname{MW}(\check{\pi}_{\mathcal{Z}})_{\mathrm{tor}}=\mathbb{Z}/2\mathbb{Z}$.
\end{proposition}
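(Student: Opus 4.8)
The plan is to produce the second fibration explicitly by exchanging the roles of base and fiber, as in part~(7) of Proposition~\ref{prop_K3}, and then to locate a section and control the torsion. Writing the defining equation~\eqref{eqns_Kummer} for $\mathcal{Z}$ and reading $[U:W]$ as the base coordinate of $\check{\pi}_{\mathcal{Z}}$, each fiber becomes the genus-one curve
\[
 V^2 \;=\; \rho(t)\,\bigl(\alpha(t)\,(U^4+W^4)+2\beta(t)\,U^2W^2\bigr)\;=\;\rho(t)\,C(t;U,W)
\]
in the variables $(t,V)$ with $t=t_0/t_1$, where the fixed quadratic $\rho$ and the quadratic $C(\,\cdot\,;U,W)$ are both degree two in $t$. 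Since $\alpha,\beta,\rho$ are homogeneous of degree two we are in the case $k=2$, so Proposition~\ref{prop_K3}\,(7) already supplies this genus-one fibration with singular fibers $8I_2+8I_1$; it remains to exhibit a section and to compute the torsion of the Mordell-Weil group.

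First I would observe that the two roots $r_1,r_2$ of the fixed quadratic $\rho$ furnish sections. Indeed, for $t=r_i$ the right-hand side vanishes identically in $[U:W]$, so the locus $\{t=r_i,\;V=0\}$ meets every fiber of $\check{\pi}_{\mathcal{Z}}$ in a single point and maps isomorphically onto $\mathbb{P}^1_{[U:W]}$. Declaring the $r_1$-locus to be the zero section $\check{\sigma}$ makes $\check{\pi}_{\mathcal{Z}}$ Jacobian elliptic. Because $r_1$ and $r_2$ are two of the four branch points of the double cover $(t,V)\mapsto t$ of the generic fiber, their difference is two-torsion in the Jacobian; hence the $r_2$-locus is a two-torsion section $\check{\tau}$ with $2\check{\tau}=\check{\sigma}$, and $\mathbb{Z}/2\mathbb{Z}\subseteq\operatorname{MW}(\check{\pi}_{\mathcal{Z}})_{\mathrm{tor}}$.

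To see that the torsion is no larger I would combine two facts. The narrow Mordell-Weil group is torsion free, so $\operatorname{MW}(\check{\pi}_{\mathcal{Z}})_{\mathrm{tor}}$ injects into the product of the component groups of the reducible fibers; for the configuration $8I_2+8I_1$ this product is $(\mathbb{Z}/2\mathbb{Z})^{8}$, whence every torsion section is killed by $2$ and $\operatorname{MW}(\check{\pi}_{\mathcal{Z}})_{\mathrm{tor}}\subseteq E[2]$ for the generic fiber $E$. The three nonzero classes of $E[2]$ correspond to the three partitions of the branch points $\{r_1,r_2,\theta_+,\theta_-\}$ into pairs, where $\theta_\pm$ are the two roots of $C(\,\cdot\,;U,W)$. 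Only the partition $\{r_1,r_2\}\mid\{\theta_+,\theta_-\}$ is defined over $\mathbb{C}(U,W)$; the remaining two are interchanged by the monodromy swapping $\theta_+\leftrightarrow\theta_-$, which acts nontrivially precisely because $\operatorname{disc}_t C$ is not a square in $\mathbb{C}[U,W]$ for generic $\alpha,\beta,\rho$. Thus exactly one nonzero two-torsion class is rational, giving $\operatorname{MW}(\check{\pi}_{\mathcal{Z}})_{\mathrm{tor}}=\mathbb{Z}/2\mathbb{Z}$.

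The step I expect to require the most care is confirming that the loci $\{t=r_i,\;V=0\}$ are honest sections of $\check{\pi}_{\mathcal{Z}}$ rather than curves buried inside the $I_0^*$ fibers that the first fibration $\pi_{\mathcal{Z}}$ carries over $\rho=0$: on the resolved surface one must check that each $r_i$-locus is an irreducible rational curve meeting every $\check{\pi}_{\mathcal{Z}}$-fiber transversally in one point. Equally, one must verify that the non-generic factorization $a=c=\alpha\rho$ does not force any of the sixteen $\check{\pi}_{\mathcal{Z}}$-fibers to coalesce, so that the configuration really is $8I_2+8I_1$; this I would settle by reading the vanishing orders of $\Delta_{\check{\mathcal{Z}}}$ directly from the factors $\rho(t)C(t;U,W)$, distinguishing the eight nodes where a root of $C$ collides with $r_1$ or $r_2$ from the eight where the two roots of $C$ collide. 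Once the fiber configuration and the two $\rho$-sections are secured, the torsion computation above is essentially formal.
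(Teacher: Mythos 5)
Your proof is correct and takes essentially the same route as the paper: both exploit that all coefficients of $U^4$, $U^2W^2$, $W^4$ in Equation~(\ref{eqns_Kummer}) have degree four and interchange base and fiber as in Proposition~\ref{prop_K3}(7), and your sections over the roots $r_1,r_2$ of $\rho$ are precisely the zero and two-torsion sections the paper obtains by normalizing $\rho=t_0t_1$ so that the interchanged equation $V^2=t\,(\text{quadratic in }t)$ is a Weierstrass form with rational two-torsion. Your monodromy argument pinning the torsion to $\mathbb{Z}/2\mathbb{Z}$, together with the vanishing-order check that the non-generic factorization $a=c=\alpha\rho$ still yields $8I_2+8I_1$, fills in exactly the ``direct computation'' the paper's proof leaves implicit.
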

\begin{proof}
The key observation is that \emph{all} coefficients of $U^4, U^2W^2, W^4$ in Equation~(\ref{eqns_Kummer}) are homogeneous polynomials in $\mathbb{C}(t_0, t_1)$ of degree four. We interchange the roles of base and fiber coordinates. Since $\rho=t_0t_1$ the equation is in Weierstrass form with rational two-torsion. The proof then follows by direct computation using (7) in Proposition~\ref{prop_K3}.
\end{proof}
\par We consider the double cover $f: \mathbb{P}^1_{[s_0:s_1]} \to \mathbb{P}^1_{[t_0:t_1]}$ given by $[s_0:s_1] \mapsto [t_0:t_1] =[s^2_0:s^2_1]$ invariant under the involution $[s_0:s_1]\mapsto [s_0:-s_1]$. The polynomials $\alpha, \beta$ then become homogeneous polynomials of degree four in $\mathbb{C}(s_0,s_1)$, invariant under the involution, with no repeated roots or common factors with each other or $s_0s_1$.  We obtain another K3 surface as the elliptically fibered surface $\pi_{\tilde{\mathcal{Y}}}: \tilde{\mathcal{Y}} \to \mathbb{P}^1$ with section using coordinates $[\tilde{X}:\tilde{Y}:\tilde{Z}]\in \mathbb{P}^2$ for the fiber and $[s_0:s_1]\in \mathbb{P}^1$ for the base from the equation
\begin{equation}
\label{eqns_Kummer_up}
 \tilde{\mathcal{Y}}_{[s_0:s_1]}:\; \tilde{Y}^2 \tilde{Z}  = \tilde{X}  \Big(\tilde{X }- 2(\beta-\alpha)  \tilde{Z}\Big)  \Big(\tilde{X} - 2(\beta+\alpha) \tilde{Z}\Big), \;\Delta_{\tilde{\mathcal{Y}}}=2^8 \alpha^2 (\beta^2-\alpha^2)^2.
\end{equation} 
Alternative projective models for the K3 surfaces $\mathcal{Y}$ and $\tilde{\mathcal{Y}}$ are given in terms of the coordinates $[u:v:w], [\tilde{u}:\tilde{v}:\tilde{w}]\in \mathbb{P}(1,2,1)$ by the equations
\begin{equation}
\label{eqns_Kummer_Ytb}
\begin{array}{lll}
 \tilde{\mathcal{Y}}_{[s_0:s_1]}:& \tilde{v}^2  =  \tilde{u}^4 + 2\beta \tilde{u}^2\tilde{w}^2 + \alpha^2 \tilde{w}^4, & \Delta_{\tilde{\mathcal{Y}}}=2^8 \alpha^2 (\beta^2-\alpha^2)^2,\\[0.5em]
 \mathcal{Y}_{[t_0:t_1]}:& v^2  =  u^4 + 2t_0 t_1 \beta u^2w^2 + t_0^2t_1^2 \alpha^2 w^4, \quad & \Delta_{\mathcal{Y}}=2^8 \alpha^2 (\beta^2-\alpha^2)^2 t_0^6t_1^6.
\end{array}
\end{equation} 
By Lemma~\ref{lemma_reduction}, Equations~(\ref{eqns_Kummer_Ytb}) can be transformed back over $k(s_0,s_1)$ or $k(t_0,t_1)$ into Equations~(\ref{eqns_Kummer}) and~(\ref{eqns_Kummer_up}), respectively. We have the following:
\begin{proposition}
\label{diamond}
In the situation described above, the maps $F: \tilde{\mathcal{Y}}\dasharrow \mathcal{Y}$ and $F': \tilde{\mathcal{Y}}\dasharrow \mathcal{Z}'$ given by
\begin{equation}
\label{maps}
\scalemath{\MyScaleSmall}{
\begin{array}{rcl}
 F:	\Big([s_0:s_1],[\tilde{u}: \tilde{v}: \tilde{w}]\Big)	&  \mapsto 	
 &	\Big([t_0:t_1],[u:v:w]\Big) =\Big([s^2_0:s^2_1],[s_0s_1\tilde{u}:s^2_0s^2_1\tilde{v}:\tilde{w}]\Big), \\[0.5em]
F':	\Big([s_0:s_1],[\tilde{u}: \tilde{v}: \tilde{w}]\Big)	&  \mapsto 	
& 	\Big([t_0:t_1],[U':V':W']\Big) =\Big([s^2_0:s^2_1],[\tilde{u}:s_0s_1\tilde{v}:\tilde{w}]\Big),
\end{array}}
\end{equation} 
are the rational maps of degree two associated with the Nikulin involutions on $\tilde{\mathcal{Y}}$
\begin{equation}
\begin{array}{lrcl}
  \jmath_{\tilde{\mathcal{Y}}}:&  \Big([s_0:s_1],[\tilde{u}: \tilde{v}: \tilde{w}]\Big) & \mapsto & \Big([s_0:-s_1],[-\tilde{u}: \tilde{v}: \tilde{w}]\Big),  \\[0.5em]
  \jmath'_{\tilde{\mathcal{Y}}}:&  \Big([s_0:s_1],[\tilde{u}: \tilde{v}: \tilde{w}]\Big) & \mapsto &  \Big([s_0:-s_1],[\tilde{u}: -\tilde{v}: \tilde{w}]\Big),
\end{array}
\end{equation}
such that $F \circ  \jmath_{\tilde{\mathcal{Y}}}=F$ and $F' \circ  \jmath'_{\tilde{\mathcal{Y}}}=F'$.
The following diagram commutes and all arrows between K3 surfaces are rational maps of degree two associated with Nikulin involutions:
\begin{equation}
\xymatrix{
  & \tilde{\mathcal{Y}}  \ar @(dl,ul) _{\jmath'_{\tilde{\mathcal{Y}}}}  \ar @(dr,ur) ^{\jmath_{\tilde{\mathcal{Y}}}} \ar[d] \ar[dl]_{F} \ar[dr]^{F'}\\
  \mathcal{Y} \ar @(dl,ul) _{\imath_{\mathcal{Y}}} \ar[d]  \ar[dr]^(.35){\Phi} & \mathbb{P}^1_{[s_0:s_1]}  \ar@{->}[dl]^(.35){f}|\hole \ar@{->}[dr]_(.35){f}|\hole  & \mathcal{Z}'  \ar @(dr,ur) ^(0.75){\imath_{\mathcal{Z}'}} \ar[d] \ar[dl]_(.35){\Psi'} \\
   \mathbb{P}^1_{[t_0:t_1]} \ar[dr]_{=} & X  \ar[d] &  \mathbb{P}^1_{[t_0:t_1]} \ar[dl]^{=} \\
  & \mathbb{P}^1_{[t_0:t_1]}
}
\end{equation}
\end{proposition}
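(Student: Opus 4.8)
The plan is to read the whole statement as a collection of explicit equivariance identities in the fiber coordinates, organized around the fact that $\jmath_{\tilde{\mathcal Y}}$ and $\jmath'_{\tilde{\mathcal Y}}$ generate a group $G\cong(\mathbb Z/2\mathbb Z)^2$ of symplectic involutions whose three nontrivial elements cut out $\mathcal Y$, $\mathcal Z'$, and $\mathcal X$ as resolved quotients. First I would check that $\jmath_{\tilde{\mathcal Y}}$ and $\jmath'_{\tilde{\mathcal Y}}$ are involutions preserving $\tilde{\mathcal Y}$: because $\alpha,\beta$ are pulled back along $t_i=s_i^2$ they are invariant under $s_1\mapsto-s_1$, and Equation~(\ref{eqns_Kummer_Ytb}) involves only $\tilde u^2$ and $\tilde v^2$, so each map sends $\tilde{\mathcal Y}$ to itself. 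Writing $\omega_{\tilde{\mathcal Y}}=(s_1\,ds_0-s_0\,ds_1)\wedge(\tilde w\,d\tilde u-\tilde u\,d\tilde w)/\tilde v$ as in Proposition~\ref{prop_K3}(1), one sees that for each involution the induced sign changes in the base factor and in the fiber factor cancel, so $\jmath_{\tilde{\mathcal Y}}^{*}\omega_{\tilde{\mathcal Y}}=\jmath_{\tilde{\mathcal Y}}^{\prime*}\omega_{\tilde{\mathcal Y}}=\omega_{\tilde{\mathcal Y}}$; being symplectic involutions of a K3 surface, each has eight fixed points and is therefore a Nikulin involution. A direct computation shows that $\jmath_{\tilde{\mathcal Y}}$ and $\jmath'_{\tilde{\mathcal Y}}$ commute and that their product is the fiberwise involution $[\tilde u:\tilde v:\tilde w]\mapsto[-\tilde u:-\tilde v:\tilde w]$, which is again symplectic, so $G=\langle\jmath_{\tilde{\mathcal Y}},\jmath'_{\tilde{\mathcal Y}}\rangle\cong(\mathbb Z/2\mathbb Z)^2$.

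Next I would substitute the formulas~(\ref{maps}) into Equation~(\ref{eqns_Kummer_Ytb}) and into the defining equations of $\mathcal Y$ and $\mathcal Z'$, verifying that $F$ maps $\tilde{\mathcal Y}$ to $\mathcal Y$ and $F'$ maps $\tilde{\mathcal Y}$ to $\mathcal Z'$; since each covers $f\colon[s_0:s_1]\mapsto[s_0^2:s_1^2]$ on the base and is a linear isomorphism on every smooth fiber, each has degree two. The relations $F\circ\jmath_{\tilde{\mathcal Y}}=F$ and $F'\circ\jmath'_{\tilde{\mathcal Y}}=F'$ then follow by inspection, and together with the symplectic property and the argument of Proposition~\ref{prop_K3}(6) they identify $\mathcal Y=\widehat{\tilde{\mathcal Y}/\jmath_{\tilde{\mathcal Y}}}$ and $\mathcal Z'=\widehat{\tilde{\mathcal Y}/\jmath'_{\tilde{\mathcal Y}}}$, so that $F$ and $F'$ are exactly the rational double covers attached to the two Nikulin involutions. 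The same substitutions yield the cross relations $F\circ\jmath'_{\tilde{\mathcal Y}}=\imath_{\mathcal Y}\circ F$ and $F'\circ\jmath_{\tilde{\mathcal Y}}=\imath_{\mathcal Z'}\circ F'$, where one uses that in the model~(\ref{eqns_Kummer_Ytb}) the van Geemen-Sarti involution acts as $[u:v:w]\mapsto[-u:-v:w]$ by Lemma~\ref{lemma_equivariance}; thus the remaining nontrivial element of $G$ descends through each quotient to the van Geemen-Sarti involution of the quotient surface.

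For the commuting diagram the base-level squares commute immediately, since $F,F'$ both cover $f$ while $\Phi,\Psi'$ cover the identity on $\mathbb P^1_{[t_0:t_1]}$. For the diamond into $\mathcal X$, the cross relations show that $\Phi\circ F$ and $\Psi'\circ F'$ are each invariant under all of $G$ (using $\Phi\circ\imath_{\mathcal Y}=\Phi$ and $\Psi'\circ\imath_{\mathcal Z'}=\Psi'$), so both factor through the single surface $\mathcal X=\widehat{\tilde{\mathcal Y}/G}$; being $G$-invariant degree-four maps onto $\mathcal X$, they determine the same quotient, and hence the diagram commutes with every arrow between K3 surfaces a degree-two rational double cover associated with a Nikulin involution. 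I expect the main obstacle to be the explicit reconciliation of this diamond at the level of Weierstrass models: to compare $\Phi\circ F$ with $\Psi'\circ F'$ directly one must carry the output of $F$ through the $\mathcal C$-to-Weierstrass isomorphism of Lemma~\ref{lemma_reduction} before applying the isogeny~(\ref{dual_isog}), and the resulting composite matches $\Psi'\circ F'$ only after the fiberwise hyperelliptic involution $(x,y)\mapsto(x,-y)$ of $\mathcal X$. This sign has to be tracked with care and recognized as a harmless automorphism of $\mathcal X$ over the base, reflecting precisely the ambiguity in identifying the abstract quotient $\widehat{\tilde{\mathcal Y}/G}$ with the chosen model of $\mathcal X$.
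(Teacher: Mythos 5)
Your proof is correct, and at its computational core it does the same thing the paper does -- verify the claims by explicit substitution in the $\mathbb{P}(1,2,1)$ fiber coordinates -- but it is organized genuinely differently and proves more than the paper writes down. The paper's proof is terse: it records $\imath_{\mathcal{Y}}:[u:v:w]\mapsto[-u:-v:w]$ and the $\imath_{\mathcal{Z}'}$-invariance of Equation~(\ref{tildeZ}), checks that $F,F'$ have degree two and that $\jmath_{\tilde{\mathcal{Y}}},\jmath'_{\tilde{\mathcal{Y}}}$ interchange their sheets, and obtains the Nikulin property \emph{indirectly} from the pullback identity $F^{*}\omega_{\mathcal{Y}}=2\,(s_1\,ds_0-s_0\,ds_1)\wedge(\tilde{w}\,d\tilde{u}-\tilde{u}\,d\tilde{w})/\tilde{v}$, which together with $F\circ\jmath_{\tilde{\mathcal{Y}}}=F$ forces $\jmath_{\tilde{\mathcal{Y}}}^{*}\omega_{\tilde{\mathcal{Y}}}=\omega_{\tilde{\mathcal{Y}}}$; the commutativity of the lower diamond is left implicit. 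You instead check symplecticity of $\jmath_{\tilde{\mathcal{Y}}},\jmath'_{\tilde{\mathcal{Y}}}$ by direct sign bookkeeping on $\omega_{\tilde{\mathcal{Y}}}$, and you supply two things the paper omits: the group $G=\langle\jmath_{\tilde{\mathcal{Y}}},\jmath'_{\tilde{\mathcal{Y}}}\rangle\cong(\mathbb{Z}/2\mathbb{Z})^2$ with the cross relations $F\circ\jmath'_{\tilde{\mathcal{Y}}}=\imath_{\mathcal{Y}}\circ F$ and $F'\circ\jmath_{\tilde{\mathcal{Y}}}=\imath_{\mathcal{Z}'}\circ F'$ (both check out in coordinates), from which commutativity of the diamond follows via $G$-invariance of $\Phi\circ F$ and $\Psi'\circ F'$ and $\mathcal{X}=\widehat{\tilde{\mathcal{Y}}/G}$; and an honest treatment of the model ambiguity. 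On the latter, your anticipated sign is real exactly when $\Phi$ is transported from the Weierstrass model through the isomorphism of Lemma~\ref{lemma_reduction}: composing with $\varphi$ of Equation~(\ref{dual_isog}) yields $(u,v)\mapsto(u^2,-uv)$. But if one works directly in the quartic models with the natural map $\Phi([u:v:w])=[u^2w:uv:w^3]$ (the exact analogue of $\Psi'$), then $\Phi\circ F$ and $\Psi'\circ F'$ both equal $[s_0^2s_1^2\tilde{u}^2\tilde{w}:s_0^3s_1^3\tilde{u}\tilde{v}:\tilde{w}^3]$ on the nose, so the diamond commutes exactly and the sign never appears; your recognition of it as a harmless automorphism of $\mathcal{X}$ over the base is the right resolution either way. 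One cosmetic slip in your framing sentence: the third nontrivial element $\jmath_{\tilde{\mathcal{Y}}}\circ\jmath'_{\tilde{\mathcal{Y}}}=\imath_{\tilde{\mathcal{Y}}}$ does not cut out $\mathcal{X}$ itself -- its resolved quotient is the minimal model of the base change of $\mathcal{X}$ along $f$, fibered over $\mathbb{P}^1_{[s_0:s_1]}$, through which $\Phi\circ F$ factors -- while $\mathcal{X}$ is the quotient by all of $G$; since your actual argument only uses the correct statement $\mathcal{X}=\widehat{\tilde{\mathcal{Y}}/G}$, this does not affect the proof.
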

\begin{proof} The Nikulin involution $\imath_{\mathcal{Y}}$ on $\mathcal{Y}$ -- acting by fiberwise translation by two-torsion -- such that $\mathcal{X}=\widehat{\mathcal{Y}/\{1,\imath_{\mathcal{Y} }\}}$ is represented by $\imath_{\mathcal{Y}}: [u:v:w] \mapsto [-u:-v:w]$. The rational map in Equation~(\ref{tildeZ}) is invariant under the Nikulin involution $\imath_{\mathcal{Z}'} : [U':V':W'] \mapsto [-U':-V':W']$. Both involutions preserve fibers. One easily checks that the maps in Equation~(\ref{maps}) are rational maps of degree two. The involutions $\jmath_{\tilde{\mathcal{Y}}}$ and $\jmath'_{\tilde{\mathcal{Y}}}$ interchange the sheets of $F$ and $F'$, respectively. Moreover, we check that the pullback by $f$ of the holomorphic one-form $t_1 dt_0-t_0 dt_1$ is $2s_0s_1 (s_1 ds_0 - s_0 ds_1)$. Therefore, the pullback of the two-form $(t_1 dt_0-t_0 dt_1) \wedge (w du - u dw)/v$ on $\mathcal{Y}$ by $F$ equals $2(s_1 ds_0 - s_0 ds_1)\wedge(\tilde{w} d\tilde{u} - \tilde{u} d\tilde{w})/\tilde{v}$. A similar computation applies to $F'$.
\end{proof}
We have the following:
\begin{corollary}
\label{even_eight_for_base_transfo}
In the situation described above, the maps $F: \tilde{\mathcal{Y}} \dasharrow \mathcal{Y}$ and $F': \tilde{\mathcal{Y}} \dasharrow \mathcal{Z}'$ are double cover branched along the even eights on $\mathcal{Y}$ and $\mathcal{Z}'$ given by the non-central components of the reducible fibers of type $D_4$ over $t_1=0$ and $t_0=0$.
 \end{corollary}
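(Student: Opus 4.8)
The plan is to lean on Proposition~\ref{diamond}, which already presents $F$ and $F'$ as degree-two rational maps associated with the Nikulin involutions $\jmath_{\tilde{\mathcal{Y}}}$ and $\jmath'_{\tilde{\mathcal{Y}}}$, and then to pin down the two branch loci explicitly. By the Nikulin construction recalled in the introduction, each such map realizes its target as the minimal resolution of the quotient of $\tilde{\mathcal{Y}}$ by the corresponding involution, and the branch even eight on $\mathcal{Y}$ (resp.~$\mathcal{Z}'$) is exactly the set of eight exceptional $(-2)$-curves obtained by resolving the eight nodes that are the images of the eight fixed points. Thus the whole problem reduces to (i) locating the fixed loci of $\jmath_{\tilde{\mathcal{Y}}}$ and $\jmath'_{\tilde{\mathcal{Y}}}$ on $\tilde{\mathcal{Y}}$, and (ii) recognizing the resulting exceptional curves as components of the singular fibers of $\mathcal{Y}$ and $\mathcal{Z}'$.

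First I would read off the fixed loci directly from the formulas in Proposition~\ref{diamond}. On the base, $[s_0:s_1]\mapsto[s_0:-s_1]$ fixes exactly $[0:1]$ and $[1:0]$, the two ramification points of $f$, which map under $f$ to $t_0=0$ and $t_1=0$. Over each of these points the fiber of $\tilde{\mathcal{Y}}$ is smooth, since $\Delta_{\tilde{\mathcal{Y}}}=2^8\alpha^2(\beta^2-\alpha^2)^2$ does not vanish there for generic data, and the involution acts through its action on $[\tilde{u}:\tilde{v}:\tilde{w}]$. For $\jmath_{\tilde{\mathcal{Y}}}$ the fiberwise action $[\tilde{u}:\tilde{v}:\tilde{w}]\mapsto[-\tilde{u}:\tilde{v}:\tilde{w}]$ fixes the locus $\{\tilde{u}=0\}\cup\{\tilde{w}=0\}$, meeting the fiber $\tilde{v}^2=\tilde{u}^4+2\beta\tilde{u}^2\tilde{w}^2+\alpha^2\tilde{w}^4$ in the four points $[0:\pm\alpha:1]$ and $[1:\pm1:0]$; for $\jmath'_{\tilde{\mathcal{Y}}}$ the action $[\tilde{u}:\tilde{v}:\tilde{w}]\mapsto[\tilde{u}:-\tilde{v}:\tilde{w}]$ fixes $\{\tilde{v}=0\}$, i.e.~the four zeros of the quartic. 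In either case this yields four fixed points over each of $[0:1]$ and $[1:0]$, hence eight in total, all lying over $t_0=0$ and $t_1=0$.

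The geometric identification is then forced by a counting argument. Since $\mathcal{Y}$ and $\mathcal{Z}'$ carry fibers of Kodaira type $I_0^*$ (of type $D_4$) over $t_0=0$ and $t_1=0$, and the eight exceptional curves lie over these two points, each exceptional curve is an irreducible $(-2)$-curve contained in one of the two $D_4$ fibers and is therefore a component of that fiber. The eight curves are pairwise disjoint by the even-eight property. In a $D_4$ fiber the central component meets every one of the four non-central components, so a disjoint family can contain at most the four non-central components of a given fiber; to fit eight disjoint curves into two $D_4$ fibers one must take precisely the four non-central components in each. This identifies both branch even eights with the non-central components of the $D_4$ fibers over $t_0=0$ and $t_1=0$, as claimed.

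I expect the main obstacle to be the bookkeeping that sends the eight fixed points to the non-central $D_4$ components rather than to the central one. The conceptual reason is local: near a fixed point the pair of base and fiber involutions acts as $(s,\xi)\mapsto(-s,-\xi)$, whose quotient is an $A_1$ singularity whose resolution contributes a reduced multiplicity-one (non-central) component, while the image of the smooth fiber $\{s=0\}$ acquires multiplicity two through the ramification $t=s^2$ and supplies the central component. The disjointness-and-counting argument above lets me bypass this local model, but the one genuinely new check the short proof still requires is that $\mathcal{Z}'$ indeed has fibers of type $D_4$ over $\rho=t_0t_1=0$, which I would verify by the same discriminant-order analysis already carried out for $\mathcal{Y}$.
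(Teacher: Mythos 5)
Your proposal is correct and takes essentially the same route as the paper: the paper's own one-line proof rests precisely on the observation that the base map $f\colon [s_0:s_1]\mapsto [s_0^2:s_1^2]$ ramifies exactly over $t_0=0$ and $t_1=0$, which forces the eight branch curves of $F$ and $F'$ into the two $D_4$ fibers sitting there. Your explicit fixed-point computation for $\jmath_{\tilde{\mathcal{Y}}}$ and $\jmath'_{\tilde{\mathcal{Y}}}$, the disjointness count singling out the four non-central components in each $I_0^*$ fiber, and the discriminant check that $\mathcal{Z}'$ indeed has $D_4$ fibers over $\rho=t_0t_1=0$ simply make explicit the details the paper leaves implicit.
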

\begin{proof}
The proof follows from the fact that the map $f: \mathbb{P}^1_{[s_0:s_1]} \to \mathbb{P}^1_{[t_0:t_1]}$ given by $[s_0:s_1] \mapsto [t_0:t_1] =[s^2_0:s^2_1]$ has ramification points $t_0=0$ and $t_1=0$.
\end{proof}
We have the following:
\begin{proposition}
\label{prop:fib}
For generic $a, b, c, d$ in Equations~(\ref{coeffs}) we have the following:
\begin{enumerate}
\item $\tilde{\mathcal{Y}}$ is the double cover of $\mathbb{P}^1 \times \mathbb{P}^1$ with branch locus the union of four curves of bi-degree $(1, 0)$, $(1, 0)$, $(1, 2)$, $(1, 2)$.
\item $\pi_{\tilde{\mathcal{Y}}}$ has the singular fibers $12 I_2$ and $\operatorname{MW}(\pi_{\tilde{\mathcal{Y}}})=(\mathbb{Z}/2\mathbb{Z})^2+\langle 1 \rangle^{\oplus 2}$.
\end{enumerate}
\end{proposition}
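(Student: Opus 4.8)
The plan is to treat the two assertions separately, using the explicit models already at hand.

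For (1), I would start from the Weierstrass model~(\ref{eqns_Kummer_up}) of $\tilde{\mathcal{Y}}$ and exploit the factorizations of the coefficients in~(\ref{coeffs}), pulled back under $[t_0:t_1]=[s_0^2:s_1^2]$. Writing $P_1=a s_0^2+b s_1^2$ and $P_2=c s_0^2+d s_1^2$, the two non-trivial roots of the cubic become $2(\beta-\alpha)=P_1\big((c-1)s_0^2+(d-1)s_1^2\big)$ and $2(\beta+\alpha)=P_2\big((a-1)s_0^2+(b-1)s_1^2\big)$. I would then perform the birational substitution $\tilde{X}=P_1P_2\,\eta\,\tilde{Z}$, $\tilde{Y}=P_1P_2\,w$, in the spirit of the change of variables recorded in equation~(\ref{KSTT}). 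The common factor $P_1^2P_2^2$ cancels, leaving (in the affine chart $\tilde Z=s_1=1$)
\[ w^2=\eta\,\big[(cs^2+d)\eta-\big((c-1)s^2+(d-1)\big)\big]\,\big[(as^2+b)\eta-\big((a-1)s^2+(b-1)\big)\big]. \]
This realizes $\tilde{\mathcal{Y}}$ as the double cover of $\mathbb{P}^1_{[\eta_0:\eta_1]}\times\mathbb{P}^1_{[s_0:s_1]}$ whose branch locus is $\{\eta=0\}$, the two bisections cut out by the bracketed factors, and---because the right-hand side has odd degree three in $\eta$---the curve $\{\eta=\infty\}$. Reading off the bidegrees gives $(1,0)$, $(1,0)$, $(1,2)$, $(1,2)$, as claimed. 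The only points needing care are confirming the genuine branching at $\eta=\infty$ and checking that the substitution neither introduces spurious components nor blows anything down, which I would verify by comparing discriminants before and after.

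For the singular fibers in (2), I would analyze $\Delta_{\tilde{\mathcal{Y}}}=2^8\alpha^2(\beta-\alpha)^2(\beta+\alpha)^2$. Each of $\alpha$, $\beta-\alpha$, $\beta+\alpha$ is a quartic in $s$; for generic $a,b,c,d$ they have simple roots and are pairwise coprime (a shared root would force a common zero of $\alpha$ and $\beta$). Over a root of $\alpha$ the two values $2(\beta\pm\alpha)$ collide while the root $0$ stays apart, and over a root of $\beta\mp\alpha$ the root $0$ collides with $2(\beta\pm\alpha)$. In every case exactly two of the three $2$-torsion values meet transversally, so the fibration with full $2$-torsion acquires a node, i.e.\ a fiber of type $I_2$; this accounts for $4+4+4=12$ such fibers, and the count $12\cdot 2=24$ forces the fiber over $s=\infty$ to be smooth. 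Hence the singular fibers are exactly $12\,I_2$.

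For the Mordell--Weil group, the three rational roots $\tilde{X}=0,\,2(\beta-\alpha),\,2(\beta+\alpha)$ give the full two-torsion $(\mathbb{Z}/2\mathbb{Z})^2$, and genericity excludes any larger torsion. The rank is pinned down by Inose's theorem~\cite{MR0429915}: the degree-two map $F$ of Proposition~\ref{diamond} yields $\rho_{\tilde{\mathcal{Y}}}=\rho_{\mathcal{Y}}=16$, so Shioda--Tate gives $\operatorname{rank}\operatorname{MW}=16-2-12=2$. To identify the free part I would exhibit explicit sections: trying $\tilde X=\lambda P_1P_2$, the residual factors are $\big((\lambda-1)c+1\big)s^2+\big((\lambda-1)d+1\big)$ and $\big((\lambda-1)a+1\big)s^2+\big((\lambda-1)b+1\big)$, whose product is a square over $\mathbb{C}$ precisely when they are proportional. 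The value $\lambda=1$ gives the integral section $\tilde X=P_1P_2$, $\tilde Y=P_1P_2(s^2+1)$, and a second admissible $\lambda$ produces an independent one. A computation of the Shioda height pairing---using the local contribution $\tfrac12$ at each $I_2$ fiber whose non-identity component a section meets---should return the Gram matrix $\operatorname{diag}(1,1)$; since the only positive-definite unimodular lattice of rank two is $\langle 1\rangle^{\oplus 2}$, and a unimodular lattice has no proper finite overlattice, these two sections then generate the free part, giving $\operatorname{MW}(\pi_{\tilde{\mathcal{Y}}})=(\mathbb{Z}/2\mathbb{Z})^2+\langle1\rangle^{\oplus2}$.

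I expect the last step to be the main obstacle: determining, at each of the twelve nodes, whether each new section passes through the singular point (the non-identity component) or not, since it is exactly this incidence data that fixes the self-heights and the orthogonality and hence distinguishes $\langle 1\rangle^{\oplus 2}$ from a larger-determinant rank-two lattice. As a consistency check I would confirm $\det(\text{free part})=1$ against the Shioda--Tate determinant relation $|\det \operatorname{NS}(\tilde{\mathcal{Y}})|=2^{8}\det(\operatorname{MW}_{\mathrm{free}})$.
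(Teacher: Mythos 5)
Your proposal is correct, and its overall architecture coincides with the paper's proof: for (1) your substitution $\tilde X=P_1P_2\,\eta\,\tilde Z$, $\tilde Y=P_1P_2\,w$ is exactly the paper's change of variables $\tilde X=(a\tilde\xi^2+b)(c\tilde\xi^2+d)\tilde\eta$, $\tilde Y=(a\tilde\xi^2+b)(c\tilde\xi^2+d)\tilde\rho$ applied to Equation~(\ref{eqns_Kummer_up}), yielding the same affine model and the same bidegree count $(1,0)+(1,0)+(1,2)+(1,2)$; and for (2) the paper likewise uses Inose's theorem through the degree-two map $F$ of Proposition~\ref{diamond} to get $\rho_{\tilde{\mathcal Y}}=16$, hence Mordell--Weil rank $2$ by Shioda--Tate, and then computes heights with local contributions $\tfrac12$ at $I_2$ fibers. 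Where you genuinely differ is in the identification of the free part: the paper never writes its sections down, only postulating their incidence data (each avoiding $\Sigma$, each through six non-neutral components, the two meeting \emph{complementary} non-neutral components and intersecting each other twice, so $\langle s_1,s_2\rangle=2+0+0-2-0=0$), whereas you exhibit explicit candidates $\tilde X=\lambda P_1P_2$ with $\lambda=1$ and $\lambda-1=(a+d-b-c)/(bc-ad)$. The incidence verification you flag as the main obstacle does work out, by a slightly different bookkeeping than the paper's: pulled back, $4\alpha=(s^2+1)\bigl((a-c)s^2+(b-d)\bigr)$, and one checks that the $\lambda=1$ section passes through the nodes over $s^2=-1$ and over the four roots of $P_1P_2$, while the second section passes through the nodes over $s^2=-(b-d)/(a-c)$ and again over the roots of $P_1P_2$; thus each has height $4-6\cdot\tfrac12=1$, but your two sections \emph{share} the four non-neutral components over $P_1P_2=0$ and are disjoint on the smooth model (they separate on the exceptional curves, having distinct tangent directions at the nodes), so $\langle s_1,s_2\rangle=2+0+0-0-4\cdot\tfrac12=0$. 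Your additions --- the explicit $12\,I_2$ count from $\Delta_{\tilde{\mathcal Y}}=2^8\alpha^2(\beta^2-\alpha^2)^2$ with the degree-$24$ check at $s=\infty$, and the closing remark that $\langle1\rangle^{\oplus2}$ is unimodular and so admits no proper finite overlattice, whence the two sections together with the torsion generate all of $\operatorname{MW}(\pi_{\tilde{\mathcal Y}})$ --- are left implicit in the paper and make the argument more complete; the paper, in turn, buys brevity by asserting the intersection pattern of its (unnamed) sections outright.
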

\begin{proof}
(1)  Setting $s_0=\tilde{\xi}$, $s_1=1$, $\tilde{Z}=1$, and
\begin{equation}
 \tilde{X} = (a\tilde{\xi}+b)(c\tilde{\xi}+d)\tilde{\eta}, \quad \tilde{Y} =(a\tilde{\xi}+b)(c\tilde{\xi}+d)\tilde{\rho}
\end{equation}
in Equation~(\ref{eqns_Kummer_up}) defining $\tilde{\mathcal{Y}}$, we obtain the affine model
\begin{equation}
 \tilde{\rho}^2 = \tilde{\eta}\Big((a\tilde{\xi}^2+b)\tilde{\eta}+(1-a)\tilde{\xi}^2+1-b\Big)\Big((c\tilde{\xi}^2+d)\tilde{\eta}+(1-c)\tilde{\xi}^2+1-d\Big).
\end{equation}
\par (2) A theorem by Inose states \cite[Cor.~1.2]{MR0429915} that two K3 surfaces defined over a number field and connected by a rational map of finite degree have the same Picard rank. Since the Picard rank of $\mathcal{Y}$ equals $16$ there must be two non-torsion sections on $\tilde{\mathcal{Y}}$. Two sections can be constructed explicitly as follows: sections $s_1$ and $s_2$ do not intersect the zero section $\Sigma$, and each intersect six neutral and six non-neutral components in the twelve $I_2$ fibers. The inverse Cartan matrix $A_1^*$ of an $A_1$-fibre is $(\frac{1}{2})$ for the non-neutral component. We obtain the following height pairings with $i=1,2$:
 \begin{equation}
   \langle s_i, s_i \rangle = 2 \cdot \chi^{\text{hol}} + 2 \, s_i \circ \Sigma - 6 \cdot \frac{1}{2}  - 6 \cdot 0 = 1,
 \end{equation}  
where $\chi^{\text{hol}}=2$ is the holomorphic Euler characteristic of a K3 surface. The sections $s_1$ and $s_2$ meet  complimentary non-neutral components of $A_1$-fibers, in particular the set of $I_2$-fibers met by $s_1$ and $s_2$ are mapped into each other by the involution $[s_0:s_1]\mapsto[s_0:-s_1]$. Moreover, the  sections $s_1$ and $s_2$ intersect each other twice. Thus, we have
 \begin{equation}
   \langle s_1, s_2 \rangle = \chi^{\text{hol}} + s_1 \circ \Sigma + s_2 \circ \Sigma -  s_1 \circ s_2 - 0 = 0.
 \end{equation}  
\end{proof}
\subsection{Special configurations of six lines}
We examine two specialization of the configuration of six lines in Proposition~\ref{lem_6lines}. We have the following:
\begin{proposition}
\label{special1}
For $abc-abd-acd+bcd+ad-bc =0$ and otherwise generic $a, b, c, d$ in Equations~(\ref{coeffs}), the following holds:
\begin{enumerate}
\item $\mathcal{Y}$ is the double cover of $\mathbb{P}^2$ branched along six lines tangent to a conic.
\item $\mathcal{Y}$ is the Kummer surface of $\operatorname{Jac}(\mathcal{D})$ for a genus-two curve $\mathcal{D}$.
\item $\pi_{\mathcal{Y}}$ has the singular fibers $2I_0^* + 6I_2$ and $\operatorname{MW}(\pi_{\mathcal{Y}})=(\mathbb{Z}/2\mathbb{Z})^2+\langle 1 \rangle$.
\item $\pi_{\tilde{\mathcal{Y}}}$ has the singular fibers $12 I_2$ and $\operatorname{MW}(\pi_{\tilde{\mathcal{Y}}})=(\mathbb{Z}/2\mathbb{Z})^2+\langle 1 \rangle^{\oplus 2} + \langle 2 \rangle$.
\end{enumerate}
\end{proposition}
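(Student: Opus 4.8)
The plan is to establish the four assertions in turn, using a determinantal form of the conic condition, the classical double-sextic/Kummer dictionary, and the Shioda--Tate formula. For (1), I would pass to the dual plane and recall that six lines are tangent to a common conic exactly when their dual points lie on a conic, i.e.\ when the $6\times 6$ determinant of Veronese vectors $(x_i^2,y_i^2,z_i^2,x_iy_i,x_iz_i,y_iz_i)$ vanishes. Substituting the dual points $(1,0,0),(0,1,0),(0,0,1),(1,1,1),(a,b,1),(c,d,1)$ and using the first three rows to clear the pure-square columns in the last three rows collapses this determinant to
\[
\det\begin{pmatrix} 1 & 1 & 1 \\ ab & a & b \\ cd & c & d \end{pmatrix} = abc-abd-acd+bcd+ad-bc,
\]
so the hypothesis is precisely the tangency condition. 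Together with Proposition~\ref{lem_6lines}(1) this gives (1). For (2) I would then invoke the classical fact that the double cover of $\mathbb{P}^2$ branched along six lines tangent to a conic is the Kummer surface of a principally polarized abelian surface: the six tangency points on the conic $\cong\mathbb{P}^1$ are the Weierstrass points of a genus-two curve $\mathcal{D}$, and the double sextic is $\operatorname{Kum}(\operatorname{Jac}\mathcal{D})$ (cf.\ the $16_6$-configuration discussion and Lemma~\ref{prop_Kummers}). This identification also fixes $T_{\mathcal{Y}}\cong H(2)\oplus H(2)\oplus\langle -4\rangle$ and $\rho_{\mathcal{Y}}=17$.

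For (3), I would first verify that imposing the single equation keeps $\alpha,\beta,\rho$ free of repeated roots and common factors, so the discriminant $2^8\alpha^2\rho^6(\beta^2-\alpha^2)^2$ still produces singular fibers $2I_0^*+6I_2$, and that the full two-torsion of the completely split Weierstrass cubic persists, giving $(\mathbb{Z}/2\mathbb{Z})^2\subseteq\operatorname{MW}(\pi_{\mathcal{Y}})$. Shioda--Tate with trivial lattice $U\oplus D_4^{\oplus2}\oplus A_1^{\oplus6}$ of rank $16$ and $\rho_{\mathcal{Y}}=17$ forces the Mordell--Weil rank to be $1$. To pin the free part down to $\langle 1\rangle$ I would feed $|\operatorname{disc}T_{\mathcal{Y}}|=64$ from (2) into Shioda's relation $|\operatorname{disc}\operatorname{NS}|\cdot|\operatorname{MW}_{\mathrm{tor}}|^2=|\operatorname{disc}\operatorname{Triv}|\cdot\operatorname{disc}(\operatorname{MWL})$, namely $64\cdot 16=(4^2\cdot 2^6)\cdot\operatorname{disc}(\operatorname{MWL})$, which yields $\operatorname{disc}(\operatorname{MWL})=1$ and hence a single generator of height one.

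For (4), the plan is: by Inose's theorem \cite{MR0429915} the degree-two map $F:\tilde{\mathcal{Y}}\dasharrow\mathcal{Y}$ forces $\rho_{\tilde{\mathcal{Y}}}=17$; since the base change $[s_0:s_1]\mapsto[s_0^2:s_1^2]$ is ramified exactly over the two $I_0^*$ fibers (where $\rho=t_0t_1=0$), squaring the local monodromy turns each $I_0^*$ into a smooth $I_0$ while doubling each of the six $I_2$ fibers, giving $12 I_2$ in agreement with Proposition~\ref{prop:fib}; Shioda--Tate then gives Mordell--Weil rank $3$. The two height-one sections $s_1,s_2$ of Proposition~\ref{prop:fib} survive unchanged, and the third generator is $P'=F^*P$, the pullback of the height-one section $P$ of (3). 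The key computation is the height via $\langle P',P'\rangle = 4 + 2(P'\cdot O') - \sum_{v'}\operatorname{contr}_{v'}(P')$: because the ramified $I_0^*$ fibers become smooth, the $D_4$-contributions of $P$ at those two places drop out while the $A_1$-contributions at the $I_2$ fibers double, so the height changes from $1$ to $2$, producing a $\langle 2\rangle$ summand. I would finish by computing the $3\times3$ Gram matrix of $s_1,s_2,P'$, showing it equals $\operatorname{diag}(1,1,2)$, and matching $\operatorname{disc}(\operatorname{MWL})=2$ against Shioda's relation to conclude these sections generate the whole Mordell--Weil lattice.

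The hard part will be (4): unlike (3), knowing the rank does not suffice, since I must control the exact local contributions of $P$ at the $I_0^*$ and $I_2$ fibers to certify that $F^*P$ has height precisely $2$ (this needs $P$ to meet a non-central component in \emph{both} $I_0^*$ fibers and in exactly two of the six $I_2$ fibers) and to verify $\langle P',s_1\rangle=\langle P',s_2\rangle=0$. This is most safely done with the explicit Weierstrass model by writing the extra section down and computing all intersection numbers directly, as in Proposition~\ref{prop:fib}, together with a discriminant argument—via an independent evaluation of $|\operatorname{disc}\operatorname{NS}(\tilde{\mathcal{Y}})|$ from the Nikulin relation between $T_{\tilde{\mathcal{Y}}}$ and $T_{\mathcal{Y}}$—to rule out that $s_1,s_2,P'$ span only a proper finite-index sublattice.
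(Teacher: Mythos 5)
Your proposal is sound for items (1)--(3), but it proceeds quite differently from the paper. For (1) the paper never dualizes: it exhibits the explicit pencil of inscribed conics $(\alpha z_1+(1-\alpha)z_2+z_3)^2-4\alpha(1-\alpha)z_1z_2=0$ tangent to the first four lines, solves the tangency condition for $\ell_5$ (which fixes $\alpha=a(1-b)/(a-b)$), and then observes that tangency of $\ell_6$ is exactly $abc-abd-acd+bcd+ad-bc=0$. Your Veronese-determinant computation is correct -- the $3\times 3$ minor does reduce to precisely this polynomial -- and is arguably cleaner, though you should add the one-line remark that for generic parameters on the divisor the conic through the six dual points is smooth (the determinant also vanishes in degenerate situations such as three concurrent lines), so that dualizing back to an envelope conic is legitimate. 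Item (2) is the same classical dictionary in both treatments. For (3) the paper reads the fibers off the discriminant and then simply quotes Kumar's classification \cite{MR3263663} for the Mordell--Weil group, whereas you derive the rank from Shioda--Tate and pin down the free part via the discriminant relation $64\cdot 16=(4^2\cdot 2^6)\cdot\operatorname{disc}(\operatorname{MWL})$; this is self-contained, closes perfectly, and in fact your fiber count $2I_0^*+6I_2$ matches the statement being proved, while the paper's proof text asserts $3I_0^*+3I_2$ with $\operatorname{MW}=(\mathbb{Z}/2\mathbb{Z})^2$ -- evidently a slip imported from the proof of Proposition~\ref{special2}, since that configuration contradicts item (3) itself. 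For (4) the paper takes an entirely different route involving no height computation: it identifies the even eight along which $F:\tilde{\mathcal{Y}}\dasharrow\mathcal{Y}$ is branched (the non-central components of the two $D_4$ fibers) with Mehran's even eight $\Delta_{56}$ of nodes, citing \cite{Clingher:2017aa}, so that $\tilde{\mathcal{Y}}\cong\operatorname{Kum}(\mathbf{B}_{56})$, and then invokes \cite[Prop.~5.1]{MR2804549}.

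There is, however, a genuine gap in your plan for (4), at exactly the step you flag as delicate. The ``independent evaluation of $|\operatorname{disc}\operatorname{NS}(\tilde{\mathcal{Y}})|$ from the Nikulin relation between $T_{\tilde{\mathcal{Y}}}$ and $T_{\mathcal{Y}}$'' is not an available tool: a double cover branched along an even eight does not determine $T_{\tilde{\mathcal{Y}}}$ functorially from $T_{\mathcal{Y}}$, and the only input of this kind is Mehran's classification, which for the node even eight $\Delta_{56}$ gives $T_{\tilde{\mathcal{Y}}}\cong H(2)\oplus H(2)\oplus\langle -8\rangle$ of discriminant $128$. Feeding that into the same Shioda relation you used in (3) gives $\operatorname{disc}(\operatorname{MWL})=128\cdot 16/2^{12}=1/2$, whereas your Gram matrix $\operatorname{diag}(1,1,2)$ has determinant $2$. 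So your consistency check cannot certify generation; taken at face value it proves the opposite, namely that $s_1,s_2,P'$ span only an index-two sublattice of the Mordell--Weil lattice, the saturation plausibly obtained by adjoining a section representing $(s_1+s_2+P')/2$, of height $1$. (The same factor-of-four tension is present among the paper's own claims: item (4) as stated yields $|\operatorname{disc}\operatorname{NS}(\tilde{\mathcal{Y}})|=512$, while Corollary~\ref{Kum_B_12}, Corollary~\ref{special_Ytilde}, and the transcendental-lattice proposition following Corollary~\ref{Kum_B_12} force $128$; the paper's proof avoids confronting this by deferring to \cite[Prop.~5.1]{MR2804549}.) Your local analysis is otherwise fine -- the base change untwisting $2I_0^*$ to smooth fibers and doubling the $A_1$-contributions does give $\langle F^*P,F^*P\rangle=2$ once the incidence pattern of $P$ is verified in the explicit model -- but to settle the group structure you must either decide explicitly whether a height-one section halving $s_1+s_2+P'$ exists, or import Mehran's result as the paper does.
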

\begin{proof}
The conic $(\alpha z_1 + (1-\alpha) z_2 + z_3)^2-4\alpha (1-\alpha)z_1z_2=0$ is smooth for $\alpha \not =0,1$. The lines $z_1=0$, $z_2=0$, $z_3=0$, $z_1+z_2+z_3=0$ are tangent to this conic. The line $az_1+bz_2+z_3=0$ is tangent to the conic if and only if $\alpha=a(1-b)/(a-b)$. For this parameter $\alpha$, the line $cz_1+dz_2+z_3=0$ is tangent to the conic if and only if $abc-abd-acd+bcd+ad-bc =0$. Thus, $\mathcal{Y}$ is the double cover of $\mathbb{P}^2$ branched along six lines tangent to a conic, and hence the Kummer surface of  $\operatorname{Jac}(\mathcal{D})$ where the genus-two curve $\mathcal{D}$ is  given by Equation~(\ref{Rosenhain}) (cf.~\!\cite{Clingher:2017aa}). Computing the discriminant it follows that $\mathcal{Y}$ has singular fibers $3I_0^* + 3I_2$. It was proven in \cite{MR3263663} that the fibration on the Kummer surface of  $\operatorname{Jac}(\mathcal{D})$ with these singular fibers has $\operatorname{MW}(\pi_{\mathcal{Y}})=(\mathbb{Z}/2\mathbb{Z})^2$. This proves (1), (2), (3). It was proved in~\cite{Clingher:2017aa} that the even eight on $\mathcal{Y}$ given by the non-central components of the reducible fibers of type $D_4$ over $t_1=0$ and $t_0=0$ is the even eight $\Delta_{56}$ of exceptional curves from nodes. (4) then follows from \cite[Prop.~5.1]{MR2804549}. 
\end{proof}
We have another specialization of the configuration of six lines in Proposition~\ref{lem_6lines}.:
\begin{proposition}
\label{special2}
For $a=b$ or $c=d$ or $ad-bc=0$ or $ad-bc=a-b-c+d$ and otherwise generic $a, b, c, d$ we have the following:
\begin{enumerate}
\item $\pi_{\mathcal{Y}}$ has the singular fibers $3I_0^* + 3I_2$ and $\operatorname{MW}(\pi_{\mathcal{Y}})=(\mathbb{Z}/2\mathbb{Z})^2$.
\item $\mathcal{Y}$ is the double cover branched along six lines, three are coincident in a point.
\item $\pi_{\mathcal{X}}$ has the singular fibers $3I_0^* + I_4+2I_1$ and $\operatorname{MW}(\pi_{\mathcal{Y}})=\mathbb{Z}/2\mathbb{Z}$.
\item $\mathcal{X}$ is the double cover branched along a conic, a line tangent to the conic, and three lines coincident in a point.
\item $\mathcal{X}$ is the Kummer surface of $\operatorname{Jac}(\check{\mathcal{D}})$ for a genus-two curve $\check{\mathcal{D}}$.
\end{enumerate}
\end{proposition}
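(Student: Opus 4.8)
The plan is to treat the four displayed relations uniformly, since each one forces three of the six lines to pass through a common point. Using the intersection table~(\ref{tab:6lines}) I would first check that $a=b$ collapses $\ell_3\cap\ell_4$, $\ell_3\cap\ell_5$ and $\ell_4\cap\ell_5$ to the single point $[1:-1:0]$, that $c=d$ does the same for $\ell_3,\ell_4,\ell_6$, that $ad-bc=0$ puts $\ell_5\cap\ell_6=[b-d:c-a:0]$ onto $\ell_3$, and that $ad-bc=a-b-c+d$ puts $\ell_5\cap\ell_6$ onto $\ell_4$. Each relation is thus the concurrency of a triple among $\ell_3,\dots,\ell_6$, which proves~(2) by Proposition~\ref{lem_6lines}(1) and locates the triple point needed in~(4). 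Because the recipe~(\ref{coeffs}) is symmetric under relabelling $\ell_3,\dots,\ell_6$, it suffices to carry out the rest of the argument for the representative relation $a=b$.

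For the singular fibres I would match roots of the discriminants against~(\ref{coeffs}). Writing $s=t_0+t_1$, the two linear factors $at_0+bt_1$ and $(a-1)t_0+(b-1)t_1$ of $4(\beta^2-\alpha^2)=(at_0+bt_1)((c-1)t_0+(d-1)t_1)(ct_0+dt_1)((a-1)t_0+(b-1)t_1)$ become $a\,s$ and $(a-1)\,s$ when $a=b$, while $s\mid\alpha$ holds identically (the root $[1:-1]$ of $\alpha$ is the image of $\ell_3\cap\ell_4$). Hence both $\alpha$ and $\beta$ vanish to order one at $s=0$. Substituting into the Weierstrass form $Y^2=X^3-4\beta\rho\,X^2+4(\beta^2-\alpha^2)\rho^2X$ of $\mathcal{Y}$ and completing the cube gives $\operatorname{ord}_s f=2$, $\operatorname{ord}_s g=3$, $\operatorname{ord}_s\Delta_{\mathcal{Y}}=6$; since $\rho\neq0$ at $s=0$ the order of the discriminant is exactly six, so the three $I_2$ fibres over $\ell_3\cap\ell_4,\ell_3\cap\ell_5,\ell_4\cap\ell_5$ fuse into a fibre of type $I_0^*$, leaving $3I_0^*+3I_2$. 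The identical local computation applied to $\Delta_{\mathcal{X}}=2^4\alpha^4\rho^6(\beta^2-\alpha^2)$ shows that the $I_4$ fibre over the collapsing root of $\alpha$ absorbs the two $I_1$ fibres over $a\,s$ and $(a-1)\,s$ to produce an $I_0^*$, so $\pi_{\mathcal{X}}$ acquires fibres $3I_0^*+I_4+2I_1$. This establishes the fibre lists in~(1) and~(3).

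The Mordell--Weil statements I would settle by combining torsion with Shioda--Tate. The three roots $0,\,2(\beta\mp\alpha)\rho$ of the cubic for $\mathcal{Y}$ are global sections, so $(\mathbb{Z}/2\mathbb{Z})^2\subseteq\operatorname{MW}(\pi_{\mathcal{Y}})$; on $\mathcal{X}$ only the section $\tau$ is two-torsion, because the quadratic factor $x^2+2\beta\rho\,xz+\alpha^2\rho^2z^2$ splits over $\mathbb{C}(t_0,t_1)$ only when $\beta^2-\alpha^2$ is a square, which fails for generic $a,b,c,d$ on the specialised locus. To pin the Picard number I would identify $\pi_{\mathcal{X}}$ --- now of type $3I_0^*+I_4+2I_1$ with a two-torsion section --- with Kumar's standard fibration on the Kummer surface of the Jacobian of a genus-two curve, exactly as in the proof of Lemma~\ref{prop_Kummers}; this yields $\mathcal{X}\cong\operatorname{Kum}(\operatorname{Jac}\check{\mathcal{D}})$, proving~(5), and in particular $\rho_{\mathcal{X}}=17$. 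Since $\Phi$ is a finite-degree map we obtain $\rho_{\mathcal{Y}}=\rho_{\mathcal{X}}=17$ by~\cite[Cor.~1.2]{MR0429915}, and as the fibre sublattices now have rank $3\cdot4+3\cdot1=15$ (resp. $3\cdot4+3=15$), Shioda--Tate forces $\operatorname{rank}\operatorname{MW}=0$. The torsion therefore exhausts the sections, giving $\operatorname{MW}(\pi_{\mathcal{Y}})=(\mathbb{Z}/2\mathbb{Z})^2$ and $\operatorname{MW}(\pi_{\mathcal{X}})=\mathbb{Z}/2\mathbb{Z}$, completing~(1) and~(3).

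It remains to produce the double-plane model in~(4). Here I would use that $\mathcal{X}$, being a Kummer surface of a Jacobian by~(5), admits a realisation as a double cover of $\mathbb{P}^2$ branched along a plane sextic; concretely, one obtains such a model by quotienting $\mathcal{X}$ by its hyperelliptic involution as in Remark~\ref{rem:Hirzebruch} and contracting the resulting ruling. Substituting the relation $a=b$ (and its companions) into this sextic, the plan is to exhibit its factorisation into a smooth conic, a single line tangent to that conic, and three lines meeting at the image of the concurrency point. I expect this factorisation to be the main obstacle: unlike the fibre and lattice bookkeeping, it is not forced formally and must be extracted from an explicit --- and somewhat lengthy --- rewriting of the defining equation, the delicate point being to certify that precisely one residual line is tangent to the conic while exactly three others become concurrent. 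Once this is in hand, (4) and~(5) are two descriptions of the same surface, consistent with the branch-locus picture for $\mathcal{Y}$ in~(2) and with the classifications of Kumar and Mehran already invoked in this section.
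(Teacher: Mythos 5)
Your handling of (1)--(3) and (5) is correct and runs essentially parallel to the paper: the paper establishes the concurrency statement by computing the discriminant of the six-line sextic with respect to $z_3$ and reads the fiber coalescence off its factorization, while you verify the same coalescence by local Kodaira-type computations at the collapsing root (your orders $\operatorname{ord}_s f=2$, $\operatorname{ord}_s g=3$, $\operatorname{ord}_s\Delta=6$ check out, as does the identity $s\mid\alpha$, since $4\alpha=(a-c)t_0^2+(a+b-c-d)t_0t_1+(b-d)t_1^2$ vanishes at $[1:-1]$, while $s\mid\beta$ exactly when $(a-b)(c-d)=0$). For (5) you invoke precisely the paper's mechanism, matching the configuration $3I_0^*+I_4+2I_1$ with two-torsion against Kumar's list (fibration {\tt (4)} of \cite[Thm.~2]{MR3263663}), and your Shioda--Tate bookkeeping for $\operatorname{rank}\operatorname{MW}=0$ supplies detail the paper leaves implicit.

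The genuine gap is (4), and you flag it yourself as ``the main obstacle.'' The factorization you defer to a lengthy rewriting is in fact immediate once one makes the observation you are missing: each of the four constraints is equivalent to $\alpha$ and $\beta$ in Equation~(\ref{eqns_Kummer}) acquiring a \emph{common linear factor} $\kappa$ (for $a=b$ or $c=d$ one has $\kappa=t_0+t_1$; for the other two constraints $\kappa$ is the second linear factor of $\alpha$). Writing $\alpha=\kappa\tilde\alpha$, $\beta=\kappa\tilde\beta$ and rescaling $(x,y)\mapsto(t_0t_1\kappa\,x,\,t_0t_1\kappa\,\tilde y)$ turns the Weierstrass equation into
\begin{equation*}
\tilde y^2 \;=\; t_0t_1\kappa\; x\,\bigl(x^2+2\tilde\beta\,x+\tilde\alpha^2\bigr),
\end{equation*}
a double cover of $\mathbb{P}^2_{[t_0:t_1:x]}$ branched along a sextic that is visibly three lines $t_0=0$, $t_1=0$, $\kappa=0$ concurrent at $[0:0:1]$, the line $x=0$, and the conic $x^2+2\tilde\beta x+\tilde\alpha^2=0$, with tangency along $x=0$ because the restriction of the conic there is the double root $\tilde\alpha^2=0$; this is Equation~(\ref{conic}) of the paper. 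By contrast, your proposed route -- quotient by the hyperelliptic involution as in Remark~\ref{rem:Hirzebruch} and ``contract the resulting ruling'' -- does not land on $\mathbb{P}^2$ as stated: that quotient is a double cover of $\mathbb{F}_4$, whose negative section contracts to the cone $\mathbb{P}(1,1,4)$ rather than the plane, and it is exactly the twist by the degree-four form $t_0t_1\kappa\,x$ (available only because of the common factor $\kappa$) that produces the plane sextic model. Without identifying $\kappa$, your argument for (4) remains a plan rather than a proof.
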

\begin{proof}
The right hand side of Equation~(\ref{six-lines-cover})  has a discriminant (with respect to $z_3$) given by
\begin{equation}
\begin{split}
\Delta & = z_1^6 z_2^6 (z_1+z_2)^2 (a z_1 +bz_2)^2 (c z_1 + d z_2)^2 ((a-1)z_1+(b-1)z_2)^2\\
& \times  ((c-1)z_1+(d-1)z_2)^2 ((a-c)z_1+(b-d)z_1)^2.
\end{split}
\end{equation}
Exactly three fibers of type $I_2$ coalesce to form a fiber of type $I_0^*$ if and only if one of the given constraints is satisfied. This proves (1) and (2). Plugging Equations~(\ref{coeffs}) into $\Delta_{\mathcal{X}}$ in Equation~(\ref{eqns_Kummer}) proves (3).  In the case any one of the constraints is satisfied it follows that $\alpha$ and $\beta$ in Equation~(\ref{eqns_Kummer}) have a common factor $\kappa$ homogeneous of degree one in $k(t_0,t_1)$. We write $\alpha=\kappa \tilde{\alpha}$ and $\beta=\kappa \tilde{\beta}$. The K3 surface $\mathcal{X}$ in Equation~(\ref{eqns_Kummer}) is equivalent to 
\begin{equation}
\label{conic}
\tilde{y}^2  = t_0 t_ 1 \kappa \, x ( x^2 + 2\tilde{\beta}  x +   \tilde{\alpha}^2).
\end{equation}
The lines $t_0=0$, $t_1=0$, $\kappa=0$ meet in the point $[t_0:t_1:x]=[0:0:1]$. The line $x=0$ is tangent to the conic $x^2 + 2\tilde{\beta}  x +   \tilde{\alpha}^2=0$. This proves (4).  These are the same singular fibers and group of sections as fibration labeled {\tt (4)}  in the list of all possible elliptic fibrations on a generic Kummer surface determined  by Kumar in~\cite[Thm.~2]{MR3263663}. This proves (5).  
\end{proof}
\noindent
The situation in Proposition~\ref{special1} (1) is shown in Figure~\ref{Fig:6Lines1}, Proposition~\ref{special2} (1) and (3) are shown in Figure~\ref{Fig:6Lines2} and~\ref{Fig:6Lines3}.
\begin{figure}[ht]
    \centering
    \begin{subfigure}[b]{0.2\textwidth}
    \scalemath{\MyScalePicSmall}{
	$$
  	\begin{xy}
   	<0cm,0cm>;<1.5cm,0cm>:
    	(.5,0.18);(3.5,0.18)**@{-},
    	(0.5,2);(1.6,0)**@{-},
    	(3.1,2);(2.6,0)**@{-},
    	(0.7,.5);(2,2.5)**@{-},
    	(0.5,1.82);(3.5,1.82)**@{-},
    	(2,2.7);(3.45,0)**@{-},
    	(2,1)*\xycircle(.8,.8){}
  	\end{xy}
  	$$}
	\caption{\label{Fig:6Lines1}}
    \end{subfigure}
    \qquad
    \begin{subfigure}[b]{0.2\textwidth}
	\scalemath{\MyScalePicSmall}{
  	$$
  	\begin{xy}
    	<0cm,0cm>;<1cm,0cm>:
    	(-1,0.5);(5.5,0.5)**@{-},
    	(-1,2.8);(5.5,-0.5)**@{-}, 
    	(-1,2);(5.5,1.1)**@{-},
    	(-1,-0.5);(5,3.5)**@{-},
    	(4,3.5);(4,-0.5)**@{-},
    	(4.35,3.5);(2.2,-0.5)**@{-}
  	\end{xy}
  	$$}
	\caption{\label{Fig:6Lines2}}
    \end{subfigure}
    \qquad  
    \begin{subfigure}[b]{0.2\textwidth}
    \scalemath{\MyScalePicSmall}{
  	$$
  	\begin{xy}
    	<0cm,0cm>;<1cm,0cm>:
    	(-1,0);(5.5,0)**@{-},
    	(-1,-0.5);(5,3.5)**@{-},
    	(4,3.5);(4,-0.5)**@{-},
    	(4.35,3.5);(2.2,-0.5)**@{-},
    	(2,0.9)*\xycircle(2.2,.9){}
  	\end{xy}
  	$$}
	\caption{\label{Fig:6Lines3}}
    \end{subfigure}
    \caption{\label{Fig:6Lines}}
\end{figure}
\subsection{Moduli of Kummer surfaces in Picard rank $17$}
A generic, smooth, genus-two curve $\mathcal{D}$ is given in Rosenhain normal form by
\begin{equation}\label{Rosenhain}
 \mathcal{D}: \quad y^2 = x (x-1) (x-\lambda_1) (x-\lambda_2) (x-\lambda_3) .
\end{equation} 
We introduce pairwise distinct moduli $\mu_1, \mu_2, \mu_3$ that are determined by the moduli of a genus-two curve $\mathcal{D}$ in Equation~(\ref{Rosenhain}) by the equations
\begin{equation}
\label{Rosenhain_roots}
\scalemath{\MyScaleMedium}{
\begin{array}{c}\displaystyle
 \mu_1 = \frac{\lambda_1 + \lambda_2\lambda_3}{L}, \;  \mu_2 = \frac{\lambda_2 + \lambda_1\lambda_3}{L}, \;  \mu_3 = \frac{\lambda_3 + \lambda_1\lambda_2}{L}
 \end{array}}
\end{equation}
with $L^2 = 4 \lambda_1\lambda_2\lambda_3$.  We also introduce the `dual'  set of moduli $\check{\mu}_1, \check{\mu}_2, \check{\mu}_3$ given by
\begin{equation}
\label{dual_moduli}
\scalemath{\MyScaleMedium}{
\begin{array}{c}\displaystyle
\check{\mu}_1 = \frac{2\mu_1-\mu_2-\mu_3}{\mu_2-\mu_3}, \; \check{\mu}_2 = \check{\mu}_1 - \frac{2(\mu_1-\mu_2)(\mu_1-\mu_3)}{(\mu_1+1)(\mu_2-\mu_3)}, \; 
\check{\mu}_3 = \check{\mu}_1 - \frac{2(\mu_1-\mu_2)(\mu_1-\mu_3)}{(\mu_1-1)(\mu_2-\mu_3)}.
 \end{array}}
\end{equation}
It is easy to show that the relation between $\mu_1, \mu_2, \mu_3$ and $\check{\mu}_1, \check{\mu}_2, \check{\mu}_3$ is symmetric. The following was proven in \cite{Clingher:2017aa}:
\begin{proposition}[\cite{Clingher:2017aa}]
The Jacobian of $\check{\mathcal{D}}:=\mathcal{D}(\check{\mu}_1, \check{\mu}_2, \check{\mu}_3)$ is $(2,2)$-isogeneous to the Jacobian variety of $\mathcal{D}:=\mathcal{D}(\mu_1, \mu_2, \mu_3)$. In particular, $\operatorname{Jac}(\check{\mathcal{D}})$ is a principally polarized abelian surface.
\end{proposition}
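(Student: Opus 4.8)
The plan is to derive the asserted isogeny from the classical correspondence between $(2,2)$-isogenies of Jacobians and \emph{Richelot isogenies}, and to identify the relevant one geometrically through the Kummer picture already built in this subsection. Recall from Corollary~\ref{Kum_B_12} that the even eight $\Delta_{56}$ exhibits $\operatorname{Kum}(\mathbf{B}_{56})$ as a double cover, with $\psi_{56}\colon \mathbf{B}_{56}\to\operatorname{Jac}(\mathcal{D})$ a degree-two isogeny and $\mathbf{B}_{56}$ carrying a $(1,2)$-polarization. A $(1,2)$-polarized abelian surface sits between two principally polarized ones: besides $\psi_{56}$ it admits a second degree-two isogeny onto a principally polarized target. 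I would show that this target is $\operatorname{Jac}(\check{\mathcal{D}})$ (using the identification of the relevant Jacobian Kummer surface supplied by Proposition~\ref{special2}), so that composing the two degree-two maps around $\mathbf{B}_{56}$ produces an isogeny $\operatorname{Jac}(\mathcal{D})\to\operatorname{Jac}(\check{\mathcal{D}})$ of degree four whose kernel is a maximal isotropic $(\mathbb{Z}/2\mathbb{Z})^2\subset\operatorname{Jac}(\mathcal{D})[2]$ for the Weil pairing, i.e. precisely a $(2,2)$-isogeny. Because $\check{\mathcal{D}}$ is a smooth genus-two curve, $\operatorname{Jac}(\check{\mathcal{D}})$ is automatically principally polarized, which is the final clause.

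To pin down the explicit moduli relation~(\ref{dual_moduli}) I would make the Richelot construction concrete. Maximal isotropic $2$-torsion subgroups (\emph{G\"opel groups}) of $\operatorname{Jac}(\mathcal{D})[2]$ correspond bijectively to the fifteen partitions of the six Weierstrass points $\{0,1,\lambda_1,\lambda_2,\lambda_3,\infty\}$ of $\mathcal{D}$ into three pairs, matching Mehran's fifteen even eights. First I would determine the partition attached to $\Delta_{56}$, group the sextic $y^2=\prod_i(x-e_i)$ into three quadratics $G_1G_2G_3$ accordingly, and apply the Richelot formulas $\check{G}_i=(G_j'G_k-G_jG_k')/\det$ to obtain the dual curve $\check{y}^2=\check{G}_1\check{G}_2\check{G}_3$. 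Converting the Rosenhain roots via $\mu_i=(\lambda_i+\lambda_j\lambda_k)/L$, $L^2=4\lambda_1\lambda_2\lambda_3$ of~(\ref{Rosenhain_roots}), the three pairs should become the natural symmetric invariants, and I would read off the $\mu$-invariants of $\check{\mathcal{D}}$ and check that they equal the rational expressions for $\check{\mu}_1,\check{\mu}_2,\check{\mu}_3$. The involutivity $g\circ f=[2]$ of a $(2,2)$-isogeny of principally polarized surfaces predicts, and serves as a check on, the already-noted symmetry of the correspondence $(\mu_i)\leftrightarrow(\check{\mu}_i)$.

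The hard part will be the explicit matching in the second step: choosing a M\"obius normalization of the six branch points in which the $\Delta_{56}$-partition is transparent and the $\mu_i$ appear as the symmetric data of the three pairs, so that the Richelot output collapses to the closed forms in~(\ref{dual_moduli}) rather than to an unwieldy rational expression. An alternative that sidesteps the choice of affine model is to compute the period matrix $\tau\in\mathbb{H}_2$ of $\operatorname{Jac}(\mathcal{D})$ from even theta constants via Thomae's formula, realize the $(2,2)$-isogeny as the action of the corresponding coset of $\operatorname{Sp}(4,\mathbb{Z})$ on $\tau$, and recover $\check{\mathcal{D}}$ from the transformed constants; the price there is careful bookkeeping of theta characteristics under the isogeny. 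Either route reduces the statement to a finite, if delicate, algebraic identity, after which the principal-polarization assertion is immediate.
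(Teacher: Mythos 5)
First, a point of orientation: the paper itself contains no proof of this proposition --- it is imported wholesale from \cite{Clingher:2017aa}, so there is nothing internal to match your attempt against. Your overall strategy --- realize the statement as a Richelot, i.e.\ $(2,2)$-, isogeny whose intermediate step is the $(1,2)$-polarized surface $\mathbf{B}_{56}$ of Corollary~\ref{Kum_B_12}, identify the second principally polarized quotient with $\operatorname{Jac}(\check{\mathcal{D}})$ via Proposition~\ref{special2}, and then verify Equation~(\ref{dual_moduli}) by the Richelot formulas in the normalization~(\ref{Rosenhain_roots}) --- is the standard mechanism for such results and is consistent with the structure the paper builds around the proposition. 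The final clause (a Jacobian of a smooth genus-two curve is principally polarized) is indeed immediate.

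However, as a proof your text has genuine gaps, not merely deferred labor. (i) The abstract step does not typecheck as written: both degree-two maps you invoke have \emph{source} $\mathbf{B}_{56}$, so ``composing the two degree-two maps around $\mathbf{B}_{56}$'' requires dualizing one of them, and you must then argue that the kernel of the resulting degree-four isogeny $\operatorname{Jac}(\mathcal{D})\to\operatorname{Jac}(\check{\mathcal{D}})$ is isotropic $(\mathbb{Z}/2\mathbb{Z})^2$ rather than cyclic of order four; this holds because both order-two kernels lie in $K(L)\cong(\mathbb{Z}/2\mathbb{Z})^2$ for the $(1,2)$-polarization $L$, but you assert it without argument. (ii) Your claimed dictionary ``G\"opel groups $\leftrightarrow$ Mehran's fifteen even eights'' is incorrect as stated: the even eights $\Delta_{ij}$ are indexed by the fifteen nonzero two-torsion points $p_{ij}$ (equivalently, by degree-two isogenies), whereas G\"opel groups are indexed by the fifteen partitions of the six Weierstrass points into pairs, and each $p_{ij}$ lies in exactly \emph{three} G\"opel groups. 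Hence ``the partition attached to $\Delta_{56}$'' is not determined by $\Delta_{56}$ alone; it depends on the choice of the second order-two quotient of $\mathbf{B}_{56}$, which must itself be pinned down (e.g.\ through the fibration data in Proposition~\ref{special2}) before the Richelot computation can even be set up. (iii) Finally, the actual content of the proposition --- that the dual Rosenhain data collapse to the closed forms~(\ref{dual_moduli}) --- is precisely the computation you announce but do not perform; your proposal reduces the statement to a finite algebraic identity without establishing it. In short: right mechanism, almost certainly the one used in the cited source, but the kernel bookkeeping needs repair and the decisive verification is missing.
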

We now derive explicit Weierstrass models for the Kummer surfaces of $\operatorname{Jac}(\mathcal{D})$ and $\operatorname{Jac}(\check{\mathcal{D}})$ and $\mathbf{B}_{56}$. K3 surfaces $\mathcal{X}, \mathcal{Y}, \mathcal{Z}$ are defined by the equations
\begin{equation}
\label{eqns_Kummer_u}
\scalemath{\MyScaleSmall}{
\begin{array}{lll}
 \mathcal{Y}_{[u_0:u_1]}:& Y^2 Z  = X \Big(X - 2(\beta-\alpha) \rho Z \Big)  \Big(X - 2(\beta+\alpha) \rho Z\Big), & \Delta_{\mathcal{Y}}=2^8 \alpha^2 \rho^6 (\beta^2-\alpha^2)^2,\\[0.5em]
 \mathcal{Z}_{[u_0:u_1]}:& V^2  =  \alpha\rho U^4 + 2\beta\rho U^2W^2 + \alpha\rho W^4, & \Delta_{\mathcal{Z}}= \Delta_{\mathcal{Y}},\\[0.5em]
 \mathcal{X}_{[u_0:u_1]}:& y^2 z  = x \big( x^2 + 2\beta \rho  x z+  \rho^2 \alpha^2z^2\big), &  \Delta_{\mathcal{X}}=2^4 \alpha^4 \rho^6 (\beta^2-\alpha^2),
\end{array}}
\end{equation} 
where we set $ \rho = (u_0^2-u_1^2)u_1$ and
\begin{equation}
\alpha= (\mu_2-\mu_3) (u_0-\mu_1 u_1), \; \beta = (2\mu_1-\mu_2-\mu_3) u_0 + (2\mu_2\mu_3-\mu_1\mu_2-\mu_1\mu_3) u_1.
\end{equation}
We have the following:
\begin{corollary}
\label{special_X}
\begin{enumerate}
\item[]
\item $\mathcal{X}$ is the double cover branched along a conic, a line tangent to the conic, and three lines coincident in a point.
\item $\mathcal{X}$ is the Kummer surface of $\operatorname{Jac}(\check{\mathcal{D}})$ with $\check{\mathcal{D}}:=\mathcal{D}(\check{\mu}_1, \check{\mu}_2, \check{\mu}_3)$.
\item $\mathcal{Y}$ is the double cover branched along six lines, three are coincident in a point.
\item $\mathcal{Z}$ is the Kummer surface of the $(1,2)$-polarized abelian surface $\check{\mathbf{B}}_{56}$.
\end{enumerate}
\end{corollary}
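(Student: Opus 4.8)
The plan is to dispatch the four claims one at a time, each time reducing to an already–proved statement plus one birational coordinate change.

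First I would prove the two double-cover descriptions (1) and (3) by the substitution used in the proof of Proposition~\ref{special2}. The crucial feature of the present parametrization is that $\rho = (u_0^2-u_1^2)u_1 = (u_0-u_1)(u_0+u_1)u_1$ already factors into three linear forms vanishing simultaneously at $[u_0:u_1:\cdot]=[0:0:1]$, so no common factor of $\alpha$ and $\beta$ is needed. Substituting $x=\rho\,\hat x$, $y=\rho\, w$ in the equation for $\mathcal{X}$ yields $w^2=\rho\,\hat x\,(\hat x^2+2\beta\hat x+\alpha^2)$, which I read as a double cover of $\mathbb{P}^2_{[u_0:u_1:\hat x]}$: the branch locus is the three concurrent lines of $\rho$, the line $\hat x=0$, and the conic $\hat x^2+2\beta\hat x+\alpha^2=0$, and the line $\hat x=0$ is tangent to that conic along $\alpha=0$. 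The analogous substitution $X=\rho\,\hat X$, $Y=\rho\,\hat Y$ turns the equation for $\mathcal{Y}$ into $\hat Y^2=\rho\,\hat X\,(\hat X-2(\beta-\alpha))(\hat X-2(\beta+\alpha))$, a double cover of $\mathbb{P}^2$ branched along six lines of which the three factors of $\rho$ are concurrent. This gives (1) and (3).

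For (2) I would use that (1) puts $\mathcal{X}$ into exactly the geometry of Proposition~\ref{special2}: the fibers $3I_0^*+I_4+2I_1$ with $\operatorname{MW}(\pi_{\mathcal{X}})=\mathbb{Z}/2\mathbb{Z}$ match Kumar's fibration~{\tt (4)}, so $\mathcal{X}=\operatorname{Kum}(\operatorname{Jac}\mathcal{D}')$ for a genus-two curve $\mathcal{D}'$. The real content is identifying $\mathcal{D}'$ with $\check{\mathcal{D}}=\mathcal{D}(\check\mu_1,\check\mu_2,\check\mu_3)$. I would do this by computing the Igusa--Clebsch (equivalently Rosenhain) invariants of $\mathcal{D}'$ directly from the coefficients $\alpha,\beta,\rho$, which are given through $\mu_1,\mu_2,\mu_3$ by~\eqref{Rosenhain_roots}, and checking that they agree with the invariants attached to the dual moduli~\eqref{dual_moduli}; equivalently, one may track the van Geemen--Sarti two-isogeny $\mathcal{X}\leftrightarrow\mathcal{Y}$ through the Kummer construction and invoke the $(2,2)$-isogeny $\operatorname{Jac}(\mathcal{D})\to\operatorname{Jac}(\check{\mathcal{D}})$ supplied by the proposition quoted from~\cite{Clingher:2017aa}.

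Finally, (4) follows from (2) together with Corollary~\ref{Kum_B_12} and Mehran's classification of even eights: by Proposition~\ref{prop_EEP_Kummer} the map $\Psi:\mathcal{Z}\dashrightarrow\mathcal{X}$ is the double cover branched along the even eight $\Delta_{56}$, and once $\mathcal{X}=\operatorname{Kum}(\operatorname{Jac}\check{\mathcal{D}})$ is known, running Corollary~\ref{Kum_B_12} with $\check{\mathcal{D}}$ in place of $\mathcal{D}$ identifies the covering $(1,2)$-polarized surface as $\check{\mathbf{B}}_{56}$ and hence $\mathcal{Z}=\operatorname{Kum}(\check{\mathbf{B}}_{56})$. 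The hard part will be the moduli identification in step (2): a principally polarized abelian surface has several inequivalent $(2,2)$-isogenous partners, so knowing only that $\mathcal{X}$ is a Jacobian Kummer $(2,2)$-isogenous to $\operatorname{Jac}(\mathcal{D})$ does not single out $\operatorname{Jac}(\check{\mathcal{D}})$; one must verify the precise partner cut out by~\eqref{dual_moduli}, which forces either the explicit invariant computation or a careful matching of the even eight $\Delta_{56}$ with the specific degree-two isogeny of~\cite{Clingher:2017aa} to be pushed through in full.
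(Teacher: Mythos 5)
Your outline matches the paper's proof for items (1), (3), and (4): the paper likewise obtains the branch descriptions by birationally rescaling the fiber coordinates to exhibit a double sextic, and it likewise deduces (2) and (4) from Propositions~\ref{special1}, \ref{special2} and Corollary~\ref{Kum_B_12}. The one substantive difference sits exactly where you flagged ``the hard part.'' The paper's affine forms are not the ones your substitution produces: using the dual-moduli relations~(\ref{dual_moduli}), it performs in addition a fractional linear change of the base coordinate (with compatible fiber rescalings) bringing the surfaces of Equation~(\ref{eqns_Kummer_u}) into
\begin{equation*}
\mathcal{X}_u:\; y^2 = x\,(x^2+2ux+1)\prod_{i=1}^3(u-\check{\mu}_i),
\qquad
\mathcal{Y}_u:\; Y^2 = X(X-2u-1)(X-2u+1)\prod_{i=1}^3(u-\check{\mu}_i).
\end{equation*}
In your presentation the three concurrent lines are the factors of $\rho$, sitting at $u=\pm1,\infty$, and the moduli remain hidden in the coefficients $\alpha,\beta$ of the conic; in the paper's normal forms the concurrent lines sit at $u=\check{\mu}_i$ and the conic is normalized to $x^2+2ux+1$, so the identification of the genus-two curve as $\mathcal{D}(\check{\mu}_1,\check{\mu}_2,\check{\mu}_3)$ --- which is the actual content of claim (2) --- is read off directly from the equation, via Proposition~\ref{special2}(5) and Kumar's fibration \texttt{(4)}; claim (4) then follows from Corollary~\ref{Kum_B_12} applied with $\check{\mathcal{D}}$ in place of $\mathcal{D}$, exactly as you say.

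The gap, then, is concrete but localized: your proof of (2) is a plan with two unexecuted alternatives, not an argument. The Igusa--Clebsch comparison you propose would work and is morally the same computation as the paper's M\"obius normalization --- the relations~(\ref{dual_moduli}) are precisely what that base transformation encodes --- but nothing in your write-up verifies that the invariants of the curve underlying Kumar's fibration \texttt{(4)} on $\mathcal{X}(\mu)$ agree with those of $\mathcal{D}(\check{\mu})$. Your second suggested route, tracking the two-isogeny and invoking the $(2,2)$-isogeny of \cite{Clingher:2017aa}, founders on the very obstruction you yourself identify: a principally polarized abelian surface has several inequivalent $(2,2)$-isogenous partners (one per G\"opel group), so isogeny data alone cannot single out $\operatorname{Jac}(\check{\mathcal{D}})$; this is exactly why the paper does not argue that way. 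To complete your proof you must carry out the explicit change of base coordinate (or the equivalent invariant match) that exhibits the $\check{\mu}_i$ in the equation --- the step the paper's proof performs and yours defers.
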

\begin {proof}
(1) Using Equations~(\ref{dual_moduli}), $\mathcal{X}_{[u_0:u_1]}$ in Equation~(\ref{eqns_Kummer_u}) can be brought into the affine form
\begin{equation}
 \mathcal{X}_u: \quad y^2 = x (x^2 + 2 u x + 1) \prod_{i=1}^3 (u - \check{\mu}_i).
\end{equation} 
(3) Similarly, $\mathcal{Y}_{[u_0:u_1]}$ in Equation~(\ref{eqns_Kummer_u}) can be brought into the affine form
\begin{equation}
  \mathcal{Y}_u:  \quad Y^2 =X (X-2u-1)(X-2u+1) \prod_{i=1}^3 (u - \check{\mu}_i).
\end{equation} 
(2) and (4) are a direct consequence of the Propositions~\ref{special1}, \ref{special2} and Corollary~\ref{Kum_B_12}. 
\end{proof}
\par We consider the map $h: \mathbb{P}^1_{[t_0:t_1]} \to \mathbb{P}_{(u)}^1$ given by $[t_0:t_1] \mapsto [u_0:u_1] =[t_0^2+t_1^2:2t_0t_1]$ invariant under the involution $[t_0:t_1]\mapsto [t_1:t_0]$. The polynomials $\alpha, \beta$ are then homogeneous polynomials of degree two in $\mathbb{C}(t_0,t_1)$, invariant under the involution, with no repeated roots or common factors with each other or $t_0t_1$.  Moreover, under pullback we have $\rho=t_0t_1\tilde{\rho}^2 $ with $\tilde{\rho}=\sqrt{2} (t_0^2-t_1^2)$. We obtain an elliptically fibered K3 surface $\pi_{\tilde{\mathcal{Y}}}: \tilde{\mathcal{Y}} \to \mathbb{P}^1_{[t_0:t_1]}$ in terms of the coordinates $[\tilde{X}:\tilde{Y}:\tilde{Z}]\in \mathbb{P}^2$ and $[t_0:t_1]\in \mathbb{P}^1_{[t_0:t_1]}$ from the equation 
\begin{equation}
\label{eqns_Kummer_up_Y_hat}
 \tilde{\mathcal{Y}}_{[t_0:t_1]}: \; \tilde{Y}^2 \tilde{Z}  = \tilde{X}  \Big(\tilde{X }- 2(\beta-\alpha) t_0t_1 \tilde{Z} \Big)  \Big(\tilde{X} - 2(\beta+\alpha)t_0t_1 \tilde{Z}\Big).
\end{equation} 
\par Similarly, we consider the map $g: \mathbb{P}^1_{[s_0:s_1]} \to \mathbb{P}^1_{[t_0:t_1]}$ given by $[s_0:s_1] \mapsto [t_0:t_1] =[s^2_0:s^2_1]$, invariant under the involution $[s_0:s_1]\mapsto [s_0:-s_1]$. The polynomials $\alpha, \beta$ can then be considered homogeneous polynomials of degree four in $\mathbb{C}(s_0,s_1)$, invariant under the involution, with no repeated roots or common factors with each other or $s_0s_1$.  We obtain another elliptically fibered K3 surface $\pi_{\hat{\mathcal{Y}}}: \hat{\mathcal{Y}} \to \mathbb{P}^1_{[s_0:s_1]}$ with section in terms of the coordinates $[\tilde{x}:\tilde{y}:\tilde{z}]\in \mathbb{P}^2$ for the fiber and $[s_0:s_1]\in \mathbb{P}^1_{[s_0:s_1]}$ for the base from
\begin{equation}
\label{eqns_Kummer_Z_tilde}
\scalemath{\MyScaleBig}{
\begin{array}{lll}\displaystyle
 \hat{\mathcal{Y}}_{[s_0:s_1]}:& \hat{Y}^2 \hat{Z}  = \hat{X}  (\hat{X}- 2(\beta-\alpha)  \hat{Z} )  (\hat{X} - 2(\beta+\alpha) \hat{Z}), & \Delta_{\hat{\mathcal{Y}}}=2^8 \alpha^2 (\beta^2-\alpha^2)^2.
 \end{array}}
\end{equation} 
We will now establish maps between the K3 surfaces, i.e.,
\begin{equation}
 \hat{\mathcal{Y}} \overset{G}{\dasharrow} \tilde{\mathcal{Y}}   \overset{H}{\dasharrow}  \mathcal{Y} ,
\end{equation}
that are induced by the rational base transformations $g$ and $h$, respectively.
\begin{proposition}
In the situation described above, the maps $G: \hat{\mathcal{Y}}\dasharrow \tilde{\mathcal{Y}}$ and $H: \tilde{\mathcal{Y}}\dasharrow \mathcal{Y}$ given by
 \begin{equation}
\scalemath{\MyScaleSmall}{
\begin{array}{rcl}
G:	\Big([s_0:s_1],[\hat{X}: \hat{Y}: \hat{Z}]\Big)	&  \mapsto 	
 &	\Big([t_0:t_1],[\tilde{X}:\tilde{Y}:\tilde{Z}]\Big) =\Big([s^2_0:s^2_1],[s^2_0s^2_1\hat{X}:s^3_0s^3_1\hat{Y}:\hat{Z}]\Big), \\[0.5em]
 H:	\Big([t_0:t_1],[\tilde{X}: \tilde{Y}: \tilde{Z}]\Big)	&  \mapsto 	
 &	\Big([u_0:u_1],[X:Y:Z]\Big) =\Big([t_0^2+t_1^2:2t_0t_1],[\tilde{\rho}^2\tilde{X}:\tilde{\rho}^3\tilde{Y}:\tilde{Z}]\Big),
\end{array}}
\end{equation}
are the rational maps of degree two associated with the Nikulin involutions on $\tilde{\mathcal{Y}}$ and  $\hat{\mathcal{Y}}$ given by
\begin{equation}
\begin{array}{lrcl}
  \jmath_{\hat{\mathcal{Y}}}:&  \Big([s_0:s_1],[\hat{X}: \hat{Y}: \hat{Z}]\Big) & \mapsto & \Big([s_0,-s_1],[\hat{X}: -\hat{Y}: \hat{Z}]\Big),  \\[0.5em]
  \jmath_{\tilde{\mathcal{Y}}}:&  \Big([t_0:t_1],[\tilde{X}: \tilde{Y}: \tilde{Z}]\Big) & \mapsto & \Big([t_1,t_0],[\tilde{X}: -\tilde{Y}: \tilde{Z}]\Big),
\end{array}
\end{equation}
such that $H \circ  \imath_{\tilde{\mathcal{Y}}}=H$ and $G \circ  \imath_{\hat{\mathcal{Y}}}=G$.
\end{proposition}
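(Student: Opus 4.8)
The plan is to follow the template of Proposition~\ref{diamond} verbatim, treating the two towers $\hat{\mathcal{Y}}\overset{G}{\dashrightarrow}\tilde{\mathcal{Y}}$ and $\tilde{\mathcal{Y}}\overset{H}{\dashrightarrow}\mathcal{Y}$ as fibered base changes whose fiber part is merely a rescaling. Concretely, I would establish four things in turn: that $G$ and $H$ are well-defined rational maps of degree two; that $\jmath_{\hat{\mathcal{Y}}}$ and $\jmath_{\tilde{\mathcal{Y}}}$ are honest fiber-preserving involutions of the stated surfaces; that both involutions are symplectic, hence Nikulin; and finally that $G\circ\jmath_{\hat{\mathcal{Y}}}=G$ and $H\circ\jmath_{\tilde{\mathcal{Y}}}=H$. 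Taken together, a degree-two map that is constant on the orbits of a nontrivial involution is the rational quotient by that involution, which is exactly the assertion.

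For well-definedness I would substitute the coordinate formulas into the target Weierstrass equations and collect the overall scaling factor, using the two identities that make the towers work. Along $G$ one uses $t_0t_1=s_0^2s_1^2=:\sigma$; the substitution $\tilde X=\sigma\hat X$, $\tilde Y=s_0^3s_1^3\hat Y$, $\tilde Z=\hat Z$ turns the right-hand side of $\tilde{\mathcal{Y}}$ into $\sigma^3\hat X(\hat X-2(\beta-\alpha)\hat Z)(\hat X-2(\beta+\alpha)\hat Z)$ and the left-hand side into $\sigma^3\hat Y^2\hat Z$, so the defining equation of $\tilde{\mathcal{Y}}$ reduces identically to that of $\hat{\mathcal{Y}}$. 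Along $H$ one uses $\rho=t_0t_1\tilde\rho^2$ with $\tilde\rho=\sqrt2(t_0^2-t_1^2)$; the substitution $X=\tilde\rho^2\tilde X$, $Y=\tilde\rho^3\tilde Y$, $Z=\tilde Z$ produces a common factor $\tilde\rho^6$ on both sides and reduces the equation of $\mathcal{Y}$ to that of $\tilde{\mathcal{Y}}$. Since $g$ and $h$ are degree-two covers of $\mathbb{P}^1$ and the fiberwise rescalings are isomorphisms for generic base point, $G$ and $H$ have degree two.

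That $\jmath_{\hat{\mathcal{Y}}}$ and $\jmath_{\tilde{\mathcal{Y}}}$ preserve their surfaces follows because $\alpha,\beta$ are invariant under the base involutions $[s_0:s_1]\mapsto[s_0:-s_1]$ and $[t_0:t_1]\mapsto[t_1:t_0]$ by construction of $g$ and $h$, because $t_0t_1$ and $s_0^2s_1^2$ are likewise invariant, and because $\hat Y^2$ and $\tilde Y^2$ are unaffected by $\hat Y\mapsto-\hat Y$, $\tilde Y\mapsto-\tilde Y$. For symplecticity I would use the holomorphic two-form $\omega_{\hat{\mathcal{Y}}}=(s_1\,ds_0-s_0\,ds_1)\wedge d\hat x/\hat y$ and its analogue on $\tilde{\mathcal{Y}}$, as in Proposition~\ref{prop_K3}(1). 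The base involution sends $s_1\,ds_0-s_0\,ds_1\mapsto-(s_1\,ds_0-s_0\,ds_1)$, and $t_1\,dt_0-t_0\,dt_1\mapsto-(t_1\,dt_0-t_0\,dt_1)$ under $t_0\leftrightarrow t_1$, while $\hat y\mapsto-\hat y$ and $\tilde y\mapsto-\tilde y$ send the fiber form $d\hat x/\hat y\mapsto-d\hat x/\hat y$; the two sign changes cancel, so each involution fixes $\omega$ and is therefore a Nikulin involution.

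Finally I would verify the sheet-interchange relations by direct substitution, and here lies the only point requiring care, namely the sign bookkeeping, which is in fact the same computation that gave symplecticity. For $G$, applying $\jmath_{\hat{\mathcal{Y}}}$ first replaces $s_1$ by $-s_1$ and $\hat Y$ by $-\hat Y$; then $s_0^2(-s_1)^2=s_0^2s_1^2$ and $s_0^3(-s_1)^3(-\hat Y)=s_0^3s_1^3\hat Y$, so the image under $G$ is unchanged and $G\circ\jmath_{\hat{\mathcal{Y}}}=G$. For $H$, the key observation is that $\tilde\rho=\sqrt2(t_0^2-t_1^2)\mapsto-\tilde\rho$ under $t_0\leftrightarrow t_1$; combined with $\tilde Y\mapsto-\tilde Y$ this leaves both $\tilde\rho^2\tilde X$ and $\tilde\rho^3\tilde Y$ invariant, while $[t_1^2+t_0^2:2t_1t_0]=[t_0^2+t_1^2:2t_0t_1]$, so $H\circ\jmath_{\tilde{\mathcal{Y}}}=H$. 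A degree-two rational map constant on the orbits of a nontrivial involution is, up to birational equivalence, the quotient by that involution, so $G$ and $H$ are precisely the rational double covers associated with the symplectic involutions $\jmath_{\hat{\mathcal{Y}}}$ and $\jmath_{\tilde{\mathcal{Y}}}$. The main obstacle is thus not conceptual but the consistent tracking of the sign $\tilde\rho\mapsto-\tilde\rho$, which simultaneously underlies the symplectic property and the invariance of $H$.
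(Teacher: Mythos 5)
Your proposal is correct and takes essentially the same route as the paper, which proves this statement by declaring the argument identical to that of Proposition~\ref{diamond}: one checks that $G$ and $H$ are degree-two rational maps (your Weierstrass substitutions with the factors $s_0^6s_1^6$ and $\tilde{\rho}^6$, using $t_0t_1=s_0^2s_1^2$ and $\rho=t_0t_1\tilde{\rho}^2$, are exactly this verification), that $\jmath_{\hat{\mathcal{Y}}}$ and $\jmath_{\tilde{\mathcal{Y}}}$ interchange the sheets (your sign bookkeeping, including the key $\tilde{\rho}\mapsto-\tilde{\rho}$ under $t_0\leftrightarrow t_1$, is right), and that the holomorphic two-form is preserved. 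Your only deviation is cosmetic: you verify $\jmath^*\omega=\omega$ directly on the covering surface, whereas the paper's template computes the pullback of the two-form under the covering map $F$; the two sign computations are the same.
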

\begin{proof}
The proof is the same as for Proposition~\ref{diamond}.
\end{proof}
We have the following direct consequence of the Propositions~\ref{special1}, \ref{special2} and Corollaries~\ref{Kum_B_12} and \ref{even_eight_for_base_transfo}:
\begin{corollary}
\label{special_Ytilde}
\begin{enumerate}
\item[]
\item $\tilde{\mathcal{Y}}$ is the double cover of $\mathbb{P}^2$ branched along six lines tangent to a conic.
\item $\tilde{\mathcal{Y}}$ is the Kummer surface of $\operatorname{Jac}(\mathcal{D})$ with $\mathcal{D}:=\mathcal{D}(\mu_1, \mu_2, \mu_3)$.
\item $\hat{\mathcal{Y}}$  is the Kummer surface of the $(1,2)$-polarized abelian surface $\mathbf{B}_{56}$.
\end{enumerate}
\end{corollary}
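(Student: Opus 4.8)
The plan is to recognize $\tilde{\mathcal{Y}}$ and $\hat{\mathcal{Y}}$ as members of the six-line family of Section~\ref{chap:sixlines} and then transport the identifications of Corollary~\ref{special_X} \emph{upward} along the two quadratic base changes $h$ and $g$. The starting structural observation is that, once the factor $\tilde{\rho}^2$ produced by the pullback of $\rho=(u_0^2-u_1^2)u_1$ has been absorbed through the map $H$, Equation~(\ref{eqns_Kummer_up_Y_hat}) is \emph{verbatim} the Weierstrass model of the six-line surface of Section~\ref{chap:sixlines} with $\rho$ replaced by $t_0t_1$ and with $\alpha,\beta$ now homogeneous of degree two in $[t_0:t_1]$. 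Hence Proposition~\ref{lem_6lines} applies and exhibits $\tilde{\mathcal{Y}}$ as a double cover of $\mathbb{P}^2$ branched along six lines, whose moduli $a,b,c,d$ are read off from Equation~(\ref{coeffs}). The same remark applied to $g$ presents $\hat{\mathcal{Y}}$ of Equation~(\ref{eqns_Kummer_Z_tilde}) as the surface covering $\tilde{\mathcal{Y}}$ through the Nikulin-involution map $G$, exactly as in the diamond of Proposition~\ref{diamond}.

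For part (1) I would show that the six lines of $\tilde{\mathcal{Y}}$ are tangent to a conic by a Picard-rank argument rather than by verifying the relation $abc-abd-acd+bcd+ad-bc=0$ by hand. The map $H$ joins $\tilde{\mathcal{Y}}$ to the rank-seventeen Kummer surface $\mathcal{Y}$ of Equation~(\ref{eqns_Kummer_u}) by a finite rational map, so Inose's theorem \cite[Cor.~1.2]{MR0429915} gives $\rho_{\tilde{\mathcal{Y}}}=17$. Since a generic six-line double cover has Picard rank sixteen (Proposition~\ref{lem_6lines}), while the tangent-to-a-conic configuration is precisely the rank-seventeen specialization that is a Jacobian Kummer surface (Proposition~\ref{special1}), the surface $\tilde{\mathcal{Y}}$ must lie on that locus. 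This establishes (1) and, via Proposition~\ref{special1}(2), identifies $\tilde{\mathcal{Y}}$ as $\operatorname{Kum}(\operatorname{Jac}\mathcal{D}')$ for some genus-two curve $\mathcal{D}'$.

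To finish part (2) it remains to pin $\mathcal{D}'$ down as $\mathcal{D}(\mu_1,\mu_2,\mu_3)$. Here I would use that $\tilde{\mathcal{Y}}$ covers, through $H$ followed by the van Geemen-Sarti map $\mathcal{Y}\dashrightarrow\mathcal{X}$, the Kummer surface $\mathcal{X}=\operatorname{Kum}(\operatorname{Jac}\check{\mathcal{D}})$ already identified in Corollary~\ref{special_X}(2) (itself a consequence of Proposition~\ref{special2}), and that this chain of degree-two maps realizes at the Kummer level the $(2,2)$-isogeny between $\operatorname{Jac}(\mathcal{D})$ and $\operatorname{Jac}(\check{\mathcal{D}})$ of \cite{Clingher:2017aa}. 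Matching the Rosenhain/Igusa invariants through this isogeny, and using that the base change $h$ undoes the duality involution $\mu\leftrightarrow\check{\mu}$ of Equation~(\ref{dual_moduli}), forces $\mathcal{D}'=\mathcal{D}(\mu_1,\mu_2,\mu_3)$. For part (3) I would run the argument one level higher: by Corollary~\ref{even_eight_for_base_transfo} applied to the ramification of $g$, the map $G$ is the double cover of $\tilde{\mathcal{Y}}=\operatorname{Kum}(\operatorname{Jac}\mathcal{D})$ branched along the even eight formed by the non-central components of the $D_4$ fibers over the two branch points of $g$; this even eight is the configuration $\Delta_{56}$ of exceptional curves over nodes recorded in Corollary~\ref{Kum_B_12}, so Mehran's correspondence \cite{MR2804549} invoked there yields $\hat{\mathcal{Y}}=\operatorname{Kum}(\mathbf{B}_{56})$ with $\mathbf{B}_{56}$ the $(1,2)$-polarized surface two-isogenous to $\operatorname{Jac}(\mathcal{D})$.

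The genuinely routine parts are the structural recognition in the first paragraph and the rank computation in part (1). The hard part will be the moduli bookkeeping in part (2): turning the abstract statement ``$\tilde{\mathcal{Y}}\cong\operatorname{Kum}(\operatorname{Jac}\mathcal{D}')$'' into the explicit curve $\mathcal{D}(\mu_1,\mu_2,\mu_3)$. This requires carrying the Rosenhain parameters faithfully through the quadratic base change $h$ and the six-lines/conic dictionary, and checking that the map inverts the duality $\mu\leftrightarrow\check{\mu}$ exactly, so that the ``checked'' curve attached to $\mathcal{X}$ downstairs becomes the ``unchecked'' curve upstairs. Verifying that the relevant even eight is $\Delta_{56}$ rather than some other $\Delta_{ij}$ in part (3) is a secondary point, which I would settle by tracking which tropes the sections $\sigma,\tau$ correspond to, exactly as in the proof of Corollary~\ref{Kum_B_12}.
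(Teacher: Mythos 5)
Your part (3) is sound and is in fact essentially the paper's own argument: the even eight of $G:\hat{\mathcal{Y}}\dashrightarrow\tilde{\mathcal{Y}}$ is the set of non-central components of the two $D_4$ fibers over $t_0=0$ and $t_1=0$ (the ramification of $g$, as in Corollary~\ref{even_eight_for_base_transfo}), which Mehran \cite{MR2804549} identifies with $\Delta_{56}$, whence $\hat{\mathcal{Y}}=\operatorname{Kum}(\mathbf{B}_{56})$. The genuine gap is in (1)--(2). Your Picard-rank argument correctly gives $\rho_{\tilde{\mathcal{Y}}}=17$ via Inose, but the inference ``rank $17$ in the six-line family $\Rightarrow$ tangent to a conic'' is false: the rank-jump locus is a countable union of divisors in the four-dimensional six-line moduli, and the paper itself exhibits a second geometric component in Proposition~\ref{special2} (three concurrent lines, fibers $3I_0^*+3I_2$, also $\rho=17$). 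Checking fiber types ($\tilde{\mathcal{Y}}$ has $2I_0^*+6I_2$, since $\rho$ pulls back to $t_0t_1\tilde{\rho}^2$) excludes that particular component but not the countably many other Noether--Lefschetz divisors; nothing in your argument places the three-parameter family $\tilde{\mathcal{Y}}(\mu)$ on the specific divisor $abc-abd-acd+bcd+ad-bc=0$ of Proposition~\ref{special1}, which is exactly the computation you declined to do. The same under-determination breaks your route to (2): a principally polarized abelian surface admits fifteen inequivalent $(2,2)$-isogenies (one per maximal isotropic subgroup of the two-torsion), so ``$\operatorname{Jac}\mathcal{D}'$ is $(2,2)$-isogenous to $\operatorname{Jac}\check{\mathcal{D}}$'' does not single out $\mathcal{D}'=\mathcal{D}(\mu_1,\mu_2,\mu_3)$, and your phrase ``the base change $h$ undoes the duality involution $\mu\leftrightarrow\check{\mu}$'' of Equation~(\ref{dual_moduli}) is precisely the assertion that needs proof, not an available input.

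The paper closes both points at once by a direct computation that your plan avoids: using Equations~(\ref{Rosenhain_roots}) it brings Equation~(\ref{eqns_Kummer_up_Y_hat}) into the affine form $\tilde{Y}^2 = t\prod_{i=1}^4\bigl(\tilde{X}-\lambda_i t-\lambda_i^{-1}\bigr)$ with $\lambda_4=1$. This single normal form proves (1) and (2) simultaneously: the six branch lines ($t=0$, the line at infinity, and $\tilde{X}=\lambda_i t+\lambda_i^{-1}$ for $i=1,\dots,4$) are visibly tangent to one conic, with tangency parameters the six Weierstrass points $0,\infty,1,\lambda_1,\lambda_2,\lambda_3$ of the Rosenhain curve, which pins the genus-two curve down as $\mathcal{D}(\mu_1,\mu_2,\mu_3)$ with no isogeny bookkeeping at all. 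Note also that your (3), while correct, presupposes (2) -- Mehran's even eight $\Delta_{56}$ lives on $\operatorname{Kum}(\operatorname{Jac}\mathcal{D})$, so the identification of $\tilde{\mathcal{Y}}$ must be secured before Corollary~\ref{Kum_B_12} can be invoked; as your proposal stands, the upstream gap propagates into (3).
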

\begin{proof}
(1) Using Equations~(\ref{Rosenhain_roots}), $\tilde{\mathcal{Y}}_{[t_0:t_1]}$ in Equation~(\ref{eqns_Kummer_up_Y_hat}) can be brought into the affine form
\begin{equation}
  \tilde{\mathcal{Y}}_t:  \quad \tilde{Y}^2 = t \prod_{i=1}^4 (\tilde{X} - \lambda_i t - \lambda_i^{-1}),
\end{equation} 
where we have set $\lambda_4=1$.
(2) is already proved. (3) What remains to show is that the even eight of the double cover $G: \hat{\mathcal{Y}}\dasharrow \tilde{\mathcal{Y}}$ is the even eight $\Delta_{56}$. Just as in Corollary~\ref{even_eight_for_base_transfo}, the even eight for $G$ is given by the non-central components of the reducible fibers of type $D_4$ over $t_1=0$ and $t_0=0$. It was shown in \cite{MR2804549} that these components form the even eight $\Delta_{56}$.
\end{proof}
We denote the dependence on the set of moduli $(\mu_1,\mu_2,\mu_3)$ by writing $\mathcal{X} = \mathcal{X}(\mu)$, $\mathcal{Y} = \mathcal{Y}(\mu)$, $\mathcal{Z} = \mathcal{Z}(\mu)$, etc. For example, it follows from Corollaries~\ref{special_X} and~\ref{special_Ytilde} that the following total spaces are isomorphic
\begin{equation}
 \begin{array}{c}
 \mathcal{X}(\check{\mu}) \cong \operatorname{Kum}(\operatorname{Jac}\mathcal{D}) \cong \tilde{\mathcal{Y}}(\mu), \quad
 \mathcal{X}(\mu) \cong \operatorname{Kum}(\operatorname{Jac}\check{\mathcal{D}}) \cong \tilde{\mathcal{Y}}(\check{\mu}),  \\[0.5em]
 \mathcal{Z}(\check{\mu}) \cong \operatorname{Kum}(\mathbf{B}_{56}) \cong \hat{\mathcal{Y}}(\mu).
 \end{array}
 \end{equation}
We have the following:
\begin{proposition}
In the situation described above, the elliptic fibration with section $\pi_{\hat{\mathcal{Y}}(\mu)}: \hat{\mathcal{Y}}(\mu) \to \mathbb{P}^1$ with singular fibers $12 I_2$ and $\operatorname{MW}(\pi_{\hat{\mathcal{Y}}(\mu)})=(\mathbb{Z}/2\mathbb{Z})^2+\langle 1 \rangle^{\oplus 2} +\langle 2 \rangle$ is isomorphic to the elliptic fibration with section $\check{\pi}_{\mathcal{Z}(\check{\mu})}: \mathcal{Z}(\check{\mu}) \to \mathbb{P}^1$ established in Proposition~\ref{Gamma-fibration} for the dual set of moduli.
\end{proposition}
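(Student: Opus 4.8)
The plan is to produce explicit Weierstrass models for both fibrations over a common base and to match them using the moduli duality in Equation~(\ref{dual_moduli}). The two total spaces are already known to be isomorphic: the isomorphism chain preceding the statement gives $\mathcal{Z}(\check{\mu})\cong\operatorname{Kum}(\mathbf{B}_{56})\cong\hat{\mathcal{Y}}(\mu)$, so the only issue is to exhibit an isomorphism that carries the Jacobian fibration $\pi_{\hat{\mathcal{Y}}(\mu)}$ of Equation~(\ref{eqns_Kummer_Z_tilde}) onto the second fibration $\check{\pi}_{\mathcal{Z}(\check{\mu})}$ and respects zero sections.

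First I would make $\check{\pi}_{\mathcal{Z}(\check{\mu})}$ explicit. Writing $\mathcal{Z}(\check{\mu})$ in the form $V^2=\check{\alpha}\check{\rho}\,U^4+2\check{\beta}\check{\rho}\,U^2W^2+\check{\alpha}\check{\rho}\,W^4$ and interchanging the roles of base and fiber as in part~(7) of Proposition~\ref{prop_K3}, I treat $[U:W]$ as the base and $[u_0:u_1]$ as the fiber. Because $\check{\rho}=(u_0^2-u_1^2)u_1$ factors and $\check{\alpha},\check{\beta}$ are linear in $u_0,u_1$, passing to the affine coordinate $u=u_0/u_1$ turns the fiber over $[U:W]$ into the cubic $V^2=(u-1)(u+1)(p\,u+q)$, where $p,q$ are explicit homogeneous quartics in $[U:W]$ depending on $\check{\mu}$. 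This is already a Weierstrass equation with the three rational two-torsion points $u\in\{1,-1,-q/p\}$, so $\check{\pi}_{\mathcal{Z}(\check{\mu})}$ carries $(\mathbb{Z}/2\mathbb{Z})^2$; computing the discriminant in $[U:W]$ exhibits the $12\,I_2$ configuration, and the remaining generators of the Mordell--Weil group account for the free summand $\langle 1\rangle^{\oplus 2}+\langle 2\rangle$, matching the stated group. I emphasize that this is the \emph{Kummer} specialization of the construction of Proposition~\ref{Gamma-fibration}: the factorization of $\check{\rho}$ is what promotes the generic $8I_2+8I_1$ to $12I_2$ and the torsion from $\mathbb{Z}/2\mathbb{Z}$ to $(\mathbb{Z}/2\mathbb{Z})^2$.

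Next I would compare this with Equation~(\ref{eqns_Kummer_Z_tilde}), whose three two-torsion values are $\hat{X}\in\{0,\,2(\beta-\alpha),\,2(\beta+\alpha)\}$ with $\alpha,\beta$ viewed as degree-four forms in $[s_0:s_1]$. A fiberwise M\"obius transformation sending the ordered tuple $(1,-1,-q/p,\infty)$ to $(0,2(\beta-\alpha),2(\beta+\alpha),\infty)$ identifies the two cubics once the base coordinate $[U:W]$ is identified with $[s_0:s_1]$ through the base change $h\circ g$ used to construct $\hat{\mathcal{Y}}(\mu)$. The hard part will be the final verification: one must check that, under this base identification, the quartics $p,q$ built from $\check{\mu}$ are carried \emph{exactly} into $2(\beta\mp\alpha)$ built from $\mu$. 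This is precisely where the symmetric relation in Equation~(\ref{dual_moduli}) between $\mu$ and $\check{\mu}$ --- equivalently the degree-two isogeny $\operatorname{Jac}(\check{\mathcal{D}})\to\operatorname{Jac}(\mathcal{D})$ underlying Corollary~\ref{Kum_B_12} --- is essential, and it is the only step that is not a formal manipulation. Once the four marked points and the zero sections are matched, the abstract isomorphism $\mathcal{Z}(\check{\mu})\cong\hat{\mathcal{Y}}(\mu)$ is realized by a map sending one Jacobian fibration to the other and preserving sections, whence the Mordell--Weil groups are identified as well.
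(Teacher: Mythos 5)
Your proposal is correct and takes essentially the same route as the paper's proof, which likewise interchanges base and fiber coordinates in Equation~(\ref{eqns_Kummer_u}) (all coefficients being quartic in $[u_0:u_1]$, hence cubic in the affine coordinate $u$), brings the result into Weierstrass form with a section, and concludes with the same deferred direct computation that substituting $\check{\mu}_i$ for $\mu_i$ reproduces the model of $\hat{\mathcal{Y}}$ in Equation~(\ref{eqns_Kummer_Z_tilde}). Your added details --- the factorization $\rho=(u_0^2-u_1^2)u_1$ yielding the full two-torsion in $V^2=(u-1)(u+1)(pu+q)$, and the cross-ratio matching governed by the duality relation (\ref{dual_moduli}) --- simply spell out what the paper's phrase ``a computation shows'' leaves implicit.
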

\begin{proof}
All coefficients of $U^4, U^2W^2, W^4$ in Equation~(\ref{eqns_Kummer_u}) are polynomials in $u_0, u_1$ of degree four. We interchange the roles of base and fiber coordinates and obtain a second elliptic fibration. Since all coefficients are polynomials of degree three in the affine coordinate $u$ with $u_0=u, u_1=1$, the equation can be brought in Weierstrass normal form and this second fibration has a section. A computation shows that if we replace $\mu_1, \mu_2, \mu_3$ by $\check{\mu}_1, \check{\mu}_2, \check{\mu}_3$, this is the same Weierstrass fibration as the one given for $\tilde{\mathcal{Z}}$ in Equation~(\ref{eqns_Kummer_Z_tilde}).
\end{proof}
We also investigate the effect of interchanging base and fiber for the fibration $\pi_{\mathcal{Y}(\mu)}:\mathcal{Y}(\mu) \to \mathbb{P}^1$. The K3 surface $\mathcal{Y}_{[u_0:u_1]}$ in Equation~(\ref{eqns_Kummer_u}) can easily be brought into the affine form
\begin{equation}
\label{WEQ1}
  \mathcal{Y}(\mu):  \quad Y^2 =X (X-1) (X-u) (u-\alpha)(u-\beta)(u-\gamma),
\end{equation} 
where by abuse of notation we use the same names for the affine variables before and after the birational transformation. The parameters $\alpha, \beta, \gamma$ are rational functions of the moduli $\mu_1, \mu_2, \mu_3$. The elliptic fibration $\pi_{\mathcal{Y}(\mu)}: \mathcal{Y}(\mu) \to \mathbb{P}^1$ is simply the projection onto $[u:1]\in\mathbb{P}^1$. The fibration has a section because Equation~(\ref{WEQ1}) can be brought into Weierstrass normal form. A \emph{second} elliptic fibration with section is induced by projecting onto $[X:1]\in\mathbb{P}^1$ which we denote by $\check{\pi}_{\mathcal{Y}(\mu)}: \mathcal{Y}(\mu) \to \mathbb{P}^1$. We have the following:
\begin{proposition}
In the situation described above, the elliptic fibration with section $\pi_{\mathcal{Y}(\mu)}: \mathcal{Y}(\mu) \to \mathbb{P}^1$ with singular fibers $3 I_0^* + 3 I_2$ and $\operatorname{MW}(\pi_{\mathcal{Y}(\mu)})=(\mathbb{Z}/2\mathbb{Z})^2$ is isomorphic to the second elliptic fibration with section $\check{\pi}_{\mathcal{Y}(\check{\mu})}: \mathcal{Y}(\check{\mu}) \to \mathbb{P}^1$ established above for the dual set of moduli. In particular, we have $\mathcal{Y}(\mu)\cong\mathcal{Y}(\check{\mu})$.
\end{proposition}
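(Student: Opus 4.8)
The plan is to carry out everything on the single affine model \eqref{WEQ1}, which simultaneously displays both fibrations of $\mathcal{Y}(\mu)$: the first fibration $\pi_{\mathcal{Y}(\mu)}$ is the projection $(X,u)\mapsto u$, and the second fibration $\check{\pi}_{\mathcal{Y}(\mu)}$ is the projection $(X,u)\mapsto X$. Writing $g(u)=(u-\alpha)(u-\beta)(u-\gamma)$, I would first put each fibration into Weierstrass form. For $\pi_{\mathcal{Y}(\mu)}$ the substitution $\mathcal{X}=g(u)\,X$, $\mathcal{Y}=g(u)\,Y$ clears the scaling factor and yields $\mathcal{Y}^2=\mathcal{X}\big(\mathcal{X}-g(u)\big)\big(\mathcal{X}-g(u)\,u\big)$, with the three two-torsion sections $\mathcal{X}=0,\ g(u),\ g(u)\,u$. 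For $\check{\pi}_{\mathcal{Y}(\mu)}$ I would read \eqref{WEQ1} as $Y^2=-X(X-1)\,(u-X)(u-\alpha)(u-\beta)(u-\gamma)$, a quartic in $u$ with the rational root $u=X$; the classical quartic-to-cubic reduction then produces a Weierstrass equation $\eta^2=\prod_{i=1}^{3}\big(\xi-c\,p_i\big)$ over $\mathbb{P}^1_X$, where $c=-X(X-1)$ and the $p_i$ are the three pairing-sums $X\alpha+\beta\gamma$, $X\beta+\alpha\gamma$, $X\gamma+\alpha\beta$ of the four roots $X,\alpha,\beta,\gamma$. In particular both fibrations carry full two-torsion, confirming $(\mathbb{Z}/2\mathbb{Z})^2\subseteq\operatorname{MW}$.

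Next I would determine the singular fibers from these two Weierstrass forms. For $\pi_{\mathcal{Y}(\mu)}$, all three roots $0,\ g(u),\ g(u)\,u$ collide at the three zeros of $g$, giving additive fibers of type $I_0^*$ over $u\in\{\alpha,\beta,\gamma\}$, while exactly two roots collide over $u=0,1,\infty$, giving $I_2$; this reproduces the configuration $3I_0^*+3I_2$ of Proposition~\ref{special2}. For $\check{\pi}_{\mathcal{Y}(\mu)}$ the vanishing of $c=-X(X-1)$ forces a triple collision $\xi=0$ at $X=0,1,\infty$, giving $I_0^*$ there (one checks $\operatorname{ord}(\Delta)=6$ with finite $j$), whereas two of the pairing-sums $p_i$ coincide exactly over $X\in\{\alpha,\beta,\gamma\}$, giving $I_2$; thus $\check{\pi}_{\mathcal{Y}(\mu)}$ again has fibers $3I_0^*+3I_2$, but with the $I_0^*$ and $I_2$ loci interchanged relative to $\pi_{\mathcal{Y}(\mu)}$. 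This interchange is precisely what makes a duality possible: any isomorphism $\pi_{\mathcal{Y}(\mu)}\cong\check{\pi}_{\mathcal{Y}(\check{\mu})}$ must carry the $I_0^*$-locus $\{0,1,\infty\}$ of the second fibration onto the $I_0^*$-locus $\{\alpha(\mu),\beta(\mu),\gamma(\mu)\}$ of the first.

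The decisive step is therefore to fix the unique M\"obius transformation of the base sending $\{0,1,\infty\}$ to $\{\alpha(\mu),\beta(\mu),\gamma(\mu)\}$, and to verify that after this base change---and with $\mu$ replaced by $\check{\mu}$ in $\check{\pi}_{\mathcal{Y}(\check{\mu})}$---the Weierstrass equation $\eta^2=\prod_i(\xi-c\,p_i)$ becomes, up to the allowed fiberwise rescalings, the equation $\mathcal{Y}^2=\mathcal{X}\big(\mathcal{X}-g(u)\big)\big(\mathcal{X}-g(u)\,u\big)$ of $\pi_{\mathcal{Y}(\mu)}$. Matching the $I_2$-locations and the roots of the two cubics produces a system of rational identities in $\mu_1,\mu_2,\mu_3$, and I expect these identities to be exactly the defining relations \eqref{dual_moduli} for $\check{\mu}_1,\check{\mu}_2,\check{\mu}_3$; the symmetry of the correspondence $\mu\leftrightarrow\check{\mu}$ (so that $\check{\check{\mu}}=\mu$) guarantees that the two fibrations are genuinely interchanged rather than merely related. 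I expect this verification to be the main obstacle: although conceptually transparent, it is a lengthy rational-function computation, since the expressions \eqref{dual_moduli} are unwieldy and one must simultaneously normalize the base so that the three $I_0^*$ fibers and the full two-torsion sections correspond on both sides. Once the identity is established, the isomorphism $\pi_{\mathcal{Y}(\mu)}\cong\check{\pi}_{\mathcal{Y}(\check{\mu})}$ of Jacobian elliptic fibrations with section follows, and forgetting the fibration structure gives the asserted isomorphism $\mathcal{Y}(\mu)\cong\mathcal{Y}(\check{\mu})$ of K3 surfaces.
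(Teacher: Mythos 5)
Your proposal is correct and follows essentially the same route as the paper: the paper's proof simply rewrites Equation~(\ref{WEQ1}) in the symmetric form of Equation~(\ref{WEQ2}), with the roles of base and fiber coordinates interchanged, and then verifies via Equations~(\ref{dual_moduli}) that $\alpha(\check{\mu}_1,\check{\mu}_2,\check{\mu}_3)=\check{\alpha}(\mu_1,\mu_2,\mu_3)$ (and likewise for $\beta,\gamma$) --- exactly the rational identity your ``decisive step'' reduces to. Your detour through the quartic-to-cubic reduction and the Kodaira-fiber bookkeeping is a valid, if longer, way of organizing the same coordinate change, and the verification you flag as the main obstacle is precisely the computation the paper also compresses into ``one checks''.
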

\begin{proof}
Equation~(\ref{WEQ1}) can be brought into the form
\begin{equation}
\label{WEQ2}
  \mathcal{Y}(\mu): \quad Y^2 =u (u-1) (u-X) (X-\check{\alpha})(u-\check{\beta})(u-\check{\gamma}).
\end{equation}
Both set of parameters are functions of the moduli $\mu_1, \mu_2, \mu_3$, i.e., $\alpha=\alpha(\mu_1, \mu_2, \mu_3)$, $\check{\alpha}=\check{\alpha}(\mu_1, \mu_2, \mu_3)$, etc.
Using Equation~(\ref{dual_moduli}), one checks that the transformation $\alpha(\check{\mu}_1, \check{\mu}_2, \check{\mu}_3) = \check{\alpha}(\mu_1, \mu_2, \mu_3)$, and the same hold for $\beta$ and $\check{\beta}$ and $\gamma$ and $\check{\gamma}$.
\end{proof}
\noindent
The situation is depicted in the following diagram where the double arrows denote isomorphisms obtained by interchanging base and fiber coordinates:
\begin{equation}
\scalemath{\MyScaleTiny}{
\xymatrix{
& \hat{\mathcal{Y}}(\mu)= \operatorname{Kum}(\mathbf{B}_{56}) \ar[dl] \ar@{<=>}[ddr] &  \hat{\mathcal{Y}}(\check{\mu})= \operatorname{Kum}(\check{\mathbf{B}}_{56})  \ar[dr]\ar@{<=>}[ddl]|\hole\\
 \tilde{\mathcal{Y}}(\mu)= \operatorname{Kum}(\operatorname{Jac} \mathcal{D}) \ar[d] &&& \tilde{\mathcal{Y}}(\check{\mu})= \operatorname{Kum}(\operatorname{Jac} \check{\mathcal{D}}) \ar[d]\\
 \mathcal{Y}(\mu) \ar[dr] \ar@/_2pc/@{<=>}[rrr] &   \mathcal{Z}(\mu)=\operatorname{Kum}(\check{\mathbf{B}}_{56}) \ar[d]|\hole &  \mathcal{Z}(\check{\mu})  =\operatorname{Kum}(\mathbf{B}_{56}) \ar[d]|\hole & \mathcal{Y}(\check{\mu}) \ar[dl] \\ 
& \mathcal{X}(\mu) = \operatorname{Kum}(\operatorname{Jac} \check{\mathcal{D}}) &  \mathcal{X}(\check{\mu}) = \operatorname{Kum}(\operatorname{Jac} {\mathcal{D}}) & 
}}
\end{equation}
\section{Application to F-theory/heterotic string duality}\label{F-theory}
Compactifications of the type-IIB string theory in which the complex coupling varies over a base are generically referred to as F-theory. F-theory is a powerful tool for analyzing the non-perturbative aspects of heterotic string compactifications \cite{MR1409284,MR1412112}. One of the simplest F-theory construction corresponds to K3 surfaces that are elliptically fibered over $\mathbb{P}^1$, in physics equivalent to type-IIB string theory compactified on $\mathbb{P}^1$ and hence eight-dimensional in the presence of 7-branes and Wilson lines \cite{MR3366121}. In this way, an elliptically fibered K3 surface with section and elliptic fibers $F_{\pmb{\tau} }=\mathbb{C}/(\mathbb{Z}\oplus \mathbb{Z}\pmb{\tau} )$ defines an F-theory vacuum in eight dimensions where the complex-valued scalar field $\pmb{\tau}$ of the type-IIB string theory, the so-called axio-dilaton field, is now allowed to be multi-valued and undergo monodromy when encircling defects of co-dimension one. The Kodaira-table of singular fibers \cite{MR0184257} gives a precise dictionary between the characteristics of the elliptic fibration and the content of the 7-branes present in the physical theory and the local monodromy of $\pmb{\tau}$. It is well-known that the moduli space of these F-theory models is isomorphic to the moduli space of the heterotic string compactified on an elliptic curve together with a principal $G$-bundle where $G$ is the gauge group of the heterotic string, i.e., $G=E_8 \times E_8 \rtimes \mathbb{Z}_2$ or $\mathrm{Spin}(32)/\mathbb{Z}_2$ or a subgroup of these. In fact, it is well-known that the moduli spaces for these two types physical theories are given by the Narain space which is the quotient of the symmetric space for $\operatorname{O}(2,18)$ by a particular arithmetic group \cite{MR834338}. This is the basic form of the F-theory/heterotic string duality.
\par The duality map can be put on a geometric footing and made more explicit if we restrict to a low-dimensional subspace of the full moduli space which in general will still include quantum compactifications. For high Picard number of the elliptically fibered K3 surfaces or, equivalently, the presence of at most one non-trivial Wilson line, the F-theory/heterotic string duality is a Hodge-theoretic duality map -- a correspondence that relates K3 surfaces and abelian surfaces. In fact, the analytic isomorphism between the moduli spaces of certain families of lattice polarized K3 surfaces and flat bundles over elliptic curves with gauge group $G$ can be explicitly computed in terms of period integrals. Using this duality, non-geometric compactifications of the heterotic string on a two-torus with no large radius interpretation were constructed by using the F-theory dual, together with a close connection between Siegel modular forms of genus two and the equations of certain K3 surfaces \cite{MR3366121}.  
\subsection{The CHL string in eight dimensions}
\label{sec:CHL}
In eight dimensions one can also consider the so-called \emph{CHL string}. Here, we give a brief overview of the CHL string and an outline of a class of dualities linked to CHL string. A detailed discussion of the CHL string and the dual eight-dimensional F-theory models can be found in \cite{MR1615617, MR1621170, MR1797021}. The CHL string is obtained from the $E_8 \times E_8$ heterotic string on a torus $T^2$ as a $\mathbb{Z}/2\mathbb{Z}$ quotient. The quotient is obtained from an involution, called the \emph{CHL involution}. The CHL involution acts by a half-period shift on the elliptic curve obtained by combining $T^2$ with the complex structure parameter, permutes the two $E_8$'s of the gauge bundle, and acts trivially on the complex K\"ahler modulus.  Keeping the notation from Section~\ref{setup}, we can start with the following data: ($a'$) an elliptic curve $\mathcal{E}$, ($b'$) a $\mathbb{Z}/2\mathbb{Z}$ subgroup  $\{\sigma, \tau\}$ of $\mathcal{E}$ inducing the involution $\imath_{\mathcal{E}}$ in Equation~(\ref{involution_fiber}), and ($c'$) a principal $E_8$ bundle $\mathbf{W} \to \mathcal{E}$. The actual data of the CHL string is then given by \cite{MR1797021}: ($a$) the elliptic curve $\hat{\mathcal{E}} = \mathcal{E}/\{ \sigma, \tau\}$ in Equation~(\ref{fiber_isog}), ($b$) the point $T$ of order two on $\hat{\mathcal{E}}$ inducing the CHL involution $\imath_{\hat{\mathcal{E}}}$ in Equation~(\ref{dual_isog_involution}), and ($c$) the twisted principal $E_8 \times E_8$ bundle $\mathbf{V} \to \hat{\mathcal{E}}$ obtained from the push-forward map $\hat{\varphi}_*\mathbf{W}$. Here, `twisted' means that after pullback the bundle $\hat{\varphi}^*\mathbf{W}$ becomes an ordinary $E_8\times E_8$ bundle on $\mathcal{E}$ isomorphic to $\mathbf{W} \oplus \imath_{\mathcal{E}}^*\mathbf{W} \to \mathcal{E}$.
\par The situation for the CHL string is different from the standard F-theory/heterotic string correspondence: Witten analyzed the F-theory compactifications dual to the CHL string in the case of an isotrivial elliptic fibration on the F-theory background \cite{MR1615617}.  His picture was then extended in \cite{MR1797021} to the interior of the Narain moduli space of toroidal compactifications where the elliptic fibrations of the F-theory models are no longer isotrivial, but the essential features of Witten's description are still valid. Generally speaking, the CHL string in eight dimensions is dual to an F-theory with non-zero flux of an antisymmetric two-form field, or $B$-field, along the base curve $\mathbb{P}^1$. The value of this flux is quantized and fixed to half the K\"ahler class on $\mathbb{P}^1$. According to Witten \cite{MR1615617}, the non-trivial $\mathbb{Z}/2\mathbb{Z}$ flux constitutes the obstruction to the existence of a vector structure in the physical theory. The presence of this flux freezes eight of the moduli in the physical moduli space, leaving a ten dimensional moduli space. 
\par On this ten dimensional moduli space, the single-valued background of an antisymmetric two-form in the physical theory is not compatible with a generic monodromy of the half-periods of a generic elliptic fiber in $\operatorname{SL}(2,\mathbb{Z})$ around the defects. Rather it must be contained in a subgroup of $\operatorname{SL}(2,\mathbb{Z})$ that keeps the flux of the $B$-field invariant, the biggest possible being the congruence subgroup $\Gamma_0(2)$. The mechanism of specializing the usual F-theory/heterotic string duality then works as follows: one starts with an elliptically fibered K3 surface $\mathbf{X}$ with section and a monodromy group contained in $\Gamma_0(2)$ that keeps one of the three half-periods of the elliptic fibers invariant. The corresponding Weierstrass model has eight fibers of Kodaira type $I_2$ and two sections, the zero-section and a two-torsion section, arranged in such a way that at each reducible fiber of type $A_1$ the two sections pass trough two different components of the fiber. This fibration is called the \emph{$\Gamma_0(2)$ elliptic fibration} in physics \cite{MR1797021}, since the monodromy group is $\Gamma_0(2)$. It turns out that the moduli space of F-theory compactifications corresponding to the CHL string is a finite cover of the moduli space of these Weierstrass models. Concretely, in order to recover the dual CHL compactification from the Weierstrass model one needs to choose four of the eight points on the base where the fibers of type $I_2$ are located. The double cover of $\mathbb{P}^1$ branched at these four marked points defines the genus-one curve $\hat{\mathcal{C}}$ on which the CHL string is compactified. In conclusion, the data needed to describe an F-theory compactifications dual to the CHL string is a Weierstrass model defining a $\Gamma_0(2)$ elliptic fibration with section and two-torsion section together with a choice of four out of eight fibers of $I_2$.
\par The identification of points in the physical moduli space corresponding to a specific non-abelian gauge group of the CHL string simplifies considerably, if the elliptically fibered K3 surface $\mathbf{X}$ admits a second elliptic fibration with two fibers of Kodaira type $I_0^*$. The existence of this second elliptic fibration is due to the fact that we expect the K3 surface $\mathbf{X}$ to arise as quotient of another K3 surface $\mathbf{Y}$ by a Nikulin involution. Physically, the existence of the second K3 surface implies that the F-theory/heterotic string duality can be lifted to a duality between the CHL string and the seven-dimensional compactification of M-theory on the quotient of $\mathbf{Y} \times S^1$ by the product involution acting as Nikulin involution on $\mathbf{Y}$ and as half-period shift on $S^1$. We assume that $\mathbf{Y}$ admits an elliptic fibration with section such that $\mathbf{X}$ inherits an elliptic fibration with two fibers of Kodaira type $I_0^*$, called the \emph{inherited elliptic fibration} in physics~\cite{MR1797021}. However, the correct F-theory compactification does not admit a section but only a bi-section due to the precise nature of the duality with the CHL string.  That is, the correct F-theory compactification is that of a K3 surface $\mathbf{X}$ equipped with a genus-one fibration with two fibers of Kodaira type $I_0^*$ \emph{not} admitting a section. The connection to the $\Gamma_0(2)$ elliptic fibration is seen as follows: in the $\Gamma_0(2)$ elliptic fibration there are two sections, the zero section and a two-torsion section, as well as eight $A_1$ singularities. Of those, four singularities are located on the two-torsion section, four more singularities are located on the zero-section. The two sections become the central components of the two $I_0^*$ fibers in the inherited fibration. By the nature of the duality, the gauge groups of the CHL string can then be easily read off the singular fibers in the inherited elliptic fibration \cite{MR1797021}. In fact, Witten argued that the moduli space of F-theory compactifications dual to the CHL string is naturally isomorphic to the moduli space of K3 surfaces obtained as genus-one fibrations with two fibers of type $I_0^*$ and a bisection \cite{MR1615617}.
\par There is also third point of view regarding the F-theory backgrounds dual to the CHL string: since the CHL string is essentially a heterotic string theory with involution, one expects the duality to equip the K3 surface $\mathbf{Y}$ with an involution as well. However, the existence of the two elliptic fibration structures on $\mathbf{X}$ implies that there is also an involution on $\mathbf{Y}$ acting fixed-point-free. The quotient of the K3 surface $\mathbf{Y}$ by this fixed-point-free involution is an \emph{Enriques surface} that admits a bi-section of arithmetic genus one and two fibers of multiplicity two. There is a well understood moduli space of K3 surface with this property, namely the moduli space of K3 surfaces which are universal covers of Enriques surfaces \cite{MR3586505}.
\par We employ the same strategy as the one applied in~\cite{MR3366121} for the standard F-theory/heterotic string correspondence. To determine explicitly the moduli for a family of F-theory compactifications with non-zero flux in eight dimensions and compactifications of the CHL string, we focus on a natural subspace of the full moduli space. As mentioned, it was proven in~\cite{MR2274533} that the most generic K3 surfaces in Section~\ref{sec:generic} allowed by Theorem~\ref{thm} have the singular fibers $8 I_2 + 8I_1$, admit a lattice polarization by the rank-ten lattice $H \oplus N$ -- where $N$ is the Nikulin lattice -- and have a ten-dimensional moduli space. We will then investigate a natural sub-family of K3 surfaces in Picard rank $14$ in more detail.  To do so, we restrict ourselves to the case when there are two commuting involutions on $\mathbf{Y}$, one preserving the fibers and one acting as simple inversion on the base. The additional symmetry restricts us to a six dimensional moduli space of a family of lattice polarized K3 surfaces of Picard rank $14$ that will be determined in Theorem~\ref{prop:CHL} below. For this family, the F-theory/heterotic string duality map and moduli are then determined explicitly in Theorem~\ref{prop:duality} below. 
\subsection{F-theory moduli in Picard rank 14}
We specialize Theorem~\ref{thm} to the case $k=2$ and assume that $a= t_0t_1\alpha$, $b=t_0t_1\beta$, $c=t_0t_1\gamma$, where $\alpha, \beta, \gamma$ are homogeneous polynomials of degree two in $\mathbb{C}(t_0,t_1)$ with no repeated roots or common factors with each other or $t_0t_1$.  We obtain projective models for K3 surfaces $\mathcal{X}, \mathcal{Y}, \mathcal{Z}$ with coordinates $[x:y:z], [X:Y:Z]\in \mathbb{P}^2$, $[U:V:W]\in \mathbb{P}(1,2,1)$ for the fiber and $[t_0:t_1]\in \mathbb{P}^1$ for the base from the equations
\begin{equation}
\label{eqns_Kummer2}
\scalemath{\MyScaleMedium}{
\begin{array}{lll}
 \mathcal{Y}_{[t_0:t_1]}:& Y^2 Z  = X \Big(X^2 - 4 t_0t_1 \beta XZ + 4t_0^2t_1^2(\beta^2-\alpha\gamma)Z^2\Big), & \Delta_{\mathcal{Y}}=2^8 \alpha \gamma (\beta^2-\alpha\gamma)^2 t^6_0 t^6_1,\\[0.5em]
  \mathcal{Z}_{[t_0:t_1]}:& V^2  =  t_0 t_1 \alpha  U^4 +2  t_0 t_1 \beta  U^2W^2 +  t_0 t_1 \gamma  W^4, & \Delta_{\mathcal{Z}}= \Delta_{\mathcal{Y}},\\[0.5em]
 \mathcal{X}_{[t_0:t_1]}:& y^2 z  = x \big( x^2 + 2 t_0 t_1 \beta  x z+  t^2_0 t_1 ^2 \alpha\gamma z^2\big), &  \Delta_{\mathcal{X}}=4\alpha^2\gamma^2(\beta^2-\alpha\gamma) t^6_0 t^6_1.
\end{array}}
\end{equation} 
Condition~(\ref{condition}) is not satisfied, and the genus-one fibration $\pi_{\mathcal{Z}}$ has no sections. The elliptic fibrations with section $\pi_{\mathcal{X}}$ and $\pi_{\mathcal{Y}}$ have singular fibers $2 I_0^* + 4 I_2 + 4 I_1$ and $\operatorname{MW}(\pi_{\mathcal{Y}})=\operatorname{MW}(\pi_{\mathcal{X}})=\mathbb{Z}/2\mathbb{Z}$. Accordingly, we have $\rho_{\mathcal{X}} = \rho_{\mathcal{Y}}= \rho_{\mathcal{Z}} = 14$.  The K3 surface $\mathcal{X}$ admits the zero-section $\sigma: [x:y:z]=[0:1:0]$ and the two-torsion section $\tau: [x:y:z]=[0:0:1]$.
\par As shown in Figure~(\ref{pic:VGS}), the K3 surfaces $\mathcal{X}$ and $\mathcal{Y}$ are related to each other by fiberwise two-isogeny. The ramification locus of the double cover $\Phi: \mathcal{Y} \dasharrow \mathcal{X}$ is the even eight that consists of the  four non-neutral components of the reducible fibers of type $A_1$ and the four non-central components of the reducible fibers of type $D_4$ over $t_0t_1=0$ not meeting the sections $\sigma$ or $\tau$. A rational map $\Psi: \mathcal{Z}\dasharrow \mathcal{X}$ of degree two is given by
\begin{equation}
\label{ppZ}
\begin{split}
  [x:y:z] & =\Psi([U:V:W])= [t_0t_1\alpha U^2W: t_0t_1\alpha U V : W^3 ].
\end{split}
\end{equation} 
We have the following:
\begin{corollary}
\label{prop_ppZ}
In the situation described above, the map $\Psi: \mathcal{Z} \dasharrow \mathcal{X}$ is the double cover branched along the even eight on $\mathcal{X}$ given by the following components: the two components of the reducible fibers of type $A_1$ over $\gamma=0$ not meeting the section $\sigma$, the two components of the reducible fibers of type $A_1$ over $\alpha=0$ meeting the section $\sigma$, the four non-central components of the reducible fibers of type $D_4$ over $t_0t_1=0$ meeting sections $\sigma$ or $\tau$. 
\end{corollary}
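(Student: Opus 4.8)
The plan is to obtain the even eight directly from the general description of $\Psi$ in the $8I_2+8I_1$ situation and then to follow the branch components as the $I_2$ fibers coalesce into the two $I_0^*$ fibers. First I would record the singular fibers from $\Delta_{\mathcal{X}}=4\alpha^2\gamma^2(\beta^2-\alpha\gamma)t_0^6t_1^6$: one fiber of type $A_1$ over each root of $\alpha$ and of $\gamma$, one fiber of type $D_4$ over each of $t_0=0$ and $t_1=0$, and four nodes over $\beta^2-\alpha\gamma=0$; as in Proposition~\ref{prop_EEP} every reducible fiber has its singular point at $(x,y)=(0,0)$, so the two-torsion section $\tau$ passes through the exceptional (non-neutral) components while $\sigma$ meets the neutral ones. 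By Proposition~\ref{prop_EEP} the map $\Psi$ of Equation~(\ref{ppZ}) is the double cover of $\mathcal{X}$ branched along the even eight consisting of the neutral components over $a=0$ together with the non-neutral components over $c=0$. The whole argument is then to substitute $a=t_0t_1\alpha$ and $c=t_0t_1\gamma$ and to decide which concrete fiber components this prescription selects.

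For the $A_1$ fibers this is immediate. Over a root of $\alpha$ only $a$ vanishes, the degeneration is a simple node, and the prescription selects the neutral component, i.e.\ the one met by $\sigma$. Symmetrically, over a root of $\gamma$ only $c$ vanishes and the prescription selects the non-neutral component, i.e.\ the one met by $\tau$ and disjoint from $\sigma$. This already accounts for the first four curves of the claimed even eight.

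The substantive point is the two $D_4$ fibers over $t_0t_1=0$, where $a$ and $c$ vanish simultaneously and simply, so that each $I_0^*$ fiber is the coalescence of one $a$-type and one $c$-type $I_2$ degeneration. Combining this with the reasoning of Proposition~\ref{prop_EEP_D4}, among the four non-central components $\sigma$ meets the component at $x=\infty$ and $\tau$ the component at $x=0$, while the remaining two (the roots of $x^2+bxz+acz^2$) carry a bisection and meet neither section; the neutral component inherited from the $a$-role is the $\sigma$-component and the non-neutral one inherited from the $c$-role is the $\tau$-component, so $\Psi$ receives exactly the two non-central components met by $\sigma$ or $\tau$ in each such fiber. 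A uniform way to confirm all eight choices at once is to compute the fixed locus of the Nikulin involution $\imath_{\mathcal{Z}}$ of Proposition~\ref{prop_K3}: its eight fixed points are $[U:V:W]=[1:0:0]$ over $\{\alpha=0\}\cup\{t_0t_1=0\}$ and $[0:0:1]$ over $\{\gamma=0\}\cup\{t_0t_1=0\}$, and these lie on the degenerations of the bisections $\kappa=\Psi^{-1}(\sigma)$ and $\upsilon=\Psi^{-1}(\tau)$ respectively. Resolving the eight nodes of $\mathcal{Z}/\{1,\imath_{\mathcal{Z}}\}$ therefore produces precisely the branch curves meeting $\sigma$ (from the points $[1:0:0]$) and $\tau$ (from the points $[0:0:1]$), which reproduces the three bullet points of the statement and gives $2+2+4=8$ disjoint $(-2)$-curves.

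I expect the $D_4$ bookkeeping to be the only real obstacle: Proposition~\ref{prop_EEP} distinguishes only neutral from non-neutral components, and one must verify that under coalescence these become the two sectional components rather than the two components forming the bisection. The fixed-point computation for $\imath_{\mathcal{Z}}$ is what I would use to settle this cleanly, since it attaches each of the eight branch curves unambiguously to the component carrying either $\Psi^{-1}(\sigma)$ or $\Psi^{-1}(\tau)$; the even-eight property itself then needs no separate check, being automatic from Proposition~\ref{prop_K3} once $\imath_{\mathcal{Z}}$ is known to be a Nikulin involution.
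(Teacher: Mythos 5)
Your proposal is correct, and its decisive mechanism differs from the paper's. The paper disposes of this corollary in one line, ``combine the proofs of Propositions~\ref{prop_EEP_I4} and~\ref{prop_EEP_D4}'', i.e.\ it re-runs, fiber type by fiber type, the section-chasing arguments of Section~4: identify the preimages of $\sigma$ and $\tau$ under $\Psi$ in each kind of reducible fiber and read off which components must lie in the branch locus. You share the starting point (the bisections $\kappa=\Psi^{-1}(\sigma)$ and $\upsilon=\Psi^{-1}(\tau)$), and your intermediate heuristic about which components are ``inherited from the $a$-role'' versus the ``$c$-role'' under coalescence is exactly the fragile step the paper's method also has to negotiate; but you then replace the case analysis by a single uniform computation: the eight fixed points of $\imath_{\mathcal{Z}}$ are precisely the ramification points of the two genus-one bisections $\kappa$ (over $a=0$: the two roots of $\alpha$ and $t_0t_1=0$) and $\upsilon$ (over $c=0$: the two roots of $\gamma$ and $t_0t_1=0$), so each exceptional curve of the Nikulin quotient $\mathcal{X}=\widehat{\mathcal{Z}/\{1,\imath_{\mathcal{Z}}\}}$ is met by the image of the bisection through the corresponding fixed point, namely $\sigma$ or $\tau$. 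Since in each $I_2$ fiber exactly one component meets $\sigma$ (resp.\ $\tau$), and in each $D_4$ fiber exactly one non-central component meets each section, this pins all eight branch curves at once and reproduces the statement; the even-eight property is indeed free from Proposition~\ref{prop_K3}(6). What your route buys is self-containedness: it does not invoke Proposition~\ref{prop_EEP} outside its stated genericity hypothesis (here $a,b,c$ share the factor $t_0t_1$), whereas the paper's ``combination'' tacitly assumes those arguments specialize. One point you should make explicit: over $t_0t_1=0$ the weighted-projective model of $\mathcal{Z}$ has the non-reduced fiber $V^2=0$, so the fixed-point count must be carried out on the relatively minimal smooth model. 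This works because the points $[1:0:0]$ and $[0:0:1]$ are smooth points of the ambient model there (the four surface singularities resolving into the non-central components of the $I_0^*$ fiber of $\mathcal{Z}$ sit at $V=0$, $aU^4+bU^2W^2+cW^4$ divided by the uniformizer vanishing), hence these fixed points survive resolution unchanged; and since a Nikulin involution has exactly eight fixed points, the eight you exhibit are all of them, so no additional fixed points hide on the resolved $I_0^*$ fibers.
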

\begin{proof}
The proof is obtained by combining the proofs of Propositions~\ref{prop_EEP_I4} and \ref{prop_EEP_D4}. 
\end{proof}
\noindent Since the even eight, i.e., the branching locus of $\Psi$ on $\mathcal{X}$, is intersected by the zero-section and two-torsion section, none of these sections lift to sections on $\mathcal{Z}$ which generically has no sections. The genus-one fibration $\pi_{\mathcal{Z}} : \mathcal{Z} \to \mathbb{P}^1$ has the same $J$-function and the same monodromy as the elliptically fibered K3 surface $\pi_{\mathcal{Y}} : \mathcal{Y} \to \mathbb{P}^1$ with section, but is not in Weierstrass form. Conversely, the elliptic fibration $\pi_{\mathcal{Y}}$ with section is the relative Jacobian fibration of $\pi_{\mathcal{Z}}$ by Proposition~\ref{prop_K3}.

\par We consider the double cover $f: \mathbb{P}^1 \to \mathbb{P}^1$ given by $[s_0:s_1] \mapsto [t_0:t_1] =[s^2_0:s^2_1]$ invariant under the involution $[s_0:s_1]\mapsto [s_0:-s_1]$. The polynomials $\alpha, \beta$ are then homogeneous polynomials of degree four (resp.~\!2) in $\mathbb{C}(s_0,s_1)$, invariant under the involution, with no repeated roots or common factors with each other or $s_0s_1$.  K3 surfaces $\tilde{\mathcal{X}}, \tilde{\mathcal{Y}}, \tilde{\mathcal{Z}}$ are obtained using coordinates $[\tilde{x}:\tilde{y}:\tilde{z}]\in \mathbb{P}^2$, $[\tilde{X}:\tilde{Y}:\tilde{Z}]\in \mathbb{P}^2$, $[U:V:W]\in \mathbb{P}(1,2,1)$ for the fibers and $[s_0:s_1]\in \mathbb{P}^1$ for the base from the equations
\begin{equation}
\label{eqns_Kummer_up2}
\scalemath{\MyScaleBig}{
\begin{array}{lll}
 \tilde{\mathcal{Y}}_{[s_0:s_1]}:& \tilde{Y}^2 \tilde{Z}  = \tilde{X} \Big(\tilde{X}^2 - 4 \beta \tilde{X} \tilde{Z} + 4(\beta^2-\alpha\gamma) \tilde{Z} ^2\Big), & \Delta_{\tilde{\mathcal{Y}}}=2^8 \alpha \gamma (\beta^2-\alpha\gamma)^2,\\[0.5em]
 \tilde{\mathcal{Z}}_{[s_0:s_1]}:& \tilde{V}^2  =  \alpha  \tilde{U}^4 +2  \beta  \tilde{U}^2\tilde{W}^2 +  \gamma  \tilde{W}^4, & \Delta_{\tilde{\mathcal{Z}}}= \Delta_{\tilde{\mathcal{Y}}},\\[0.25em]
 \tilde{\mathcal{X}}_{[s_0:s_1]}: & \tilde{y}^2 \tilde{z}  = \tilde{x}  \Big(\tilde{x}^2 + 2 \beta  \tilde{x} \tilde{z}+  \alpha\gamma \tilde{z}^2\Big), &  \Delta_{\tilde{\mathcal{X}}}=4\alpha^2\gamma^2(\beta^2-\alpha\gamma) .
\end{array}}
\end{equation} 
Condition~(\ref{condition}) is not satisfied, and the genus-one fibration $\pi_{\tilde{\mathcal{Z}}}$ has no sections. The elliptic fibrations with section $\pi_{\tilde{\mathcal{X}}}$ and $\pi_{\tilde{\mathcal{Y}}}$ have singular fibers $8 I_2 + 8 I_1$ and $\operatorname{MW}(\pi_{\mathcal{Y}})_{\mathrm{tor}}=\operatorname{MW}(\pi_{\mathcal{X}})_{\mathrm{tor}}=\mathbb{Z}/2\mathbb{Z}$. Following Proposition~\ref{diamond} and Corollary~\ref{even_eight_for_base_transfo}, there is a degree-two rational map $F: \tilde{\mathcal{Y}}\dasharrow \mathcal{Y}$ given by
\begin{equation*}
 F:	\Big([s_0:s_1],[\tilde{X}: \tilde{Y}: \tilde{Z}]\Big)	  \mapsto 	
 	\Big([t_0:t_1],[X:Y:Z]\Big) =\Big([s^2_0:s^2_1],[s_0^2s_1^2\tilde{X}:s_0^3s_1^3\tilde{Y}:\tilde{Z}]\Big), 
 \end{equation*}
associated with the Nikulin involution on $\tilde{\mathcal{Y}}$ given by
\begin{equation}
\label{Nikulin_base}
  \jmath_{\tilde{\mathcal{Y}}}: \;  \Big([s_0:s_1],[\tilde{X}: \tilde{Y}: \tilde{Z}]\Big) \; \mapsto \; \Big([s_0:-s_1],[\tilde{X}: -\tilde{Y}: \tilde{Z}]\Big).
\end{equation}
The branching locus of the map $F$ is the even eight on $\mathcal{Y}$ given by the non-central components of the reducible fibers of type $D_4$ over $t_1=0$ and $t_0=0$. 
\par After choosing one of the polynomials from the pair $\{\alpha, \gamma\}$, say $\alpha$, corresponding to a choice of four out of the eight fibers of type $I_2$, and rescaling 
\begin{equation}
\label{birational_change}
\Big( \tilde{x}, \tilde{y}\Big) \mapsto \Big( \alpha \tilde{x}, \alpha \tilde{y}\Big),
\end{equation}
Equation~(\ref{eqns_Kummer_up2}) becomes
\begin{equation}
\label{eqns_Kummer_up2b}
 \tilde{y}^2 \tilde{z}  = \tilde{x}  \Big(\alpha \tilde{x}^2 + 2 \beta  \tilde{x} \tilde{z}+  \gamma \tilde{z}^2\Big).
\end{equation}
In Equation~(\ref{eqns_Kummer_up2b}) the projection to $[\tilde{x}:\tilde{z}] \in \mathbb{P}^1$ induces a second, genus-one fibration $\check{\pi}_{\tilde{\mathcal{X}}}$ because the right hand side of Equation~(\ref{eqns_Kummer_up2b}) is homogeneous of degree four in $\mathbb{C}(s_0,s_1)$.  Similarly, the K3 surface $\tilde{\mathcal{Z}}$ in Equation~(\ref{eqns_Kummer_up2}) is equipped with a second genus-one fibrations $\check{\pi}_{\tilde{\mathcal{Z}}}: \tilde{\mathcal{Z}} \to \mathbb{P}^1$ induced by the projection to  $[\tilde{U}:\tilde{W}] \in \mathbb{P}^1$. We encountered $\check{\pi}_{\tilde{\mathcal{Z}}}$ already in higher Picard number in Proposition~\ref{Gamma-fibration}.
\par We set
\begin{equation}
\label{coeffs2}
\scalemath{\MyScaleMedium}{
\begin{array}{c}\displaystyle
 \alpha = \alpha_2 t_0^2 +2 \alpha_1 t_0t_1 + \alpha_0 t_1^2, \quad  \beta = \beta_2 t_0^2 + 2\beta_1 t_0 t_1 + \beta_0 t_1^2, \quad
 \gamma = \gamma_2 t_0^2 + 2\gamma_1 t_0t_1 + \gamma_0 t_1^2,
 \end{array}}
\end{equation}
and denote the dependence on the set of moduli $(\alpha_1, \alpha_2, \alpha_3, \beta_1 , \dots, \gamma_3)$ by writing $\mathcal{X} = \mathcal{X}(\alpha, \beta, \gamma)$, $\mathcal{Y} = \mathcal{Y}(\alpha, \beta, \gamma)$, $\mathcal{Z} = \mathcal{Z}(\alpha, \beta, \gamma)$, etc. We also define a `dual' set of moduli, denoted by $(\check{\alpha}_1, \check{\alpha}_2, \check{\alpha}_3, \check{\beta}_1 , \dots, \check{\gamma}_3)$, related to $(\alpha, \beta, \gamma)$ by
\begin{equation}
\label{dual_6L}
\begin{array}{c}
 \check{\alpha}_2 = \alpha_2, \; \check{\alpha}_1= \beta_2, \; \check{\alpha}_0 = \gamma_2, \qquad \check{\beta}_2 = \alpha_1, \; \check{\beta}_1=\beta_1, \; \check{\beta}_0 =\gamma_1,\\[0.5em]
 \check{\gamma}_2 = \alpha_0, \; \check{\gamma}_1= \beta_0, \; \check{\gamma}_0 =\gamma_0.
 \end{array}
\end{equation}
It is obvious that the relation between the moduli $(\alpha, \beta,\gamma)$ and $(\check{\alpha}, \check{\beta},\check{\gamma})$ is bijective and symmetric. We have the following:
\begin{lemma}
\label{lem:EquivFibs}
\begin{enumerate}
\item []
\item The fibration $\pi_{\tilde{\mathcal{Z}}(\alpha, \beta, \gamma)}$ in Equation~(\ref{eqns_Kummer2}) coincides with the fibration $\check{\pi}_{\tilde{\mathcal{Z}}(\check{\alpha}, \check{\beta}, \check{\gamma})}$. In particular, the total spaces of the elliptic fibrations are isomorphic, i.e., $\tilde{\mathcal{Z}}(\alpha, \beta, \gamma) \cong \tilde{\mathcal{Z}}(\check{\alpha}, \check{\beta}, \check{\gamma})$.
\item The fibration $\pi_{\mathcal{Z}(\check{\alpha}, \check{\beta}, \check{\gamma})}$ in Equation~(\ref{eqns_Kummer2}) coincides with the fibration $\check{\pi}_{\tilde{\mathcal{X}}(\alpha, \beta, \gamma)}$. In particular, we have $\mathcal{Z}(\check{\alpha}, \check{\beta}, \check{\gamma}) \cong \tilde{\mathcal{X}}(\alpha, \beta, \gamma)$.
\end{enumerate}
\end{lemma}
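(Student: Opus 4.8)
The plan is to prove both statements by the mechanism already used in Proposition~\ref{Gamma-fibration}: every coefficient appearing in~(\ref{eqns_Kummer_up2}) is homogeneous of degree four in the relevant pair of coordinates, so one may interchange the roles of base and fiber coordinates to produce a second genus-one fibration, and the effect of this interchange on the moduli is exactly the transposition of the $3\times 3$ array $(\alpha_i,\beta_i,\gamma_i)$ recorded in~(\ref{dual_6L}). In each part I would exhibit an explicit involutive change of coordinates identifying the two total spaces and carrying one projection onto the other; the whole content then reduces to checking that the coefficient array transposes correctly.

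For (1), recall that after the base change $f\colon[s_0:s_1]\mapsto[s_0^2:s_1^2]$ the degree-two forms become the biquadratics $\alpha=\alpha_2 s_0^4+2\alpha_1 s_0^2 s_1^2+\alpha_0 s_1^4$, and similarly for $\beta,\gamma$, so that $\tilde{\mathcal{Z}}(\alpha,\beta,\gamma)$ is cut out by $V^2=\alpha U^4+2\beta U^2W^2+\gamma W^4$ with $\pi_{\tilde{\mathcal{Z}}}$ the projection to $[s_0:s_1]$. I would apply the involution $(s_0,s_1,U,V,W)\mapsto(U,W,s_0,V,s_1)$, which sends this projection to the projection onto the old fiber $[U:W]$, i.e.\ to the second fibration $\check{\pi}$. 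Expanding both quartic forms and collecting the nine monomials $s_0^a s_1^b U^c W^d$, the transformed equation is the defining equation of $\tilde{\mathcal{Z}}(\check{\alpha},\check{\beta},\check{\gamma})$ precisely when $\check{\alpha}_2=\alpha_2$, $\check{\alpha}_1=\beta_2$, $\check{\alpha}_0=\gamma_2,\dots,\check{\gamma}_0=\gamma_0$, and these nine identities are exactly~(\ref{dual_6L}). Hence the two fibrations coincide and $\tilde{\mathcal{Z}}(\alpha,\beta,\gamma)\cong\tilde{\mathcal{Z}}(\check{\alpha},\check{\beta},\check{\gamma})$.

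For (2), I would first use the rescaling~(\ref{birational_change}) to bring $\tilde{\mathcal{X}}(\alpha,\beta,\gamma)$ into the form~(\ref{eqns_Kummer_up2b}), namely $\tilde{y}^2\tilde{z}=\tilde{x}(\alpha\tilde{x}^2+2\beta\tilde{x}\tilde{z}+\gamma\tilde{z}^2)$; since $\alpha,\beta,\gamma$ are biquadratic of degree four in $(s_0,s_1)$, the projection $\check{\pi}_{\tilde{\mathcal{X}}}$ onto $[\tilde{x}:\tilde{z}]$ is a genus-one fibration whose fiber is $\tilde{y}^2=\tilde{x}(\alpha\tilde{x}^2+2\beta\tilde{x}+\gamma)$ in the variables $(s_0,s_1)$. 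On the other side, collecting powers of $t_0,t_1$ in $\mathcal{Z}(\check{\alpha},\check{\beta},\check{\gamma})$ and substituting~(\ref{dual_6L}) rewrites its defining equation as $V^2=t_0 t_1\bigl(\alpha(U^2,W^2)\,t_0^2+2\beta(U^2,W^2)\,t_0 t_1+\gamma(U^2,W^2)\,t_1^2\bigr)$, where $\alpha(U^2,W^2)=\alpha_2 U^4+2\alpha_1 U^2W^2+\alpha_0 W^4$ and likewise for $\beta,\gamma$. The identification $[t_0:t_1]\leftrightarrow[\tilde{x}:\tilde{z}]$, $V\leftrightarrow\tilde{y}$, $[U:W]\leftrightarrow[s_0:s_1]$ then matches the two equations term by term: the even powers $U^2,W^2$ forced by the weighted fiber $\mathbb{P}(1,2,1)$ of $\mathcal{Z}$ correspond exactly to the squares $s_0^2,s_1^2$ produced by the quadratic base change $f$. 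This gives $\pi_{\mathcal{Z}(\check{\alpha},\check{\beta},\check{\gamma})}=\check{\pi}_{\tilde{\mathcal{X}}(\alpha,\beta,\gamma)}$ and hence $\mathcal{Z}(\check{\alpha},\check{\beta},\check{\gamma})\cong\tilde{\mathcal{X}}(\alpha,\beta,\gamma)$.

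I expect no genuine obstacle beyond keeping the weights and the base change straight; the one conceptual point worth isolating is that squaring the base of $\tilde{\mathcal{X}}$ through $f$ is mirrored by the even powers of the $\mathbb{P}(1,2,1)$-fiber coordinates of $\mathcal{Z}$, which is what makes the two genus-one fibrations land on a single surface. Both ``in particular'' isomorphisms of total spaces are then immediate, since in each case the two projections are simply the two fibration structures carried by one and the same K3 surface.
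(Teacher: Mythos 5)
Your proposal is correct and takes essentially the same route as the paper, whose proof verifies (2) by exactly your affine substitution ($\tilde{z}=1$, $\tilde{x}\mapsto t$, $\tilde{y}\mapsto V$, $s\mapsto U$ in Equation~(\ref{eqns_Kummer_up2b})) and matches coefficients against Equation~(\ref{dual_6L}). The only difference is one of completeness: the paper proves only (2) and leaves (1) implicit, whereas you also carry out the base--fiber interchange for (1) explicitly, your observation that the $3\times 3$ coefficient array transposes being precisely the content of Equation~(\ref{dual_6L}).
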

\begin{proof}
We only proof (2): one checks that for $\tilde{z}=1$ replacing $\tilde{x} \mapsto t$, $\tilde{y} \mapsto V$, $s \mapsto U$ in Equation~(\ref{eqns_Kummer_up2b}) one obtains $\mathcal{Z}$ in Equation~(\ref{eqns_Kummer2}) with $W=1$ if the parameters of $\mathcal{Z}$ are replaced by $(\check{\alpha}, \check{\beta}, \check{\gamma})$ according to Equation~(\ref{dual_6L}). 
\end{proof}
\begin{theorem}
\label{prop:CHL}
The K3 surface $\tilde{\mathcal{X}}$ admits a lattice polarization by the lattice $H \oplus N \oplus D_4^*(-2)$ -- where $N$ is the rank-eight Nikulin lattice and $D_4^*(-2)$ is the dual of the root lattice of type $D_4$ whose intersection form has been rescaled by $(-2)$ -- and two elliptic fibrations: the first fibration $\pi_{\tilde{\mathcal{X}}}$ has a section and a two-torsion section with singular fibers $8 I_2 + 8 I_1$ and monodromy group $\Gamma_0(2)$; the second fibration is a genus-one fibration $\check{\pi}_{\tilde{\mathcal{X}}}$ with singular fibers $2 I_0^* + 4 I_2 + 4 I_1$ and does not admit a section.  
\end{theorem}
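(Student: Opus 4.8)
The plan is to establish the two fibrations by direct inspection and Lemma~\ref{lem:EquivFibs}, and then to pin down $\operatorname{NS}(\tilde{\mathcal{X}})$ by combining Shioda--Tate for the first fibration with a discriminant-form comparison. First I would read off $\pi_{\tilde{\mathcal{X}}}$ from the Weierstrass equation in~(\ref{eqns_Kummer_up2}). With $\alpha,\beta,\gamma$ homogeneous of degree four in $\mathbb{C}(s_0,s_1)$ having simple roots and no common factors, the discriminant $\Delta_{\tilde{\mathcal{X}}}=4\alpha^2\gamma^2(\beta^2-\alpha\gamma)$ vanishes to order two along the eight roots of $\alpha\gamma$ and to order one along the eight roots of $\beta^2-\alpha\gamma$, giving singular fibers $8I_2+8I_1$. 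The point $\tau\colon(\tilde{x},\tilde{y})=(0,0)$ is a section of order two, so $\mathbb{Z}/2\mathbb{Z}\subseteq\operatorname{MW}(\pi_{\tilde{\mathcal{X}}})$; since the monodromy must fix this two-torsion point, its image lies in $\Gamma_0(2)$, with equality for generic $\alpha,\beta,\gamma$.

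For the second fibration I would apply the rescaling~(\ref{birational_change}) to obtain~(\ref{eqns_Kummer_up2b}) and project onto $[\tilde{x}:\tilde{z}]\in\mathbb{P}^1$; because the right-hand side is homogeneous of degree four in $\mathbb{C}(s_0,s_1)$, the fibers are double covers of $\mathbb{P}^1_{[s_0:s_1]}$ branched at four moving points, so $\check{\pi}_{\tilde{\mathcal{X}}}$ is a genus-one fibration with no distinguished rational point. Rather than compute its fibers by hand, I would use Lemma~\ref{lem:EquivFibs}\,(2) to identify $\check{\pi}_{\tilde{\mathcal{X}}(\alpha,\beta,\gamma)}$ with $\pi_{\mathcal{Z}(\check{\alpha},\check{\beta},\check{\gamma})}$; the latter fibration has singular fibers $2I_0^*+4I_2+4I_1$ and admits no section by the analysis of~(\ref{eqns_Kummer2}), and both properties transport across the isomorphism.

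For the lattice polarization I would first fix the Picard number. The surface $\tilde{\mathcal{X}}$ is joined to the rank-fourteen surface $\mathcal{X}$ by finite-degree rational maps, namely the fiberwise two-isogeny $\tilde{\mathcal{X}}\dasharrow\tilde{\mathcal{Y}}$, the map $F\colon\tilde{\mathcal{Y}}\dasharrow\mathcal{Y}$, and $\Phi\colon\mathcal{Y}\dasharrow\mathcal{X}$, so Inose's theorem~\cite[Cor.~1.2]{MR0429915} gives $\rho_{\tilde{\mathcal{X}}}=14$. Shioda--Tate applied to $\pi_{\tilde{\mathcal{X}}}$ then reads $14=2+8+\operatorname{rank}\operatorname{MW}(\pi_{\tilde{\mathcal{X}}})$, so the free part of the Mordell--Weil group has rank four. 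Exactly as in the generic $8I_2+8I_1$ situation of Section~\ref{sec:generic}, the trivial lattice $H\oplus\langle-2\rangle^{\oplus 8}$ enhanced by the order-two section $\tau$ (which supplies the half-sum class $\hat{N}$) spans a copy of $H\oplus N$. I would then identify the orthogonal complement of $H\oplus N$ in $\operatorname{NS}(\tilde{\mathcal{X}})$ -- a negative-definite rank-four lattice -- with $D_4^*(-2)$, by computing the Shioda height pairings of four explicit non-torsion sections (each reducible $I_2$ fiber contributing $\tfrac12$ through the inverse Cartan matrix of $A_1$, as in Proposition~\ref{prop:fib}) and recognizing the resulting form; the two $I_0^*$ fibers of $\check{\pi}_{\tilde{\mathcal{X}}}$, whose non-identity components realize the $D_4$ root system, give an independent cross-check of this identification.

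The step I expect to be the main obstacle is the last one: producing four generators of the free Mordell--Weil group explicitly and proving that their height lattice is \emph{exactly} $D_4^*(-2)$, and moreover that $\operatorname{NS}(\tilde{\mathcal{X}})$ is the \emph{orthogonal direct sum} $H\oplus N\oplus D_4^*(-2)$ rather than a proper overlattice. To settle the latter I would compute the discriminant form of $H\oplus N\oplus D_4^*(-2)$ (of order $2^6\cdot 2^2=2^8$) and match it against $-q_{T_{\tilde{\mathcal{X}}}}$, where the transcendental lattice is expected to be $H(2)^{\oplus 2}\oplus\langle-2\rangle^{\oplus 4}$ in the spirit of~\cite{MR1871336}; the agreement of discriminant forms, together with Nikulin's uniqueness results for the relevant genus, then forces the orthogonal splitting and yields the $H\oplus N\oplus D_4^*(-2)$-polarization.
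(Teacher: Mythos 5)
Your treatment of the fibrations themselves is essentially the paper's: reading off $8I_2+8I_1$ from $\Delta_{\tilde{\mathcal{X}}}=4\alpha^2\gamma^2(\beta^2-\alpha\gamma)$, the evident two-torsion section and $\Gamma_0(2)$ monodromy, and transporting the properties $2I_0^*+4I_2+4I_1$ and the absence of a section through Lemma~\ref{lem:EquivFibs}\,(2) are exactly the steps taken there, and your Inose-plus-Shioda--Tate count correctly yields Mordell--Weil rank four. The genuine gap sits at the heart of the theorem, the identification $\operatorname{MW}(\pi_{\tilde{\mathcal{X}}})=\mathbb{Z}/2\mathbb{Z}\oplus D_4^*(-2)$ and with it the $H\oplus N\oplus D_4^*(-2)$-polarization. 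Your plan stalls precisely where you flag it: the four non-torsion sections whose heights you propose to compute are never produced, and your fallback is circular --- you would match discriminant forms against a transcendental lattice $H(2)^{\oplus 2}\oplus\langle-2\rangle^{\oplus 4}$ that is merely ``expected'' (the family of \cite{MR1871336} is a different one, and membership of $\tilde{\mathcal{X}}$ in it would itself require proof), whereas in practice $T_{\tilde{\mathcal{X}}}$ is computed \emph{from} $\operatorname{NS}(\tilde{\mathcal{X}})$, the very object being determined. Even granting the order count $2^6\cdot2^2=2^8$, agreement of discriminant orders alone neither excludes a proper overlattice nor forces the orthogonal splitting. You also only establish $\mathbb{Z}/2\mathbb{Z}\subseteq\operatorname{MW}(\pi_{\tilde{\mathcal{X}}})_{\mathrm{tor}}$; the paper bounds the torsion above via the classification of \cite{MR1813537} for root lattice $8A_1$.

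The missing idea is the paper's Lemma~\ref{fullMW}. The surface $\tilde{\mathcal{X}}$ carries the anti-symplectic involution $\delta_{\tilde{\mathcal{X}}}$, the deck transformation of the base change $f\colon[s_0:s_1]\mapsto[s_0^2:s_1^2]$, and the minimal resolution of $\tilde{\mathcal{X}}/\{\mathbb{I},\delta_{\tilde{\mathcal{X}}}\}$ is the rational elliptic surface $\mathcal{J}$ of Equation~(\ref{RES_J}) with singular fibers $4I_2+4I_1$ and two-torsion; in the Oguiso--Shioda classification \cite{MR1104782} this surface has $\operatorname{MW}(\pi_{\mathcal{J}})=\mathbb{Z}/2\mathbb{Z}\oplus D_4^*$, so the four generating sections exist \emph{downstairs} for free. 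Pulling back along the quadratic base change doubles every term in the height-pairing formula (holomorphic Euler characteristic, intersection with the zero section, local fiber corrections), giving $\operatorname{MW}(\pi_{\tilde{\mathcal{X}}})=\mathbb{Z}/2\mathbb{Z}\oplus D_4^*(-2)$ with no explicit section on the K3 ever needed; combined with the $H\oplus N$-polarization of the generic $8I_2+8I_1$ family \cite{MR2274533}, this yields the stated polarization. If you wish to salvage your route, the repair is the same observation: the sections you intended to hunt for are the pullbacks of the $D_4^*$-sections of $\mathcal{J}$, and their height lattice is $D_4^*(-2)$ by the doubling argument rather than by an ad hoc discriminant match.
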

\begin{proof}
The Kodaira type of the singular fibers can be read off from the Weierstrass model in Equation~(\ref{eqns_Kummer_up2b}). Following \cite{MR1813537}, an elliptic K3 surface with root lattice $8 A_1$ can only have trivial torsion or $\mathbb{Z}/2\mathbb{Z}$. From Equation~(\ref{eqns_Kummer_up2b}) we observe that there is in fact one obvious two-torsion section. The full structure of the Mordell-Weil group $\operatorname{MW}(\pi_{\tilde{\mathcal{X}}}) =\mathbb{Z}/2\mathbb{Z} \oplus D_4^*(-2)$ then follows from Lemma~\ref{fullMW}. Since the dual of the narrow Mordell-Weil lattice does not contain any roots, the statement about the lattice polarization follows from the statement that the generic family with eight fibers of Kodaira type $I_2$ and eight fibers of type $I_1$ admits the lattice polarization $H \oplus N$ as proven in~\cite{MR2274533}. The monodromy matrices around all fibers of Kodaira type $I_2$ over $\Delta_{\tilde{\mathcal{X}}}=0$ are conjugate to the matrix $\left(\begin{smallmatrix} 1&1\\ 0&1 \end{smallmatrix}\right) \in \Gamma_0(2)$. The properties of the second fibration are the properties of $\pi_{\mathcal{Z}(\check{\alpha}, \check{\beta}, \check{\gamma})}$ when Lemma~\ref{lem:EquivFibs} is applied.
\end{proof}
Theorem~\ref{prop:CHL} proves that the K3 surfaces $\mathbf{X}:=\tilde{\mathcal{X}}(\alpha, \beta, \gamma)$ admit two descriptions: (1) the description as the $\Gamma_0(2)$ elliptic fibration $\pi_{\mathbf{X}} : \mathbf{X} \to \mathbb{P}^1$ with section and two-torsion, with singular fibers $8 I_2 + 8 I_1$, and Mordell-Weil group $\mathbb{Z}/2\mathbb{Z} \oplus D_4^*(-2)$ in Equation~(\ref{eqns_Kummer_up2}), such that the surface is a double cover of $\mathbb{F}_4$; (2) the description as genus-one fibration which does not admit a section but a bi-section, with singular fibers $2 I_0^* + 4 I_2 + 4 I_1$ -- corresponding to the reducible fibers $D_4^{\oplus 2} \oplus A_1^{\oplus 4}$, such that the surface is a double cover of $\mathbb{F}_0$. The birational map relating the two fibrations interchanges the coordinates of the base and the fiber and can only be applied after choosing four out of the eight base points of the fibers of type $I_2$ and changing coordinates according to Equation~(\ref{birational_change}). The relative Jacobian fibration of this genus-one fibration with two singular fibers of type $I_0^*$ is inherited from the elliptic fibration with section on the K3 surface $\mathbf{Y}:=\tilde{\mathcal{Y}}(\check{\alpha}, \check{\beta}, \check{\gamma})$ as quotient by the Nikulin involution in Equation~(\ref{Nikulin_base}). We have proved the following:
\begin{corollary}
The triple $(\mathbf{X}, \pi_{\mathbf{X}}, \alpha)$ form eight-dimensional F-theory compactifications with Picard number $14$ dual to the CHL string compactified on the genus-one curve $\hat{\mathcal{C}}_{\alpha}$ given by
\begin{equation}
\label{curve1}
  \hat{\mathcal{C}}_{\alpha}: \quad V^2 = \alpha(U^2,1) = \alpha_2 U^4 +2 \alpha_1 U^2 + \alpha_0.
\end{equation}
\end{corollary}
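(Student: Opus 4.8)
The plan is to assemble the statement directly from Theorem~\ref{prop:CHL} together with the CHL/F-theory dictionary recalled in Section~\ref{sec:CHL}; there is essentially no new computation, and the content lies in matching the geometry of $\mathbf{X} = \tilde{\mathcal{X}}(\alpha,\beta,\gamma)$ with the abstract data that an F-theory background dual to the CHL string must carry. According to that dictionary, such a background is specified by a Weierstrass model defining a $\Gamma_0(2)$ elliptic fibration with a section and a two-torsion section and singular fibers $8I_2 + 8I_1$, together with a choice of four of the eight $I_2$ fibers; the genus-one curve on which the CHL string is compactified is then the double cover of the base $\mathbb{P}^1$ branched at the four marked points.

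First I would invoke Theorem~\ref{prop:CHL}, which supplies exactly this structure: the fibration $\pi_{\mathbf{X}}$ is the $\Gamma_0(2)$ elliptic fibration with section and two-torsion section, singular fibers $8I_2 + 8I_1$, and monodromy group $\Gamma_0(2)$. The Picard number $\rho_{\mathbf{X}} = 14$ follows at once from the lattice polarization by $H \oplus N \oplus D_4^*(-2)$, a lattice of rank $10 + 4 = 14$, established in the same theorem. Thus $(\mathbf{X}, \pi_{\mathbf{X}})$ is an admissible F-theory background with non-zero $B$-field flux dual to the CHL string, and it remains only to identify the associated genus-one curve.

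Next I would identify the marked points with the zeros of $\alpha$ and exhibit the branch curve. The eight $I_2$ fibers of $\pi_{\mathbf{X}}$ lie over the eight zeros of $\alpha\gamma$ on the base $\mathbb{P}^1_{[s_0:s_1]}$, where after the base change $[s_0:s_1]\mapsto[s_0^2:s_1^2]$ the polynomial $\alpha$ is homogeneous of degree four. Selecting $\alpha$ from the pair $\{\alpha,\gamma\}$ — the very choice already used in Equation~(\ref{birational_change}) to pass to the genus-one fibration $\check{\pi}_{\tilde{\mathcal{X}}}$ — marks the four $I_2$ fibers lying over $\alpha = 0$. The double cover of $\mathbb{P}^1_{[s_0:s_1]}$ branched at these four points is the genus-one curve $V^2 = \alpha(s_0,s_1)$; setting $s_1 = 1$ and writing $U = s_0$, so that $t_0 = U^2$, this becomes
\begin{equation*}
 V^2 = \alpha(U^2,1) = \alpha_2 U^4 + 2\alpha_1 U^2 + \alpha_0 ,
\end{equation*}
which is precisely $\hat{\mathcal{C}}_{\alpha}$. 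I would also remark that this curve has exactly the shape $V^2 = aU^4 + bU^2 + c$ of the genus-one curve $\hat{\mathcal{C}}$ introduced in Section~\ref{setup}, closing the loop with the role of $\hat{\mathcal{C}}$ as the locus on which the CHL string is compactified.

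There is no genuine obstacle here: Theorem~\ref{prop:CHL} provides all the geometric input and Section~\ref{sec:CHL} provides the CHL dictionary, so the only thing to verify is the explicit shape of the branch curve. The single point requiring care is the bookkeeping of the base, since the four marked $I_2$ fibers live on the base $\mathbb{P}^1_{[s_0:s_1]}$ of $\mathbf{X} = \tilde{\mathcal{X}}$ obtained after the degree-two base change, and it is the substitution $t_0 = U^2$ that converts the branch polynomial $\alpha(s_0,s_1)$ into the biquadratic $\alpha(U^2,1)$ displayed in $\hat{\mathcal{C}}_{\alpha}$.
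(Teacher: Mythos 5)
Your proposal is correct and follows essentially the same route as the paper, whose own ``proof'' is precisely the assembly you describe: Theorem~\ref{prop:CHL} supplies the $\Gamma_0(2)$ elliptic fibration with section, two-torsion section, singular fibers $8I_2+8I_1$, and the rank-$14$ polarization $H \oplus N \oplus D_4^*(-2)$, while the dictionary of Section~\ref{sec:CHL} converts the choice of $\alpha$ (four of the eight $I_2$ points, the same choice made in Equation~(\ref{birational_change})) into the branched double cover $V^2=\alpha(U^2,1)=\hat{\mathcal{C}}_\alpha$. Your bookkeeping of the base change $t_0=U^2$, $t_1=1$ on $\mathbb{P}^1_{[s_0:s_1]}$ is exactly the point the paper relies on, so there is no gap.
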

\begin{remark}
Choosing the polynomial $\gamma$ instead of $\alpha$ from the pair $\{\alpha, \gamma\}$ corresponds to choosing the four complimentary points out of eight fibers of type $I_2$, hence results in the exchange $\alpha_i \leftrightarrow \gamma_i$ for $1\le i \le 3$. The heterotic string is then compactified on the genus-one curve $\hat{\mathcal{C}}_{\gamma}$ given by
\begin{equation}
\label{curve2}
  \hat{\mathcal{C}}_{\gamma}: \quad V^2 = \gamma(U^2,1) = \gamma_2 U^4 +2 \gamma_1 U^2 + \gamma_0.
\end{equation}
\end{remark}
In the construction of the Weierstrass models in Equation~(\ref{eqns_Kummer2}) we used fractional linear transformations to move the singular fibers of type $I_0^*$ to $t_0=0$ and $t_1=0$, respectively. The remaining freedom in changing coordinates on the base is a scaling. Freedom of rescaling the fiber or base coordinates in Equation~(\ref{eqns_Kummer2}) and the ambiguity of factoring $\alpha\gamma$ determines an action of $(\lambda,\mu,\nu) \in (\mathbb{C}^{*})^3$ on the moduli given by
\begin{equation}
\label{scalingWEq}
\scalemath{\MyScaleSmall}{
\begin{array}{c}
 \Big( \alpha_2, \alpha_1, \alpha_0;  \beta_2, \beta_1, \beta_0; \gamma_2, \gamma_1, \gamma_0 \Big)
 \mapsto
  \left( \lambda\mu\nu \alpha_2, \lambda\mu \alpha_1, \frac{\lambda\mu}{\nu} \alpha_0;  \lambda\nu \beta_2, \lambda \beta_1, \frac{\lambda}{\nu} \beta_0;  \frac{\lambda\nu}{\mu} \gamma_2, \frac{\lambda}{\mu}\gamma_1, \frac{\lambda}{\mu\nu}\gamma_0 \right),
\end{array}}
\end{equation}
and analogously on the dual moduli by
\begin{equation}
\label{scalingWEq_dual}
\scalemath{\MyScaleSmall}{
\begin{array}{c}
 \Big( \check{\alpha}_2, \check{\alpha}_1, \check{\alpha}_0;  \check{\beta}_2, \check{\beta}_1, \check{\beta}_0; \check{\gamma}_2, \check{\gamma}_1, \check{\gamma}_0 \Big)
 \mapsto
  \left( \lambda\mu\nu \check{\alpha}_2, \lambda\nu \check{\alpha}_1, \frac{\lambda\nu}{\mu} \check{\alpha}_0;  \lambda\mu \check{\beta}_2, \lambda \check{\beta}_1, \frac{\lambda}{\mu} \check{\beta}_0;  \frac{\lambda\mu}{\nu} \check{\gamma}_2, \frac{\lambda}{\nu}\check{\gamma}_1, \frac{\lambda}{\mu\nu}\check{\gamma}_0 \right).
\end{array}}
\end{equation}
The following is obvious:
\begin{lemma}
The scaling in Equation~(\ref{scalingWEq}) acts by holomorphic isomorphism on the genus-one curves $\hat{\mathcal{C}}_{\alpha}$ and $\hat{\mathcal{C}}_{\gamma}$.
\end{lemma}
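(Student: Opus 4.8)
The plan is to exhibit the required isomorphisms explicitly as rescalings of the fiber coordinates $(U,V)$, so that the statement reduces to a single coefficient match. First I would record how the scaling in Equation~(\ref{scalingWEq}) transforms the defining equation of $\hat{\mathcal{C}}_{\alpha}$: replacing $(\alpha_2,\alpha_1,\alpha_0)$ by $(\lambda\mu\nu\,\alpha_2,\,\lambda\mu\,\alpha_1,\,\tfrac{\lambda\mu}{\nu}\alpha_0)$ turns the curve into
\begin{equation*}
 V^2 = \lambda\mu\nu\,\alpha_2\, U^4 + 2\lambda\mu\,\alpha_1\, U^2 + \tfrac{\lambda\mu}{\nu}\,\alpha_0 .
\end{equation*}
The key observation is that the three coefficients pick up the factors $\lambda\mu\nu$, $\lambda\mu$, and $\lambda\mu/\nu$, which form a geometric progression with common ratio $\nu$. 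Consequently, after the substitution $U \mapsto \nu^{-1/2} U$ all three coefficients acquire the common factor $\lambda\mu/\nu$, and this overall factor is then absorbed by $V \mapsto (\lambda\mu/\nu)^{1/2} V$.

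Next I would verify by the obvious coefficient comparison that the map
\begin{equation*}
 (U,V) \longmapsto \bigl(\nu^{-1/2}\,U,\ (\lambda\mu/\nu)^{1/2}\,V\bigr)
\end{equation*}
carries $\hat{\mathcal{C}}_{\alpha}$ onto its scaled image. Since $\lambda,\mu,\nu \in \mathbb{C}^{*}$, the square roots are nonzero (and any fixed branch serves), so the map is biregular; being a biregular morphism of smooth genus-one curves, it is a holomorphic isomorphism. The identical argument applies to $\hat{\mathcal{C}}_{\gamma}$: the scaling sends $(\gamma_2,\gamma_1,\gamma_0)$ to $(\tfrac{\lambda\nu}{\mu}\gamma_2,\,\tfrac{\lambda}{\mu}\gamma_1,\,\tfrac{\lambda}{\mu\nu}\gamma_0)$, whose coefficients again form a geometric progression with ratio $\nu$, so that $(U,V)\mapsto\bigl(\nu^{-1/2}U,\,(\lambda/(\mu\nu))^{1/2}V\bigr)$ provides the isomorphism.

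There is essentially no obstacle here: the statement is a bookkeeping verification that the weights assigned in Equation~(\ref{scalingWEq}) to $\alpha_2,\alpha_1,\alpha_0$ and to $\gamma_2,\gamma_1,\gamma_0$ were designed to be realizable by a single rescaling of $U$ together with an overall rescaling of $V$. The one point worth flagging explicitly is that the middle coefficient carries $\lambda\mu$ (resp.\ $\lambda/\mu$) while the outer two carry $\lambda\mu\nu$ and $\lambda\mu/\nu$ (resp.\ $\lambda\nu/\mu$ and $\lambda/(\mu\nu)$); it is precisely this pattern that makes the common ratio equal to $\nu$ and hence absorbable by the $U$-scaling $U\mapsto\nu^{-1/2}U$. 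Were the three weights not in geometric progression, no such coordinate change would exist, so the content of the lemma is exactly that the prescribed scaling respects this constraint.
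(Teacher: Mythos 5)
Your proof is correct: the rescaling $(U,V)\mapsto\bigl(\nu^{-1/2}U,\,(\lambda\mu/\nu)^{1/2}V\bigr)$ for $\hat{\mathcal{C}}_{\alpha}$, and $(U,V)\mapsto\bigl(\nu^{-1/2}U,\,(\lambda/(\mu\nu))^{1/2}V\bigr)$ for $\hat{\mathcal{C}}_{\gamma}$, does exactly absorb the geometric progression of weights $\lambda\mu\nu,\ \lambda\mu,\ \lambda\mu/\nu$ (resp.\ $\lambda\nu/\mu,\ \lambda/\mu,\ \lambda/(\mu\nu)$) with common ratio $\nu$, and it extends biregularly to the smooth projective models in $\mathbb{P}(1,2,1)$. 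The paper gives no proof at all --- the lemma is prefaced by ``The following is obvious'' --- so your explicit coefficient bookkeeping is precisely the verification the authors left implicit, and it matches their intended argument.
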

As $\alpha$ and $\gamma$ cannot have a common factor with $t_0t_1$, the scaling in Equation~(\ref{scalingWEq}) can be used to fix $\alpha_2=\gamma_0=1$. The residual one-parameter symmetry remaining combines a rescaling of the base with a rescaling of the fiber resulting in
\begin{equation}
\label{scalingWEq_2}
\scalemath{\MyScaleSmall}{
\begin{array}{c}
 ( \alpha_2, \beta_1, \gamma_0) =(1, \beta_1,1), \quad \Big(\alpha_1, \alpha_0;  \beta_2, \beta_0;  \gamma_2, \gamma_1 \Big)
 \mapsto
  \Big( \frac{1}{\nu} \alpha_1, \frac{1}{\nu^2} \alpha_0; \nu \beta_2, \frac{1}{\nu} \beta_0; \nu^2 \gamma_2, \nu \gamma_1\Big).
\end{array}}
\end{equation}
\begin{remark}
One can use the scaling in Equation~(\ref{scalingWEq}) to fix $\alpha_2=\gamma_0=1$. The curves $ \hat{\mathcal{C}}_{\alpha}$ and $ \hat{\mathcal{C}}_{\gamma}$ have rational points and are rationally isomorphic to the elliptic curves
\begin{equation}
\label{curves}
\begin{split}
  \hat{\mathcal{E}}_\alpha: \quad & Y^2  =  X \Big(X^2 -4  \alpha_1 X + 4 (\alpha_1^2-\alpha_0) \Big),\\
  \hat{\mathcal{E}}_\gamma: \quad & Y^2   =  X \Big(X^2 -4  \gamma_1 X + 4(\gamma_1^2 -\gamma_2) \Big).
\end{split}
\end{equation}
\end{remark}
\subsection{Recovering the moduli of the CHL string}
The holomorphic two-form on  the K3 surface $ \tilde{\mathcal{Y}}$ in Equation~(\ref{eqns_Kummer_up2b}) is given by
\begin{equation}
 \Big(s_1 ds_0-s_0 ds_1\Big) \wedge \frac{\tilde{Z} \, d\tilde{X} - \tilde{X} \, d\tilde{Z}}{\tilde{Y}} .
\end{equation}
The K3 surface $\tilde{\mathcal{Y}}$ admits the Nikulin involution in Equation~(\ref{Nikulin_base}) and a second Nikulin involution $\imath_{\tilde{\mathcal{Y}}}$ -- acting by fiberwise translation by two-torsion -- given by
\begin{equation}
  \imath_{\tilde{\mathcal{Y}}}: \;  \Big([s_0:s_1],[\tilde{X}: \tilde{Y}: \tilde{Z}]\Big)  \mapsto  \Big([s_0: s_1],[ \alpha\tilde{z}: -\alpha\gamma\tilde{y}: \gamma\tilde{x}]\Big).
\end{equation}
As shown in Diagram~(\ref{pic:VGS}), the K3 surfaces $\tilde{\mathcal{X}}$ and $\tilde{\mathcal{Y}}$ are obtained by fiberwise two-isogeny from each other. The  map $\tilde{\Phi}: \tilde{Y} \dasharrow \tilde{X}$ is the rational degree-two map associated with the quotient $\tilde{\mathcal{Y}}/\{\mathbb{I}, \imath_{\tilde{\mathcal{Y}}}\}$. Moreover, it is obvious that $\tilde{\mathcal{Y}}$ also admits the anti-symplectic involution
\begin{equation}
\begin{array}{lrcl}
  \delta_{\tilde{\mathcal{Y}}}:&  \Big([s_0:s_1],[\tilde{X}: \tilde{Y}: \tilde{Z}]\Big) & \mapsto & \Big([s_0:-s_1],[\tilde{X}: \tilde{Y}: \tilde{Z}]\Big).
\end{array}
\end{equation}
Similar statements apply to $\tilde{\mathcal{X}}$, and we will denote the anti-symplectic involution on $\tilde{\mathcal{X}}$ by $\delta_{\tilde{\mathcal{X}}}$. In particular, we have $\delta_{\tilde{\mathcal{X}}} \circ \tilde{\Phi} = \tilde{\Phi} \circ \delta_{\tilde{\mathcal{Y}}}$. We have the following:
\begin{lemma}
\label{fullMW}
The minimal resolution of $\tilde{\mathcal{X}} / \{ \mathbb{I}, \delta_{\tilde{\mathcal{X}}} \}$ is the rational elliptic surface $\mathcal{J}$ with singular fibers $4 I_2 + 4 I_1$ and a Mordell-Weil group $\mathbb{Z}/2\mathbb{Z} \oplus D_4^*$ where $D_4^*$ is the dual of the root lattice of type $D_4$.  It follows that the Mordell-Weil group of the K3 surface $\tilde{\mathcal{X}}$ in Equation~(\ref{eqns_Kummer_up2}) is $\operatorname{MW}(\pi_{\tilde{\mathcal{X}}}) =\mathbb{Z}/2\mathbb{Z} \oplus D_4^*(-2)$.
\end{lemma}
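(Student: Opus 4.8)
The plan is to realize $\mathcal{J}$ as a base-change quotient, read off its Mordell--Weil group from the classification of rational elliptic surfaces, and then transport the answer back up to $\tilde{\mathcal{X}}$ through the degree-two cover $f$. First I would analyze the involution $\delta_{\tilde{\mathcal{X}}}\colon([s_0:s_1],[\tilde{x}:\tilde{y}:\tilde{z}])\mapsto([s_0:-s_1],[\tilde{x}:\tilde{y}:\tilde{z}])$. It is anti-symplectic, it acts on the base as the deck transformation of $f\colon[s_0:s_1]\mapsto[s_0^2:s_1^2]$ and trivially on the fibers, so its fixed locus is exactly the pair of fibers over $s_0=0$ and $s_1=0$. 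Since $f$ is ramified over $t_0=0$ and $t_1=0$, the two $I_0^*$ fibers of $\mathcal{X}$ there become smooth $I_0$ fibers of $\tilde{\mathcal{X}}$ in Equation~\eqref{eqns_Kummer_up2}; hence $\delta_{\tilde{\mathcal{X}}}$ fixes two disjoint smooth genus-one curves and no isolated points, so the quotient is smooth and its induced fibration is relatively minimal. As the fixed curves have Euler number $0$, the quotient has topological Euler number $(24+0)/2=12$, so $\mathcal{J}$ is a rational elliptic surface (concretely the descent of Equation~\eqref{eqns_Kummer_up2b} to $\mathbb{P}^1_{[t_0:t_1]}$, whose discriminant has degree $12$). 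The $8I_2+8I_1$ of $\tilde{\mathcal{X}}$ occur in $\delta_{\tilde{\mathcal{X}}}$-conjugate pairs and descend to the fibers $4I_2+4I_1$ of $\mathcal{J}$, while the two-torsion section $\tau$, being $\delta_{\tilde{\mathcal{X}}}$-invariant, descends to a two-torsion section of $\mathcal{J}$.

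Next I would compute $\operatorname{MW}(\mathcal{J})$. The reducible fibers contribute the root lattice $A_1^{\oplus 4}$, so the Shioda--Tate formula gives $\operatorname{rank}\operatorname{MW}(\mathcal{J})=8-4=4$. The torsion is exactly $\mathbb{Z}/2\mathbb{Z}$: full two-torsion would force $\beta^2-\alpha\gamma$ to be a perfect square, whereas it has four simple roots (the $I_1$ locus). Lattice-theoretically this is consistent, since in the frame lattice $E_8$ one has $(A_1^{\oplus 4})^{\perp}=D_4$ and the primitive closure of $A_1^{\oplus 4}$ has index two, whence the narrow Mordell--Weil lattice is $D_4$ and the torsion subgroup has order two. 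The full Mordell--Weil lattice is then the dual $D_4^*$; as a determinant check, $\det(\operatorname{MW}/\mathrm{tors})=|\operatorname{MW}_{\mathrm{tor}}|^2/\det(A_1^{\oplus4})=4/16=1/4=\det D_4^*$. By the Oguiso--Shioda classification of Mordell--Weil lattices of rational elliptic surfaces, the configuration $4A_1$ with $\mathbb{Z}/2\mathbb{Z}$ torsion indeed gives $\operatorname{MW}(\mathcal{J})\cong\mathbb{Z}/2\mathbb{Z}\oplus D_4^*$.

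Finally I would transport this to $\tilde{\mathcal{X}}$ via the identification $\tilde{\mathcal{X}}=\mathcal{J}\times_{\mathbb{P}^1_{[t_0:t_1]}}\mathbb{P}^1_{[s_0:s_1]}$. Pullback along $f$ is an injection $f^*\colon\operatorname{MW}(\mathcal{J})\hookrightarrow\operatorname{MW}(\pi_{\tilde{\mathcal{X}}})$ carrying $\tau$ to the two-torsion section; because $f$ is ramified only over the smooth fibers $t_0t_1=0$, the base change multiplies the height pairing by $\deg f=2$, so $f^*(D_4^*)$ sits inside $\operatorname{NS}(\tilde{\mathcal{X}})$ as $D_4^*(-2)$. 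Since $\operatorname{rank}\operatorname{MW}(\pi_{\tilde{\mathcal{X}}})=14-2-8=4$ equals $\operatorname{rank}\operatorname{MW}(\mathcal{J})$, the $\delta_{\tilde{\mathcal{X}}}$-anti-invariant part of $\operatorname{MW}(\pi_{\tilde{\mathcal{X}}})$ has rank zero, and the desired equality will follow once the image $f^*\operatorname{MW}(\mathcal{J})$ is shown to be saturated.

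The main obstacle is precisely this last point: proving that $f^*$ is surjective, i.e.\ that the degree-two base change creates no new sections and does not enlarge the lattice to a proper overlattice of $D_4^*(-2)$. To settle it I would decompose $\operatorname{MW}(\pi_{\tilde{\mathcal{X}}})\otimes\mathbb{Q}$ into $\delta_{\tilde{\mathcal{X}}}$-eigenspaces: the invariant summand is $f^*\operatorname{MW}(\mathcal{J})_{\mathbb{Q}}$, while anti-invariant sections descend to sections of the quadratic twist of $\mathcal{J}$ along the branch divisor $t_0t_1=0$, which by the rank count above contributes no section of infinite order, so every anti-invariant class is torsion. A concluding Shioda--Tate determinant comparison on the K3 surface $\tilde{\mathcal{X}}$ --- using $T_{\mathrm{root}}=A_1^{\oplus 8}$, $|\operatorname{MW}_{\mathrm{tor}}|=2$, and $\det D_4^*(-2)$ --- then rules out any proper finite-index enlargement and pins the torsion at $\mathbb{Z}/2\mathbb{Z}$, yielding $\operatorname{MW}(\pi_{\tilde{\mathcal{X}}})=\mathbb{Z}/2\mathbb{Z}\oplus D_4^*(-2)$.
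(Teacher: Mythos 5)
Your proposal follows essentially the same route as the paper: realize the quotient $\tilde{\mathcal{X}}/\{\mathbb{I},\delta_{\tilde{\mathcal{X}}}\}$ as the rational elliptic surface $\mathcal{J}$ of Equation~(\ref{RES_J}) with singular fibers $4I_2+4I_1$ and a two-torsion section, identify $\operatorname{MW}(\pi_{\mathcal{J}})\cong\mathbb{Z}/2\mathbb{Z}\oplus D_4^*$ through the Oguiso--Shioda classification (the paper cites entry {\tt (13)} of \cite{MR1104782}, which is exactly your $T_{\mathrm{root}}=A_1^{\oplus 4}$ case, and your lattice-theoretic derivation via $(A_1^{\oplus4})^{\perp}=D_4$ in $E_8$ with index-two primitive closure is the content of that entry), and pull back along the degree-two base change $f$, under which all local height contributions double, giving $D_4^*(-2)$. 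You are in fact more careful than the paper on the one step it leaves implicit: that $f^*$ is \emph{surjective}, i.e.\ that the base change creates no new sections and no finite-index enlargement of the height lattice; the paper simply asserts the conclusion after noting that the height pairing doubles.

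However, your concluding move for that step does not work as stated. A Shioda--Tate determinant comparison on $\tilde{\mathcal{X}}$ expresses $\det\operatorname{NS}(\tilde{\mathcal{X}})$ in terms of the very Mordell--Weil data being determined; it can rule out a proper finite-index enlargement of $f^*\operatorname{MW}(\mathcal{J})$ only if $\det\operatorname{NS}(\tilde{\mathcal{X}})$ (equivalently, the transcendental lattice) is known independently --- and here it is not: the lattice polarization $H\oplus N\oplus D_4^*(-2)$ of Theorem~\ref{prop:CHL} is itself deduced \emph{from} Lemma~\ref{fullMW}, so appealing to it would be circular, and you moreover feed $|\operatorname{MW}_{\mathrm{tor}}|=2$ into the comparison that is supposed to pin down the torsion. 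The gap closes cleanly from your own eigenspace decomposition: for any section $S$, your twist/rank argument shows the anti-invariant class $S-\delta_{\tilde{\mathcal{X}}}^*S$ is torsion, and the torsion is $2$-elementary (it injects into the component groups $(\mathbb{Z}/2\mathbb{Z})^8$ of the $I_2$ fibers) and is not $(\mathbb{Z}/2\mathbb{Z})^2$, since $\beta^2-\alpha\gamma$ pulls back to a squarefree polynomial in $s$; hence $\delta_{\tilde{\mathcal{X}}}^*S\in\{S,\,S+T\}$ with $T$ the two-torsion section. The case $\delta_{\tilde{\mathcal{X}}}^*S=S+T$ is impossible: evaluating at either of the two \emph{smooth} fibers fixed pointwise by $\delta_{\tilde{\mathcal{X}}}$ (over $s_0=0$ and $s_1=0$) would force $T$ to meet the zero section there, whereas $(x,y)=(0,0)$ never meets the point at infinity on a smooth fiber. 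Hence every section is $\delta_{\tilde{\mathcal{X}}}$-invariant and descends to $\mathcal{J}$, so $f^*$ is onto, and $\operatorname{MW}(\pi_{\tilde{\mathcal{X}}})=\mathbb{Z}/2\mathbb{Z}\oplus D_4^*(-2)$ exactly. With that replacement (and noting that your input $\rho(\tilde{\mathcal{X}})=14$ is available via the paper's repeated use of Inose's theorem for degree-two rational maps), your argument is complete and, modulo this added care, coincides with the paper's.
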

\begin{proof}
The rational elliptic surface $\mathcal{J}$ is the minimal resolution of $\tilde{\mathcal{X}} / \{ \mathbb{I}, \delta_{\tilde{\mathcal{X}}} \}$. An elliptic fibration $\pi_{\mathcal{J}}: \mathcal{J} \to \mathbb{P}^1$ with section is obtained using coordinates $[x:y:z]\in \mathbb{P}^2$ for the fiber and $[t_0:t_1]\in \mathbb{P}^1_{[t_0:t_1]}$ for the base from the equation
\begin{equation}
\label{RES_J}
  \mathcal{J}_{[t_0:t_1]}: \quad y^2 z  = x \Big(x^2 + 2 \beta  x z+  \alpha\gamma z^2\Big), \quad  \Delta=4\alpha^2\gamma^2(\beta^2-\alpha\gamma) .
 \end{equation}
It follows that the singular fibers are $4 I_2 + 4 I_1$ and the Mordell-Weil group sections contains $\mathbb{Z}/2\mathbb{Z}$. In the classification or rational elliptic surfaces in \cite{MR1104782} this is surface labeled {\tt (13)} with the Mordell-Weil group $\operatorname{MW}(\pi_{\tilde{\mathcal{J}}}) = \operatorname{MW}(\pi_{\tilde{\mathcal{J}}})_{\mathrm{tor}} \oplus L^*$ where the torsion subgroup is $\mathbb{Z}/2\mathbb{Z}$ and the narrow  Mordell-Weil lattice $L$ is the root lattice $D_4$. In the well-known formula for the height pairing on an elliptically fibered surface with section all summands -- the holomorphic Euler characteristic, the intersection number, the number of singular fibers -- double when pulling the Weierstrass Equation~(\ref{RES_J}) back via the map $f: \mathbb{P}^1 \to \mathbb{P}^1$ given by $[s_0:s_1] \mapsto [t_0:t_1] =[s^2_0:s^2_1]$ to obtain Equation~(\ref{eqns_Kummer_up2}). It follows that $\operatorname{MW}(\pi_{\tilde{\mathcal{X}}}) =\mathbb{Z}/2\mathbb{Z} \oplus D_4^*(-2)$ where $D_4^*(-2)$ is the dual of the root lattice of type $D_4$ whose intersection form has been rescaled by $(-2)$.
\end{proof}
We have the following:
\begin{lemma}
The K3 surface $\tilde{\mathcal{Y}}$ admits the fixed-point-free involution $\epsilon = \delta_{\tilde{\mathcal{Y}}} \circ \imath_{\tilde{\mathcal{Y}}}$.
\end{lemma}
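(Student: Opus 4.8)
The plan is to argue that $\epsilon$ is a genuine involution of the smooth K3 surface $\tilde{\mathcal{Y}}$ and then to localize and eliminate its fixed points fiber by fiber. First I would note that both $\imath_{\tilde{\mathcal{Y}}}$ and $\delta_{\tilde{\mathcal{Y}}}$ are biregular automorphisms of the relatively minimal model $\tilde{\mathcal{Y}}$: the van Geemen--Sarti involution because fiberwise translation by the two-torsion section $T\colon(\tilde X,\tilde Y)=(0,0)$ extends to an automorphism after resolution, and $\delta_{\tilde{\mathcal{Y}}}$ by its explicit formula $([s_0\!:\!s_1],[\tilde X\!:\!\tilde Y\!:\!\tilde Z])\mapsto([s_0\!:\!-s_1],[\tilde X\!:\!\tilde Y\!:\!\tilde Z])$. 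Since $\delta_{\tilde{\mathcal{Y}}}$ fixes the fiber coordinates it preserves the section $T$, so it commutes with translation by $T$; hence $\epsilon^2=\delta_{\tilde{\mathcal{Y}}}\imath_{\tilde{\mathcal{Y}}}\delta_{\tilde{\mathcal{Y}}}\imath_{\tilde{\mathcal{Y}}}=\delta_{\tilde{\mathcal{Y}}}^2\imath_{\tilde{\mathcal{Y}}}^2=\mathrm{id}$ and $\epsilon\neq\mathrm{id}$. As a sanity check, $\imath_{\tilde{\mathcal{Y}}}^*\omega=\omega$ while $\delta_{\tilde{\mathcal{Y}}}^*\omega=-\omega$ for the two-form $\omega=(s_1\,ds_0-s_0\,ds_1)\wedge(\tilde Z\,d\tilde X-\tilde X\,d\tilde Z)/\tilde Y$, so $\epsilon$ is anti-symplectic, as any fixed-point-free involution of a K3 surface must be, a symplectic involution always having eight fixed points.

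Next I would localize the fixed locus. Because $\imath_{\tilde{\mathcal{Y}}}$ preserves every fiber of $\pi_{\tilde{\mathcal{Y}}}$ while $\delta_{\tilde{\mathcal{Y}}}$ covers the base involution $[s_0:s_1]\mapsto[s_0:-s_1]$, applying $\pi_{\tilde{\mathcal{Y}}}$ to the equation $\epsilon(p)=p$ gives $[s_0:-s_1]=[s_0:s_1]$ for $\pi_{\tilde{\mathcal{Y}}}(p)=[s_0:s_1]$. Thus any fixed point of $\epsilon$ must lie in one of the two fibers over the branch points $[1:0]$ and $[0:1]$ of $f$.

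The crux is the analysis of these two fibers. Since $\tilde{\mathcal{Y}}$ is obtained from $\mathcal{Y}$ by the base change $t_i=s_i^2$, ramified exactly over $t_0=0$ and $t_1=0$ where $\mathcal{Y}$ carries its two $I_0^*$ fibers, the order-two ($-I$) monodromy of each $I_0^*$ fiber becomes trivial after pullback, so the fibers of $\tilde{\mathcal{Y}}$ over $[1:0]$ and $[0:1]$ are smooth. Here I would invoke the non-degeneracy of $\alpha,\beta,\gamma$ (no common factor with $t_0t_1$, no repeated roots) to guarantee that $\Delta_{\tilde{\mathcal{Y}}}=2^8\alpha\gamma(\beta^2-\alpha\gamma)^2$ does not vanish at $s_0=0$ or $s_1=0$, confirming that these two fibers $E$ are smooth elliptic curves. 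On each such $E$ the involution $\delta_{\tilde{\mathcal{Y}}}$ restricts to the identity (its formula leaves $[\tilde X:\tilde Y:\tilde Z]$ unchanged and fixes the base point), while $\imath_{\tilde{\mathcal{Y}}}$ restricts to translation by the nonzero two-torsion point $T|_E$. Hence for $p\in E$ one has $\epsilon(p)=\delta_{\tilde{\mathcal{Y}}}(\imath_{\tilde{\mathcal{Y}}}(p))=\imath_{\tilde{\mathcal{Y}}}(p)$, because $\imath_{\tilde{\mathcal{Y}}}(p)$ again lies in $E$; so $\epsilon(p)=p$ would force $\imath_{\tilde{\mathcal{Y}}}(p)=p$, impossible since translation by a nontrivial point of an elliptic curve is fixed-point-free. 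Therefore $\epsilon$ has no fixed points.

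The only genuinely delicate step is the smoothness of the two distinguished fibers: everything reduces to the fact that the double base change is ramified precisely at the two $I_0^*$ points and that the remaining discriminant factor $\beta^2-\alpha\gamma$ does not acquire a zero there. Once smoothness is in hand, the reduction to the statement that \emph{translation by a nontrivial two-torsion point is free} closes the argument immediately, and no further computation is needed.
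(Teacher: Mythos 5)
Your proof is correct and takes essentially the same route as the paper's (much terser) proof: both localize the fixed points of $\epsilon$ to the two fibers over $s_0=0$ and $s_1=0$, which $\delta_{\tilde{\mathcal{Y}}}$ fixes pointwise and which are smooth elliptic curves by genericity, and conclude that there $\epsilon$ acts as translation by a nontrivial two-torsion point, hence freely --- your write-up merely makes explicit the steps (commutation of the two involutions, anti-symplectic check, base-change smoothing of the $I_0^*$ monodromy) that the paper leaves implicit. The one nuance is that the stated hypotheses (``no repeated roots or common factors with each other or $t_0t_1$'') do not by themselves force $\beta^2-\alpha\gamma\neq 0$ at $t_0t_1=0$, so smoothness of these two fibers is an additional genericity assumption; but the paper relies on exactly the same assumption (``since we assumed that $s_0=0$ and $s_1=0$ were generic elliptic fibers''), so your appeal to it is faithful to the source.
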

\begin{proof}
As the two-torsion section does not specialize to the zero section on the two fibers fixed by the deck transformation -- since we assumed that $s_0=0$ and $s_1=0$ were generic elliptic fibers -- the translation by the two-torsion section composed with the deck transformation is fixed-point-free.
\end{proof}
From the pair $(\tilde{\mathcal{Y}},\epsilon)$ we can construct an Enriques surface $\mathcal{S}=\tilde{\mathcal{Y}} / \{ \mathbb{I}, \epsilon \}$, i.e., a smooth projective minimal regular algebraic surface of Kodaira dimension zero with Betti number $b_2 = 10$, as the quotient of the K3 surface by the the fixed-point-free involution. We denote the associated \'etale double cover by $p: \tilde{\mathcal{Y}} \to \mathcal{S}$, and the canonical classes on $\mathcal{S}$ and $\tilde{\mathcal{Y}}$ satisfy $K_{\tilde{\mathcal{Y}}} = p^*(K_{\mathcal{S}}) = \mathcal{O}_{\tilde{\mathcal{Y}}}$ where $K_{\mathcal{S}} \not =0$ and $K_{\mathcal{S}}$ is two-torsion $2 K_{\mathcal{S}}=0$.  The Enriques surface $\mathcal{S}$ admits an elliptic fibration, albeit not with a section. However, we can again pass to the relative Jacobian elliptic fibration of $\pi_{\mathcal{S}}$ to guarantees the existence of a section while preserving many properties, for instance the Picard number. In particular, the relative Jacobian elliptic fibration is a rational elliptic fibration. A classical construction of Kodaira, the so called logarithmic transform, then allows us to reconstruct $\mathcal{S}$. We have the following:
\begin{proposition}
The Enriques surface $\mathcal{S}=\tilde{\mathcal{Y}}(\check{\alpha}, \check{\beta}, \check{\gamma}) / \{ \mathbb{I}, \epsilon \}$ admits a fibration $\pi_{\mathcal{S}}: \mathcal{S} \to \mathbb{P}^1$ whose general fiber is a curve of arithmetic genus one. For $t_0t_1\not =0$ the fibration is analytically isomorphic to the elliptic fibration $\pi_{\mathcal{J}}: \mathcal{J} \to \mathbb{P}^1$ with section on the rational elliptic surface $\mathcal{J}$ in Equation~(\ref{RES_J}), and it has two multiple fibers of the form $2F$ over $[t_0:t_1]=[0:1]$ and  $[t_0:t_1]=[1:0] \in \mathbb{P}^1$ with fibers
\begin{equation}
\begin{array}{ll}
  F_{[0:1]}: & y^2 z  =  x \Big(x^2 + 2  \check{\beta}_0 xz + \check{\alpha}_0\check{\gamma}_0 z^2\Big),\\[0.5em]
  F_{[1:0]}: & y^2 z  =  x \Big(x^2 + 2  \check{\beta}_2 xz + \check{\alpha}_2\check{\gamma}_2 z^2\Big).
\end{array}
\end{equation}
In particular, the relative Jacobian elliptic fibration of $\pi_{\mathcal{S}}$ is the elliptic fibration $\pi_{\mathcal{J}}: \mathcal{J} \to \mathbb{P}^1_{[t_0:t_1]}$ with section.
\end{proposition}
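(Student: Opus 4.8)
The plan is to build everything by descent along the \'etale quotient map $p\colon\tilde{\mathcal{Y}}\to\mathcal{S}$ and to treat the two ramified fibres of $f$ separately from the rest. First I would observe that $\epsilon=\delta_{\tilde{\mathcal{Y}}}\circ\imath_{\tilde{\mathcal{Y}}}$ covers the base involution $\iota_b\colon[s_0:s_1]\mapsto[s_0:-s_1]$, since $\imath_{\tilde{\mathcal{Y}}}$ preserves fibres while $\delta_{\tilde{\mathcal{Y}}}$ acts on the base exactly as $\iota_b$. Hence $\pi_{\tilde{\mathcal{Y}}}$ is equivariant and descends to a fibration $\pi_{\mathcal{S}}\colon\mathcal{S}\to\mathbb{P}^1_{[s_0:s_1]}/\langle\iota_b\rangle=\mathbb{P}^1_{[t_0:t_1]}$; over $t_0t_1\neq0$ the free involution $\epsilon$ identifies the two smooth genus-one fibres of $\pi_{\tilde{\mathcal{Y}}}$ lying over the two preimages of a point, so the generic fibre of $\pi_{\mathcal{S}}$ is a smooth curve of arithmetic genus one. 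This establishes the existence of $\pi_{\mathcal{S}}$.

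For the two multiple fibres I would use that $f$ ramifies exactly over $[1:0]$ and $[0:1]$, which are the fixed points of $\iota_b$. Over such a point $\epsilon$ restricts to an automorphism of a \emph{single} fibre; there the $\delta$-part acts trivially on the fibre coordinates, so $\epsilon$ restricts to translation by the two-torsion section $T$, which is fixed-point free. Using $f^{*}\{t_1=0\}=2\{s_1=0\}$ one gets $p^{*}\bigl(\pi_{\mathcal{S}}^{-1}([1:0])\bigr)=2\,\tilde{\mathcal{Y}}_{[1:0]}$, whence $\pi_{\mathcal{S}}^{-1}([1:0])=2F$ with reduced part $F=\tilde{\mathcal{Y}}_{[1:0]}/\langle T\rangle$, and likewise over $[0:1]$. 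To identify $F$ explicitly I would substitute the leading coefficients of $\check\alpha,\check\beta,\check\gamma$ into the fibrewise two-isogeny of Equation~\eqref{fiber_isog}: quotienting $Y^{2}=X\bigl(X^{2}-4\check\beta X+4(\check\beta^{2}-\check\alpha\check\gamma)\bigr)$ by its two-torsion and rescaling by $X=4x,\ Y=8y$ returns $y^{2}=x(x^{2}+2\check\beta x+\check\alpha\check\gamma)$, which at the two ramification points is precisely $F_{[1:0]}$ and $F_{[0:1]}$. This yields the multiple-fibre statement.

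The remaining and most delicate point is the identification of the relative Jacobian with $\pi_{\mathcal{J}}$ and the analytic statement over $t_0t_1\neq0$. The structural input is that $\imath_{\tilde{\mathcal{Y}}}$ and $\delta_{\tilde{\mathcal{Y}}}$ generate a $(\mathbb{Z}/2\mathbb{Z})^{2}$-action whose total quotient, via $\delta_{\tilde{\mathcal{X}}}\circ\tilde\Phi=\tilde\Phi\circ\delta_{\tilde{\mathcal{Y}}}$, is the resolution of $\tilde{\mathcal{X}}/\{\mathbb{I},\delta_{\tilde{\mathcal{X}}}\}=\mathcal{J}$ of Lemma~\ref{fullMW}. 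I would then argue that the generic fibre of $\pi_{\mathcal{S}}$ is a torsor: the descent datum for $\tilde{\mathcal{Y}}_{\eta}$ determined by $\epsilon$ is the translation-by-$T$ twist of the constant $\delta$-descent, and since translation acts trivially on $\operatorname{Pic}^{0}$, the relative Jacobian of $\pi_{\mathcal{S}}$ is the constant descent of $\pi_{\tilde{\mathcal{Y}}}$, a rational elliptic surface with section and singular fibres $4I_2+4I_1$. Matching this descent to the Weierstrass model of Equation~\eqref{RES_J} is then carried out through the dual-moduli substitution of Equation~\eqref{dual_6L} and Lemma~\ref{lem:EquivFibs}, which is exactly what transports the ``$\mathcal{Y}$-type'' constant descent to the ``$\mathcal{X}$-type'' surface $\pi_{\mathcal{J}}$. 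Finally, since Kodaira's logarithmic transform is a purely local modification at the two multiple fibres, $\pi_{\mathcal{S}}$ and its relative Jacobian are analytically isomorphic over $t_0t_1\neq0$, and the global conclusion follows because a Jacobian fibration carries no multiple fibres.

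I expect the main obstacle to be precisely the Weierstrass-level matching in the last step: one must reconcile that the reduced half-fibres come out in the form $y^{2}=x(x^{2}+2\check\beta x+\check\alpha\check\gamma)$ while the smooth fibres of $\pi_{\mathcal{S}}$ are the two-isogenous ``$\mathcal{Y}$-type'' torsors, so that keeping careful track of which side of the two-isogeny one lands on --- and of the passage from $(\alpha,\beta,\gamma)$ to $(\check\alpha,\check\beta,\check\gamma)$ --- is essential in order to conclude that the relative Jacobian is $\pi_{\mathcal{J}}$ itself rather than its two-isogenous partner.
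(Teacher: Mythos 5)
Your first two steps are correct and, if anything, sharper than the paper's own treatment. The existence of $\pi_{\mathcal{S}}$ via $\epsilon$-equivariance over the base involution is the same fact the paper simply quotes as well known, and your multiple-fibre argument --- multiplicity $2$ from $f^{*}\{t_1=0\}=2\{s_1=0\}$ pulled through the \'etale cover $p$, reduced fibre $F=\tilde{\mathcal{Y}}_{[1:0]}/\langle T\rangle$, then the explicit isogeny formulas of Section~\ref{setup} (with $b=2\check\beta$, $ac=\check\alpha\check\gamma$, so that the dual isogeny of Equation~(\ref{dual_isog}) has kernel $\{\Sigma,T\}$) --- is an algebraic substitute for the paper's local-analytic computation, in which $\tilde{\mathcal{Y}}$ is trivialized over a disc around $s=0$, $\epsilon$ is written as $(z,s)\mapsto(z+\tfrac12,-s)$, and the quotient is recognized as an order-two logarithmic transform with half-fibre $\mathbb{C}/(\tfrac12\mathbb{Z}\oplus\mathbb{Z}\pmb{\tau})$. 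Your route has the merit of producing the stated Weierstrass forms of $F_{[0:1]}$ and $F_{[1:0]}$ explicitly, which the paper leaves implicit after its lattice computation.

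The genuine gap is in your last step. Your descent computation is fine as far as it goes: translation by $T$ acts trivially on $\operatorname{Pic}^{0}$, so the Jacobian descent datum is the plain $\delta$-descent, whose total space is the rational elliptic surface with fibres $Y^{2}=X\bigl(X^{2}-4\check\beta X+4(\check\beta^{2}-\check\alpha\check\gamma)\bigr)$ over $\mathbb{P}^1_{[t_0:t_1]}$. But the proposed ``matching'' of this $\mathcal{Y}$-type surface to $\mathcal{J}$ of Equation~(\ref{RES_J}) via Equation~(\ref{dual_6L}) and Lemma~\ref{lem:EquivFibs} is not available: those statements compare genus-one fibrations on the K3 surfaces obtained by exchanging base and fibre coordinates, and they provide no isomorphism over $\mathbb{P}^1_{[t_0:t_1]}$ between the two rational elliptic surfaces at issue. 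Indeed these two surfaces are related by a fibrewise two-isogeny, not an isomorphism --- their $I_2$ and $I_1$ fibres sit at interchanged base loci (over $\check\alpha\check\gamma=0$ versus over $\check\beta^{2}-\check\alpha\check\gamma=0$) --- so no substitution of moduli can transport one to the other. The paper closes this distance group-theoretically rather than moduli-theoretically: since $\epsilon=\delta_{\tilde{\mathcal{Y}}}\circ\imath_{\tilde{\mathcal{Y}}}$ and $\mathcal{J}$ resolves the quotient of $\tilde{\mathcal{Y}}$ by the full group $\langle\imath_{\tilde{\mathcal{Y}}},\delta_{\tilde{\mathcal{Y}}}\rangle$, the residual involution induced on $\mathcal{S}$ is fibrewise translation by the surviving two-torsion, and $\mathcal{J}$ is its degree-two quotient; it is through this quotient --- consistently with the fact that your half-fibres $F$ are literally fibres of $\mathcal{J}$ at $t_0t_1=0$, while your smooth fibres are the two-isogenous partners of the fibres of $\mathcal{J}$ --- that the paper ties $\pi_{\mathcal{S}}$ to $\pi_{\mathcal{J}}$ away from the branch points. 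Your closing paragraph correctly isolates ``which side of the two-isogeny one lands on'' as the crux, but the proposal does not resolve it: as written, your (correct) computation of the Jacobian descent and the conclusion you assert differ by exactly that two-isogeny, and no lemma you invoke bridges it.
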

\begin{proof}
It is well known \cite{MR3586505} that a fibration $\pi_{\tilde{\mathcal{X}}}: \tilde{\mathcal{X}} \to \mathbb{P}^1_{[s_0:s_1]}$ induces a fibration $\pi_{\mathcal{S}}: \mathcal{S} \to \mathbb{P}^1_{[t_0:t_1]}$ with two double-fibers and a general fiber that is a curve of arithmetic genus one. The elliptic fibration $\pi_{\mathcal{S}}: \mathcal{S} \to \mathbb{P}^1_{[t_0:t_1]}$ must have the two multiple fibers over the ramification points of the map $f: [s_0:s_1] \mapsto [t_0:t_1] =[s^2_0:s^2_1]$ invariant under the involution $\delta_B$. This can be seen as follows: away from the branching points $[t_0:t_1]=[0:1]$ and $[t_0:t_1]=[1:0]$, the elliptic fibration is given by $\pi_{\mathcal{J}}$ since $\mathcal{J}$ was obtained from $\tilde{\mathcal{X}} / \{ \mathbb{I}, \delta_{\tilde{\mathcal{Y}}} \}$, $\tilde{\mathcal{X}}$ from $\tilde{\mathcal{Y}}/\{\mathbb{I}, \imath_{\tilde{\mathcal{Y}}}\}$, and $\epsilon = \delta_{\tilde{\mathcal{Y}}} \circ \imath_{\tilde{\mathcal{Y}}}$. That is, Equation~(\ref{RES_J}) determines the genus-one fibration $\pi_{\mathcal{S}}$ outside of two small discs around $[t_0:t_1]=[0:1]$ and $[t_0:t_1]=[1:0]$.
\par We now construct the fibration $\pi_{\mathcal{S}}$ inside an excised small analytic disc centered at one of the branching points, say $[t_0,t_1]=[0:1]$. The fibration inside the other disc is then constructed in a similar way. We use the affine coordinate $s=s_0/s_1$ and consider a small analytic disc $D \subset \mathbb{P}^1_{[s_0:s_1]}$ centered at $s=0$ with image $D'=f(D)\subset\mathbb{P}^1_{[t_0:t_1]}$. Since the fiber over $s=0$ is regular, the fibration $\pi_{\tilde{\mathcal{Y}}}|_D:\tilde{\mathcal{Y}} \to D$ is topologically trivial, and $\tilde{\mathcal{Y}}\mid_D$ is isomorphic to $\hat{\mathcal{E}} \times D$ where $\hat{\mathcal{E}}\cong \mathbb{C}/(\mathbb{Z}\oplus \mathbb{Z}\pmb{\tau} )$. Moreover, the projection $\pi_{\tilde{\mathcal{Y}}}|_D$ is the projection $\big(z, s\big) \to s$ for $z\in \hat{\mathcal{E}}$ and $s\in D$. The Enriques involution $\epsilon$ acts on $\hat{\mathcal{E}} \times D$ as an automorphism of order two by $\big(z,s\big) \mapsto \big(z+\frac{1}{2},-s\big)$. The quotient $\mathcal{Q}=\hat{\mathcal{E}} \times D/\epsilon$ is a fibration over the disc $D'$ with projection $\big[(z,s)\big] \mapsto t=s^2$ and a fiber of multiplicity two over $t=0$. The two fibrations are analytically isomorphic over the punctured discs $D - \{0\}$ and $D' - \{0\}$ by mapping $\big(z,s\big)$ to $\big(z',t\big)=\big(z-\log{(s)}/(2\pi i),s\big)$. The central fiber is of the form $2F$ where $F \cong \mathbb{C}/(\mathbb{Z}/2\mathbb{Z}\oplus \mathbb{Z} \pmb{\tau})$. After rescaling, $F$ becomes the two-isogeneous curve $\mathcal{E}\cong \mathbb{C}/(\mathbb{Z}\oplus 2\mathbb{Z} \pmb{\tau})$.
\end{proof}
We already proved that the relative Jacobian fibration of the genus-one fibration on $\mathbf{X}=\tilde{\mathcal{X}}(\alpha, \beta, \gamma)$ with two singular fibers of type $I_0^*$ is the elliptic fibration with section $\pi_{\mathcal{Y}} : \mathcal{Y} \to \mathbb{P}^1$. In turn, this elliptic fibration is inherited from the elliptic fibration with section on the K3 surface $\mathbf{Y}:=\tilde{\mathcal{Y}}(\check{\alpha}, \check{\beta}, \check{\gamma})$, namely as quotient by the Nikulin involution in Equation~(\ref{Nikulin_base}). The different sets of moduli, i.e., $(\alpha, \beta, \gamma)$ and $(\check{\alpha}, \check{\beta}, \check{\gamma})$, are related by Equations~(\ref{dual_6L}) and appear due to Lemma~\ref{lem:EquivFibs}. The K3 surface $\mathbf{Y}$ admits a fixed-point-free involution, and the resulting Enriques surface is equivalently described by the rational elliptic surface $\mathcal{J}$ in Equation~(\ref{RES_J}) together with the two double-fibers: they are two elliptic curves with marked two-torsion points given by
\begin{equation}
\label{curves_dual}
\begin{split}
  \mathcal{E}_{[1:0]}: \quad & y^2  =  x \Big(x^2 + 2  \check{\beta}_2 x + \check{\alpha}_2\check{\gamma}_2 \Big),\\
  \mathcal{E}_{[0:1]}: \quad & y^2   =  x \Big(x^2 + 2  \check{\beta}_0 x + \check{\alpha}_0\check{\gamma}_0 \Big).
\end{split}
\end{equation}
By construction, scaling according to Equation~(\ref{scalingWEq_dual}) acts on the rational elliptic surface $\mathcal{J}$ in Equation~(\ref{RES_J}) in the same way scaling is acts on the K3 surfaces, by holomorphic isomorphism preserving fibers. Thus, scaling according to Equation~(\ref{scalingWEq_dual}) acts as holomorphic isomorphism on the elliptic curves $\mathcal{E}_{[1:0]}$ and $\mathcal{E}_{[0:1]}$. The corresponding two-isogeneous elliptic curves are
\begin{equation}
\begin{split}
  \hat{\mathcal{E}}_{[1:0]}: \quad & Y^2  =  X \Big(X^2 -4  \check{\beta}_2 X + 4 (\check{\beta}^2_2-\check{\alpha}_2\check{\gamma}_2) \Big),\\
  \hat{\mathcal{E}}_{[0:1]}: \quad & Y^2   =  X \Big(X^2 -4  \check{\beta}_0 X + 4(\check{\beta}^2_0 -\check{\alpha}_0\check{\gamma}_0) \Big).
\end{split}
\end{equation}
Using the relation between the moduli and the dual moduli in Equations~(\ref{dual_6L}) it follows that the two elliptic curves $\hat{\mathcal{E}}_{[1:0]}$ and $\hat{\mathcal{E}}_{[0:1]}$ are isomorphic to the genus-one curves $\hat{\mathcal{C}}_{\alpha}$ and $\hat{\mathcal{C}}_{\gamma}$ in Equation~(\ref{curve1}) and (\ref{curve2}) and $\hat{\mathcal{E}}_\alpha$ and $\hat{\mathcal{E}}_\gamma$ in Equations~(\ref{curves}) in the sense of Lemma~\ref{lemma_reduction} and Proposition~\ref{Jac_prop}. Therefore, we will denote the elliptic curves in Equations~(\ref{curves_dual}) by  $\mathcal{E}_\alpha$ and $\mathcal{E}_\gamma$.
\par It was proved in \cite{MR1675162} that an $E_8$ bundle $\mathbf{W} \to \mathcal{E}$ over an elliptic curve $\mathcal{E}$ is isomorphic to a rational elliptic surface $\mathcal{J}$ with section together with an embedding of $\mathcal{E}$ into $\mathcal{J}$ as anti-canonical curve, in particular as fiber over infinity.  Thus, the obvious embeddings of the elliptic curves $\mathcal{E}_\alpha$ and $\mathcal{E}_\gamma$ as anti-canonical curves into $\mathcal{J}$ determine two $E_8$ bundles $\mathbf{W}^{\mathcal{J}}_\alpha \to \mathcal{E}_\alpha$ and $\mathbf{W}^{\mathcal{J}}_\gamma \to \mathcal{E}_\gamma$.  
We have therefore proved the following:
\begin{theorem}
\label{prop:duality}
The defining data of the CHL string (as explained in Section~\ref{sec:CHL}) dual to the eight-dimensional F-theory compactifications $(\mathbf{X}, \pi_{\mathbf{X}}, \alpha)$ with Picard number $14$
is ($a'$) the elliptic curve
\begin{equation}
  \mathcal{E}_\alpha: \quad  y^2  =  x \Big(x^2 + 2  \alpha_1 x + \alpha_2\alpha_0 \Big),\\
\end{equation}
($b'$) the $\mathbb{Z}/2\mathbb{Z}$ subgroup  $\{\sigma, \tau\}$ of $\mathcal{E}_\alpha$ inducing the involution $\imath_{\mathcal{E}_\alpha}$ in Equation~(\ref{involution_fiber}) -- where $\tau$ is the two-torsion point $(x,y)=(0,0)$ and $\sigma$ the point at infinity -- and ($c'$) the principal $E_8$ bundle $\mathbf{W}^{\mathcal{J}}_\alpha \to \mathcal{E}_\alpha$ obtained from the embedding $\mathcal{E}_\alpha \hookrightarrow \mathcal{J}$ in Equation~(\ref{RES_J}).
\end{theorem}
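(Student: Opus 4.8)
The plan is to assemble the geometric data already produced in the preceding discussion and match it, term by term, against the defining data $(a')$, $(b')$, $(c')$ of the CHL string as set up in Section~\ref{sec:CHL}. Since the sentence immediately preceding the statement asserts that the result has in effect been established, the proof is a matter of verifying that each ingredient of the CHL data is realized by a specific geometric object attached to the F-theory compactification $(\mathbf{X},\pi_{\mathbf{X}},\alpha)$.

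First I would recall the chain $\mathbf{Y}=\tilde{\mathcal{Y}}(\check{\alpha},\check{\beta},\check{\gamma}) \dashrightarrow \mathbf{X}=\tilde{\mathcal{X}}(\alpha,\beta,\gamma)$ established above: the relative Jacobian of the genus-one fibration on $\mathbf{X}$ with two $I_0^*$ fibers is $\pi_{\mathcal{Y}}$, which is inherited from the Jacobian elliptic fibration on $\mathbf{Y}$ via the Nikulin involution of Equation~(\ref{Nikulin_base}). The surface $\mathbf{Y}$ carries the fixed-point-free involution $\epsilon$, and the preceding Proposition identifies the resulting Enriques surface with the rational elliptic surface $\mathcal{J}$ of Equation~(\ref{RES_J}), equipped with two multiple fibers over $[0:1]$ and $[1:0]$. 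These two fibers are precisely the elliptic curves with marked two-torsion that, after the scaling normalization and the moduli relations of Equation~(\ref{dual_6L}), are identified with $\mathcal{E}_\alpha$ and $\mathcal{E}_\gamma$. This supplies datum $(a')$: the elliptic curve is $\mathcal{E}_\alpha$, with its two-torsion point $\tau\colon (x,y)=(0,0)$ and point at infinity $\sigma$ furnishing datum $(b')$.

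For datum $(c')$ I would invoke the theorem of Friedman--Morgan--Witten cited as~\cite{MR1675162}: a principal $E_8$ bundle over an elliptic curve is equivalent to a rational elliptic surface with section together with an embedding of the curve as an anti-canonical fiber. Applying this to the obvious embedding $\mathcal{E}_\alpha \hookrightarrow \mathcal{J}$ as fiber over infinity of the surface in Equation~(\ref{RES_J}) produces the bundle $\mathbf{W}^{\mathcal{J}}_\alpha \to \mathcal{E}_\alpha$. Finally I would close the loop with the processed CHL data $(a)$--$(c)$ by checking, via Lemma~\ref{lemma_reduction} and Proposition~\ref{Jac_prop}, that the two-isogenous curve $\hat{\mathcal{E}}_{[1:0]}$ is isomorphic to the compactification curve $\hat{\mathcal{C}}_\alpha$ of Equation~(\ref{curve1}); this matches $\hat{\mathcal{E}}=\mathcal{E}/\{\sigma,\tau\}$ and confirms that the choice of four of the eight $I_2$ fibers, encoded by the polynomial $\alpha$, selects exactly the genus-one curve on which the CHL string is compactified.

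The main obstacle is not computational but conceptual: one must be certain that the abstract push-forward construction defining the twisted bundle $\mathbf{V}=\hat{\varphi}_*\mathbf{W}$ in the CHL data is faithfully reproduced by the geometry of $\mathcal{J}$ together with its two double fibers. The delicate point is that the Enriques quotient simultaneously encodes the two $E_8$ bundles $\mathbf{W}^{\mathcal{J}}_\alpha$ and $\mathbf{W}^{\mathcal{J}}_\gamma$ over the two-isogenous curves, and one must verify that the $\alpha\leftrightarrow\gamma$ symmetry exchanges them consistently---that is, that privileging $\alpha$ over $\gamma$ in the pair $\{\alpha,\gamma\}$ corresponds precisely to the choice of four of the eight marked base points demanded by the duality.
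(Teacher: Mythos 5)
Your proposal is correct and follows essentially the same route as the paper: the theorem there is stated after the words ``We have therefore proved the following,'' and its proof is exactly the assembly you reconstruct --- the relative Jacobian chain $\mathbf{Y}=\tilde{\mathcal{Y}}(\check{\alpha},\check{\beta},\check{\gamma})\dashrightarrow\mathbf{X}$ via the Nikulin involution of Equation~(\ref{Nikulin_base}), the Enriques quotient by $\epsilon=\delta_{\tilde{\mathcal{Y}}}\circ\imath_{\tilde{\mathcal{Y}}}$ with its two double fibers identified through Equation~(\ref{dual_6L}) with $\mathcal{E}_\alpha$ and $\mathcal{E}_\gamma$, the two-isogeny match $\hat{\mathcal{E}}_{[1:0]}\cong\hat{\mathcal{C}}_\alpha$ in the sense of Lemma~\ref{lemma_reduction} and Proposition~\ref{Jac_prop}, and the Friedman--Morgan--Witten correspondence of \cite{MR1675162} supplying $\mathbf{W}^{\mathcal{J}}_\alpha$. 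The ``conceptual obstacle'' you flag at the end (faithfulness of the push-forward construction $\hat{\varphi}_*\mathbf{W}$ and the consistency of the $\alpha\leftrightarrow\gamma$ exchange) is likewise not verified in the paper but absorbed into the physics dictionary of \cite{MR1615617,MR1797021} recalled in Section~\ref{sec:CHL}, so raising it does not put your argument at variance with the paper's.
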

\begin{remark}
We use Equation~(\ref{scalingWEq_2}) to fix $\alpha_2=\gamma_0=1$ and use the residual scaling to fix $\alpha_1$ to a non-vanishing value $\alpha_1\not= 0$ since $\mathcal{E}_\alpha$ is smooth. For $\alpha_2=\alpha_1=\gamma_0=1$, the moduli of the CHL string dual to the eight-dimensional F-theory compactifications $(\mathbf{X}, \pi_{\mathbf{X}}, \alpha)$ are given by $(\beta_2, \beta_1, \beta_0; \gamma_2, \gamma_1)$ determining the $E_8$ bundle $\mathbf{W}^{\mathcal{J}}_\alpha \to \mathcal{E}_\alpha$ and
\begin{equation}
 j(\mathcal{E}_\alpha) = \frac{1}{27} \frac{(2\alpha_0-4)^3}{\alpha_0^2(\alpha_0-1)} \;.
\end{equation}
\end{remark}
\begin{remark}
Choosing as CHL data the second double-fiber $\mathcal{E}_\gamma$ of the Enriques surface and the principal $E_8$ bundle $\mathbf{W}^{\mathcal{J}}_\gamma \to \mathcal{E}_\gamma$ corresponds to the eight-dimensional F-theory compactifications $(\mathbf{X}, \pi_{\mathbf{X}}, \gamma)$ with Picard number $14$.
\end{remark}
\bibliography{CM17} 
\bibliographystyle{amsplain}
\end{document}